\tikzset{decoration={snake}}
\tikzset{cross/.style={cross out, draw=black, minimum size=2*(#1-\pgflinewidth), inner sep=0pt, outer sep=0pt}}
\tikzset{->-/.style={decoration={
  markings,
  mark=at position .5 with {\arrow{>}}},postaction={decorate}}}
\newtheorem{theorem}{Theorem}
\newtheorem{question}[theorem]{Question}
\newtheorem{proposition}[theorem]{Proposition}
\newtheorem{observation}[theorem]{Observation}
\newtheorem{corollary}[theorem]{Corollary}
\newtheorem{lemma}[theorem]{Lemma}
\newtheorem{claim}[theorem]{Claim}
\theoremstyle{definition}
\newtheorem{definition}[theorem]{Definition}
\newtheorem{conjecture}[theorem]{Conjecture}
\newcommand*{\myproofname}{Proof}
\newcommand{\A}{\mathcal{A}}
\newcommand{\I}{\mathcal{I}}
\newcommand{\F}{\mathcal{F}}
\newcommand{\CAI}{\rm{CAI}}
\newcommand{\AI}{\ensuremath{(\A,\I)}}
\newcommand{\ru}[1]{\textbf{R#1}}
\newcommand{\ora}[1]{\overrightarrow{#1}}
\renewcommand{\setminus}{-}
\title{Partitions of planar (oriented) graphs into a connected acyclic and an independent set}
\author[1]{Stijn Cambie}
\author[2]{François Dross}
\author[3]{Kolja Knauer}
\author[4]{Hoang La}
\author[5]{Petru Valicov}
\affil[1]{Department of Computer Science, KU Leuven Campus Kulak-Kortrijk, 8500 Kortrijk, Belgium.}
\affil[2]{LaBRI, CNRS, Université de Bordeaux, Bordeaux, France.}
\affil[3]{School of Mathematical Sciences, Hebei Key Laboratory of Computational Mathematics and Applications, Hebei Normal University, Shijiazhuang 050024, P. R. China and Departament de Matem\`atiques i Inform\`atica,
Universitat de Barcelona (UB), Barcelona, Spain.}
\affil[4]{LISN, Université Paris-Saclay, CNRS, Gif-sur-Yvette, France.}
\affil[5]{LIRMM, Université de Montpellier, CNRS, Montpellier, France.}
\begin{document}

\maketitle

\begin{abstract}
A question at the intersection of Barnette's Hamiltonicity and Neumann-Lara's dicoloring conjecture is: \emph{Can every Eulerian oriented planar graph be vertex-partitioned into two acyclic sets?}
A \CAI-partition of an undirected/oriented graph is a partition into a tree/connected acyclic subgraph and an independent set.
Consider any plane Eulerian oriented triangulation together with its unique tripartition, i.e. partition into three independent sets.
If two of these three sets induce a subgraph $G$ that has a \CAI-partition, then the above question has a positive answer.
We show that if $G$ is subcubic, then it has a \CAI-partition, i.e. oriented planar bipartite subcubic 2-vertex-connected graphs admit \CAI-partitions.
We also show that series-parallel 2-vertex-connected graphs admit \CAI-partitions.
%Moreover, none of the hypotheses of these results can be removed except possibly planarity.
Finally, we present a Eulerian oriented triangulation such that no two sets of its tripartition induce a graph with a \CAI-partition. This generalizes a result of Alt, Payne, Schmidt, and Wood to the oriented setting.
\end{abstract}

 \hfill  \textit{ “A problem worthy of attack, proves its worth by fighting back!” (Piet Hein)}

\section{Introduction}
A famous and widely open conjecture of Barnette says:

\begin{conjecture}[Barnette's Hamiltonicity Conjecture, 1969~\cite{B68}]\label{conj:primalBarnette}
Every $3$-connected cubic planar bipartite graph is Hamiltonian.
\end{conjecture}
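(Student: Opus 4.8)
Since \Cref{conj:primalBarnette} is, in the authors' own words, ``famous and widely open,'' I cannot expect an actual proof; what follows is the line of attack I would take and the point at which I expect it to break down. The first move is to pass to the planar dual. If $G$ is a plane cubic graph with $n$ vertices and $T=G^{*}$ is its dual, then $T$ is a plane triangulation on $n/2+2$ vertices, and $G$ is bipartite exactly when every face of $G$ is even, i.e.\ when every vertex of $T$ has even degree, so that $T$ is \emph{Eulerian}. A Hamiltonian cycle $C$ of $G$ is a simple closed curve splitting the faces of $G$ into an ``inside'' set $I$ and an ``outside'' set $O$; dually this is a $2$-colouring of $V(T)$, the edges of $C$ being exactly the bichromatic edges of $T$ and the chords $E(G)\setminus C$ being the monochromatic ones. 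Counting edges shows that $C$ is a \emph{single} cycle through all vertices precisely when $T[I]$ and $T[O]$ are both trees. Thus \Cref{conj:primalBarnette} is equivalent to the purely structural statement that \emph{every Eulerian plane triangulation admits a partition of its vertex set into two induced trees}, which is exactly the tree-versus-tree refinement of the \CAI-partitions studied here.

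Working on the triangulation side, the plan is to build such a two-tree partition by induction on $|V(T)|$, peeling off small separators. A $3$-edge-cut of $G$ dualizes to a separating triangle of $T$, and a nontrivial one lets me split $T$ into two smaller Eulerian triangulations $T_1,T_2$ sharing that triangle; I would partition each recursively and then merge the partitions along the shared triangle. To make the merge legal the induction hypothesis must be reinforced: rather than asking merely for \emph{some} two-tree partition, I would prescribe the colours (equivalently, the two ``trees'') on the vertices of the separating triangle and demand a partition extending that boundary condition, so that the two pieces glue without creating a monochromatic cycle or disconnecting a tree. The parity forced by the Eulerian condition is the tool that should keep these boundary prescriptions consistent, in the same spirit as Grinberg's cycle-parity identity on the primal side.

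The step I expect to be the genuine obstacle is the base case, namely Eulerian triangulations with no nontrivial separating triangle (dually, cyclically $4$-edge-connected cubic bipartite planar graphs): there is no \emph{a priori} reason a two-tree partition should exist, and indeed this is exactly the core of the conjecture that has resisted attack for over half a century. A discharging argument---assign charge $\deg(v)-6$ to each vertex of $T$, redistribute, and hope to exhibit an unavoidable set of locally reducible configurations whose presence lets one extend a partial two-tree partition---would be the natural fallback, but no unavoidable reducible set is known for this class, which is precisely why the problem is open. Accordingly I would expect to settle only restricted cases, for instance when every vertex of $T$ has degree $4$ or $6$ (dually, all faces of $G$ of length $4$ or $6$), where the rigid local structure tames both the discharging and the gluing; the full conjecture I would not expect to reach.
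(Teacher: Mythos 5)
There is nothing for your proposal to be checked against: \Cref{conj:primalBarnette} is stated in the paper as exactly that --- a famous, widely open conjecture --- and the paper contains no proof of it. You were right not to manufacture one. The part of your write-up that makes definite mathematical claims is correct and coincides with the paper's own first move: passing to the planar dual is precisely how the paper obtains \Cref{conj:Barnette} (Dual Barnette). Your tree-versus-tree refinement is also sound, and it is worth recording why it agrees with the paper's ``two acyclic sets'' formulation: a triangulation $T$ on $n$ vertices has $3n-6$ edges, the bichromatic edges of any $2$-colouring form a bipartite planar graph and hence number at most $2n-4$, so at least $n-2$ edges are monochromatic; but two induced forests on parts of sizes $a$ and $b$ with $a+b=n$ carry at most $(a-1)+(b-1)=n-2$ edges, so equality holds throughout and both forests are forced to be spanning trees of their parts. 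Thus ``partition into two acyclic sets'' and ``partition into two induced trees'' are the same demand on a triangulation, and both are equivalent to Hamiltonicity of the dual cubic graph.

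Where you and the paper diverge is in the follow-up strategy, and here the comparison is informative. Your plan --- induction over separating triangles with prescribed boundary colourings, falling back on discharging over the cyclically $4$-edge-connected core --- is not what the paper attempts, and, as you yourself predict, it has no known way past its base case. The paper instead restricts the problem: by \Cref{obs:main} it suffices to find a \CAI-partition of the subgraph induced by two classes of the tripartition of the Eulerian triangulation, and the paper proves such partitions exist in the subcubic case (\Cref{thm:subcubic}, by a discharging argument with reducible configurations --- a realization of your fallback, but on the bipartite subcubic graph rather than on the triangulation) and in the series-parallel case (\Cref{thm:tw2}, via short nested ear decompositions). Crucially, \Cref{thm:finalboss} shows this tripartition route \emph{cannot} prove the full conjecture: there are Eulerian oriented triangulations in which no two classes of the tripartition induce a graph with a \CAI-partition. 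So the paper's results are of exactly the ``restricted cases'' type you anticipated as the realistic ceiling, and neither your outline nor the paper constitutes progress toward an actual proof of \Cref{conj:primalBarnette}.
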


Bipartiteness is important here, because if it is dropped, then the statement corresponds to Tait's Conjecture~\cite{Tait1884}, disproved by Tutte~\cite{Tutte46}. On the other hand, planarity is also essential, as shown by Horton~\cite{Hor82} who disproved a corresponding conjecture of Tutte~\cite{Tut71}.

Many partial and related results are available~\cite{APSW16,CO02,F24,Flo10,Lu11,BCD24,BFFS22,Flo16,Flo20a,Flo20b,Har13,K20}. In particular, \Cref{conj:primalBarnette} holds on graphs on up to 90 vertices~\cite{HMM85, BGM22}.

It is well-known and easy to see that the planar dual of a $3$-connected cubic planar bipartite graph is \emph{Eulerian}, i.e., it is connected and all its vertices have even degree. Moreover, the dual will be a planar \emph{triangulation}, i.e., all its faces are triangles. A subset of the vertices of an undirected graph is called \emph{acyclic} if it induces a forest. Finally, one can observe that after dualization one obtains the following equivalent statement of \Cref{conj:primalBarnette}.

\begin{conjecture}[Dual Barnette]\label{conj:Barnette}
Every Eulerian planar triangulation can be vertex-partitioned into two acyclic sets.
\end{conjecture}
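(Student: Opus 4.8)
The plan is to argue by contradiction through a minimal counterexample combined with a discharging argument, the standard framework for ``partition a planar graph according to a local-looking rule'' statements. Let $T$ be an Eulerian planar triangulation on the fewest vertices admitting no partition of $V(T)$ into two acyclic sets. First I would record the structural constraints that $T$ must satisfy: every triangulation on at least four vertices is $3$-connected, the Eulerian hypothesis forces all degrees to be even, and $3$-connectivity rules out degree $2$, so $T$ has minimum degree at least $4$; moreover a planar triangulation is properly $3$-colorable precisely when it is Eulerian, which supplies a canonical tripartition $V_1\cup V_2\cup V_3$ that I would keep available to control adjacencies. By Euler's formula $\sum_{v}(\deg(v)-6)=-12$, so with all degrees even the vertices of degree $4$ are the only carriers of negative charge and must be plentiful; their neighbourhoods are where I would hunt for a reducible configuration.

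The reduction step is to locate a bounded local configuration whose presence is impossible in a minimal counterexample. The natural candidates are the neighbourhoods of degree-$4$ vertices: if $v$ has degree $4$ with neighbours $a,b,c,d$ in cyclic order, deleting $v$ leaves a quadrilateral face that must be re-triangulated to recover a triangulation on fewer vertices. The immediate difficulty is that naively inserting one diagonal flips the parity of two of the four neighbours and so destroys the Eulerian property; the reduction therefore has to be engineered (contracting an antipodal pair, inserting a balanced pair of diagonals, or performing a local vertex split) to preserve even degrees and keep the result a genuine Eulerian triangulation $T'$. By minimality $T'$ admits a two-forest partition, and the final move is to reinsert the configuration and distribute its vertices between the two classes so that both remain forests.

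The discharging layer would then show that some such reducible configuration is unavoidable. Starting from the initial charge $\deg(v)-6$ at each vertex, I would send charge from high-degree vertices toward the degree-$4$ vertices (and across the faces of the canonical colouring), and argue that, absent every reducible configuration, each vertex could be made non-negative, contradicting the total charge $-12$. Designing rules that simultaneously cover all cases and respect the even-degree restriction is intricate; this is where the canonical tripartition is most useful, since it limits how degree-$4$ vertices may cluster.

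The hard part, and the reason this statement has resisted proof, is the reinsertion step, because acyclicity is a genuinely \emph{global} property rather than a local one. In a proper-colouring argument one need only satisfy constraints along the edges incident to the reinserted region; here, placing a reinserted vertex into the class containing a forest component $F$ can close a cycle through a path of $F$ that wanders arbitrarily far away in $T$, so no bounded amount of local information certifies that the extension stays acyclic. Overcoming this would require carrying, throughout the induction, a stronger invariant recording the ``connection pattern'' of each forest class along the boundary of each reducible region (for instance, which boundary vertices already lie in a common tree component), and proving that each parity-preserving reduction preserves this refined invariant. Formulating an invariant strong enough to control global cycles yet weak enough to survive all admissible reductions is the crux on which the whole approach stands or falls.
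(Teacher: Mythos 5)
You have not proved the statement, and neither does the paper: \Cref{conj:Barnette} is the planar-dual formulation of Barnette's 1969 conjecture (\Cref{conj:primalBarnette}) and is treated throughout the paper as widely open. Your text is a proof \emph{programme}, not a proof, and you say so yourself: the decisive step --- reinserting the reduced configuration while certifying that both classes stay acyclic --- is left to an invariant ``recording the connection pattern of each forest class'' that you never construct, and your final sentence concedes that formulating it ``is the crux on which the whole approach stands or falls.'' That is precisely the gap. The preliminary layers are indeed routine (Eulerian triangulations are $3$-connected with even degrees, $3$-colorable, and Euler's formula makes degree-$4$ vertices the negative-charge carriers; parity-preserving reductions of a degree-$4$ vertex, e.g.\ contracting an antipodal pair of its neighbours, are standard), but none of that addresses the obstruction: acyclicity of a vertex class is a global property, so the minimal-counterexample template delivers a partition of the reduced triangulation $T'$ that carries no bounded-radius certificate usable at the reinsertion site. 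Without the invariant, the induction does not close, and the statement remains exactly as open as before.

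It is worth measuring your plan against what the paper actually does with these ideas. The discharging-plus-minimal-counterexample machinery appears only in the proof of \Cref{thm:subcubic}, and crucially in the \CAI{} setting: demanding that one class be \emph{connected} acyclic and the other \emph{independent} is a strengthening that restores local verifiability (independence is checked edge-by-edge, and connectivity plus the $\A$-path case analysis substitutes for global acyclicity control), which is exactly the kind of carried invariant your sketch calls for --- but it is only known to work under the strong restrictions of \Cref{obs:2-connected} (bipartite, planar, $2$-vertex-connected) plus subcubicity. Moreover, \Cref{thm:finalboss} and \Cref{cor:finalboss} are a concrete warning about your proposed use of the canonical tripartition: the natural strengthened invariant ``one acyclic class contains an entire colour class'' (equivalently, a permeating connected acyclic subgraph avoiding one colour class, via \Cref{obs:main,obs:permeat}) is \emph{false} for some Eulerian oriented triangulations, generalizing the undirected negative result of Alt, Payne, Schmidt, and Wood. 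So the invariant you need must be strictly weaker than the tripartition-based one yet still strong enough to survive every reduction --- and the paper's negative results indicate that the most natural candidates fail.
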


The statement does not hold for general planar triangulations, because then it corresponds to Tait's Conjecture~\cite{Tait1884}. Indeed, there is a rich literature about decompositions of planar graphs into graphs close to forests, see e.g.~\cite{Tho95,knauer2024partitioning,RW08,KT09}.

We are now switching to \emph{oriented graphs}, i.e., directed graphs without cycles of length $1$ or $2$. A subset of the vertices of a directed graph is called \emph{acyclic} if it induces a subdigraph without directed cycles. Another relaxation of Tait's Conjecture is due to Neumann-Lara.

\begin{conjecture}[Neumann-Lara Dicoloring Conjecture, 1985~\cite{NL85}]\label{conj:NL}
Every oriented planar triangulation can be vertex-partitioned into two acyclic sets.
\end{conjecture}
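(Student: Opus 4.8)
The plan is to attack \Cref{conj:NL} by strong induction on the number of vertices, using a Thomassen-style strengthening that carries information across the inductive boundary. The bare statement ``dichromatic number $\le 2$'' is not directly inductive, because acyclicity of a colour class is a global property: deleting a vertex and recolouring the rest can close a long monochromatic directed cycle once the vertex is reinserted. To repair this, I would instead prove the stronger claim that for any facial triangle $xyz$ of an oriented triangulation $T$ and any \emph{admissible boundary condition} on $\{x,y,z\}$---a $2$-colouring of these three vertices together with, for each colour, a total order on the same-coloured ones that is consistent with the arcs among $x,y,z$---the colouring extends to all of $T$ so that each colour class is acyclic and the prescribed orders extend to topological orders of the classes. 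Carrying the orders is exactly what will allow two sub-solutions to be glued without creating a monochromatic cycle across the interface.

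Consider a minimal counterexample $T$ to the strengthened statement; I first show $T$ is $4$-connected. If $T$ has a separating triangle $xyz$, it splits into an inside triangulation $T_{\mathrm{in}}$ and an outside triangulation $T_{\mathrm{out}}$, both containing the face $xyz$ and both smaller. Any directed cycle not contained in one side must pass between the sides through $\{x,y,z\}$, hence it visits at least two of these vertices; writing such a cycle as an alternation of one-sided paths between shared vertices, each path forces the same strict inequality in its side's topological order. Thus if the two sides are coloured with a \emph{common} admissible boundary condition on $\{x,y,z\}$, so that their topological orders agree on the shared vertices, merging the orders cannot produce a monochromatic cycle: it would yield a chain $s_1<s_2<\dots<s_1$, a contradiction. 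Since there are only constantly many admissible boundary conditions to try on three vertices, induction applied to each side with a matching condition reconstructs a valid colouring of $T$, so $T$ has no separating triangle.

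Now $T$ is a $4$-connected oriented triangulation, and I would extract a reducible configuration by discharging. Every planar triangulation has a vertex $v$ of degree at most $5$, and $4$-connectivity forces its neighbourhood to be a chordless wheel. I would delete $v$, re-triangulate and orient the resulting hole, $2$-colour the smaller triangulation by the strengthened induction, and then reinsert $v$ and choose its colour. Because $v$ has at most five neighbours, the aim is to place $v$ as a source (or sink) inside its colour class, so that it cannot lie on any monochromatic directed cycle; the admissible boundary condition on the re-triangulated hole is precisely the bookkeeping that guarantees such a placement is compatible with the inductively produced topological orders. Classifying the cases of degree $3,4,5$ together with the cyclic/transitive orientation types of the incident faces yields the candidate reducible configurations, and a charge assignment based on $\sum_{v}(\deg v-6)=-12$ should certify that at least one is unavoidable, contradicting minimality.

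The hard part will be reducibility for the short directed cycles, and this is where a genuinely new idea is required. A cyclically oriented facial triangle is a directed $3$-cycle, so $T$ has digirth $3$ and the digirth-$\ge 4$ arguments of Li and Mohar do not apply; these directed triangles are exactly what can trap a low-degree vertex into both monochromatic classes and what can chain into longer cycles after re-triangulation. Controlling them demands that the order data in the boundary condition certify not merely that each side is acyclic, but that its directed triangles are broken in a manner that survives reinsertion of $v$ and reorientation of the filled-in edges. Making this certification strong enough to remain inductive, yet weak enough to be satisfiable at every reducible configuration, is the crux of the whole program. I expect the Eulerian subcase to be the right guide: there the underlying triangulation is $3$-colourable, the tripartition of the excerpt is available, and one already has a canonical way to split the three vertices of each directed triangle into distinct parts, which suggests the shape the order-certificate should take in general.
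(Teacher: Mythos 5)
This statement is \Cref{conj:NL}, which the paper does not prove: it is presented as a widely open conjecture, settled only in the digirth-$\ge 4$ case (Li--Mohar) and for oriented graphs on at most $26$ vertices, and the paper's actual theorems are partial evidence toward its Eulerian special case via \CAI-partitions. So there is no proof in the paper to compare against, and the only question is whether your proposal constitutes a proof on its own. It does not, and you concede as much: the reducibility of configurations involving directed triangles is labelled as requiring ``a genuinely new idea,'' which means the core of the induction is missing, not merely deferred. Concretely, the reinsertion step fails as stated. Your plan is to place the deleted vertex $v$ of degree at most $5$ as a source or sink within its colour class, but a degree-$5$ vertex whose same-class neighbours send arcs both into and out of $v$ is a source or sink in neither class, and nothing in the boundary-order bookkeeping prevents the inductive colouring from producing exactly this situation. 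Worse, the acyclicity guaranteed by induction is relative to the artificially oriented re-triangulation edges, which are deleted upon reinsertion of $v$; acyclicity of the class containing $v$ with respect to the \emph{original} arcs at $v$ is simply not inherited, and a monochromatic directed cycle through $v$ can use two neighbours that were not even adjacent in the re-triangulated hole.

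The strengthened hypothesis itself is also unverified and likely too strong. Demanding that \emph{every} admissible boundary condition on a facial triangle extend is a Thomassen-style precolouring-extension claim, and for dicolouring such claims tend to fail: a directed triangle elsewhere in the graph, sharing vertices with the boundary face, can force colour and order constraints incompatible with an arbitrary prescription, and you supply no base case or consistency check. (The separating-triangle gluing via agreeing topological orders is the sound part of the proposal --- each monochromatic one-sided segment does force a strict inequality in its side's order, so a common boundary condition rules out cross-cutting monochromatic cycles --- but it rests entirely on the unproven extension property.) As it stands, the proposal is a plausible research programme with an explicitly open core, not a proof of the conjecture, and the paper's own negative result (\Cref{thm:finalboss}) should be taken as a warning that natural strengthenings in this area can fail outright.
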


\Cref{conj:NL} is settled in the absence of directed triangles~\cite{LM17} and for oriented graphs on at most $26$ vertices~\cite{KV19} but remains widely open. Note that in the primal setting, i.e. in the language of Hamiltonicity of $3$-connected graphs, also~\Cref{conj:NL} has a formulation and can be seen as a special case of a conjecture of Hochst\"attler~\cite{Hoc17} which has been disproved in~\cite{KV19}, where a detailed overview of the interplay of these conjectures have been given\footnote{See also \url{http://www.cs.toronto.edu/~ahertel/WebPageFiles/Papers/StrengtheningBarnette'sConjecture10.pdf}}. Together with results of Steiner~\cite[Corollary 5.40]{steiner2018} it follows that the largest open common special case of these conjectures is equivalent to:

\begin{conjecture}[Eulerian Neumann-Lara]\label{conj:EulerianNL}
Every Eulerian oriented planar graph can be vertex-partitioned into two acyclic sets.
\end{conjecture}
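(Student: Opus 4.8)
The plan is to attack \Cref{conj:EulerianNL} through the triangulation route outlined in the abstract, funnelling the whole problem into the existence of a single \CAI-partition. The natural first move is to restrict attention to Eulerian oriented planar \emph{triangulations}: these carry the tripartition structure used below and, by the equivalences recalled above (culminating via Steiner~\cite{steiner2018} in the statement of \Cref{conj:EulerianNL} as the crux of \Cref{conj:NL} and \Cref{conj:Barnette}), they form the essential case. For such a triangulation $H$ I would invoke the classical fact that a planar triangulation is properly $3$-colorable exactly when it is Eulerian, and that this $3$-coloring is then unique up to permutation of colors; this produces the canonical tripartition $V(H)=V_1\cup V_2\cup V_3$ into three independent sets.

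The core step is the reduction to a \CAI-partition. Fix a pair of classes, say $V_1$ and $V_2$, and set $G=H[V_1\cup V_2]$. Writing $n=|V(H)|$ and $n_i=|V_i|$, each triangular face of $H$ meets all three color classes and hence contributes exactly one edge to $G$; counting faces gives $|E(G)|=n-2$, so $G$ is a $2$-connected planar bipartite graph of cycle rank $n_3-1$. Now suppose $G$ admits a \CAI-partition $(C,I)$, with $C$ connected and acyclic and $I$ independent. I claim $(C,\,I\cup V_3)$ is a partition of $H$ into two acyclic sets: the part $C$ contains no directed cycle since it is a connected acyclic subdigraph, while $I\cup V_3$ induces no directed cycle by the implication asserted in the abstract. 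Thus the entire conjecture, on $H$, collapses to the single question: \emph{does some pair of tripartition classes of $H$ induce a graph admitting a \CAI-partition?}

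To produce such \CAI-partitions I would argue according to the structure of the induced graph $G$. For the series-parallel case, the natural route is induction along a series-parallel decomposition tree, carrying a strengthened hypothesis that records for each of the two terminals whether it is placed in $C$ or in $I$, and verifying that both series and parallel compositions can be made to preserve a compatible \CAI-partition. For the subcubic case, where $\Delta(G)\le 3$, I would instead grow the connected acyclic set $C$ while keeping the untouched vertices independent, handling the bounded list of local configurations through a minimal-counterexample or discharging argument that exploits bipartiteness, planarity, and $2$-connectivity.

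The hard part is exactly the funnelled question, and it is here that the approach must stop short of the full conjecture: there are Eulerian oriented triangulations for which \emph{no} pair of tripartition classes induces a \CAI-partitionable graph, so the certificate we are chasing is not merely elusive but genuinely absent. The strategy therefore settles only restricted families — \CAI-partitions do exist when $G$ is subcubic and when $G$ is series-parallel — while \Cref{conj:EulerianNL} itself stays open. The real obstacle to a complete proof is to replace the ``single \CAI-partition'' certificate by one robust enough to survive on the triangulations that defeat every tripartition pair.
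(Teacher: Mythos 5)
The statement you were asked about is \Cref{conj:EulerianNL}, an open conjecture that the paper itself does not prove, so there is no proof to compare against; your proposal rightly stops short of claiming one. What you describe is essentially the paper's own program: the tripartition reduction is \Cref{obs:main} together with \Cref{obs:2-connected}, the two tractable cases you sketch are \Cref{thm:subcubic} (subcubic, proved by discharging on a minimal counterexample) and \Cref{thm:tw2} (series-parallel, which the paper handles via short nested open ear decompositions rather than an induction on a series-parallel decomposition tree), and the obstruction you cite — Eulerian oriented triangulations in which no pair of tripartition classes induces a graph with a \CAI-partition — is exactly \Cref{thm:finalboss}.
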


Here, a connected oriented graph is \emph{Eulerian} if for each of its vertices its out-degree and in-degree are equal. Note that when forgetting the orientations of a Eulerian oriented graph, one obtains a Eulerian undirected graph, but not vice versa (not every orientation is Eulerian). The main definition for the present paper is
\begin{definition}[\CAI-partition]
A partition $\A\cup\I=V$ of the vertices of an (oriented) graph $G=(V,E)$ is a \CAI\emph{-partition} if $\A$ induces a connected acyclic sub(di)graph and $\I$ is independent.
\end{definition}

The connection of \CAI-partitions to the above conjectures and simultaneously our central interest in their study is the following observation. For this, recall that every Eulerian planar triangulation has a unique \emph{tripartition}, i.e., a vertex-partitioning into three independent sets (see~\cite{TW11} for a new proof and a history of this result).

\begin{observation}\label{obs:main}
Let $G$ be a Eulerian (oriented) planar triangulation with tripartition $I_1,I_2,I_3$. If there exists $1\leq i\leq 3$ such that $G-I_i$ has a \CAI-partition, then $G$ can be vertex-partitioned into two acyclic sets $A_1, A_2$. Moreover, $A_1$ is connected and $A_2$ is a forest containing $I_i$ for some $1\leq i\leq 3$.
\end{observation}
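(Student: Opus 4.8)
The plan is to read off the partition directly from the given \CAI-partition. Writing $\{j,k\}=\{1,2,3\}-\{i\}$ and letting $\A\cup\I$ be the \CAI-partition of $G-I_i$ (so that $\A\cup\I=I_j\cup I_k$), I would set $A_1:=\A$ and $A_2:=\I\cup I_i$. Then $A_1$ is connected and acyclic for free, as these are exactly the defining properties of the $\A$-part of a \CAI-partition (in the oriented case ``acyclic'' meaning no directed cycle, which is precisely what the \CAI-partition guarantees). Since $I_i$ is one of the colour classes, $A_2$ automatically contains $I_i$, so the entire content of the statement reduces to the single claim that $G[A_2]$ is a forest. A forest has no undirected cycle and hence, a fortiori, no directed cycle, so this one claim simultaneously settles the undirected and the oriented versions and shows $A_2$ is acyclic in either sense. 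Note that orientations play no further role: the only features of $G$ used below are its underlying plane triangulation and the independence of the colour classes.

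To prove that $G[A_2]$ is a forest I would argue by contradiction, topologically. Since $I_i$ and $\I$ are both independent, $G[A_2]$ is bipartite with parts $I_i$ and $\I$, so any cycle $C\subseteq A_2$ is even and of length at least $4$; in particular $C$ is not a face of the triangulation. Drawn in the plane (or on the sphere), $C$ is a closed Jordan curve splitting the surface into two open disks. The crucial use of connectivity comes here: because $\A$ is connected and vertex-disjoint from $C$, and because no edge of the plane graph $G$ crosses an edge of $C$, the whole of $\A$ lies in the closure of just one of the two disks. (In a triangulation on at least three vertices the classes $I_j,I_k$ are joined by an edge, so $\A\neq\emptyset$; if it were empty the step would be vacuous anyway.) Let $R$ be the open disk \emph{not} meeting $\A$.

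It then remains to extract a contradiction from $R$. Every vertex of $G$ lies in $\A$ or in $A_2$, and $R$ meets no vertex of $\A$; hence every vertex of $\overline{R}$ — whether on $C$ or strictly inside — belongs to $A_2$. Because $G$ is a triangulation and $C$ has length at least $4$, the disk $\overline{R}$ is subdivided by $G$ into at least one triangular face, and each such face is a triangle all of whose vertices lie in $A_2$. But $A_2$ is bipartite, hence triangle-free, a contradiction. Therefore $G[A_2]$ contains no cycle, i.e.\ $A_2$ is a forest, which completes the argument and yields the desired partition $A_1,A_2$.

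The main obstacle, and the only genuinely delicate point, is this final topological step: confining $\A$ to one side of $C$ and then guaranteeing that the opposite region contains a triangular face with all vertices in $A_2$. This is exactly where both hypotheses are indispensable. Connectivity of $\A$ is what forces an entire region to be free of $\A$; and the triangulation hypothesis is what produces a triangle there. Dropping either makes the conclusion fail — for instance deleting a single (hence connected) vertex from the octahedron leaves a wheel, which is not a forest, showing that a ``connected complement'' alone is far too weak. Making the Jordan-curve reasoning fully rigorous (edges not crossing $C$, and $\overline{R}$ being a triangulated disk once $|C|\ge 4$) is routine, but is the portion I would write out with care.
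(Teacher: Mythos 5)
Your proof is correct and follows essentially the same route as the paper: set $A_1=\A$, $A_2=\I\cup I_i$, suppose $A_2$ contains a cycle $C$, use the connectivity of $\A$ together with planarity to confine $\A$ to one side of $C$, and then use the triangulation property to exhibit a triangle all of whose vertices lie in the bipartite set $A_2$. The only cosmetic difference is how that triangle is located (you take a face of $G$ inside the $\A$-free region, while the paper takes a vertex of $C$ and two consecutive neighbours on or inside $C$), which does not change the substance of the argument.
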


Observation~\ref{obs:main} suggests a way of attacking the notoriously hard~\Cref{conj:Barnette} and~\Cref{conj:EulerianNL}. But what kind of graphs can appear and hence would need to be given a \CAI-partition?

\begin{observation}\label{obs:2-connected}
 An (oriented) graph $H$ is induced by two parts of the tripartition of a Eulerian (oriented) planar triangulation if and only if $H$ is a $2$-vertex-connected bipartite planar (oriented) graph.
\end{observation}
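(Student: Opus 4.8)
The plan is to prove both implications: the forward one (necessity of the three conditions) is essentially topological, while the backward one (sufficiency) is constructive, with the oriented case requiring an orientation argument that I expect to be the crux.

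For the \emph{only if} direction, I would start from a Eulerian (oriented) planar triangulation $G$ with tripartition $I_1,I_2,I_3$ and set $H=G-I_3$ after relabelling. Bipartiteness and planarity are immediate, since $I_1,I_2$ are independent and $H\subseteq G$. The content is $2$-connectivity. Here I would use that a Eulerian planar triangulation (which has at least the six vertices of the octahedron) is a simple $3$-connected maximal planar graph, so the link of every vertex is a cycle. Since each face of $G$ is a triangle with exactly one vertex of each colour, the closed stars of the vertices of $I_3$ tile the sphere; deleting $I_3$ thus turns each such star into a face of $H$ bounded precisely by the link cycle $C_v$ of the corresponding $v\in I_3$. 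Hence every face of $H$ is bounded by a cycle. Finally $H$ is connected: any walk of $G$ between two $H$-vertices that passes through some $v\in I_3$ enters and leaves $v$ at two neighbours lying on the connected cycle $C_v\subseteq H$, so it can be rerouted inside $H$. A connected plane graph all of whose faces are bounded by cycles is $2$-connected, which finishes this direction.

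For the \emph{if} direction, given a $2$-connected bipartite planar (oriented) graph $H$, fix a plane embedding; $2$-connectivity makes every face bounded by a cycle and bipartiteness makes each facial cycle even, of length $2k_f$. I would insert one new vertex $w_f$ in each face $f$, join it to all $2k_f$ vertices of $\partial f$, and colour all the $w_f$ with a third colour. The result $G$ is a simple planar triangulation, properly $3$-coloured with $H=G-I_3$, so by uniqueness of the tripartition $H$ is induced by two of its parts. The degrees behave well: $\deg_G(w_f)=2k_f$ is even, and since in a $2$-connected plane graph each vertex $u$ sees $\deg_H(u)$ distinct incident faces, $\deg_G(u)=2\deg_H(u)$ is even as well, so $G$ is Eulerian in the undirected case.

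The hard part is the oriented case: I must orient the new ``spoke'' edges so that, together with the given orientation of $H$, every vertex of $G$ is balanced. A vertex $u$ should receive $d^+_H(u)$ incoming and send $d^-_H(u)$ outgoing spokes, while each $w_f$ should be balanced, i.e.\ emit exactly $k_f$ spokes. This is precisely an orientation of the vertex--face incidence (radial) graph $B$ of $H$ with prescribed out-degrees $\beta(u)=d^-_H(u)$ at each $u\in V(H)$ and $\beta(f)=k_f$ at each face $f$; the totals match, both equalling $\lvert E(H)\rvert$ while $\lvert E(B)\rvert=2\lvert E(H)\rvert$. Existence of such an orientation is governed by the classical prescribed-out-degree orientation theorem (Hakimi), which requires $\sum_{x\in U}\beta(x)\ge e_B(U)$ for every $U\subseteq V(B)$. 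I expect verifying this inequality to be the main obstacle, but it should yield to an edge-by-edge accounting: writing $U=U_V\cup U_F$ and using $\lvert\partial_V f\cap U_V\rvert=\tfrac12\sum_{e\in\partial f}\lvert e\cap U_V\rvert$, one rewrites $e_B(U)-\sum_{x\in U}\beta(x)$ as a sum over the edges $e$ of $H$, and checks that each edge contributes a nonpositive amount by splitting into the cases $\lvert e\cap U_V\rvert\in\{0,1,2\}$ and using that $e$ bounds at most two faces. This produces the desired orientation, turning $G$ into a Eulerian oriented triangulation and completing the argument.
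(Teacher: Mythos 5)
Your proposal is correct, and for most of the statement it follows the paper's route: the forward direction (faces of $H$ are the links of the deleted colour class, which are simple cycles by simplicity of $G$, plus the fact that a connected plane graph is $2$-vertex-connected iff all its faces are bounded by simple cycles) and the undirected half of the backward direction (insert a vertex $w_f$ in each face, note $\deg_G(w_f)=2k_f$ and $\deg_G(u)=2\deg_H(u)$, and use uniqueness of the tripartition) are essentially identical to the paper's proof. Where you genuinely diverge is the orientation of the spoke edges. The paper argues locally and in two stages: on each facial cycle the numbers of sources and sinks are equal, so it orients the spoke at each source of $f$ away from $w_f$ and at each sink of $f$ towards $w_f$; after this partial orientation every vertex is balanced, and the still-unoriented spokes form a graph in which every vertex has even degree, which therefore admits a Eulerian orientation, preserving balance. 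You instead pose a single global prescribed out-degree orientation problem on the radial graph $B$ and invoke Hakimi's theorem; your cut condition does hold, and exactly by the accounting you sketch: writing $e_B(U)-\sum_{x\in U}\beta(x)$ as $\tfrac12\sum_{e\in E(H)}n_F(e)\left(|e\cap U_V|-1\right)-\sum_{u\in U_V}d^-_H(u)$, where $n_F(e)\in\{0,1,2\}$ is the number of faces of $U_F$ bordered by $e$, an edge with both ends in $U_V$ contributes at most $n_F(e)/2\le 1$ to the first sum while its head contributes $1$ to the second, an edge with one end in $U_V$ contributes $0$, and an edge with no end in $U_V$ contributes $-n_F(e)/2\le 0$; so the inequality you flagged as the crux is indeed true and your proof is complete. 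The trade-off is clear: the paper's argument is elementary and self-contained (a two-line parity observation plus the existence of Eulerian orientations of even graphs), whereas yours outsources the work to a classical orientation theorem at the cost of the counting above, but produces the balanced orientation in one uniform step and generalizes readily to other prescribed-imbalance situations.
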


\subsubsection*{Related work}
To our knowledge \CAI-partitions have been studied only for undirected graphs. 

In~\cite{APSW16} the authors call a subtree of a Eulerian plane triangulation $G$ \emph{permeating} if it intersects every face and study the case where the tree avoids one class of the tripartition of $G$. More generally, let us call an acyclic connected subgraph $\mathcal{A}$ of a plane (oriented) $G$ \emph{permeating} if $\mathcal{A}$ intersects every face of $G$.  The following observation makes the connection with \CAI-partitions:
\begin{observation}\label{obs:permeat}
    Let $G$ be an undirected Eulerian triangulation with tripartition $I_1, I_2, I_3$. If $\mathcal{A}\cup\mathcal{I}$ is a \CAI-partition of $G-I_i$, then $\mathcal{A}$ is a permeating acyclic connected subgraph of $G$ and every permeating acyclic connected subgraph of $G$ that avoids $I_i$ arises like this.
\end{observation}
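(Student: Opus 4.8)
The plan is to reduce everything to a single structural fact: in a Eulerian triangulation $G$ with tripartition $I_1,I_2,I_3$, every face is a \emph{rainbow} triangle, i.e.\ it has exactly one vertex in each $I_m$. First I would record this. Since $G$ is a triangulation, each face is bounded by a triangle whose three vertices are pairwise adjacent; since the tripartition is a proper $3$-colouring, these three vertices receive three distinct colours, so one lies in each of $I_1,I_2,I_3$. I would also note that a subgraph $\mathcal{A}$ of $G$ meets a face $f$ exactly when some vertex of $f$ lies in $V(\mathcal{A})$ (an edge of $\mathcal{A}$ on the boundary of $f$ already contributes its endpoints), so ``permeating'' is really a vertex condition.

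For the forward direction, start from a \CAI-partition $\mathcal{A}\cup\mathcal{I}=V(G\setminus I_i)$. Then $V(\mathcal{A})\subseteq I_j\cup I_k$ (with $\{i,j,k\}=\{1,2,3\}$), so $\mathcal{A}$ avoids $I_i$; moreover, as $I_j$ and $I_k$ are independent, the only edges of $G$ inside $V(\mathcal{A})$ are $I_j$--$I_k$ edges, which are precisely the edges of $G\setminus I_i$, so $G[V(\mathcal{A})]$ coincides with the connected acyclic graph supplied by the partition. It remains to check it is permeating. Take any face $f=\{v_i,v_j,v_k\}$ with $v_m\in I_m$: its $I_i$-vertex $v_i$ is not in $\mathcal{A}$, and if neither $v_j$ nor $v_k$ were in $\mathcal{A}$ then both would lie in $\mathcal{I}$, yet $v_jv_k$ is an edge of $G\setminus I_i$, contradicting independence of $\mathcal{I}$. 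Hence $f$ meets $\mathcal{A}$.

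For the converse I would run the same face argument in reverse. Given a permeating induced tree $T$ of $G$ avoiding $I_i$, set $\mathcal{I}=V(G\setminus I_i)\setminus V(T)$; then $V(T)\cup\mathcal{I}$ partitions $V(G\setminus I_i)$ and $G[V(T)]=T$ is connected and acyclic. If two vertices $u,w\in\mathcal{I}$ were adjacent, then $uw$ would be an $I_j$--$I_k$ edge lying on some face $f$ whose third vertex is in $I_i$, so no vertex of $f$ would belong to $T$, contradicting that $T$ is permeating. Thus $\mathcal{I}$ is independent and $(V(T),\mathcal{I})$ is a \CAI-partition whose connected acyclic part induces $T$. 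The two constructions are mutually inverse.

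The one genuinely delicate point, which I would flag explicitly, is that the correspondence is between \CAI-partitions and \emph{induced} permeating trees: the $\mathcal{A}$-side of a \CAI-partition is an induced subgraph, whereas an arbitrary permeating acyclic connected subgraph could omit a chord and so fail to induce a forest. Already on the octahedron, where $G\setminus I_i$ is the $4$-cycle $K_{2,2}$, its Hamiltonian path is permeating, acyclic and connected, yet its vertex set induces a cycle and hence gives no \CAI-partition. Once this inducedness convention is fixed, everything is immediate from the rainbow-face fact; indeed the heart of the proof is that, face by face, ``$\mathcal{A}$ misses $f$'' is synonymous with ``the two non-$I_i$ vertices of $f$ form an edge inside $\mathcal{I}$''.
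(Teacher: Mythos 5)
Your proof is correct. On the converse direction it is essentially the paper's argument: the paper also observes that any edge of $G-I_i$ lying outside $\mathcal{A}$ sits in triangles whose third vertex belongs to $I_i$, so an edge inside $\mathcal{I}$ would put an entire face outside $\mathcal{A}$, contradicting permeation. The two differences are in the forward direction and in your closing caveat. For the forward direction the paper does not argue face-by-face; it invokes \Cref{obs:main}, which shows (via a planar-embedding argument) that $\mathcal{I}\cup I_i$ induces a forest in $G$, and then notes a forest cannot contain a facial triangle. Your rainbow-face argument reaches the same conclusion more directly: the two non-$I_i$ vertices of any face are adjacent, so they cannot both lie in $\mathcal{I}$, hence one lies in $\mathcal{A}$. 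This buys self-containment (no planarity, no appeal to \Cref{obs:main}) and makes both directions instances of the single equivalence ``$\mathcal{A}$ misses a face iff that face's non-$I_i$ edge lies inside $\mathcal{I}$''; the paper's route is shorter only because \Cref{obs:main} was already proved. Finally, your inducedness caveat identifies something the paper genuinely glosses over: a \CAI-partition requires $\mathcal{A}$ to \emph{induce} a connected acyclic subgraph, whereas a ``permeating acyclic connected subgraph'' need not be induced, and the paper's converse proof only verifies independence of $\mathcal{I}$, never that $V(\mathcal{A})$ induces a forest. Your octahedron example (the Hamiltonian path of the $4$-cycle $G-I_i$, whose vertex set induces a cycle) correctly shows that the converse fails under the literal non-induced reading, so the observation must be read with the convention you state. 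That is an improvement in precision over the paper's treatment, not a flaw in your argument.
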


The negative result~\cite[Theorem 4]{APSW16} says that for every integer $k$ there is a properly $3$-coloured undirected Eulerian planar
triangulation $G$ such that every permeating tree of $G$ contains at least $k$ vertices
from each colour class. In particular, there are Eulerian triangulations $G$ with tripartition $I_1, I_2, I_3$ such that no $G\setminus I_i$ admits a \CAI-partition. 
With~\Cref{obs:2-connected} and~\Cref{obs:permeat} the positive result~\cite[Corollary 2]{APSW16} reads: $2$-vertex connected bipartite planar undirected graphs in which every cycle contains a vertex of degree $2$ have a \CAI-partition. 

\CAI-partitions have also been studied in non-planar graphs. Payan and Sakarovitch~\cite{PS75} show that cubic, 2-connected, cyclically $4$-edge connected graphs have a \CAI-partition if their order is not divisible by 4, but also give examples of order divisible by 4 without \CAI-partition. The case of cubic, 2-connected, cyclically $4$-edge connected graphs without \CAI-partition remains active, see~\cite{NS22,NSS23}.
In~\cite{CBOS24} it is shown NP-hard to decide if a graph (of diameter at most $3$) has a \CAI-partition.

\subsubsection*{Our results}
Our first and main positive result can be translated via~\Cref{obs:2-connected} and~\Cref{obs:main} into further evidence for~\Cref{conj:EulerianNL}. 

\begin{theorem}\label{thm:subcubic}
Every planar bipartite $2$-vertex-connected subcubic oriented graph has a \CAI-partition.
\end{theorem}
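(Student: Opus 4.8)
The plan is to first reformulate the conclusion in a way that isolates the role of the orientation. Writing $\A=V-\I$, the condition that $\I$ be independent is exactly the condition that $\A$ be a vertex cover of $G$; hence a \CAI-partition is the same thing as a \emph{connected vertex cover $\A$ whose induced sub-digraph $G[\A]$ is acyclic}. The key point I would stress up front is that, in contrast with the undirected problem (where ``connected $+$ acyclic'' forces $\A$ to be a tree), here $\A$ may contain undirected cycles as long as none of them is a directed cycle. This extra freedom is not a luxury but a necessity: for the cube $Q_3$ (which is $2$-vertex-connected, bipartite, planar, cubic) a parity count shows no independent set $\I$ can leave $G-\I$ a tree, since $|E(\A)|=|E|-3|\I|$ while a tree would require $|E(\A)|=|V|-|\I|-1$, forcing $|\I|=5/2$. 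So any proof must genuinely use the orientation, and the construction of $\A$ will, in general, have to leave some even cycle intact while arguing that it is not directed.

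I would then set up a \emph{minimal counterexample} $G$ (fewest vertices) and clean up its local structure. First I would remove degree-$2$ vertices: a \emph{thread} (a path whose internal vertices all have degree $2$) can be shortened by an even number of vertices, preserving bipartiteness, planarity, subcubicity, and $2$-vertex-connectedness; solving the smaller graph and re-inserting the thread is easy because a path can be alternately assigned to $\A$ and $\I$ so as to keep $\I$ independent, keep $\A$ connected, and add no cycle at all. The outcome of this phase is that the minimal counterexample may be assumed to have minimum degree $3$, i.e.\ to be cubic. This is where planarity and bipartiteness bite: all faces are even and of length at least $4$, so Euler's formula $\sum_v(d(v)-4)+\sum_f(\ell(f)-4)=-8$ has nonnegative face charges, concentrating all deficiency on short faces. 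A discharging argument should then force the presence of a small \emph{reducible configuration} (for instance a short face meeting the rest of the graph in a controlled way).

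The heart of the proof is the reducibility of these configurations: one deletes (or contracts) the configuration, invokes minimality to get a \CAI-partition of the smaller graph $G'$, and extends it back across the reinserted vertices. At the reinsertion one must assign each new vertex to $\A$ or $\I$ so as to simultaneously (i) keep $\I$ independent, (ii) keep $\A$ connected, and (iii) create no directed cycle in $G[\A]$. Conditions (i) and (ii) are handled exactly as in the undirected \CAI-literature, by routing a short bridging path through $\A$ and pushing the remaining new vertices into $\I$; condition (iii) is the new ingredient, and here I would use that the only cycles that $\A$ can acquire are even (bipartiteness) and pass through the few reinserted vertices, so that among the small number of legal extensions one can always choose one in which some reinserted vertex is a local source or sink of the candidate cycle, killing its directedness.

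The step I expect to be the main obstacle is precisely this simultaneous bookkeeping in the extension, namely reconciling the connectivity of $\A$ with directed-acyclicity when the orientation of the reinserted edges is adversarial and outside our control. Maintaining connectivity of $\A$ is already the classical difficulty for \CAI-type partitions, and coupling it with the requirement ``no directed cycle'' means the reductions must be chosen so that the reattachment offers enough alternatives to break any newly formed cycle; getting a finite list of configurations that is both \emph{reducible in this strong sense} and \emph{discharging-complete} (no uncovered case) is where essentially all of the casework will live.
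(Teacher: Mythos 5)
The skeleton you propose---minimal counterexample, discharging, reducible configurations, and an extension step that must simultaneously preserve independence of $\I$, connectivity of $\A$, and directed-acyclicity of $G[\A]$---is exactly the paper's strategy, and your opening observations (the cube shows orientation is essential; only directed cycles in $\A$ need to be killed) are correct and are used in the paper too. However, there is a concrete gap at the step where you claim the minimal counterexample may be assumed cubic. Thread-shortening removes internal $2$-vertices two at a time, so a thread with an \emph{odd} number of internal $2$-vertices can only be reduced to one with a single internal $2$-vertex, and that last one cannot be eliminated: its two neighbours lie in the same colour class (they are at distance $2$), so suppressing it into an edge destroys bipartiteness, while simply deleting it fails in the extension step, because both of its neighbours may end up in $\I$, leaving the reinserted vertex isolated in $\A$. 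This is not a technicality one can wave away: the paper's minimal counterexample genuinely retains $2$-vertices, and its entire discharging is organized around them --- the charges are $2d(v)-6$ and $d(f)-6$, the only rule sends charge from $8^+$-faces to the $2$-vertices, and the reducible configurations are statements about how $2$-vertices may sit relative to $6$- and $8$-faces (e.g., two $2$-vertices must be at facial distance at least $4$, a $2$-vertex cannot lie on two $6$-faces).

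Even granting a cubic counterexample, the discharging you sketch does not close. With charges $d(v)-4$ and $\ell(f)-4$, in a cubic graph every vertex carries $-1$ and every $4$-face carries $0$, so the deficiency sits on the \emph{vertices}, not on ``short faces'' as you state, and in any region tiled by quadrilaterals no face has positive charge to donate; discharging cannot end with all charges nonnegative there. So $4$-faces themselves must be reducible, and this is in fact the hardest part of the paper: it proves the much stronger statement that a minimal counterexample contains \emph{no $4$-cycle at all}, via a chain of separate reductions (no three mutually edge-sharing $4$-cycles, no two $4$-cycles sharing an edge, no separating $4$-cycle, then none at all), each requiring careful, orientation-sensitive surgery of the kind you correctly anticipate (choosing how to orient added arcs so that a directed $\A$-cycle in $G$ pulls back to one in the smaller graph, and exploiting sources/sinks on non-directed cycles). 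Your proposal defers precisely this content---the list of configurations, the rule, and the reducibility case analysis---so as it stands it is a plan consistent with the paper's method but with the two load-bearing components (the treatment of $2$-vertices and the elimination of $4$-cycles) respectively incorrect and missing.
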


Our second positive result can be seen as a general contribution to \CAI-partitions in undirected graphs and when restricted to bipartite graphs it yields further positive evidence for~\Cref{conj:Barnette} via~\Cref{obs:2-connected} and~\Cref{obs:main}.

\begin{theorem}\label{thm:tw2}
Every $2$-vertex-connected simple series-parallel graph has a \CAI-partition.
\end{theorem}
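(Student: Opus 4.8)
The plan is to induct on the standard series--parallel decomposition of $G$, tracking how a partial \CAI-partition meets the two \emph{poles} of each two-terminal piece. Recall that a $2$-vertex-connected simple series--parallel graph arises, for a suitable choice of poles $s,t$, as a two-terminal series--parallel graph: starting from single edges, one repeatedly forms \emph{series} compositions (identifying the second pole of one piece with the first pole of the next) and \emph{parallel} compositions (identifying both poles). I would prove a statement about every such piece $H$ with poles $s,t$ and then apply it to $G$ itself. For a piece $H$, I consider partitions $V(H)=\A\cup\I$ with $\I$ independent, $H[\A]$ acyclic, and --- crucially --- every component of $H[\A]$ containing a pole (no ``floating'' tree hidden inside $H$, since such a component could never be attached to the rest of $G$). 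Each such partition has one of four \emph{boundary types}: $AA^{=}$ (both poles in $\A$, in the same tree of $H[\A]$), $AA^{\ne}$ (both poles in $\A$, in different trees), $AI$ ($s\in\A$, $t\in\I$), and $IA$ ($s\in\I$, $t\in\A$); the type $II$ is infeasible, since two poles in $\I$ force either an internal tree with no pole or an edge inside $\I$.

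Let $\mathcal{S}(H)$ denote the set of achievable types. I would show by induction that (A) $\{AA^{=},AI,IA\}\subseteq\mathcal{S}(H)$ for every piece $H$, and (B) $AA^{\ne}\in\mathcal{S}(H)$ if and only if $s,t$ are non-adjacent in $H$. Since type $AA^{=}$ means precisely that $H[\A]$ is a single tree containing both poles, applying (A) to the whole graph $G$ yields a \CAI-partition, which proves the theorem.

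The base case is a single edge, where $\mathcal{S}=\{AA^{=},AI,IA\}$ and the poles are adjacent. For the inductive step I compute the composition rules. In a series composition at a middle vertex $m$, the $\A$-components through $m$ merge: combining an $AI$-piece with an $IA$-piece places $m$ in $\I$ and yields $AA^{\ne}$ (this is how the disconnected type is created), while $(AA^{=},AA^{=})\to AA^{=}$, $(AA^{=},AI)\to AI$, and $(IA,AA^{=})\to IA$ furnish the remaining types; every other combination either closes a cycle or leaves the tree of $m$ stranded away from both new poles, and is discarded. As series composition always produces non-adjacent poles, (A) and (B) follow. In a parallel composition the poles' memberships must agree across the pieces; the dangerous combination is $(AA^{=},AA^{=})$, which closes a cycle and is forbidden, so to realise $AA^{=}$ one piece supplies the unique pole-to-pole path (type $AA^{=}$) while every other piece contributes type $AA^{\ne}$; types $AI$ and $IA$ are obtained by agreeing on the pole lying in $\I$, and $AA^{\ne}$ by taking all pieces of type $AA^{\ne}$. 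Here simplicity of $G$ enters decisively: at most one parallel piece can have its poles adjacent, and by (B) that piece still realises $AA^{=}$, so the supply of $AA^{\ne}$ needed from the remaining (non-adjacent) pieces is exactly what the induction hypothesis guarantees.

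The main obstacle is choosing the invariant correctly, in particular recognising that the ``both poles in $\A$ but disconnected'' type $AA^{\ne}$ must be carried along even though it is never the final goal: it is the only way to avoid creating a cycle when a parallel composition reattaches two poles that are already joined elsewhere. The bulk of the verification is then the finite case analysis of the series and parallel transition rules, where for each combination one must check simultaneously that $\I$ remains independent, that $H[\A]$ remains acyclic, and that no component of $H[\A]$ becomes detached from both poles; the parallel step additionally requires invoking simplicity to bound the number of adjacent-pole pieces.
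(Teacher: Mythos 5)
Your proof is correct, but it takes a genuinely different route from the paper's. You run a dynamic program over the classical two-terminal (SP-tree) decomposition of a $2$-vertex-connected series-parallel graph, tracking which boundary types $AA^{=}$, $AA^{\ne}$, $AI$, $IA$ are realizable by partitions all of whose $\A$-components meet a pole; the crucial points — carrying the disconnected type $AA^{\ne}$, characterizing its realizability by non-adjacency of the poles, and invoking simplicity so that at most one piece of a parallel composition has adjacent poles — are exactly what make the composition rules go through, and your transition tables are sound (in particular $(AI,IA)\to AA^{\ne}$ in series, the exclusion of type $II$, and the rule that a parallel composition realizes $AA^{=}$ by letting exactly one piece be $AA^{=}$ and all others $AA^{\ne}$, so that no second pole-to-pole $\A$-path can close a cycle). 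The paper argues instead via ear decompositions: it strengthens Eppstein's characterization of $2$-connected series-parallel graphs by nested open ear decompositions~\cite{Epp92} to \emph{short} nested open ear decompositions (\Cref{lem:ED}), and then constructs the partition greedily, ear by ear, maintaining that $\I$ has at most one vertex on each ear; there simplicity is used to guarantee that every ear has an interior vertex. The trade-off: your argument is more mechanical and self-contained modulo the standard two-terminal characterization of $2$-connected series-parallel graphs, and it is immediately algorithmic along an SP-tree; the paper's route yields a by-product your pole-based induction does not — any simple graph admitting a short tree ear decomposition in Khuller's sense~\cite{Khu89}, e.g., a Hamiltonian graph with sufficiently subdivided edges, has a \CAI-partition — which is precisely why the paper factors its proof through \Cref{lem:ED}.
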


We (almost) show the tightness of our positive results by showing that none of the conditions except possibly planarity in \Cref{thm:subcubic} can be dropped, see~\Cref{lem:no-AI-graphs}. See also~\Cref{quest}.

Finally, in \Cref{sec:neg}, we show that the strategy suggested by~\Cref{obs:main} is doomed to fail for resolving~\Cref{conj:EulerianNL} and thus its generalization~\Cref{conj:Barnette}. 

\begin{theorem}\label{thm:finalboss}
There exists a Eulerian oriented planar triangulation $G$ such that for any $I$ of its tripartition, the induced subgraph $H=G-I$ admits no \CAI-partition.
\end{theorem}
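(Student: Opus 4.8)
The plan is to build a single Eulerian oriented triangulation and verify all three parts of its tripartition simultaneously. By \Cref{obs:2-connected} and \Cref{obs:permeat} it suffices to produce a Eulerian oriented triangulation $G$ with tripartition $I_1,I_2,I_3$ so that for each $i$ the bipartite oriented graph $H_i=G-I_i$ has no \CAI-partition; that is, every connected vertex cover $\A$ of $H_i$ (the complement $\A=V(H_i)-\I$ of a candidate independent set $\I$) induces a subdigraph containing a directed cycle. The essential difficulty compared to the undirected result of \cite{APSW16} is that oriented acyclicity is weaker than acyclicity: a connected vertex cover $\A$ is now permitted to carry undirected cycles, provided none of them is a directed cycle. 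Hence the undirected counterexamples of \cite{APSW16} are only a starting point, and I must additionally defeat every connected vertex cover that is cyclic but directed-acyclic.

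First I would fix the underlying undirected triangulation $G_0$ to be Eulerian with all vertex degrees divisible by $4$; since $\deg_{H_i}(v)=\tfrac12\deg_{G_0}(v)$, this forces every $H_i$ to be Eulerian, equivalently to have a bipartite dual. I would then orient $G$ as follows. The edges of $G$ split into three classes according to which two colour classes each edge joins, and $H_i$ uses exactly the class of edges joining the two classes other than $I_i$; these three edge-classes are pairwise disjoint, so the three graphs may be oriented independently. For each $i$, using bipartiteness of the dual of $H_i$ I would $2$-colour its faces (equivalently, $2$-colour the vertices of $I_i$, since each face of $H_i$ is the link of an $I_i$-vertex) and orient the boundary of each face as a directed cycle, black faces clockwise and white faces counterclockwise; adjacent faces get opposite colours, so each shared edge receives a consistent orientation. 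A direct local count then shows that each $H_i$ receives an Eulerian orientation (at every vertex in- and out-degree both equal half the degree), and since the edges at any vertex of $G$ are split between exactly two of the graphs $H_i$, summing gives that $G$ itself is Eulerian. Thus $G$ is a legitimate Eulerian oriented triangulation in which, crucially, every facial cycle of every $H_i$ is a directed cycle.

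It remains to show that every connected vertex cover $\A$ of $H_i$ induces a directed cycle. Because a connected vertex cover that contains all vertices of some face also contains that face's (directed) boundary in its induced subdigraph, it suffices to guarantee the purely undirected property that every connected vertex cover of $H_i$ contains all vertices of some face; equivalently, that there is no independent set meeting every face of $H_i$ whose complement is connected. I expect this property to be not only sufficient but essentially necessary for the chosen orientation, since one can check that in a face-$2$-colouring orientation the only directed cycles are the facial ones: gluing a black and a white face along an edge already produces a non-directed boundary cycle. So the whole argument funnels into establishing this face-covering property.

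The hard part will be exactly here. The property is strictly stronger than the conclusion of \cite{APSW16} (it rules out connected \emph{acyclic} vertex covers, which are cycle-free, but forbids much more), and a face-hitting independent set with connected complement can easily exist in general — for instance in the cube, which is, however, excluded by the divisibility-by-$4$ condition. Hence the property must be engineered into the construction while keeping all degrees divisible by $4$ and the orientation Eulerian. I would attempt this by refining the APSW triangulation, substituting for each vertex or region a gadget that (i) preserves divisibility of all degrees by $4$ and (ii) forces any connected vertex cover to swallow an entire face, and I anticipate that verifying the face-covering property on the resulting explicit finite graph will be the main obstacle, plausibly requiring a computer check. Once it holds, every connected vertex cover of every $H_i$ induces a directed facial cycle, so no $H_i$ admits a \CAI-partition, which is precisely the assertion of the theorem.
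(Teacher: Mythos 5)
Your reduction framework is internally sound: with all degrees of the undirected triangulation divisible by $4$, each $H_i$ is Eulerian, its dual is bipartite, and the face-$2$-colouring orientation (black faces one way, white the other) is consistent, Eulerian on each $H_i$, and hence Eulerian on $G$; and if every connected vertex cover of $H_i$ swallows an entire face, then the directed facial boundary kills any candidate $\A$. But the proof has a genuine gap, and it is exactly where you say the "hard part" is: you never construct, nor prove the existence of, an undirected Eulerian triangulation with degrees divisible by $4$ in which, for all three classes $I_i$, every connected vertex cover of $H_i=G-I_i$ contains a whole face. This face-covering property is strictly stronger than the conclusion of \cite{APSW16} (which only excludes connected \emph{acyclic} covers), you give no argument that gadget substitution in the APSW construction can achieve it while preserving the degree condition, and you explicitly defer verification to a hypothetical computer check. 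Since the entire content of the theorem is funnelled into this unestablished claim, the proposal is a reduction, not a proof.

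It is worth noting why the paper does not need anything this strong. Its construction (\Cref{thm:oriented-triang}) glues two explicit gadgets $G_1$, $G_2$ onto an oriented octahedron and exploits a hand-tailored orientation in which the forcing directed cycles are largely \emph{non-facial} (e.g.\ the directed $4$-cycles $0,11,4,12$ and $1,4,5,2$ inside the gadgets, see \Cref{lem:gadgets}); the case analysis then defeats partitions into two acyclic sets directly, without ever controlling all connected vertex covers. Your framework, by contrast, only guarantees that \emph{facial} cycles are directed, which is precisely why it demands the much stronger undirected property. Indeed, that property would force the underlying undirected graph to have no undirected \CAI-partitions either, whereas the paper points out at the end of \Cref{sec:neg} that the undirected graph underlying its construction is \emph{not} a counterexample to \Cref{conj:Barnette}: the orientation does essential work there. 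So your route requires solving a harder undirected problem than the theorem itself, and it remains open whether that problem is even solvable. (A minor further caveat: your side remark that in a face-$2$-colouring orientation the only directed cycles are facial is unproven, though it is not load-bearing for the sufficiency direction.)
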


As a consequence of~\Cref{thm:finalboss} we obtain an oriented strengthening of~\cite[Theorem 4]{APSW16}:
\begin{corollary}\label{cor:finalboss}
 For every integer $k$ there is a properly $3$-coloured Eulerian oriented planar
triangulation $G$ such that every permeating acyclic connected subgraph $\mathcal{A}$ of $G$ contains at least $k$ vertices
from each colour class.
\end{corollary}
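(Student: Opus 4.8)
The plan is to deduce \Cref{cor:finalboss} from \Cref{thm:finalboss} by an amplification argument: I would take $k$ vertex-disjoint copies of the obstruction $G$ produced by \Cref{thm:finalboss}, glue them into a single properly $3$-coloured Eulerian oriented planar triangulation $\hat G$, and then argue that any permeating acyclic connected subgraph must ``spend'' at least one vertex of each colour inside each copy. Since a proper $3$-colouring comes for free with the tripartition of a triangulation, the only real content is the counting.

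The first step is to upgrade \Cref{thm:finalboss} from trees to forests. Recall from \Cref{obs:permeat} that, for a fixed colour class $I_c$, a permeating acyclic connected subgraph of $G$ avoiding $I_c$ is exactly a \CAI-partition of $G-I_c$; and since every (triangular) face of a Eulerian triangulation meets all three colour classes, a set $S\subseteq V(G)-I_c$ hits every face of $G$ if and only if its complement inside $G-I_c$ is independent. I claim that no induced \emph{forest} of $G$ avoiding $I_c$ can hit every face. Suppose such an $S$ existed, and among all acyclic induced subgraphs of $G-I_c$ that contain $S$ and have independent complement in $G-I_c$, choose one, with vertex set $\mathcal A$, of maximum size. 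A short exchange argument shows $\mathcal A$ is connected: for any $u\notin\mathcal A$, adding $u$ keeps the complement independent, so maximality forces that adding $u$ creates a cycle, i.e.\ $u$ has at least two $\mathcal A$-neighbours lying in one tree-component (otherwise $u$ would either extend a component or join two of them without a cycle). Hence no complement vertex joins two components, and since the complement is independent there are no direct edges between components either; as $G-I_c$ is connected (indeed $2$-vertex-connected by \Cref{obs:2-connected}) and $\mathcal A\neq V(G)-I_c$ (the latter induces the non-acyclic $G-I_c$), this is impossible unless $\mathcal A$ has a single component. Thus $\mathcal A$ induces a tree, avoids $I_c$, and has independent complement, so it is a permeating acyclic connected subgraph avoiding $I_c$, contradicting \Cref{thm:finalboss}.

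The second step is the construction. Fix $k$, take a sufficiently large Eulerian oriented planar triangulation with $k$ pairwise vertex-disjoint facial triangles, and into each such face glue a copy of $G$, identifying the face with a chosen facial triangle of $G$ while matching colours and orientations so that the result $\hat G$ is again a properly $3$-coloured Eulerian oriented planar triangulation. The $k$ copies $H_1,\dots,H_k\cong G$ are then pairwise vertex-disjoint. Now let $\mathcal A$ be any permeating acyclic connected subgraph of $\hat G$. For each copy $H_t$, which is an induced subgraph of $\hat G$ whose bounded faces are faces of $\hat G$, the set $\mathcal A\cap V(H_t)$ induces a forest hitting all of these faces; by the forest obstruction of the first step it must contain a vertex of each colour. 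As the copies are vertex-disjoint, $\mathcal A$ contains at least $k$ vertices of each colour class, which is exactly the claim.

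The main obstacle is bookkeeping about faces: gluing a sphere triangulation into a single face of the base necessarily turns one facial triangle of each copy into the gluing triangle, so one face of each copy is ``lost'' and $\mathcal A\cap V(H_t)$ is only guaranteed to hit \emph{all but one} face of $H_t$, which is not quite enough to invoke the forest obstruction verbatim. I expect this to be the delicate point, and I would handle it either by arranging the base triangulation so that the three edges of each gluing triangle are forced to be covered by any permeating set (so that the missing face of $G$ is effectively hit as well), or by strengthening the forest obstruction to tolerate a single uncovered face. Verifying that the gluing preserves the Eulerian property, the orientation, and the proper $3$-colouring is routine but should be checked explicitly.
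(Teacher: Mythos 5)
The fatal problem is your first step, the ``forest obstruction''; everything else hangs on it. The exchange argument does not prove connectivity of the maximum set $\mathcal{A}$: from ``every complement vertex has at least two $\mathcal{A}$-neighbours in a single component'' you infer ``hence no complement vertex joins two components'', and this is a non sequitur --- a complement vertex can have two neighbours in one component (supplying the cycle that maximality demands) \emph{and} a further neighbour in a different component. This is not a presentational slip: the statement the argument is meant to establish is false in general, and your argument uses nothing specific about the graph of \Cref{thm:finalboss} (nor about orientations). Take the $3$-cube of \Cref{subfig:hypercube}, which is planar, bipartite and $2$-vertex-connected, hence of the form $G-I_c$ by \Cref{obs:2-connected}. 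The complement of the independent set $\{000,011,101\}$ is the forest $\{001,010,100,110,111\}$ (a star plus an isolated vertex): it has independent complement, it is of maximum size among acyclic sets with independent complement (every $6$-vertex complement of an independent pair contains a $4$- or $6$-cycle), yet it is disconnected --- necessarily so, since the cube has no \CAI-partition by \Cref{lem:no-AI-graphs}. Here the complement vertex $000$ has the two neighbours $010,100$ in the star component \emph{and} the neighbour $001$ in the other component, exactly the configuration your inference excludes. Moreover, the forest obstruction itself is a genuine strengthening of \Cref{thm:finalboss} that does not follow from it: when $\mathcal{A}$ is disconnected, $\mathcal{I}\cup I_c$ need not be acyclic (this is precisely where \Cref{obs:main} uses connectivity), and the gadget analysis behind \Cref{thm:oriented-triang} (items 9 and 10 of \Cref{lem:gadgets}) derives contradictions from directed cycles in \emph{both} parts of the partition, so it cannot be quoted. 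Whether your step 1 even holds for the specific graph of \Cref{thm:oriented-triang} would require a new analysis of the gadgets.

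The irony is that your construction makes the forest detour unnecessary, and this is how the paper proceeds. Since each copy $H_t$ meets the rest of $\hat G$ only in its gluing triangle, any $\mathcal{A}$-path between two vertices of $H_t$ that leaves $H_t$ must exit and re-enter through two vertices of that triangle, which are adjacent, so it can be shortcut to stay inside $H_t$; hence $\mathcal{A}\cap V(H_t)$ is \emph{automatically} connected. Your ``lost face'' worry dissolves the same way: an edge of $H_t-I_c$ with both endpoints outside $\mathcal{A}$ lies on two faces of $H_t$ whose third vertices have colour $c$, and at most one of these is the gluing face, so the other is a face of $\hat G$ missed by $\mathcal{A}$, contradicting permeation; thus the complement of $\mathcal{A}\cap V(H_t)$ in $H_t-I_c$ is independent (this is exactly the argument of \Cref{obs:permeat}). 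Connectivity plus independence yields a \CAI-partition of $H_t-I_c$, contradicting \Cref{thm:finalboss} directly. The paper's proof is this argument, applied to $2k-1$ copies of $G$ chained by identifying the inner triangle of one copy with the outer triangle of the next ($2k-1$ rather than $k$ because a gluing vertex may lie in two copies), with the contradiction phrased via \Cref{obs:permeat} and \Cref{thm:oriented-triang}. If you replace your step 1 by the shortcut-connectivity argument, your vertex-disjoint variant goes through, modulo the (real, but routine) verification that colours, orientations and Eulerianity can be matched at the gluing triangles.
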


\subsection*{Definitions and notation}

Let $G=(V,E)$ be a (directed) graph.
%For a vertex $u\in V$, let $N_G(u)$ be the set of vertices (neighbors) adjacent to $u$. Let $N^-_G(u)$ (resp. $N^+_G(u)$) be the set of in-neighbors (resp. out-neighbors) of $u$ when $G$ is directed. We denote by $N_G[u]:=N(u)\cup\{u\}$ the closed neighborhood of $u$. The closed neighborhood of a set of vertices $S\subseteq V$ is $N_G[S]=\bigcup_{\{v\in S\}}N_G[v]$. We define the neighborhood $N_G(S)$ of $S$ as $N_G(S)=N_G[S]\setminus S$.We define the degree $d_G(u):=|N_G(u)|$, the in-degree $d^-_G(u):=|N^-_G(u)|$, and out-degree $d^+_G(u):=|N^+_G(u)|$.
We define the degree $d_G(u)$, the in-degree $d^-_G(u)$, and out-degree $d^+_G(u)$. 
%We denote $\delta(G):=\min\{d_G(v)|v\in V\}$ the minimum degree of $G$. We also define the minimum in-degree $\delta^-(G)$ and out-degree $\delta^+(G)$ of $G$ similarly.
We will drop the subscript $_G$ when the graph is clear from the context. A $k$-vertex (resp. $k^-$-vertex, $k^+$-vertex) is a vertex of degree $k$ (resp. at most $k$, at least $k$).
Let $G$ be a planar graph.
The degree of a face $f$ in $G$ is the number of edges of the face. The set of faces of $G$ is denoted by $F(G)$. A $k$-face is an induced cycle $C_k$.

%A set $S$ of vertices form a \emph{clique} when every two distinct vertices of $S$ are adjacent.
For every set $S\subseteq V$, we denote by $G-S$ the graph $G$ where we removed the vertices of $S$ along with their incident edges. 

%We denote the set of integers $\{i,i+1,\dots,j\}$ by $[i;j]$ and we simplify this notation to $[j]$ when $i=1$.

A \emph{bridge} is an edge whose removal disconnects the graph. A graph with no bridge is \emph{$2$-edge-connected}.

A \emph{cut-vertex} is a vertex whose removal disconnects the graph. A graph with no cut-vertex is \emph{$2$-vertex-connected}.

Note that a subcubic graph is $2$-vertex-connected if and only if it is $2$-edge-connected.

A set of vertices is \emph{separating} if its removal disconnects the graph. 

A \emph{cut-set} is a set of vertices that is separating.

Two vertices in $G$ are said to be at \emph{facial distance $d$ on a face $f$} if they are on the same face $f$ and their distance is $d$ in the induced subgraph $G[f]$.

When a graph $G$ is planar, we associate it with one of its plane drawings for simplicity. 
A triangulation is a maximal planar graph, i.e. a planar graph for adding an edge results into a non-planar graph, or equivalently a planar graph for which every face (also the outerface) is a triangle.

\section{Proofs of Observations}
\begin{proof}[Proof of~\Cref{obs:main}]
Take a \CAI-partition of $G-I_i$. Clearly $\A$ is a connected acyclic sub(di)graph of $G$. Now suppose for a contradiction that $\I\cup I_i$ induces a (not necessarily directed) cycle $C$ in $G$. 

Consider a planar embedding of $G$. Since $\A$ is connected and disjoint from $C$, we may assume without loss of generality that all vertices in $\A$ are outside of $C$ in the embedding. Let $v \in C$. By assumption, the vertex $v$ and all of its neighbors in $C$ or inside of $C$ belong to $V(G) \setminus \A = \I \cup I_i$. Note that any two consecutive neighbors of $v$ are adjacent in $G$, since $G$ is a triangulation. Since $C$ is a cycle, the vertex $v$ has at least two neighbors in $C$ or inside of $C$, hence $G[\I \cup I_i]$ contains a triangle, a contradiction.

Thus, $A_1=\A$ and $A_2=\I\cup I_i$ partition $G$ into a connected acyclic set and a forest containing $I_i$.
\end{proof}

\begin{proof}[Proof of~\Cref{obs:2-connected}]

We use the following well-known fact: a planar graph is $2$-vertex-connected if and only if all its faces are simple cycles, see e.g.~\cite[Chapter 2]{MT01}.
Let $G$ be a Eulerian (oriented) planar triangulation and tripartition $I_1,I_2,I_3$ and $H=G-I_i$ for some $1\leq i\leq 3$. Clearly, $H$ is a bipartite planar (oriented) graph. To see that it is $2$-vertex-connected, observe that every face of $H$ consists of the neighbors of a vertex of $I_i$ in their cyclic ordering. No vertex can appear twice in such a face by simplicity of $G$, hence all faces are simple cycles and $H$ is $2$-vertex-connected by the above result.

Conversely, if $H$ is a planar bipartite $2$-vertex-connected graph (let us for a moment forget about orientations), then by adding a vertex $v_f$ for each face $f$ of $H$ and edges between $v_f$ and the vertices of $f$, we obtain a planar triangulation $G$, which is simple because all faces are cycles. Moreover, each added $v_f$ will have even degree since $H$ is bipartite. For any vertex $v\in H$ its degree equals the number of faces incident to $v$ since $H$ is $2$-vertex-connected, so the degree of $v$ in $G$ is even. Thus, $G$ is a Eulerian planar and the added vertices form one of the independent sets in the tripartition of $G$. Finally, orient the new edges from $v_f$ towards an old vertex $v$ if $v$ is a source on $f$ and towards $v_f$ if $v$ is a sink on $f$. Since on each face the number of sinks and sources is equal, without the still unoriented edges every vertex has now indegree equal to outdegree. It is easy to see that the still unoriented edges form a subgraph all of whose vertices have even degree, hence we can give it a Eulerian orientation to satisfy the statement of the observation.
\end{proof}

\begin{proof}[Proof of~\Cref{obs:permeat}]
    If $\mathcal{A}\cup\mathcal{I}$ be a \CAI-partition of $G\setminus I_i$, then by~\Cref{obs:main} $\mathcal{I}\cup I_i$ is a forest in $G$ and in particular it cannot contain any face of $G$. Hence, $\mathcal{A}$ is a permeating acyclic connected subgraph of $G$ that avoids $I_i$.

    Conversely, if $\mathcal{A}$ is a permeating acyclic connected subgraph of $G$ that avoids $I_i$. Let $B=G\setminus\mathcal{A}$ be the remaining vertices of $G$. If $B\setminus I_i$ had an edge $e$, then since $G$ is a Eulerian triangulation and $I_1, I_2, I_3$ its tripartion, the triangles containing $e$ would have its third vertex in $I_i\subseteq B$. Hence, $B$ would contain a face. Thus, $\mathcal{I}=B\setminus I_i$ is independent. 
\end{proof}

\section{Proof of \texorpdfstring{\Cref{thm:subcubic}}{Theorem 8}}

We will prove~\Cref{thm:subcubic} using a discharging argument. Suppose by contradiction that there exists a counter-example $G$ of~\Cref{thm:subcubic} that minimizes the number of edges and vertices.

We call a $2$-vertex \emph{bad} if it is incident to a $6$-face, and \emph{good} otherwise. 
In order to prove the result, we will use the following proposition. Its proof will be given later.

\begin{proposition}\label{prop:reducible} 
The graph $G$ must have the following structural properties.
\begin{enumerate}[(i)]

\item\label{itm:2-vertices-distance-4} Two $2$-vertices are at facial distance at least $4$ (\Cref{lem:23b}).

\item\label{itm:no-4-faces} There are no $4$-faces (\Cref{lem:C4}).

% \item\label{itm:2-vertices-facial-distance-3} Two $2$-vertices at facial distance $3$ cannot both be bad (\Cref{cor:no2bad_atfacedist3}).

\item\label{itm:2-vertices-8-faces} If an $8$-face contains two $2$-vertices, then none of them is bad (\Cref{lem:C8}).

% \item\label{itm:2-vertices-10-faces} If a $10$-face contains three $2$-vertices, then at most one of them is bad (\Cref{lem:24_6-10}).

\item\label{itm:2-vertices-6-faces} A $2$-vertex cannot be incident to two $6$-faces (\Cref{lem:2C6}).

\end{enumerate}
\end{proposition}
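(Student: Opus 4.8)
The plan is to prove each of the four items as a separate reducibility statement and then assemble Proposition~\ref{prop:reducible} by collecting \Cref{lem:23b,lem:C4,lem:C8,lem:2C6}. For a fixed item I would assume that the forbidden configuration occurs in the minimal counterexample $G$, delete (and, where necessary, reconnect) a small set of vertices around it to obtain a strictly smaller graph $G'$, check that $G'$ is still planar, bipartite, $2$-vertex-connected, subcubic and oriented, invoke the minimality of $G$ to get a \CAI-partition $\A'\cup\I'$ of $G'$, and finally re-insert the deleted part to build a \CAI-partition of $G$, contradicting that $G$ is a counterexample.

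Concretely, the reductions all revolve around suppressing $2$-vertices. A $2$-vertex $v$ with neighbours $a,b$ has $a$ and $b$ in a common colour class (both opposite to $v$), so naively suppressing $v$ and adding the edge $ab$ would destroy bipartiteness. Hence in each case I would instead delete the $2$-vertex (or the short face) together with just enough of its surroundings that the exposed neighbours either already lie on a short path of the right parity or can be safely reconnected by a single edge joining two vertices of opposite colours. For the oriented version the only constraint on orientations is the absence of digons, so every new edge may be oriented freely; this freedom is what I would exploit when re-inserting vertices into $\A$, since acyclicity of $\A$ refers to directed cycles only.

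The heart of each lemma is the re-insertion step. Given $\A'\cup\I'$, I put a deleted $2$-vertex $v$ back and decide its side according to where its neighbours $a,b$ landed. If both lie in $\A'$, then $v$ may join $\I$ while $\I$ stays independent and $\A=\A'$ is unchanged; if exactly one does, then $v$ joins $\A$, remaining connected through that neighbour and acyclic because it then has a single $\A$-neighbour. The genuinely problematic case is when both $a,b$ lie in $\I'$, since then $v$ fits on neither side without violating a constraint. The whole design of each reduction is aimed at ruling this case out: by adding a reconnecting edge between two suitable vertices of the configuration, or by deleting along the short face, I would force $a$ and $b$ never to be simultaneously in $\I'$, or else arrange a local swap that repairs the partition. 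This is exactly where the facial-distance and face-length hypotheses of items~\ref{itm:2-vertices-distance-4}--\ref{itm:2-vertices-6-faces} are used: they guarantee that the manipulated vertices are far enough apart that such a repair stays local.

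The hard part will be twofold. First, keeping $2$-vertex-connectivity after deletion, since removing vertices near a short face can easily create a cut-vertex; for each configuration I would verify, using that all faces are even cycles and that the earlier items already forbid nearby $2$-vertices, that no cut-vertex appears, or else restore $2$-connectivity with one reconnecting edge of the correct colour parity. Second, and more delicate, is forcing the re-insertion to avoid the both-neighbours-in-$\I'$ case while simultaneously preserving acyclicity and connectivity of $\A$ on $6$- and $8$-faces (items~\ref{itm:2-vertices-6-faces} and~\ref{itm:2-vertices-8-faces}). A $6$-face is just long enough that moving a vertex from $\I'$ into $\A$ to unblock one $2$-vertex can close an undirected cycle or disconnect $\A$ elsewhere; disentangling these competing requirements is where the \emph{good}/\emph{bad} distinction and, in the oriented setting, the freedom to orient new edges so that only directed cycles are forbidden must be combined.
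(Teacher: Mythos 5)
Your plan reproduces the paper's own proof scheme almost verbatim (build a smaller $H\in\F$ from the configuration, invoke minimality of $G$ to get a \CAI-partition of $H$, extend it back to $G$), so the architecture is not in question; the gaps are in the two places where you delegate the actual content to design choices that cannot be made. The first is your framing that the reductions ``all revolve around suppressing $2$-vertices.'' This fails for item (ii): before attacking $4$-faces one already has (and the paper proves first, as \Cref{lem:2vC4}) that no $2$-vertex lies on a $4$-cycle, so every vertex of a $4$-face has degree $3$, and no $2$-vertex suppression touches this configuration. The paper's elimination of $4$-faces is a separate multi-stage argument: it first rules out three pairwise edge-sharing $4$-cycles and two edge-sharing $4$-cycles (\Cref{lem:3C4,lem:2C4}) by \emph{vertex identification} --- contracting the whole configuration to one or two vertices, with orientation rules chosen so that directed $\A$-cycles of $G$ map to directed $\A$-cycles of $H$ --- then separating $4$-cycles (\Cref{lem:separating}), and only then $4$-faces (\Cref{lem:C4}), where one additionally needs a planarity claim that at least one of the two ways of re-pairing the four outside neighbours of the face preserves $2$-connectivity. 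None of this is produced by your deletion-plus-reconnecting-edge template.

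The second gap is your central design goal of arranging the reductions so that the bad case ``both exposed neighbours in $\I'$'' never occurs. This is not achievable, and the paper does not attempt it: in essentially every lemma the bad case does occur and is repaired by a swap whose correctness rests on \emph{forced} structural information about the partition of $H$, the key instance being \Cref{lem:C6with_deg2vx}: for a $2$-vertex $a'_1$ on a $6$-face $b_1a_2b_3a_4b_5a'_1$ whose other vertices have degree $3$, \emph{any} \CAI-partition of $H=G-\{a'_1\}$ must have $\{a_2,b_3,a_4\}\subset\A$ and $\{b_1,b_5\}\subset\I$. Items (iii) and (iv) are then proved by exploiting exactly this forced configuration --- selecting which edge of the face to break, and verifying connectivity of $\A$ through third neighbours outside the face --- and analogous forced $\A$-path information drives the swaps in items (i) and (ii). Your proposal names this difficulty (``disentangling these competing requirements'') but supplies no mechanism for it, and that mechanism is where all the mathematical content of \Cref{lem:23b,lem:2C6,lem:C8} lies; as written, the plan therefore does not yield a proof of any of the four items.
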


\begin{proof}[Proof of~\Cref{thm:subcubic}]
By Euler's formula, we have 
\begin{equation}\label{eq:euler}
\sum_{v\in V(G)}(2d(v)-6)+\sum_{f\in F(G)}(d(f)-6)=-12<0.
\end{equation}

We assign the charges $\mu(v)=2d(v)-6$ to each vertex $v\in V(G)$ and  $\mu(f)=d(f)-6$ to each face $f\in F(G)$. Now, we apply the following discharging rule. 

\textbf{Discharging rule}:
\begin{itemize}
\item[\ru0] Each $8^+$-face gives 2 to its bad $2$-vertices and 1 to its good $2$-vertices.  
\end{itemize}

\medskip

If~\Cref{prop:reducible} holds, then after applying \ru0, we will prove that the remaining charge $\mu^*$ on each face and each vertex is nonnegative, reaching a contradiction with~\Cref{eq:euler}. 

\textbf{Faces}: Recall that $G$ is bipartite. So $d(f)$ is even and $d(f)\geq 6$ for every $f\in F(G)$ by~\Cref{prop:reducible}\ref{itm:no-4-faces}.
\begin{itemize}
    \item Let $f$ be a $6$-face. Its charge is unchanged so $\mu^*(f)=\mu(f)=d(f)-6=0$.
    \item Let $f$ be an $8$-face. By \Cref{prop:reducible}\ref{itm:2-vertices-distance-4}, $f$ is incident to at most two $2$-vertices.
    By \Cref{prop:reducible}\ref{itm:2-vertices-8-faces}, if $f$ is incident to exactly two $2$-vertices, then none of them is bad. Therefore, $\mu^*(f)\geq 8-6-\max\{2\cdot 1,1\cdot 2\}=0$.

    % \item Let $f$ be a $10$-face. By~\Cref{prop:reducible}\ref{itm:2-vertices-distance-4}, $f$ is incident to at most three $2$-vertices.
    % By~\Cref{prop:reducible}\ref{itm:2-vertices-10-faces}, if $f$ is incident to exactly three $2$-vertices, then at most one of them is bad.
    % Therefore, $\mu^*(f)\geq 10-6-\max\{2\cdot 1 + 1\cdot 2,2\cdot 2\} =0$.

    % \item Let $f$ be a $12$-face. By~\Cref{prop:reducible}\ref{itm:2-vertices-distance-4} $f$ is incident to at most four $2$-vertices. 
    % By \Cref{prop:reducible}\ref{itm:2-vertices-facial-distance-3}, if $f$ is incident to exactly four $2$-vertices, then at most two of them are bad. Therefore, $\mu^*(f)\geq 12-6-\max\{2\cdot 1 + 2\cdot 2, 3\cdot 2\} = 0$.

     \item Let $f$ be a $10^+$-face. By~\Cref{prop:reducible}\ref{itm:2-vertices-distance-4}, $f$ is incident to at most $\left\lfloor\frac{d(f)}{4}\right\rfloor$ $2$-vertices.
     Therefore, $\mu^*(f)=d(f)-6-2\left\lfloor\frac{d(f)}{4}\right\rfloor \geq 0$.
\end{itemize}
\medskip
\textbf{Vertices}: Let $v\in V(G)$, $v$ is a $2^+$-vertex since $G$ is $2$-vertex-connected. 
\begin{itemize}
    \item Let $v$ be a $2$-vertex. Recall that $\mu(v)=2d(v)-6=-2$.
    Since $v$ cannot be incident with two $6$-faces by \Cref{prop:reducible}\ref{itm:2-vertices-6-faces}, one of the following two cases occur.
\begin{itemize}
    \item If $v$ is incident to a $6$-face and an $8^+$-face, then it is a bad 2-vertex and it receives 2 from the $8^+$-face.
    \item If $v$ is incident to two $8^+$-faces, then it is a good 2-vertex and it receives 1 from each incident $8^+$-face.
\end{itemize}
Therefore, $\mu^*(v)=-2 + 1\cdot 2 = -2 + 2\cdot 1 = 0$.

\item Let $v$ be a $3$-vertex. Its charge is unchanged so $\mu^*(v)=\mu(v)=2d(v)-6= 2\cdot 3 - 6=0$. \qedhere
\end{itemize}
\end{proof}

\subsection*{Structural properties of $G$}

To prove~\Cref{prop:reducible}, we will study the structural properties of $G$ in greater detail. For conciseness, we will call the class of oriented planar bipartite 2-vertex-connected subcubic graphs $\F$ and when we talk about decompositions, we implicitly imply that it must be a partition into a connected acyclic set and an independent set.

\paragraph{Proof sketch.}
Every proof in this section will be by contradiction with the following scheme.
\begin{itemize}
    \item We build one (or two) graph(s) $H$ in $\F$ from $G$ such that $|E(H)|+|V(H)|<|E(G)|+|V(G)|$.
    \item We use the minimality of $G$ to obtain a \CAI-partition of $H$.
    \item We modify this \CAI-partition of $H$ to obtain a partition $(\A,\I)$ of $G$ that we claim is a \CAI-partition, thus obtaining a contradiction.
    \item The proofs that this new partition $(\A,\I)$ of $G$ is a \CAI-partition will consist in
    \begin{itemize}
        \item verifying that vertices in $\I$ form an independent set;
        \item verifying that new connections between vertices in $\A$ in $G$ will not create a directed cycle;
        \item if some connections between vertices in $\A$ in $H$ are not present in $G$ or if there were two disconnected graph $H_1$ and $H_2$, then we verify that $\A$ is connected.
    \end{itemize}
\end{itemize}
To avoid repetitions in this section, we will only argue that $H\in\F$ for restrictions that are not straightforward from the definition of $H$, which most of the time will be $2$-vertex-connectivity. Moreover, to help the reader see how the modification of $G$ to obtain $H$ preserves the bipartition, the vertices of one part will be labeled $a_i$ for some indices $i$, and the vertices in the other part with $b_j$ for the other indices $j$. We also often use the two following easy observations.

\begin{observation}
Let $v\in\A$. If $v$ has exactly one neighbor in $\A$, then $\A\setminus\{v\}$ is a connected acyclic set. 
\end{observation}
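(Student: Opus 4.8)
The plan is to check separately the two conditions that make a set a \emph{connected acyclic set}, namely that $G[\A\setminus\{v\}]$ is acyclic and that it is connected; both reduce to elementary facts, so the argument is short.

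First I would dispatch acyclicity, which is immediate: $G[\A\setminus\{v\}]$ is an induced sub(di)graph of the acyclic $G[\A]$, and deleting a vertex cannot create a new (directed) cycle, so $\A\setminus\{v\}$ is acyclic. For connectivity I would use the hypothesis directly. Let $u$ be the unique neighbor of $v$ in $\A$; then $\A\setminus\{v\}$ is nonempty because it contains $u$. In the underlying undirected graph of $G[\A]$---which is connected, since $\A$ is a connected set---the vertex $v$ has degree exactly one (recall that in an oriented graph there is at most one arc between any two vertices, so ``one neighbor'' really does mean degree one), i.e., $v$ is a leaf. I would then invoke the standard fact that deleting a leaf from a connected graph preserves connectivity: any path in $G[\A]$ between two vertices of $\A\setminus\{v\}$ can be assumed to avoid $v$ as an internal vertex, since a degree-$1$ vertex is never internal to a path, so the path stays inside $\A\setminus\{v\}$. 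Hence $G[\A\setminus\{v\}]$ is connected, and together with acyclicity this gives the claim.

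There is no real obstacle here; the only points to be careful about are that $\A\setminus\{v\}$ is nonempty (which is exactly where the neighbor $u$ is used) and that ``connected'' and ``acyclic'' are read with respect to the underlying undirected graph and the directed structure respectively, so that the leaf-deletion argument and the monotonicity of acyclicity under taking induced subgraphs both apply as stated.
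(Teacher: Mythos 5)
Your proof is correct: acyclicity passes to induced subgraphs, and deleting a leaf of the underlying undirected graph of $G[\A]$ preserves connectivity. The paper states this observation without proof precisely because this elementary argument is the intended one, so your write-up matches the paper's (implicit) reasoning.
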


\begin{observation}
Let $v\notin \A$. If $v$ has exactly one neighbor in $\A$, then $\A\cup\{v\}$ is a connected acyclic set.
\end{observation}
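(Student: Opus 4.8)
The plan is to verify the two defining properties of a connected acyclic set—weak connectedness of the underlying graph and the absence of directed cycles—separately, exploiting that $v$ is attached to $\A$ by a single edge. Let $u$ denote the unique neighbor of $v$ in $\A$. Since we work with oriented graphs (no digons), there is exactly one arc between $v$ and $u$, so $v$ is incident to exactly one arc of the subdigraph induced by $\A\cup\{v\}$; in other words $v$ has total degree $1$ there.

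For connectedness, I would simply note that $G[\A]$ is (weakly) connected by hypothesis and that the arc joining $v$ to $u\in\A$ attaches $v$ to this connected part, so $G[\A\cup\{v\}]$ is connected.

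For acyclicity, I would argue by contradiction: suppose $G[\A\cup\{v\}]$ contains a directed cycle $C$. If $v\notin C$, then $C$ lies entirely in $G[\A]$, contradicting the acyclicity of $\A$. If $v\in C$, then $C$ uses one arc entering $v$ and one arc leaving $v$, so $v$ is incident to at least two arcs of $G[\A\cup\{v\}]$; this contradicts the degree-$1$ fact established above. Hence no directed cycle exists and $\A\cup\{v\}$ is acyclic. Together with connectedness this gives that $\A\cup\{v\}$ is a connected acyclic set.

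There is essentially no obstacle here—this is one of the two easy observations used repeatedly in the reductions. The only point requiring care is invoking the no-digon property of oriented graphs to pass from ``$v$ has exactly one neighbor'' to ``$v$ has exactly one incident arc'', which is what forbids $v$ from lying on a directed cycle; the same reasoning, reading ``edge'' for ``arc'' and ``cycle'' for ``directed cycle'', yields the undirected analogue as well.
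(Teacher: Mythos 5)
Your proof is correct. The paper states this observation without proof (it is one of the two "easy observations" used repeatedly in the reductions), and your argument—$v$ has a single incident arc in $G[\A\cup\{v\}]$ since oriented graphs have no digons, so it can neither lie on a directed cycle nor disconnect anything, while cycles avoiding $v$ would already lie in $G[\A]$—is exactly the routine justification the authors leave implicit.
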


We use edge (resp. path, cycle) instead of arc (resp. directed path, directed cycle), whenever the orientation can be omitted in the proof. We define an \emph{$\A$-path between $u$ and $v$} as a path between $u$ and $v$, where every vertex on this path is in $\A$, $u,v$ included. We define an \emph{$\A$-cycle} similarly. 

Proofs will also come with figures to illustrate the extension of the \CAI-partition of $H$ to $G$. Vertices and edges removed from $G$ to obtain $H$ will be in red. Vertices and edges added in $H$ will be in blue. Next to the vertices, we add labels $\A$ and $\I$ in blue according to the \CAI-partition in $H$ and in red for the extension to the \CAI-partition in $G$. The presence of (directed) $\A$-paths highlighted by the proof will be in the figures as (directed) squiggly lines between vertices in $\A$.

\begin{lemma}\label{lem:22}
There are no adjacent $2$-vertices in $G$.
\end{lemma}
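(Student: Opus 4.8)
The plan is to follow the contradiction scheme of this section. Assume $a,b$ are adjacent $2$-vertices; by bipartiteness one of them, say $a$, lies in the first part and $b$ in the second, and since $G$ is $2$-vertex-connected and subcubic each has exactly one more neighbour. Call the second neighbour of $a$ by $b'$ and that of $b$ by $a'$, so that $b'\,a\,b\,a'$ is an induced path whose internal vertices $a,b$ have degree $2$ and whose ends $a',b'$ lie in opposite parts. I would split the argument according to whether $a'b'\in E(G)$.

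In the generic case $a'b'\notin E(G)$, set $H:=(G\setminus\{a,b\})+a'b'$, suppressing the degree-$2$ path to a single edge and orienting that edge arbitrarily. Suppression is the inverse of subdividing the edge $a'b'$, so $H$ stays planar, subcubic, bipartite (as $a',b'$ are in opposite parts) and $2$-vertex-connected, and it is simple exactly because $a'b'$ was a non-edge; thus $H\in\F$ is strictly smaller and, by minimality, has a \CAI-partition $(\A,\I)$. Since $a'b'\in E(H)$ its ends are not both in $\I$, so at least one lies in $\A$. I then claim $(\A\cup\{a,b\},\I)$ is a \CAI-partition of $G$: $\I$ is unchanged while $G$ has no extra edges inside it, so it stays independent; and adding $a,b$ to $\A$ either reconnects, via the path $b'\,a\,b\,a'$, the two subtrees left after deleting the now-absent edge $a'b'$, or else attaches a pendant path, so $\A$ remains a tree.

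The main obstacle is the case $a'b'\in E(G)$, where the suppression would create a parallel edge and $a',a,b,b'$ form a $4$-cycle. Here I would take $H:=G\setminus\{a,b\}$, retaining the edge $a'b'$. The delicate point is $2$-vertex-connectivity: if one of $a',b'$, say $b'$, had degree $2$, then $b',a,b$ would all be degree-$2$ vertices attached to the rest of $G$ only through $a'$, making $a'$ a cut-vertex unless $G$ is the $4$-cycle $C_4$ --- and $C_4$ has a \CAI-partition, so this is impossible for a counterexample. Hence $a',b'$ both have degree $3$; a short check (a cut-vertex of $H$ would have to be $a'$ or $b'$, but $G$ minus either of these is $H$ minus it with only a pendant path attached) shows that $H$ is $2$-vertex-connected, so $H\in\F$.

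Extending the \CAI-partition of this $H$ is again routine, with one twist: when both $a'$ and $b'$ lie in $\A$, adding both $a$ and $b$ to $\A$ would close the cycle $b'\,a\,b\,a'\,b'$, so instead I put one of $a,b$ into $\I$ and the other as a pendant of $\A$; in the remaining cases both go to $\A$. Throughout, directed acyclicity is automatic, since in every case $\A$ induces a tree and a tree has no directed cycle --- so the orientation of $G$, and the arbitrary orientation chosen for the auxiliary edge, play no role. In all cases $G$ turns out to have a \CAI-partition, contradicting its choice; hence $G$ has no two adjacent $2$-vertices.
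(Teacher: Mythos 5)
Your structural reductions (suppressing the degree-$2$ path to a single edge when $a'b'\notin E(G)$, deleting the two vertices when $a'b'\in E(G)$, and the accompanying connectivity checks) are essentially the same as the paper's. However, there is a genuine gap in your handling of orientations, and it sits exactly in the main case. Your argument rests on the claim that ``directed acyclicity is automatic, since in every case $\A$ induces a tree,'' so that the auxiliary edge of $H$ may be oriented arbitrarily. That claim is false in the oriented setting: a \CAI-partition only requires $\A$ to induce a \emph{connected subdigraph with no directed cycles}; its underlying undirected graph may contain many cycles, so $\A$ need not be a tree, and acyclicity of the extended partition is not automatic.

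Concretely, suppose $a'b'\notin E(G)$, the path is directed in $G$ as $a'\to b\to a\to b'$, and you orient the auxiliary edge of $H$ as $\ora{b'a'}$. Then $H$ may admit a \CAI-partition in which $a',b'\in\A$ and $\A$ contains a directed path from $b'$ to $a'$ avoiding the new arc; this is perfectly legal in $H$, since that path together with $\ora{b'a'}$ forms two parallel directed paths, not a directed cycle. Extending to $\A'=\A\cup\{a,b\}$ then closes the directed cycle $a'\to b\to a\to b'\to\cdots\to a'$ in $G$. You also have no fallback: moving $a$ (or $b$) to $\I$ instead can disconnect $\A$, because the connectivity of $\A$ in $H$ may itself rely on the auxiliary arc, which in $G$ is realized only by the full path through both $a$ and $b$. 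This is precisely why the paper does not orient the new arc arbitrarily: it adds $\ora{a_0b_3}$ when $\ora{a_0b_1}$ is an arc of $G$ and $\ora{b_3a_0}$ otherwise, ensuring that any directed cycle of $G$ traversing the suppressed path corresponds to a directed cycle of $H$ through the new arc. With that single correction (and rephrasing the connectivity claims without the word ``tree''), your argument goes through; your case $a'b'\in E(G)$ is fine as stated, since there the vertices you add to $\A$ form a pendant path, which cannot lie on any directed cycle.
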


\begin{proof}
Suppose by contradiction that we have a path $a_0b_1a_2b_3$ where $d(b_1)=d(a_2)=2$ in $G$. If $a_0$ and $b_3$ are not adjacent, then let $H=G-\{b_1,a_2\}+\ora{a_0b_3}$ when $\ora{a_0b_1}$ is an arc of $G$, otherwise let $H=G-\{b_1,a_2\}+\ora{b_3a_0}$. If $a_0$ and $b_3$ are adjacent, then let $H=G-\{b_1,a_2\}$. The resulting graph remains subcubic and bipartite. We check that $H$ is $2$-vertex-connected. Indeed, when $a_0$ and $b_3$ are not adjacent, replacing the path $a_0b_1a_2b_3$ by the edge $a_0b_3$ preserves the connectivity. In the case where $a_0$ and $b_3$ are adjacent in $G$, if removing $\{b_1,a_2\}$ creates a bridge in $H$, then this bridge along with $b_1a_2$ must be an edge-cut in $G$. We deduce that this edge cut must be $\{b_1a_2,a_0b_3\}$. This implies that $a_0$ or $b_3$ is a cut-vertex in $G$, or that $G$ is a cycle, a contradiction since $G$ is $2$-vertex-connected and cycles have a decomposition. 

Now, let \AI~be a \CAI-partition of $H$. Since $a_0$ and $b_3$ are adjacent in $H$, at most one of them can be in $\I$.\\ 
\textbf{Case 1:} $a_0$ and $b_3$ are not adjacent in $G$. See~\Cref{fig:lem:22-1}.\\
We claim that $(\A',\I')=(\A\cup\{a_2,b_1\},\I)$ is a \CAI-partition of $G$. Indeed, it is the case if either $a_0$ or $b_3$ is in $\I$. If they are both in $\A$, then the connectivity of $\A$ is preserved in $G$. Moreover, if there exists a directed $\A'$-cycle in $G$, then it must also exist in $H$ thanks to the added arc between $a_0$ and $b_3$.

\textbf{Case 2:} $a_0$ and $b_3$ are adjacent in $G$. See~\Cref{fig:lem:22-2}.\\
If either $a_0\in\I$ or $b_3\in\I$, then $(\A\cup\{b_1,a_2\},\I)$ is a \CAI-partition of $G$. If they are both in $\A$, then $(\A\cup\{b_1\},\I\cup\{a_2\})$ is a \CAI-partition of $G$.\qedhere
\end{proof}

\begin{figure}[!htbp]
\centering
\begin{subfigure}[b]{0.49\textwidth}
\begin{minipage}[b]{0.49\textwidth}
\centering
\begin{tikzpicture}[scale=0.55]
\begin{scope}[every node/.style={circle,draw,minimum size=0.65cm,inner sep = 2}]
    \node[label={below:\textcolor{blue}{$\A$}}] (a0) at (0,0) {$a_0$};
    \node[red, label={below:\textcolor{red}{$\A$}}] (b1) at (2,0) {$b_1$};
    \node[red, label={below:\textcolor{red}{$\A$}}] (a2) at (4,0) {$a_2$};
    \node[label={below:\textcolor{blue}{$\I$}}] (b3) at (6,0) {$b_3$};
\end{scope}

\begin{scope}[every edge/.style={draw,minimum width = 0.04cm}]
    \path[red] (a0) edge[->-] (b1);
    \path[red] (b1) edge (a2);
    \path[red] (a2) edge (b3);
    \path[blue] (a0) edge[->-,bend left] (b3);
\end{scope}
\end{tikzpicture}    
\end{minipage}
\begin{minipage}[b]{0.49\textwidth}
\centering
\begin{tikzpicture}[scale=0.55]
\begin{scope}[every node/.style={circle,draw,minimum size=0.65cm,inner sep = 2}]
    \node[label={below:\textcolor{blue}{$\A$}}] (a0) at (0,0) {$a_0$};
    \node[red, label={below:\textcolor{red}{$\A$}}] (b1) at (2,0) {$b_1$};
    \node[red, label={below:\textcolor{red}{$\A$}}] (a2) at (4,0) {$a_2$};
    \node[label={below:\textcolor{blue}{$\A$}}] (b3) at (6,0) {$b_3$};
\end{scope}

\begin{scope}[every edge/.style={draw,minimum width = 0.04cm}]
    \path[red] (a0) edge[->-] (b1);
    \path[red] (b1) edge (a2);
    \path[red] (a2) edge (b3);
    \path[blue] (a0) edge[->-,bend left] (b3);
\end{scope}
\end{tikzpicture} 
\end{minipage}
\caption{Case 1.}
\label{fig:lem:22-1}
\end{subfigure}
\begin{subfigure}[b]{0.49\textwidth}
\begin{minipage}[b]{0.49\textwidth}
\centering
\begin{tikzpicture}[scale=0.55]
\begin{scope}[every node/.style={circle,draw,minimum size=0.65cm,inner sep = 2}]
    \node[label={below:\textcolor{blue}{$\A$}}] (a0) at (0,0) {$a_0$};
    \node[red, label={below:\textcolor{red}{$\A$}}] (b1) at (2,0) {$b_1$};
    \node[red, label={below:\textcolor{red}{$\A$}}] (a2) at (4,0) {$a_2$};
    \node[label={below:\textcolor{blue}{$\I$}}] (b3) at (6,0) {$b_3$};
\end{scope}

\begin{scope}[every edge/.style={draw,minimum width = 0.04cm}]
    \path[red] (a0) edge (b1);
    \path[red] (b1) edge (a2);
    \path[red] (a2) edge (b3);
    \path (a0) edge[bend left] (b3);
\end{scope}
\end{tikzpicture}    
\end{minipage}
\begin{minipage}[b]{0.49\textwidth}
\centering
\begin{tikzpicture}[scale=0.55]
\begin{scope}[every node/.style={circle,draw,minimum size=0.65cm,inner sep = 2}]
    \node[label={below:\textcolor{blue}{$\A$}}] (a0) at (0,0) {$a_0$};
    \node[red, label={below:\textcolor{red}{$\A$}}] (b1) at (2,0) {$b_1$};
    \node[red, label={below:\textcolor{red}{$\I$}}] (a2) at (4,0) {$a_2$};
    \node[label={below:\textcolor{blue}{$\A$}}] (b3) at (6,0) {$b_3$};
\end{scope}

\begin{scope}[every edge/.style={draw,minimum width = 0.04cm}]
    \path[red] (a0) edge (b1);
    \path[red] (b1) edge (a2);
    \path[red] (a2) edge (b3);
    \path (a0) edge[bend left] (b3);
\end{scope}
\end{tikzpicture} 
\end{minipage}
\caption{Case 2.}
\label{fig:lem:22-2}
\end{subfigure}
\caption{\Cref{lem:22}.}
\end{figure}

Since $G$ is bipartite, containing a 4-cycle as a subgraph is the same as containing it as an induced subgraph, so there is no ambiguity in the statements that will follow.

\begin{lemma}\label{lem:2vC4}
There are no $2$-vertices on a 4-cycle in $G$.
\end{lemma}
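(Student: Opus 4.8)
The plan is to follow the reduction scheme laid out in the proof sketch, arguing by contradiction using the minimality of $G$. Suppose $G$ contains a $2$-vertex lying on a $4$-cycle. By \Cref{lem:22} we already know that no two $2$-vertices are adjacent, so the neighbors of our $2$-vertex are $3$-vertices. Name the configuration carefully respecting the bipartition: let $b$ be the $2$-vertex on a $4$-cycle $b\,a_1\,b'\,a_2\,b$, where $a_1,a_2$ are the two neighbors of $b$ (both of degree $3$ by \Cref{lem:22}) and $b'$ is the opposite corner of the $4$-cycle. Since $G$ is subcubic, each of $a_1,a_2$ has exactly one further neighbor outside the $4$-cycle; call them $x_1$ and $x_2$ respectively, where $x_i$ is adjacent to $a_i$ (and $x_i$ lies in the same part as $b,b'$).

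The first step is to build a smaller graph $H\in\F$ by contracting or deleting the redundant part of the $4$-cycle. The natural move is to delete the $2$-vertex $b$ and then suppress one of the two degree-$2$ vertices that this creates, or equivalently to identify $a_1$ and $a_2$ after deleting $b$, thereby removing the multiplicity that the $4$-cycle introduced. Concretely, I would delete $b$ and add an edge between $a_1$ and $a_2$ if it helps keep things $2$-vertex-connected, or more cleanly delete $b$ together with a careful local redrawing so that $H=G-\{b\}$ plus a suppression step stays bipartite, subcubic, oriented, and planar. The orientation of any newly introduced edge is chosen (as in \Cref{lem:22}) to be consistent with one of the deleted arcs so that no new directed cycle is created that was not already present. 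The key verification at this stage is that $H$ is again $2$-vertex-connected: since $b$ has degree $2$ and its two neighbors $a_1,a_2$ already lie together on the $4$-cycle (hence on a common face), deleting $b$ cannot create a cut-vertex unless it creates a bridge, and a bridge would correspond to an edge-cut of size at most two in $G$, which I would rule out exactly as in the proof of \Cref{lem:22} using the $2$-vertex-connectivity of $G$ and the fact that cycles trivially decompose.

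Once $H\in\F$ with $|E(H)|+|V(H)|<|E(G)|+|V(G)|$ is established, minimality gives a \CAI-partition $(\A,\I)$ of $H$. The heart of the argument is then to lift this partition back to $G$. I would place $b$ into $\I$ whenever both of its neighbors $a_1,a_2$ ended up in $\A$, so that $b$ becomes an isolated independent vertex hanging off the connected acyclic set; this is exactly the situation of the second easy observation (a vertex with a single $\A$-neighbor can be absorbed, or here pushed into $\I$ since it has two $\A$-neighbors but lies on a short cycle). Conversely, if at least one of $a_1,a_2$ lies in $\I$, then I would put $b$ into $\A$ and attach it as a leaf, using the first easy observation to preserve connectivity and acyclicity — a leaf cannot close a directed cycle. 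The case analysis mirrors the two cases of \Cref{lem:22}, split according to whether the endpoints of the suppressed/added edge coincide with an actual edge of $G$ and according to the $\A$/$\I$ status of $a_1$, $a_2$, and $b'$.

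I expect the main obstacle to be the interaction of the $4$-cycle with the rest of the graph through $b'$, $x_1$, and $x_2$: because $b'$ also sits on the $4$-cycle, the contraction or edge-addition could create a directed cycle, a parallel edge (violating that $G$ is an oriented graph, i.e. no $2$-cycles), or a short even cycle that destroys one of the invariants I rely on in later lemmas. The delicate point is choosing the reduction so that $H$ stays genuinely simple, bipartite, and $2$-vertex-connected simultaneously, and then checking that every new adjacency created in $\A$ when lifting back to $G$ corresponds to an $\A$-path that was already present in $H$, so no new directed $\A$-cycle appears. I would handle this by treating separately the subcase where $a_1$ and $a_2$ share the common neighbor $b'$ only (the generic case) and the degenerate subcases where $x_1=x_2$ or where an added edge would duplicate an existing arc, in each case falling back on the absorb/push observations to relocate $b$ and, if necessary, $b'$ into $\A$ or $\I$ without breaking connectivity or acyclicity.
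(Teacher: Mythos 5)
Your proposal has two genuine problems, one in the reduction and one in the lifting, and the second is fatal to the argument as written. On the reduction: every concrete modification you propose beyond plain deletion breaks the class $\F$. Adding an edge $a_1a_2$ joins two vertices of the same part of the bipartition; suppressing the degree-$2$ vertex $a_1$ (replacing the path $x_1a_1b'$ by an edge $x_1b'$) likewise joins two vertices of the same part, and may create a parallel edge; identifying $a_1$ and $a_2$ creates a double edge to their common neighbour $b'$, i.e.\ a digon in the oriented setting. None of this repair work is needed: $\F$ requires subcubic, not cubic, so the paper simply takes $H=G-\{b\}$ (in its notation $H=G-\{a_0\}$) and only has to check $2$-vertex-connectivity, which follows because a cut-vertex $v$ of $H$ would give a cut-set $\{v,a_0\}$ of $G$, forcing $v=a_2$ and then $b_1$ or $b_3$ to be a cut-vertex of $G$.

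The deeper gap is in your case analysis for lifting the \CAI-partition back to $G$. You split into ``both $a_1,a_2\in\A$'' (put $b$ in $\I$) and ``at least one of $a_1,a_2$ in $\I$'' (put $b$ in $\A$ as a leaf). The leaf argument is only valid when \emph{exactly} one neighbour is in $\I$; when both $a_1,a_2\in\I$, the vertex $b$ has no neighbour in $\A$, so adding it to $\A$ disconnects $\A$, and it cannot go to $\I$ either since it is adjacent to $\I$-vertices. This is precisely the one case where the $4$-cycle structure must be used, and it needs a swap, not an absorption: since $\I$ is independent, the third neighbours $x_1,x_2$ of $a_1,a_2$ lie in $\A$, and $b'$ (forced into $\A$) is a leaf of $\A$ because its only $\A$-neighbour is its third neighbour. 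Hence $\bigl((\A\setminus\{b'\})\cup\{a_1,a_2\},\,(\I\setminus\{a_1,a_2\})\cup\{b,b'\}\bigr)$ is a \CAI-partition of $G$: $a_1$ and $a_2$ attach as leaves at $x_1$ and $x_2$, so no directed cycle arises and connectivity is kept. This is exactly the paper's third case, $((\A\setminus\{a_2\})\cup\{b_1,b_3\},(\I\setminus\{b_1,b_3\})\cup\{a_0,a_2\})$ in its labelling. Your closing remark about ``relocating $b'$ if necessary'' gestures at this but never gives the argument, so the proof is incomplete precisely at its only nontrivial point.
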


\begin{proof}
Suppose by contradiction that there exists a cycle $C=a_0b_1a_2b_3$ where $d(a_0)=2$. By~\Cref{lem:22}, $d(b_1)=d(b_3)=3$.
Let $H=G-\{a_0\}$. See~\Cref{fig:lem:2vC4}.
Observe that $H$ is $2$-vertex-connected. 
Indeed, if $H$ is not $2$-vertex-connected, then there is a cut-vertex $v$ in $H$ such that $\{v,a_0\}$ is a cut-set in $G$.
Since removing $a_0$ could only separate $b_1$ and $b_3$, $v$ must be $a_2$. However, this implies that $b_1$ or $b_3$ is a cut-vertex in $G$, a contradiction.

Let $(\A,\I)$ be a \CAI-partition of $H$. See~\Cref{fig:lem:2vC4}.
If $b_1$ and $b_3$ are in $\A$, then $(\A,\I\cup\{a_0\})$ is a \CAI-partition of $G$. If only one of $b_1$ and $b_3$ is in $\A$, then $(\A\cup\{a_0\},\I)$ is a \CAI-partition of $G$. Finally, suppose $b_1\in\I$ and $b_3\in\I$. Since $(\A,\I)$ is a \CAI-partition of $H$, then $a_2$ must be in $\A$ and also must have a third neighbor in $\A$. 
Note that both $b_1$ and $b_3$ have degree three by Lemma \ref{lem:22}, and thus each has a third neighbor in $\A$, which is connected to the rest of $\A$.
Therefore, $((\A\setminus\{a_2\})\cup\{b_1,b_3\},(\I\setminus\{b_1,b_3\})\cup\{a_0,a_2\})$ is a \CAI-partition of $G$.\qedhere

\begin{figure}[!htbp]
% \centering
\begin{minipage}[b]{0.33\textwidth}
\centering
\begin{tikzpicture}
\begin{scope}[every node/.style={circle,draw,minimum size=0.65cm,inner sep = 2}]
    \node[red,label={above:\textcolor{red}{$\I$}}] (a0) at (1,1) {$a_0$};
    \node[label={above:\textcolor{blue}{$\A$}}] (b1) at (0,0) {$b_1$};
    \node (a2) at (1,-1) {$a_2$};
    \node[label={above:\textcolor{blue}{$\A$}}] (b3) at (2,0) {$b_3$};
    \node[draw=none] (b'2) at (1,-2) {};
\end{scope}

\begin{scope}[every edge/.style={draw,minimum width = 0.04cm}]
    \path[red] (a0) edge (b1);
    \path (b1) edge (a2);
    \path (a2) edge (b3);
    \path[red] (a0) edge (b3);
\end{scope}
\end{tikzpicture}    
\end{minipage}
\begin{minipage}[b]{0.33\textwidth}
\centering
\begin{tikzpicture}
\begin{scope}[every node/.style={circle,draw,minimum size=0.65cm,inner sep = 2}]
    \node[red,label={above:\textcolor{red}{$\A$}}] (a0) at (1,1) {$a_0$};
    \node[label={above:\textcolor{blue}{$\A$}}] (b1) at (0,0) {$b_1$};
    \node (a2) at (1,-1) {$a_2$};
    \node[label={above:\textcolor{blue}{$\I$}}] (b3) at (2,0) {$b_3$};
    \node[draw=none] (b'2) at (1,-2) {};
\end{scope}

\begin{scope}[every edge/.style={draw,minimum width = 0.04cm}]
    \path[red] (a0) edge (b1);
    \path (b1) edge (a2);
    \path (a2) edge (b3);
    \path[red] (a0) edge (b3);
\end{scope}
\end{tikzpicture}
\end{minipage}
\begin{minipage}[b]{0.33\textwidth}
\centering
\begin{tikzpicture}
\begin{scope}[every node/.style={circle,draw,minimum size=0.65cm,inner sep = 2}]
    \node[red,label={above:\textcolor{red}{$\I$}}] (a0) at (1,1) {$a_0$};
    \node[label={[label distance = -8pt]above:\textcolor{blue}{$\I$}$\rightarrow$\textcolor{red}{$\A$}}] (b1) at (0,0) {$b_1$};
    \node[label={[label distance = -8pt]above:\textcolor{blue}{$\A$}$\rightarrow$\textcolor{red}{$\I$}}] (a2) at (1,-1) {$a_2$};
    \node[label={[label distance = -8pt]above:\textcolor{blue}{$\I$}$\rightarrow$\textcolor{red}{$\A$}}] (b3) at (2,0) {$b_3$};
    \node[draw=none] (b'2) at (1,-2) {\textcolor{blue}{$\A$}};
\end{scope}

\begin{scope}[every edge/.style={draw,minimum width = 0.04cm}]
    \path[red] (a0) edge (b1);
    \path (b1) edge (a2);
    \path (a2) edge (b3);
    \path[red] (a0) edge (b3);
    \path (a2) edge (b'2);
\end{scope}
\end{tikzpicture} 
\end{minipage}
\caption{\Cref{lem:2vC4}.}
\label{fig:lem:2vC4}
\end{figure}
\end{proof}

\begin{lemma}\label{lem:232}
Two $2$-vertices are at distance at least $3$ in $G$. 
\end{lemma}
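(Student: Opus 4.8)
The plan is to reduce to a smaller member of $\F$ by deleting a single $2$-vertex, following the scheme of \Cref{lem:22} and \Cref{lem:2vC4}. Suppose for contradiction that two $2$-vertices are at distance $2$; since $G$ is bipartite they lie in the same part and share a common neighbour, so up to relabelling we have a path $b_0a_0b_1a_2b_2$ with $d(a_0)=d(a_2)=2$ and $b_1$ their common neighbour. By \Cref{lem:22} the vertices $b_0,b_1,b_2$ are $3$-vertices, and by \Cref{lem:2vC4} they are pairwise distinct (if $b_0=b_2$ then $a_0$ would be a $2$-vertex on the $4$-cycle $a_0b_1a_2b_0$). Let $a^*$ denote the third neighbour of $b_1$. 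I set $H=G-a_0$, which is subcubic, bipartite and planar with $|E(H)|+|V(H)|<|E(G)|+|V(G)|$, so that minimality of $G$ yields a \CAI-partition $\AI$ of $H$.

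To extend $\AI$ to $G$ I distinguish cases according to the location of the two neighbours $b_0,b_1$ of $a_0$. If both lie in $\A$, then putting $a_0$ in $\I$ keeps $\I$ independent and leaves $\A$ untouched. If exactly one of them lies in $\A$, then $a_0$ has a unique neighbour in $\A$ and may be added to $\A$, which stays connected and acyclic. The only delicate case is $b_0,b_1\in\I$, and this is where the distance hypothesis is used. Since $b_1\in\I$, its neighbours $a_2$ and $a^*$ lie in $\A$; as the neighbour $b_1$ of the $2$-vertex $a_2$ is in $\I$, its remaining neighbour $b_2$ must be in $\A$, so $a_2$ is a leaf of $\A$ attached at $b_2$ (in particular $|\A|\geq 2$). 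I then re-route by setting $\A'=(\A\setminus\{a_2\})\cup\{b_1,a_0\}$ and $\I'=(\I\setminus\{b_1\})\cup\{a_2\}$. Independence of $\I'$ holds because the two neighbours $b_1,b_2$ of $a_2$ now lie in $\A'$; connectivity of $\A'$ holds because deleting the leaf $a_2$ preserves connectivity while $b_1$ re-attaches to the rest of $\A$ through $a^*$ and carries the pendant $a_0$; and acyclicity holds because in $\A'$ the vertex $b_1$ has only the single old-body neighbour $a^*$ together with the pendant $a_0$, so no (directed) cycle through $b_1$ can arise.

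The reason for moving $b_1$ rather than $b_0$ into $\A$ is exactly the distance-$2$ structure: the common neighbour $b_1$ is adjacent to the \emph{second} $2$-vertex $a_2$, which is forced to be a sacrificable leaf of $\A$, leaving $b_1$ with a single connection ($a^*$) into the body of $\A$; moving $b_0$ instead could pull two body-neighbours into $\A$ and close a cycle. I expect the main obstacle to be twofold. Foremost is this last case, where one must check that the swap simultaneously preserves independence, connectivity, and \emph{directed} acyclicity — this is the heart of the argument and the only place the hypothesis is genuinely exploited. Second is the verification that $H=G-a_0$ lies in $\F$, i.e.\ is $2$-vertex-connected: as in \Cref{lem:2vC4}, if it is not then $\{a_0,c\}$ is a $2$-cut of $G$ separating $b_0$ from $b_1$, and one must argue either that this forces a cut-vertex of $G$ (a contradiction), or else split $G$ along the cut into two smaller graphs of $\F$, solve each by minimality, and recombine the partitions.
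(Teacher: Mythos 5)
Your extension step (the three-way case analysis on the neighbours $b_0,b_1$ of $a_0$, with the swap $\A'=(\A\setminus\{a_2\})\cup\{b_1,a_0\}$, $\I'=(\I\setminus\{b_1\})\cup\{a_2\}$ in the case $b_0,b_1\in\I$) is correct, and is in fact tidier than the corresponding case analysis in the paper. The genuine gap is the step you defer: you never establish that $H=G-a_0$ lies in $\F$, and the facts available at this point of the induction (membership of $G$ in $\F$, minimality, \Cref{lem:22} and \Cref{lem:2vC4}) do \emph{not} imply it. Concretely, let $P$ be a hexagonal prism with one spoke $u_1w_1$ deleted; $P$ is planar, bipartite, subcubic, $2$-connected, and its two $2$-vertices $u_1,w_1$ are non-adjacent and lie on no $4$-cycle. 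Form $G$ by attaching $a_0$ to $b_0:=u_1$ and to $b_1$, giving $c:=w_1$ one edge into the part of $G$ containing $b_1,a_2,b_2,a^*$, and completing that part suitably. Such a $G$ satisfies \Cref{lem:22} and \Cref{lem:2vC4}, yet $\{a_0,c\}$ is a $2$-cut, so $G-a_0$ has the cut-vertex $c$. Thus your first proposed remedy (``this forces a cut-vertex of $G$'') is simply false; unlike in \Cref{lem:2vC4}, where the $4$-cycle forces the cut-vertex to be the antipodal vertex $a_2$ and yields a contradiction, here nothing pins down $c$.

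Your second remedy (split along the $2$-cut, solve each side by minimality, recombine) is the right kind of idea but is a substantial piece of work, not a routine check: each side must be re-completed into a member of $\F$, and you cannot restore $2$-connectivity by joining $a_0$ to $c$ when they lie in the same part of the bipartition, so one needs extra subdivision vertices or the bipartition-flipping trick; then the two \CAI-partitions must be recombined consistently at the cut vertices while preserving connectivity of $\A$ and directed acyclicity across the cut. This is exactly what the paper's \Cref{lem:3-cuts} does (for the removal of two $2$-vertices, by a self-contained minimality argument), and indeed \Cref{lem:3-cuts} would close your gap: applied to $u=a_0$, $v=a_2$ it gives that $G-\{a_0,a_2\}$ is $2$-connected, and re-attaching the $2$-vertex $a_2$ to the two distinct vertices $b_1,b_2$ keeps $2$-connectivity, so $G-a_0\in\F$. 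The paper's own proof of \Cref{lem:232} avoids the issue entirely by a different reduction: it deletes one $2$-vertex $b_3$ and adds the arc $\ora{b_1a_4}$ from the \emph{other} $2$-vertex to the far neighbour of $b_3$ (legal because these endpoints are at odd distance, so bipartiteness is preserved), after which $2$-connectivity follows in one line; the price is the heavier four-case extension argument. As written, your proof is incomplete without an analogue of \Cref{lem:3-cuts}.
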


\begin{proof}
Suppose by contradiction that the underlying undirected graph of $G$ has a path $a_0b_1a_2b_3a_4$ where $d(b_1)=d(b_3)=2$ in $G$. By~\Cref{lem:22}, we know that $d(a_2)=3$ so let $b'_2\notin\{b_1,b_3\}$ be its third neighbor. By~\Cref{lem:2vC4} we know that $a_0\neq a_4$. %If $\ora{a_4b_3}$ is an arc in $G$, then let $H=G-\{b_3\}+\ora{a_4b_1}$, otherwise 
Let $H=G-\{b_3\}+\ora{b_1a_4}$. By adding the edge $b_1a_4$, we ensure the 2-connectivity of $H$, otherwise $b_1a_2$ is a bridge in $H$ and thus $a_2b_2'$ is a bridge in $G$. Let \AI~be a \CAI-partition of $H$. We have to distinguish several cases:

\textbf{Case 1:} Suppose that $a_2$ and $a_4$ are in $\A$. See~\Cref{fig:lem:232-1}.\\
If there is an $\A$-path between $a_2$ and $a_4$ in $G-\{b_3\}$, then $(\A,\I\cup\{b_3\})$ is a \CAI-partition of $G$. Otherwise, $(\A\cup\{b_3\},\I)$ is a \CAI-partition of $G$.
 
\textbf{Case 2:} Suppose that $a_2\in\A$ and $a_4\in\I$. See~\Cref{fig:lem:232-2}.\\
In this case, $(\A\cup\{b_3\},\I)$ is a \CAI-partition of $G$.
 
\textbf{Case 3:} Suppose that $a_2\in\I$ and $a_4\in\A$. See~\Cref{fig:lem:232-3}.\\
Since $a_2\in\I$, we must have $b_1$ and $b'_2$ in $\A$.
\begin{itemize}
\item If there is an $\A$-path between $a_4$ and $b_1$ in $G-\{b_3\}$, then $(\A\cup\{b_3\},\I)$ is a \CAI-partition of $G$.
\item Otherwise, if there is an $\A$-path between $b'_2$ and $b_1$ (which must go through $a_0$) in $G-\{b_3\}$, then $((\A\setminus\{b_1\})\cup\{a_2,b_3\},(\I\setminus\{a_2\})\cup\{b_1\})$ is a \CAI-partition of $G$.
\item If both of the previous conditions do not hold, then there must be an $\A$-path between $b'_2$ and $a_4$ in $G-\{b_3\}$ since $\A$ is connected in $H$. In this case, $(\A\cup\{a_2\},(\I\setminus\{a_2\})\cup\{b_3\})$ is a \CAI-partition of $G$.\\
\end{itemize} 
 
\textbf{Case 4:} Suppose that $a_2\in\I$ and $a_4\in\I$. See~\Cref{fig:lem:232-4}.\\
Since $a_2\in\I$, we must have $b_1$ and $b'_2$ in $\A$. Moreover, there must be an $\A$-path between $b_1$ and $b'_2$ since $\A$ is connected in $H$. Therefore, $((\A\setminus\{b_1\})\cup\{a_2,b_3\},(\I\setminus\{a_2\})\cup\{b_1\})$ is a \CAI-partition of $G$. \qedhere
\end{proof}

\begin{figure}[!htbp]
\centering
\begin{subfigure}[b]{0.66\textwidth}
\centering
\begin{minipage}[b]{0.49\textwidth}
\centering
\begin{tikzpicture}[scale=0.75]
\begin{scope}[every node/.style={circle,draw,minimum size=0.65cm,inner sep = 2}]
    % \node (a0) at (0,0) {$a_0$};
    \node (b1) at (2,0) {$b_1$};
    \node[label={below:\textcolor{blue}{$\A$}}] (a2) at (4,0) {$a_2$};
    \node[red,label={below:\textcolor{red}{$\I$}}] (b3) at (6,0) {$b_3$};
    \node[label={below:\textcolor{blue}{$\A$}}] (a4) at (8,0) {$a_4$};
    
    % \node (b'2) at (4,-2) {$b'_2$};
\end{scope}

\begin{scope}[every edge/.style={draw,minimum width = 0.04cm}]
    % \path (a0) edge (b1);
    \path (b1) edge (a2);
    \path[red] (a2) edge (b3);
    \path[red] (b3) edge (a4);
    % \path (a2) edge (b'2);
    
    \path[blue] (b1) edge[->-,bend left] (a4);

    \path (a2) edge[bend left=50,decorate] (a4); 
\end{scope}
\end{tikzpicture}    
\end{minipage}
\begin{minipage}[b]{0.49\textwidth}
\centering
\begin{tikzpicture}[scale=0.75]
\begin{scope}[every node/.style={circle,draw,minimum size=0.65cm,inner sep = 2}]
    % \node (a0) at (0,0) {$a_0$};
    \node (b1) at (2,0) {$b_1$};
    \node[label={below:\textcolor{blue}{$\A$}}] (a2) at (4,0) {$a_2$};
    \node[red,label={below:\textcolor{red}{$\A$}}] (b3) at (6,0) {$b_3$};
    \node[label={below:\textcolor{blue}{$\A$}}] (a4) at (8,0) {$a_4$};
    
    % \node (b'2) at (4,-2) {$b'_2$};
\end{scope}

\begin{scope}[every edge/.style={draw,minimum width = 0.04cm}]
    % \path (a0) edge (b1);
    \path (b1) edge (a2);
    \path[red] (a2) edge (b3);
    \path[red] (b3) edge (a4);
    % \path (a2) edge (b'2);
    
    \path[blue] (b1) edge[->-,bend left] (a4);

    % \path[blue] (a2) edge[bend right,decorate] node[below, cross] {$\A$} (a4); 
\end{scope}
\end{tikzpicture}    
\end{minipage}
\caption{Case 1.}
\label{fig:lem:232-1}
\end{subfigure}
\begin{subfigure}[b]{0.33\textwidth}
\centering
\begin{minipage}[b]{0.99\textwidth}
\centering
\begin{tikzpicture}[scale=0.75]
\begin{scope}[every node/.style={circle,draw,minimum size=0.65cm,inner sep = 2}]
    % \node (a0) at (0,0) {$a_0$};
    \node (b1) at (2,0) {$b_1$};
    \node[label={below:\textcolor{blue}{$\A$}}] (a2) at (4,0) {$a_2$};
    \node[red,label={below:\textcolor{red}{$\A$}}] (b3) at (6,0) {$b_3$};
    \node[label={below:\textcolor{blue}{$\I$}}] (a4) at (8,0) {$a_4$};
    
    % \node (b'2) at (4,-2) {$b'_2$};
\end{scope}

\begin{scope}[every edge/.style={draw,minimum width = 0.04cm}]
    % \path (a0) edge (b1);
    \path (b1) edge (a2);
    \path[red] (a2) edge (b3);
    \path[red] (b3) edge (a4);
    % \path (a2) edge (b'2);
    
    \path[blue] (b1) edge[->-,bend left] (a4);

    % \path[blue] (a2) edge[bend right,decorate] node[below, cross] {$\A$} (a4); 
\end{scope}
\end{tikzpicture}    
\end{minipage}
\caption{Case 2.}
\label{fig:lem:232-2}
\end{subfigure}

\begin{subfigure}[b]{0.64\textwidth}
\centering
\begin{minipage}[b]{0.99\textwidth}
\centering
\begin{tikzpicture}[scale=0.66]
\begin{scope}[every node/.style={circle,draw,minimum size=0.65cm,inner sep = 2}]
    % \node[label={above:\textcolor{blue}{$\A$}}] (a0) at (0,0) {$a_0$};
    \node[label={below:\textcolor{blue}{$\A$}}] (b1) at (2,0) {$b_1$};
    \node[label={below:\textcolor{blue}{$\I$}}] (a2) at (4,0) {$a_2$};
    \node[red,label={below:\textcolor{red}{$\A$}}] (b3) at (6,0) {$b_3$};
    \node[label={below:\textcolor{blue}{$\A$}}] (a4) at (8,0) {$a_4$};
    
    % \node[label={above:\textcolor{blue}{$\A$}}] (b'2) at (4,-2) {$b'_2$};
\end{scope}

\begin{scope}[every edge/.style={draw,minimum width = 0.04cm}]
    % \path (a0) edge (b1);
    \path (b1) edge (a2);
    \path[red] (a2) edge (b3);
    \path[red] (b3) edge (a4);
    % \path (a2) edge (b'2);
    
    \path[blue] (b1) edge[->-,bend left] (a4);

    \path (b1) edge[bend left=50,decorate] (a4); 
\end{scope}
\end{tikzpicture}    
\end{minipage}

\begin{minipage}[b]{0.52\textwidth}
\centering
\begin{tikzpicture}[scale=0.66]
\begin{scope}[every node/.style={circle,draw,minimum size=0.65cm,inner sep = 2}]
    \node[label={below:\textcolor{blue}{$\A$}}] (a0) at (0,0) {$a_0$};
    \node[label={[label distance=-6pt]below:\textcolor{blue}{$\A$}$\rightarrow$\textcolor{red}{$\I$}}] (b1) at (2,0) {$b_1$};
    \node[label={[label distance=-6pt]below:\textcolor{blue}{$\I$}$\rightarrow$\textcolor{red}{$\A$}}] (a2) at (4,0) {$a_2$};
    \node[red,label={below:\textcolor{red}{$\A$}}] (b3) at (6,0) {$b_3$};
    \node[label={below:\textcolor{blue}{$\A$}}] (a4) at (8,0) {$a_4$};
    
    \node[label={below:\textcolor{blue}{$\A$}}] (b'2) at (4,-2) {$b'_2$};
\end{scope}

\begin{scope}[every edge/.style={draw,minimum width = 0.04cm}]
    \path (a0) edge (b1);
    \path (b1) edge (a2);
    \path[red] (a2) edge (b3);
    \path[red] (b3) edge (a4);
    \path (a2) edge (b'2);
    
    \path[blue] (b1) edge[->-,bend left] (a4);

    \path (a0) edge[bend right,decorate] (b'2); 
\end{scope}
\end{tikzpicture}    
\end{minipage}
\begin{minipage}[b]{0.37\textwidth}
\centering
\begin{tikzpicture}[scale=0.66]
\begin{scope}[every node/.style={circle,draw,minimum size=0.65cm,inner sep = 2}]
    % \node (a0) at (0,0) {$a_0$};
    \node[label={below:\textcolor{blue}{$\A$}}] (b1) at (2,0) {$b_1$};
    \node[label={[label distance=-6pt]below:\textcolor{blue}{$\I$}$\rightarrow$\textcolor{red}{$\A$}}] (a2) at (4,0) {$a_2$};
    \node[red,label={below:\textcolor{red}{$\I$}}] (b3) at (6,0) {$b_3$};
    \node[label={below:\textcolor{blue}{$\A$}}] (a4) at (8,0) {$a_4$};
    
    \node[label={below:\textcolor{blue}{$\A$}}] (b'2) at (4,-2) {$b'_2$};
\end{scope}

\begin{scope}[every edge/.style={draw,minimum width = 0.04cm}]
    % \path (a0) edge (b1);
    \path (b1) edge (a2);
    \path[red] (a2) edge (b3);
    \path[red] (b3) edge (a4);
    \path (a2) edge (b'2);
    
    \path[blue] (b1) edge[->-,bend left] (a4);

    \path (b'2) edge[bend right,decorate] (a4); 
\end{scope}
\end{tikzpicture}    
\end{minipage}
\caption{Case 3.}
\label{fig:lem:232-3}
\end{subfigure}
\begin{subfigure}[b]{0.35\textwidth}
\centering
\begin{minipage}[b]{0.99\textwidth}
\centering
\begin{tikzpicture}[scale=0.66]
\begin{scope}[every node/.style={circle,draw,minimum size=0.65cm,inner sep = 2}]
    \node[label={below:\textcolor{blue}{$\A$}}] (a0) at (0,0) {$a_0$};
    \node[label={[label distance = -6pt]below:\textcolor{blue}{$\A$}$\rightarrow$\textcolor{red}{$\I$}}] (b1) at (2,0) {$b_1$};
    \node[label={[label distance = -6pt]below:\textcolor{blue}{$\I$}$\rightarrow$\textcolor{red}{$\A$}}] (a2) at (4,0) {$a_2$};
    \node[red,label={below:\textcolor{red}{$\A$}}] (b3) at (6,0) {$b_3$};
    \node[label={below:\textcolor{blue}{$\I$}}] (a4) at (8,0) {$a_4$};
    
    \node[label={below:\textcolor{blue}{$\A$}}] (b'2) at (4,-2) {$b'_2$};
\end{scope}

\begin{scope}[every edge/.style={draw,minimum width = 0.04cm}]
    \path (a0) edge (b1);
    \path (b1) edge (a2);
    \path[red] (a2) edge (b3);
    \path[red] (b3) edge (a4);
    \path (a2) edge (b'2);
    
    \path[blue] (b1) edge[->-,bend left] (a4);

    \path (a0) edge[bend right,decorate] (b'2); 
\end{scope}
\end{tikzpicture}    
\end{minipage}
\caption{Case 4.}
\label{fig:lem:232-4}
\end{subfigure}
\caption{\Cref{lem:232}.}
\label{fig:lem:232}
\end{figure}

To prove that $G$ contains no $4$-faces (\Cref{prop:reducible}\ref{itm:no-4-faces}), we need to prove~\Cref{lem:3C4,lem:2C4,lem:separating} first.

\begin{lemma}\label{lem:3C4}
There are no three distinct 4-cycles in $G$, each sharing at least one edge with each other.
\end{lemma}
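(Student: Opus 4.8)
The first step is to eliminate degree-$2$ vertices: by \Cref{lem:2vC4} no $2$-vertex lies on a $4$-cycle, so every vertex of $C_1\cup C_2\cup C_3$ is a $3$-vertex, which is what makes the configuration rigid. Next I would classify how two bipartite $4$-cycles can overlap. Since $G$ is bipartite, two $4$-cycles cannot share a pair of \emph{opposite} edges, as this would force a $K_4$, which is not bipartite. Hence two edge-sharing $4$-cycles share either a single edge or a path of length two, and in the latter case the two endpoints of the path together with the three intermediate vertices form a $K_{2,3}$; in particular, two $4$-cycles meeting in a path of length two automatically create a third $4$-cycle.

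I would then apply this to the three given cycles and argue the configuration is always one of two types. If some two of $C_1,C_2,C_3$ meet in a path of length two, the union contains a $K_{2,3}$; moreover, the ``three $4$-cycles through one edge $uv$'' pattern also contains a $K_{2,3}$ (two such cycles share the length-two path at $u$), while its fully saturated completion is $K_{3,3}$, which planarity forbids. Otherwise all three pairwise intersections are single edges. The three shared edges cannot pairwise meet in a triangle (bipartiteness forbids odd cycles), and a short bipartite argument rules out their forming a path of length three; the only remaining ``spread out'' arrangement forces a $K_{3,3}$ and is excluded by planarity. The surviving single-edge case is that the three shared edges emanate from a common apex $p$, giving the three $4$-faces around the $3$-vertex $p$ (a ``cube corner''), whose three outer vertices each carry exactly one edge leaving the configuration. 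Thus only two configurations remain: \textbf{(A)} a $K_{2,3}$ subgraph and \textbf{(B)} the apex configuration.

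For \textbf{(B)} I would set $H=G-p$. This lies in $\F$: it is clearly planar, bipartite and subcubic, and $2$-vertex-connectivity is preserved because $q_1,q_2,q_3$ remain joined through the outer vertices and their external edges. By minimality $H$ has a \CAI-partition $(\A,\I)$, and I reinsert $p$ following the scheme of this section, with a short case analysis on how $q_1,q_2,q_3$ are split between $\A$ and $\I$: in each case $p$ can be assigned to $\A$ or $\I$ so that $\I$ stays independent, no directed $\A$-cycle is created, and $\A$ stays connected. For \textbf{(A)}, each middle vertex of the $K_{2,3}$ is a $3$-vertex and hence carries exactly one edge leaving the $K_{2,3}$. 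In any plane embedding one middle vertex $m$ is enclosed between the other two; tracing the single outgoing edge of $m$ inside its bounding $4$-face, and using that the two poles are saturated, forces the enclosed region to attach to the rest of $G$ either only through $m$ (a cut-vertex, contradicting $2$-vertex-connectivity) or through $\{m,m'\}$ for a second middle vertex $m'$, giving a $2$-cut across which I replace the enclosed block by a short path to obtain a smaller $H\in\F$ whose \CAI-partition extends back to $G$.

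The main obstacle is the case analysis itself. The delicate part is pushing the bipartite and subcubic constraints far enough to prove that \textbf{(A)} and \textbf{(B)} are genuinely the only surviving configurations, i.e. that every other arrangement of the shared edges collapses to a $K_{3,3}$, to a non-bipartite $K_4$, or to an odd cycle. After that, the recurring subtlety is the reduction bookkeeping: certifying that each constructed $H$ stays $2$-vertex-connected (the fragile invariant throughout this section) and that the reinserted or re-routed vertices never close a directed $\A$-cycle while keeping $\A$ connected. I expect configuration \textbf{(A)} to be the hardest, because there the reduction is performed across a $2$-cut rather than by deleting a single vertex.
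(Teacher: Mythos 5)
Your overall skeleton (use \Cref{lem:2vC4} plus bipartiteness, planarity, subcubicity and $2$-connectivity to pin down the configuration, then reduce) matches the paper, which indeed asserts that the ``cube corner'' is the only possible drawing. But your reduction for configuration \textbf{(B)} has a genuine gap. You delete the apex and claim that, after taking a \CAI-partition $(\A,\I)$ of $H=G-p$, ``in each case $p$ can be assigned to $\A$ or $\I$''. This is false. In the paper's notation ($p=a_0$, $q_i\in\{b_2,b_4,b_6\}$), nothing prevents the partition of $H$ from having all three of $b_2,b_4,b_6$ in $\I$; this really occurs, e.g.\ when $G$ is the cube, where $H$ is the hexagon plus a hub $b'$ and $(\A,\I)=(\{a_1,a_3,a_5,b'\},\{b_2,b_4,b_6\})$ is a perfectly good \CAI-partition of $H$. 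Then $p\in\I$ destroys independence (all three neighbours of $p$ lie in $\I$) and $p\in\A$ leaves $p$ isolated in $\A$. Likewise, when exactly one $q_i\in\I$, putting $p$ in $\A$ can close a directed cycle (the $4$-cycle $a_0b_2a_3b_4$ may be directed, or a directed $\A$-path in $H$ may join the two $q_j\in\A$), while $p\in\I$ again breaks independence. Repairing this forces you to modify the partition — move some $b_j$ from $\I$ into $\A$ — and to prove that at least one of the three moves creates no directed $\A$-cycle (the right argument: if all three failed, the three witnessing directed $\A$-paths would concatenate into a closed directed walk inside $\A$, contradicting acyclicity). None of this is in your proposal, and it is precisely the difficulty the paper's proof is built to avoid: the paper contracts the entire seven-vertex configuration into a single vertex $a^*$, so the partition of $H$ already fixes the structure around the configuration, and it handles the cube separately by an explicit partition.

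Configuration \textbf{(A)} is a secondary problem: it is in fact impossible, and your treatment of it is both incomplete and unjustified. Your own topological argument, pushed one more step, finishes it off: once the enclosed middle vertex's outgoing edge forces the enclosed region to attach through $\{m,m'\}$, both $m$ and $m'$ become saturated, and then the \emph{third} middle vertex's third edge must enter a face whose only unsaturated boundary vertex is that third vertex itself — a cut vertex, contradicting $2$-connectivity. (This is why the paper, and its later remark in \Cref{lem:2C4}, can rule out two $4$-cycles sharing two edges.) Your alternative — collapsing the block behind the $2$-cut $\{m,m'\}$ to a short path and asserting the \CAI-partition ``extends back to $G$'' — cannot be waved through: extending \CAI-partitions across $2$-cuts and small edge-cuts is exactly the hard, orientation-sensitive content of this section (compare Case 1 of \Cref{lem:2C4} and \Cref{lem:separating}), where both connectivity of $\A$ and directed acyclicity genuinely fail without careful arguments about which directed $\A$-paths can coexist.
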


\begin{proof}
Suppose that such a configuration exists by contradiction. 
%Added the bipartite condiction here, otherwise the prism could match the description here.
Due to~\Cref{lem:2vC4} and the fact that $G$ is planar, bipartite, subcubic, and $2$-vertex-connected, the only possible drawing of such a configuration is presented in~\Cref{fig:lem:3C4} along with the name of the vertices. Note that not every $b'_i$ is necessarily distinct from each other.
The three 4-cycles cannot be all directed so let $C$ be the set of vertices of a non-directed 4-cycle. 
If every $b'_i$ is the same vertex, then $G$ is an orientation of the cube. We can put all of the vertices of $C$ in $\A$, along with two non-adjacent vertices among the remaining ones, and the last two vertices in $\I$, to get a \CAI-partition of $G$. Therefore we may assume that not every $b'_i$ is the same vertex.

Let $H$ be $G$ where we identify $a_0,a_1,b_2,a_3,b_4,a_5,b_6$ into one vertex $a^*$. If this causes two arcs to be merged into one, we orient it in the opposite direction to the one of the other arc incident to $a^*$.
Observe that if $H$ has a bridge, then it must be one that is incident to $a^*$, and both arcs incident to $a^*$ if $a^*$ has degree $2$.
But then, one of those arcs would also be a bridge in $G$, a contradiction. Therefore, $H\in\mathcal{F}$. Let \AI~be a \CAI-partition of $H$. In what follows, we give a \CAI-partition of $G$ in every possible case up to the symmetry of the configuration. 

Observe that $b'_1$, $b'_3$, and $b'_5$ cannot all be in $\I$, otherwise $a^*$ would be an isolated vertex in $\A$. Therefore, we have the following cases.

\textbf{Case 1:} $a^*\in\I$. See~\Cref{fig:lem:3C4-1}.\\
We must have $\{b'_1,b'_3,b'_5\}\subseteq \A$. By the pigeonhole principle and w.l.o.g. we assume the existence of arcs $\ora{a_1b'_1}$ and $\ora{a_3b'_3}$. In that case, $(\A\cup\{a_1,b_2,a_3,b_4,b_6\},(\I\setminus\{a^*\})\cup\{a_0,a_5\})$ is a \CAI-partition of $G$. 

\textbf{Case 2:} $a^*\in\A$. See~\Cref{fig:lem:3C4-2}.\\
Let $b\in\{b_2,b_4,b_6\}\setminus C$.

Suppose first that every $b'_i$ is distinct. We claim that $(\A',\I')=((\A\setminus\{a^*\})\cup\{a_0,a_1,b_2,a_3,b_4,a_5,b_6\}\setminus\{b\},\I\cup\{b\})$ is a \CAI-partition of $G$. The only possible problem with this decomposition is a directed $\A'$-cycle.
However, any such cycle in $G$ that contains two of the $b'_i$s will be a directed $\A$-cycle in $H$ that goes through $a^*$.
Moreover, the only other possible directed $\A'$-cycle is the 4-cycle that does not contain $b$. This is impossible since it is $C$ which is not directed.

Now suppose not every $b'_i$ is distinct, say $b'_1 = b'_3$ without loss of generality.
Then we can put another vertex $\hat{b}$ of $\{b_2,b_4,b_6\}$ in $I'$ without disconnecting $\A'$. We choose $\hat{b}$ so that $b$ and $\hat{b}$ are not both adjacent to $a_5$. Now $(\A',\I')=((\A\setminus\{a^*\})\cup\{a_0,a_1,b_2,a_3,b_4,a_5,b_6\}\setminus\{b,\hat{b}\},\I\cup\{b,\hat{b}\})$ is a \CAI-partition of $G$. The only additional potential directed $\A'$-cycle that could appear compared to the previous paragraph is a directed cycle containing $b'_1 = b'_3$ and not $b'_5$. But any such cycle contains either $b$ or $\hat{b}$, which is in $\I'$.
\end{proof}

\begin{figure}[!htbp]
\centering
\begin{subfigure}[b]{0.59\textwidth}
\centering
\begin{tikzpicture}[scale=0.7]
\begin{scope}[every node/.style={circle,draw,minimum size=0.65cm,inner sep = 2}]
    \node[red,label={above:\textcolor{red}{$\I$}}] (a0) at (2,2) {$a_0$};
    \node[red,label={below:\textcolor{red}{$\A$}}] (a1) at (0,1) {$a_1$};
    \node[red,label={below:\textcolor{red}{$\A$}}] (b2) at (2,0) {$b_2$};
    \node[red,label={below:\textcolor{red}{$\A$}}] (a3) at (4,1) {$a_3$};
    \node[red,label={above:\textcolor{red}{$\A$}}] (b4) at (4,3) {$b_4$};
    \node[red,label={above right:\textcolor{red}{$\I$}}] (a5) at (2,4) {$a_5$};
    \node[red,label={above:\textcolor{red}{$\A$}}] (b6) at (0,3) {$b_6$};
    
    \node[label={below:\textcolor{blue}{$\A$}}] (b'1) at (-2,0) {$b'_1$};
    \node[label={below:\textcolor{blue}{$\A$}}] (b'3) at (6,0) {$b'_3$};
    \node[label={above:\textcolor{blue}{$\A$}}] (b'5) at (2,6) {$b'_5$};

    \node[draw=none] (arrow) at (7,2) {$\longleftrightarrow$};
    
    \node[blue,label={above right:\textcolor{blue}{$\I$}}] (a*) at (10,2) {$a^*$};
    \node[label={below:\textcolor{blue}{$\A$}}] (hb'1) at (8,1) {$b'_1$};
    \node[label={below:\textcolor{blue}{$\A$}}] (hb'3) at (12,1) {$b'_3$};
    \node[label={above:\textcolor{blue}{$\A$}}] (hb'5) at (10,4) {$b'_5$};
\end{scope}

\begin{scope}[every edge/.style={draw,minimum width = 0.04cm}]
    \path[red] (a1) edge (b2);
    \path[red] (b2) edge (a3);
    \path[red] (a3) edge (b4);
    \path[red] (b4) edge (a5);
    \path[red] (a5) edge (b6);
    \path[red] (b6) edge (a1);
    
    \path[red] (a0) edge (b2);
    \path[red] (a0) edge (b4);
    \path[red] (a0) edge (b6);

    \path[red] (a1) edge[->-] (b'1);
    \path[red] (a3) edge[->-] (b'3);
    \path[red] (b'5) edge (a5);
    
    \path[blue] (a*) edge[->-] (hb'1);
    \path[blue] (a*) edge[->-] (hb'3);
    \path[blue] (hb'5) edge (a*);
\end{scope}
\end{tikzpicture}
\caption{\label{fig:lem:3C4-1}Case 1.}    
\end{subfigure}
\begin{subfigure}[b]{0.39\textwidth}
\centering
\begin{tikzpicture}[scale=0.7]
\begin{scope}[every node/.style={circle,draw,minimum size=0.65cm,inner sep = 2}]
    \node[red,label={above:\textcolor{red}{$\A$}}] (a0) at (2,2) {$a_0$};
    \node[red,label={below:\textcolor{red}{$\A$}}] (a1) at (0,1) {$a_1$};
    \node[red,label={below:\textcolor{red}{$\A$}}] (b2) at (2,0) {$b_2$};
    \node[red,label={below:\textcolor{red}{$\A$}}] (a3) at (4,1) {$a_3$};
    \node[red,label={above:\textcolor{red}{$\A$}}] (b4) at (4,3) {$b_4$};
    \node[red,label={above right:\textcolor{red}{$\A$}}] (a5) at (2,4) {$a_5$};
    \node[red,label={above:\textcolor{red}{$\I$}}] (b6) at (0,3) {$b_6$};
    
    \node (b'1) at (-2,0) {$b'_1$};
    \node (b'3) at (6,0) {$b'_3$};
    \node (b'5) at (2,6) {$b'_5$};

    % \node[draw=none] (arrow) at (7,2) {$\longleftrightarrow$};
    
    % \node[blue,label={above right:\textcolor{blue}{$\A$}}] (a*) at (10,2) {$a^*$};
    % \node (hb'1) at (8,1) {$b'_1$};
    % \node (hb'3) at (12,1) {$b'_3$};
    % \node (hb'5) at (10,4) {$b'_5$};
\end{scope}

\begin{scope}[every edge/.style={draw,minimum width = 0.04cm}]
    \path[red] (a1) edge (b2);
    \path[red] (b2) edge (a3);
    \path[red] (a3) edge (b4);
    \path[red] (b4) edge (a5);
    \path[red] (a5) edge (b6);
    \path[red] (b6) edge (a1);
    
    \path[red] (a0) edge (b2);
    \path[red] (a0) edge (b4);
    \path[red] (a0) edge (b6);

    \path[red] (a1) edge (b'1);
    \path[red] (a3) edge (b'3);
    \path[red] (b'5) edge (a5);
    
    % \path[blue] (a*) edge (hb'1);
    % \path[blue] (a*) edge (hb'3);
    % \path[blue] (hb'5) edge (a*);
\end{scope}
\end{tikzpicture}
\caption{\label{fig:lem:3C4-2}Case 2 where $C=\{a_0,b_2,a_3,b_4\}$ and $b=b_6$.}
\end{subfigure}
\caption{\Cref{lem:3C4}.}
\label{fig:lem:3C4}
\end{figure}

Using~\Cref{lem:3C4}, we can prove~\Cref{lem:2C4}.

\begin{lemma}\label{lem:2C4}
There are not two $4$-cycles sharing an edge in $G$.
\end{lemma}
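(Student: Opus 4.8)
The plan is to follow the reduction scheme of this section. First I would pin down the configuration. Write the two $4$-cycles sharing an edge as $C_1=a_0b_1a_2b_3$ and $C_2=a_0b_1a_4b_5$ with common edge $a_0b_1$. By~\Cref{lem:2vC4} every vertex of $C_1\cup C_2$ is a $3$-vertex, and a short degree count shows that $C_1$ and $C_2$ can share only the edge $a_0b_1$: a second common edge would force $a_2=a_4$ or $b_3=b_5$, and then $b_1$ (resp. $a_0$) would be a $2$-vertex lying on a $4$-cycle, contradicting~\Cref{lem:2vC4}. Hence the six vertices $a_0,b_1,a_2,b_3,a_4,b_5$ are distinct, with $a_0$ adjacent to $b_1,b_3,b_5$ and $b_1$ adjacent to $a_0,a_2,a_4$. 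In particular $a_0$ and $b_1$ have all their neighbours inside the configuration, while $a_2,b_3,a_4,b_5$ each have exactly one neighbour outside, which I denote $b_2',a_3',b_4',a_5'$, respectively.

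Before reducing I would use~\Cref{lem:3C4} to clean up the external attachments. Any coincidence among $b_2',a_3',b_4',a_5'$ (such as $a_3'=a_4$, or an already-present edge joining two of them) produces a third $4$-cycle sharing an edge with each of $C_1$ and $C_2$, which~\Cref{lem:3C4} forbids; so the gadget attaches to the rest of $G$ through four genuinely distinct legs. The reduction I propose is to excise the gadget: delete $a_0,b_1$ and suppress the two resulting degree-$2$ paths $b_2'a_2b_3a_3'$ and $b_4'a_4b_5a_5'$ into single arcs, i.e.\ set $H=G\setminus\{a_0,b_1,a_2,b_3,a_4,b_5\}+\ora{b_2'a_3'}+\ora{b_4'a_5'}$, reversing an arc if it would create a digon. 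This keeps $H$ bipartite (each new arc joins the two colour classes), subcubic (each terminal merely trades one incident arc for another), and planar (the replacement is local). The delicate point is $2$-vertex-connectivity: if $\{a_0,b_1\}$ is a cut-set of $G$, then the two halves of the gadget fall into different pieces, and I would instead produce two graphs $H_1,H_2\in\F$, each obtained from one side closed up by its new arc, exactly as the proof scheme permits.

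Applying minimality to $H$ (or to $H_1,H_2$) yields a \CAI-partition \AI, and the heart of the argument is to reinsert $a_0,b_1,a_2,b_3,a_4,b_5$ and distribute them between $\A$ and $\I$. I would case on the $\A$/$\I$-status of the four terminals $b_2',a_3',b_4',a_5'$ and on whether the arc $\ora{b_2'a_3'}$ (resp. $\ora{b_4'a_5'}$) is realised by an $\A$-path in $H$. The gadget is flexible enough to absorb each case: for instance, putting $a_0\in\I$ allows $b_1,b_3,b_5\in\A$, with $a_2,a_4$ placed so as to re-establish the two connections that the deleted arcs stood for, whereas a terminal in $\A$ lets its adjacent gadget vertex be routed into $\A$ to recreate the path it used to carry. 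In every case I must verify the three standard conditions: $\I$ stays independent (using $a_0\sim b_1,b_3,b_5$, $b_1\sim a_0,a_2,a_4$, $a_2\sim b_3$, $a_4\sim b_5$), $\A$ stays connected, and no directed $\A$-cycle appears, the latter because any such cycle would project onto a directed $\A$-cycle through the corresponding new arc in $H$. I expect the main obstacle to be precisely this bookkeeping: with four legs and two independent reconnection arcs, the number of $\A$/$\I$-patterns is markedly larger than in~\Cref{lem:22} or~\Cref{lem:232}, and connectivity together with (directed) acyclicity of $\A$ must be guaranteed in each, while the residual degenerate coincidences not already excluded by~\Cref{lem:3C4} and the possible split into $H_1,H_2$ must be handled separately; confirming that $H$ (or each $H_i$) genuinely lies in $\F$, in particular its $2$-vertex-connectivity after the excision, is the other point requiring care.
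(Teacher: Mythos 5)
Your reduction is genuinely different from the paper's: the paper does not delete the six gadget vertices but \emph{contracts} them into two adjacent vertices (in your labels, $\{b_3,a_0,b_5\}\to a^*$ and $\{a_2,b_1,a_4\}\to b^*$), keeping all four attachment edges. That difference is not cosmetic, and your excision breaks at three concrete points. First, your appeal to \Cref{lem:3C4} to exclude a pre-existing edge $b_2'a_3'$ (or $b_4'a_5'$) is incorrect: such an edge creates the $4$-cycle $b_2'a_2b_3a_3'$, which shares the edge $a_2b_3$ with $C_1$ but shares \emph{no} edge with $C_2$, so the three cycles are not pairwise edge-sharing and \Cref{lem:3C4} says nothing. (It does exclude coincidences such as $b_2'=b_4'$ or $a_3'=a_5'$, which yield a cycle through $b_1$, resp.\ $a_0$, sharing an edge with each of $C_1,C_2$.) Since $b_2'a_3'$ and $b_4'a_5'$ are exactly the pairs you join, $H$ may fail to be a simple oriented graph, and reversing the new arc does not repair a parallel edge; this case needs separate treatment, as in \Cref{lem:22}. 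Second, $2$-connectivity of $H$ can fail outside your fallback: if $G-\{a_0,b_1,a_2,b_3,a_4,b_5\}$ has a cut vertex $v$ separating $\{b_2',a_3'\}$ from $\{b_4',a_5'\}$, then $G$ is $2$-connected (the gadget supplies the second connection between the two sides) and $\{a_0,b_1\}$ is not a cut-set, yet both of your shortcut arcs lie on one side of $v$, so $v$ is a cut vertex of $H$. Repairing this requires allowing the crossing pairing $b_2'a_5'$, $b_4'a_3'$ as an alternative (compare the Claim in the proof of \Cref{lem:C4}); the paper's contraction sidesteps the issue entirely, since there the only possible bridge is $a^*b^*$, handled as its Case 1.

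The third gap is the most serious. Your acyclicity argument -- ``any directed $\A'$-cycle would project onto a directed $\A$-cycle through the corresponding new arc in $H$'' -- is false for directed cycles that \emph{cross} the gadget, e.g.\ $b_2'\to a_2\to b_1\to a_4\to b_4'\to\cdots\to b_2'$, where the last portion is a directed path outside the gadget. Such a cycle uses neither suppressed path in full, corresponds to no arc of your $H$, and therefore leaves no trace in $H$ whatsoever; your $H$ is simply blind to it. (It also cannot be killed by pushing $a_0$ and $b_1$ into $\I$, since they are adjacent, and killing cycles through $b_1$ by setting $b_1\in\I$ leaves the symmetric cycles through $a_0$ alive.) Under the paper's contraction, every walk through the gadget projects to a walk through $\{a^*,b^*\}$, which is precisely what validates its projection claims; recovering this with your reduction would require the kind of explicit ``there is no directed $\A$-path from $x$ to $y$ in $G-C$'' case analysis that the paper carries out in \Cref{lem:C4}, and that analysis -- the actual content of the proof -- is exactly what your proposal leaves as an assertion of flexibility. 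In addition, even for cycles confined to one side, the projection only works if each new arc is oriented consistently with its suppressed path when that path is directed; orienting merely to avoid digons, as you propose, is not sufficient.
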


\begin{proof}
Suppose that such a configuration exists by contradiction. Note that since $G$ is $2$-connected and planar, there cannot be two $4$-cycles that share two edges. We give a drawing of such a configuration in~\Cref{fig:lem:2C4} along with the name of the vertices. Let $H$ be obtained from $G$ by identifying $a_1,b_2,a_3$ into a vertex $a^*$ and $b_4,a_5,b_6$ into one vertex $b^*$, where the direction of the arc between $a^*$ and $b^*$ will be chosen later depending on the orientations in $G$. By contracting these vertices, we do not create digons due to~\Cref{lem:3C4}. Moreover, if we create a bridge, then it is exactly $a^*b^*$ since otherwise, the same bridge would exist in $G$, a contradiction. Therefore, we distinguish two cases.

\textbf{Case 1:} $a^*b^*$ is a bridge. See~\Cref{fig:lem:2C4-1}.\\
In this case, each component $H_i$ of $H-a^*b^*$ is in $\mathcal{F}$ for $i\in\{1,2\}$ since a bridge in $H_i$ would also exist in $G$. Let $(\A_i,\I_i)$ be a \CAI-partition of $H_i$ for $i\in\{1,2\}$. Now, we have the following cases up to symmetry.
\begin{itemize}
 \item Suppose that $a^*\in\A_1$ and $b^*\in\A_2$. In this case, $(\A,\I)=((\A_1\setminus\{a^*\})\cup(\A_2\setminus\{b^*\})\cup\{a_1,b_2,a_3,b_4,b_6\},\I_1\cup\I_2\cup\{a_5\})$ is a \CAI-partition of $G$ since $\A$ is connected and any potential directed $\A$-cycle would have existed in $H_1$ or $H_2$ by going through either $a^*$ or $b^*$.
 \item Suppose that $a^*\in\A_1$ and $b^*\in\I_2$. By pigeonhole principle and w.l.o.g., there must be at most one edge $uv\in\{a_1b_6,b_2a_5,a_3b_4\}$ that is not directed from $H_1$ towards $H_2$. Say that $v$ is in $H_2$. Observe that $a'_4,a'_6\in\A_2$ since $b^*\in\I_2$. In this case, $((\A_1\setminus\{a^*\})\cup\A_2\cup\{a_1,b_2,a_3,b_4,a_5,b_6\}\setminus\{v\},\I_1\cup(\I_2\setminus\{b^*\})\cup\{v\})$ is a \CAI-partition of $G$ since $\A$ is connected. 
 \item Suppose that $a^*\in\I_1$ and $b^*\in\I_2$. Observe that there exists an $\A_1$-path between $b'_1$ and $b'_3$ and an $\A_2$-path between $a'_4$ and $a'_6$ since $a^*\in\I_1$, $b^*\in\I_2$ and $\A_1$ and $\A_2$ are connected. In this case, $(\A_1\cup\A_2\cup\{a_1,b_2,b_4,a_5\},(\I_1\setminus\{a^*\})\cup(\I_2\setminus\{b^*\})\cup\{a_3,b_6\})$ is a \CAI-partition of $G$.\\
\end{itemize}

\textbf{Case 2:} $a^*b^*$ is not a bridge. See~\Cref{fig:lem:2C4-2}.\\
In this case, $H\in\mathcal{F}$. Let \AI~be a \CAI-partition of $H$. 
\begin{itemize}
 \item Suppose that $a^*\in\A$ and $b^*\in\I$. Observe that $a'_4,a'_6\in\A$ and therefore $((\A\setminus\{a^*\})\cup\{a_1,b_2,a_3,a_5\},(\I\setminus\{b^*\})\cup\{b_4,b_6\})$ is a \CAI-partition of $G$. The same idea holds by symmetry when $a^*\in\I$ and $b^*\in\A$.

 \item Suppose that $a^*\in\A$ and $b^*\in\A$. We can assume w.l.o.g. that $\ora{a_1b_6}$ is an arc in $G$.
 \begin{itemize}
 \item Suppose that $\ora{a_3b_4}$ is an arc in $G$. In this case, we choose $\ora{a^*b^*}$ in $H$. Therefore, $(\A',\I')=((\A\setminus\{a^*,b^*\})\cup\{a_1,b_2,a_3,b_4,b_6\},\I\cup\{a_5\})$ is a \CAI-partition of $G$ since any potential directed $\A'$-cycle would have been a directed $\A$-cycle in $H$ by going through $a^*$ or $b^*$. 

 \item Suppose that $\ora{b_4a_3}$ is an arc in $G$. W.l.o.g. we assume that $\ora{a_5b_2}$ is also an arc in $G$. In this case, we choose $\ora{b^*a^*}$ in $H$.
 If there are no $\A$-paths between $a'_6$ and $b'_1$, $b'_3$, or $a'_4$ in $G-\{a_1,b_2,a_3,b_4,a_5,b_6\}$, then $((\A\setminus\{a^*,b^*\})\cup\{a_1,b_2,a_3,b_4,b_6\},\I\cup\{a_5\})$ is a \CAI-partition of $G$. 
 Otherwise, $((\A\setminus\{a^*,b^*\})\cup\{a_1,b_2,a_3,b_4,a_5\},\I\cup\{b_6\})$ is a \CAI-partition of $G$. \qedhere
 \end{itemize} 
\end{itemize} 
\end{proof}

\begin{figure}[!htbp]
\centering
\begin{subfigure}[b]{\textwidth}
\centering
\begin{minipage}[b]{0.32\textwidth}
\centering
\begin{tikzpicture}[scale=0.75]
\begin{scope}[every node/.style={circle,draw,minimum size=0.65cm,inner sep = 2}]
    \node[red,label={above:\textcolor{red}{$\A$}}] (a1) at (2,4) {$a_1$};
    \node[red,label={left:\textcolor{red}{$\A$}}] (b2) at (2,2.5) {$b_2$};
    \node[red,label={below:\textcolor{red}{$\A$}}] (a3) at (2,1) {$a_3$};
    \node[red,label={below:\textcolor{red}{$\A$}}] (b4) at (4,1) {$b_4$};
    \node[red,label={right:\textcolor{red}{$\I$}}] (a5) at (4,2.5) {$a_5$};
    \node[red,label={above:\textcolor{red}{$\A$}}] (b6) at (4,4) {$b_6$};
    
    \node (b'1) at (0,5) {$b'_1$};
    \node (b'3) at (0,0) {$b'_3$};
    \node (a'4) at (6,0) {$a'_4$};
    \node (a'6) at (6,5) {$a'_6$};

    \node[draw=none] (arrow) at (3,-0.5) {$\big\updownarrow$};

    \node[blue,label={below:\textcolor{blue}{$\A$}}] (a*) at (2,-2) {$a^*$};
    \node[blue,label={below:\textcolor{blue}{$\A$}}] (b*) at (4,-2) {$b^*$};
    \node (hb'1) at (0,-1) {$b'_1$};
    \node (hb'3) at (0,-3) {$b'_3$};
    \node (ha'4) at (6,-3) {$a'_4$};
    \node (ha'6) at (6,-1) {$a'_6$};
\end{scope}

\begin{scope}[every edge/.style={draw,minimum width = 0.04cm}]
    \path[red] (a1) edge (b2);
    \path[red] (b2) edge (a3);
    \path[red] (a3) edge (b4);
    \path[red] (b4) edge (a5);
    \path[red] (a5) edge (b6);
    \path[red] (b6) edge (a1);
    \path[red] (b2) edge (a5);
    
    \path[red] (b'1) edge (a1);
    \path[red] (a3) edge (b'3);
    \path[red] (b4) edge (a'4);
    \path[red] (b6) edge (a'6); 

    % \path[blue] (a*) edge (b*);
    \path[blue] (hb'1) edge (a*);
    \path[blue] (hb'3) edge (a*);
    \path[blue] (ha'4) edge (b*);
    \path[blue] (ha'6) edge (b*);
\end{scope}
\end{tikzpicture}    
\end{minipage}
\begin{minipage}[b]{0.32\textwidth}
\centering
\begin{tikzpicture}[scale=0.75]
\begin{scope}[every node/.style={circle,draw,minimum size=0.65cm,inner sep = 2}]
    \node[red,label={above:\textcolor{red}{$\A$}}] (a1) at (2,4) {$a_1$};
    \node[red,label={left:\textcolor{red}{$\A$}}] (b2) at (2,2.5) {$b_2$};
    \node[red,label={below:\textcolor{red}{$\A$}}] (a3) at (2,1) {$a_3$};
    \node[red,label={below:\textcolor{red}{$\A$}}] (b4) at (4,1) {$b_4$};
    \node[red,label={right:\textcolor{red}{$\I$}}] (a5) at (4,2.5) {$a_5$};
    \node[red,label={above:\textcolor{red}{$\A$}}] (b6) at (4,4) {$b_6$};
    
    \node (b'1) at (0,5) {$b'_1$};
    \node (b'3) at (0,0) {$b'_3$};
    \node[label={left:\textcolor{blue}{$\A$}}] (a'4) at (6,0) {$a'_4$};
    \node[label={left:\textcolor{blue}{$\A$}}] (a'6) at (6,5) {$a'_6$};

    \node[draw=none] (arrow) at (3,-0.5) {$\big\updownarrow$};

    \node[blue,label={below:\textcolor{blue}{$\A$}}] (a*) at (2,-2) {$a^*$};
    \node[blue,label={below:\textcolor{blue}{$\I$}}] (b*) at (4,-2) {$b^*$};
    \node (hb'1) at (0,-1) {$b'_1$};
    \node (hb'3) at (0,-3) {$b'_3$};
    \node[label={left:\textcolor{blue}{$\A$}}] (ha'4) at (6,-3) {$a'_4$};
    \node[label={left:\textcolor{blue}{$\A$}}] (ha'6) at (6,-1) {$a'_6$};
\end{scope}

\begin{scope}[every edge/.style={draw,minimum width = 0.04cm}]
    \path[red] (a1) edge (b2);
    \path[red] (b2) edge (a3);
    \path[red] (a3) edge[->-] (b4);
    \path[red] (b4) edge (a5);
    \path[red] (a5) edge (b6);
    \path[red] (a1) edge[->-] (b6);
    \path[red] (a5) edge[->-] (b2);
    
    \path[red] (b'1) edge (a1);
    \path[red] (a3) edge (b'3);
    \path[red] (b4) edge (a'4);
    \path[red] (b6) edge (a'6); 

    % \path[blue] (a*) edge (b*);
    \path[blue] (hb'1) edge (a*);
    \path[blue] (hb'3) edge (a*);
    \path[blue] (ha'4) edge (b*);
    \path[blue] (ha'6) edge (b*);
\end{scope}
\end{tikzpicture}    
\end{minipage}
\begin{minipage}[b]{0.32\textwidth}
\centering
\begin{tikzpicture}[scale=0.75]
\begin{scope}[every node/.style={circle,draw,minimum size=0.65cm,inner sep = 2}]
    \node[red,label={above:\textcolor{red}{$\A$}}] (a1) at (2,4) {$a_1$};
    \node[red,label={left:\textcolor{red}{$\A$}}] (b2) at (2,2.5) {$b_2$};
    \node[red,label={below:\textcolor{red}{$\I$}}] (a3) at (2,1) {$a_3$};
    \node[red,label={below:\textcolor{red}{$\A$}}] (b4) at (4,1) {$b_4$};
    \node[red,label={right:\textcolor{red}{$\A$}}] (a5) at (4,2.5) {$a_5$};
    \node[red,label={above:\textcolor{red}{$\I$}}] (b6) at (4,4) {$b_6$};
    
    \node[label={right:\textcolor{blue}{$\A$}}] (b'1) at (0,5) {$b'_1$};
    \node[label={right:\textcolor{blue}{$\A$}}] (b'3) at (0,0) {$b'_3$};
    \node[label={left:\textcolor{blue}{$\A$}}] (a'4) at (6,0) {$a'_4$};
    \node[label={left:\textcolor{blue}{$\A$}}] (a'6) at (6,5) {$a'_6$};

    \node[draw=none] (arrow) at (3,-0.5) {$\big\updownarrow$};

    \node[blue,label={below:\textcolor{blue}{$\I$}}] (a*) at (2,-2) {$a^*$};
    \node[blue,label={below:\textcolor{blue}{$\I$}}] (b*) at (4,-2) {$b^*$};
    \node[label={right:\textcolor{blue}{$\A$}}] (hb'1) at (0,-1) {$b'_1$};
    \node[label={right:\textcolor{blue}{$\A$}}] (hb'3) at (0,-3) {$b'_3$};
    \node[label={left:\textcolor{blue}{$\A$}}] (ha'4) at (6,-3) {$a'_4$};
    \node[label={left:\textcolor{blue}{$\A$}}] (ha'6) at (6,-1) {$a'_6$};
\end{scope}

\begin{scope}[every edge/.style={draw,minimum width = 0.04cm}]
    \path[red] (a1) edge (b2);
    \path[red] (b2) edge (a3);
    \path[red] (a3) edge (b4);
    \path[red] (b4) edge (a5);
    \path[red] (a5) edge (b6);
    \path[red] (a1) edge (b6);
    \path[red] (a5) edge (b2);
    
    \path[red] (b'1) edge (a1);
    \path[red] (a3) edge (b'3);
    \path[red] (b4) edge (a'4);
    \path[red] (b6) edge (a'6); 

    \path[blue] (hb'1) edge (a*);
    \path[blue] (hb'3) edge (a*);
    \path[blue] (ha'4) edge (b*);
    \path[blue] (ha'6) edge (b*);

    \path (a'6) edge[decorate] (a'4);
    \path (b'1) edge[decorate] (b'3);
    \path (ha'6) edge[decorate] (ha'4);
    \path (hb'1) edge[decorate] (hb'3);
\end{scope}
\end{tikzpicture}    
\end{minipage}
\caption{Case 1 where $v=a_5$ in the second subcase.}
\label{fig:lem:2C4-1}
\end{subfigure}

\begin{subfigure}[b]{\textwidth}
\centering
\begin{minipage}[b]{0.49\textwidth}
\centering
\begin{tikzpicture}[scale=0.75]
\begin{scope}[every node/.style={circle,draw,minimum size=0.65cm,inner sep = 2}]
    \node[red,label={above:\textcolor{red}{$\A$}}] (a1) at (2,4) {$a_1$};
    \node[red,label={left:\textcolor{red}{$\A$}}] (b2) at (2,2.5) {$b_2$};
    \node[red,label={below:\textcolor{red}{$\A$}}] (a3) at (2,1) {$a_3$};
    \node[red,label={below:\textcolor{red}{$\I$}}] (b4) at (4,1) {$b_4$};
    \node[red,label={right:\textcolor{red}{$\A$}}] (a5) at (4,2.5) {$a_5$};
    \node[red,label={above:\textcolor{red}{$\I$}}] (b6) at (4,4) {$b_6$};
    
    \node (b'1) at (0,5) {$b'_1$};
    \node (b'3) at (0,0) {$b'_3$};
    \node[label={left:\textcolor{blue}{$\A$}}] (a'4) at (6,0) {$a'_4$};
    \node[label={left:\textcolor{blue}{$\A$}}] (a'6) at (6,5) {$a'_6$};

    \node[draw=none] (arrow) at (3,-0.5) {$\big\updownarrow$};

    \node[blue,label={below:\textcolor{blue}{$\A$}}] (a*) at (2,-2) {$a^*$};
    \node[blue,label={below:\textcolor{blue}{$\I$}}] (b*) at (4,-2) {$b^*$};
    \node (hb'1) at (0,-1) {$b'_1$};
    \node (hb'3) at (0,-3) {$b'_3$};
    \node[label={left:\textcolor{blue}{$\A$}}] (ha'4) at (6,-3) {$a'_4$};
    \node[label={left:\textcolor{blue}{$\A$}}] (ha'6) at (6,-1) {$a'_6$};
\end{scope}

\begin{scope}[every edge/.style={draw,minimum width = 0.04cm}]
    \path[red] (a1) edge (b2);
    \path[red] (b2) edge (a3);
    \path[red] (a3) edge (b4);
    \path[red] (b4) edge (a5);
    \path[red] (a5) edge (b6);
    \path[red] (b6) edge (a1);
    \path[red] (b2) edge (a5);
    
    \path[red] (b'1) edge (a1);
    \path[red] (a3) edge (b'3);
    \path[red] (b4) edge (a'4);
    \path[red] (b6) edge (a'6); 

    \path[blue] (a*) edge (b*);
    \path[blue] (hb'1) edge (a*);
    \path[blue] (hb'3) edge (a*);
    \path[blue] (ha'4) edge (b*);
    \path[blue] (ha'6) edge (b*);
\end{scope}
\end{tikzpicture}    
\end{minipage}
\begin{minipage}[b]{0.49\textwidth}
\centering
\begin{tikzpicture}[scale=0.75]
\begin{scope}[every node/.style={circle,draw,minimum size=0.65cm,inner sep = 2}]
    \node[red,label={above:\textcolor{red}{$\A$}}] (a1) at (2,4) {$a_1$};
    \node[red,label={left:\textcolor{red}{$\A$}}] (b2) at (2,2.5) {$b_2$};
    \node[red,label={below:\textcolor{red}{$\A$}}] (a3) at (2,1) {$a_3$};
    \node[red,label={below:\textcolor{red}{$\A$}}] (b4) at (4,1) {$b_4$};
    \node[red,label={right:\textcolor{red}{$\I$}}] (a5) at (4,2.5) {$a_5$};
    \node[red,label={above:\textcolor{red}{$\A$}}] (b6) at (4,4) {$b_6$};
    
    \node (b'1) at (0,5) {$b'_1$};
    \node (b'3) at (0,0) {$b'_3$};
    \node (a'4) at (6,0) {$a'_4$};
    \node (a'6) at (6,5) {$a'_6$};

    \node[draw=none] (arrow) at (3,-0.5) {$\big\updownarrow$};

    \node[blue,label={below:\textcolor{blue}{$\A$}}] (a*) at (2,-2) {$a^*$};
    \node[blue,label={below:\textcolor{blue}{$\A$}}] (b*) at (4,-2) {$b^*$};
    \node (hb'1) at (0,-1) {$b'_1$};
    \node (hb'3) at (0,-3) {$b'_3$};
    \node (ha'4) at (6,-3) {$a'_4$};
    \node (ha'6) at (6,-1) {$a'_6$};
\end{scope}

\begin{scope}[every edge/.style={draw,minimum width = 0.04cm}]
    \path[red] (a1) edge (b2);
    \path[red] (b2) edge (a3);
    \path[red] (a3) edge[->-] (b4);
    \path[red] (b4) edge (a5);
    \path[red] (a5) edge (b6);
    \path[red] (a1) edge[->-] (b6);
    \path[red] (a5) edge (b2);
    
    \path[red] (b'1) edge (a1);
    \path[red] (a3) edge (b'3);
    \path[red] (b4) edge (a'4);
    \path[red] (b6) edge (a'6); 

    \path[blue] (a*) edge[->-] (b*);
    \path[blue] (hb'1) edge (a*);
    \path[blue] (hb'3) edge (a*);
    \path[blue] (ha'4) edge (b*);
    \path[blue] (ha'6) edge (b*);
\end{scope}
\end{tikzpicture}    
\end{minipage}

\begin{minipage}[b]{0.49\textwidth}
\centering
\begin{tikzpicture}[scale=0.75]
\begin{scope}[every node/.style={circle,draw,minimum size=0.65cm,inner sep = 2}]
    \node[red,label={above:\textcolor{red}{$\A$}}] (a1) at (2,4) {$a_1$};
    \node[red,label={left:\textcolor{red}{$\A$}}] (b2) at (2,2.5) {$b_2$};
    \node[red,label={below:\textcolor{red}{$\A$}}] (a3) at (2,1) {$a_3$};
    \node[red,label={below:\textcolor{red}{$\A$}}] (b4) at (4,1) {$b_4$};
    \node[red,label={right:\textcolor{red}{$\I$}}] (a5) at (4,2.5) {$a_5$};
    \node[red,label={above:\textcolor{red}{$\A$}}] (b6) at (4,4) {$b_6$};
    
    \node (b'1) at (0,5) {$b'_1$};
    \node (b'3) at (0,0) {$b'_3$};
    \node (a'4) at (6,0) {$a'_4$};
    \node (a'6) at (6,5) {$a'_6$};

    \node[draw=none] (arrow) at (3,-0.5) {$\big\updownarrow$};

    \node[blue,label={below:\textcolor{blue}{$\A$}}] (a*) at (2,-2) {$a^*$};
    \node[blue,label={below:\textcolor{blue}{$\A$}}] (b*) at (4,-2) {$b^*$};
    \node (hb'1) at (0,-1) {$b'_1$};
    \node (hb'3) at (0,-3) {$b'_3$};
    \node (ha'4) at (6,-3) {$a'_4$};
    \node (ha'6) at (6,-1) {$a'_6$};
\end{scope}

\begin{scope}[every edge/.style={draw,minimum width = 0.04cm}]
    \path[red] (a1) edge (b2);
    \path[red] (b2) edge (a3);
    \path[red] (b4) edge[->-] (a3);
    \path[red] (b4) edge (a5);
    \path[red] (a5) edge (b6);
    \path[red] (a1) edge[->-] (b6);
    \path[red] (a5) edge[->-] (b2);
    
    \path[red] (b'1) edge (a1);
    \path[red] (a3) edge (b'3);
    \path[red] (b4) edge (a'4);
    \path[red] (b6) edge (a'6); 

    \path[blue] (b*) edge[->-] (a*);
    \path[blue] (hb'1) edge (a*);
    \path[blue] (hb'3) edge (a*);
    \path[blue] (ha'4) edge (b*);
    \path[blue] (ha'6) edge (b*);
\end{scope}
\end{tikzpicture}  

There are no $\A$-paths between $a'_6$ and $b'_1$, $b'_3$, or $a'_4$ in $G-\{a_1,b_2,a_3,b_4,a_5,b_6\}$.
\end{minipage}
\begin{minipage}[b]{0.49\textwidth}
\centering
\begin{tikzpicture}[scale=0.75]
\begin{scope}[every node/.style={circle,draw,minimum size=0.65cm,inner sep = 2}]
    \node[red,label={above:\textcolor{red}{$\A$}}] (a1) at (2,4) {$a_1$};
    \node[red,label={left:\textcolor{red}{$\A$}}] (b2) at (2,2.5) {$b_2$};
    \node[red,label={below:\textcolor{red}{$\A$}}] (a3) at (2,1) {$a_3$};
    \node[red,label={below:\textcolor{red}{$\A$}}] (b4) at (4,1) {$b_4$};
    \node[red,label={right:\textcolor{red}{$\A$}}] (a5) at (4,2.5) {$a_5$};
    \node[red,label={above:\textcolor{red}{$\I$}}] (b6) at (4,4) {$b_6$};
    
    \node (b'1) at (0,5) {$b'_1$};
    \node (b'3) at (0,0) {$b'_3$};
    \node[label={left:\textcolor{blue}{$\A$}}] (a'4) at (6,0) {$a'_4$};
    \node[label={left:\textcolor{blue}{$\A$}}] (a'6) at (6,5) {$a'_6$};

    \node[draw=none] (arrow) at (3,-0.5) {$\big\updownarrow$};

    \node[blue,label={below:\textcolor{blue}{$\A$}}] (a*) at (2,-2) {$a^*$};
    \node[blue,label={below:\textcolor{blue}{$\A$}}] (b*) at (4,-2) {$b^*$};
    \node (hb'1) at (0,-1) {$b'_1$};
    \node (hb'3) at (0,-3) {$b'_3$};
    \node[label={left:\textcolor{blue}{$\A$}}] (ha'4) at (6,-3) {$a'_4$};
    \node[label={left:\textcolor{blue}{$\A$}}] (ha'6) at (6,-1) {$a'_6$};
\end{scope}

\begin{scope}[every edge/.style={draw,minimum width = 0.04cm}]
    \path[red] (a1) edge (b2);
    \path[red] (b2) edge (a3);
    \path[red] (b4) edge[->-] (a3);
    \path[red] (b4) edge (a5);
    \path[red] (a5) edge (b6);
    \path[red] (a1) edge[->-] (b6);
    \path[red] (a5) edge[->-] (b2);
    
    \path[red] (b'1) edge (a1);
    \path[red] (a3) edge (b'3);
    \path[red] (b4) edge (a'4);
    \path[red] (b6) edge (a'6); 

    \path[blue] (b*) edge[->-] (a*);
    \path[blue] (hb'1) edge (a*);
    \path[blue] (hb'3) edge (a*);
    \path[blue] (ha'4) edge (b*);
    \path[blue] (ha'6) edge (b*);
\end{scope}
\end{tikzpicture}

There is an $\A$-path between $a'_6$ and $b'_1$, $b'_3$, or $a'_4$ in $G-\{a_1,b_2,a_3,b_4,a_5,b_6\}$.
\end{minipage}
\caption{Case 2.}
\label{fig:lem:2C4-2}
\end{subfigure}
\caption{\Cref{lem:2C4}.}
\label{fig:lem:2C4}
\end{figure}

\Cref{lem:2C4} is useful to prove that if there exists a 4-cycle in $G$, then it cannot be separating.

\begin{lemma}\label{lem:separating}
There are no separating $4$-cycles in $G$.
\end{lemma}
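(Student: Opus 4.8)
The plan is to exploit the planar separation provided by $C$. Suppose for contradiction that $G$ has a separating $4$-cycle $C=a_0b_1a_2b_3$. By \Cref{lem:2vC4} none of its vertices is a $2$-vertex, so all four have degree exactly $3$ and hence exactly one \emph{leaving edge} off $C$. Fix a plane embedding; then $C$ bounds an interior disk and an exterior region, and since $C$ is separating both contain vertices of $G$. The first step is to record the attachment structure: if only one vertex of $C$ sent its leaving edge to the interior, that vertex would be a cut-vertex of $G$, contradicting $2$-vertex-connectivity; so at least two leaving edges point inside and, symmetrically, at least two point outside. As there are only four, exactly two go each way, and the two inner-attached vertices form a $2$-cut separating the interior from the rest. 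Up to relabelling this gives two cases: the inner pair is $\{a_0,a_2\}$ (an antipodal, non-adjacent $2$-cut, with both vertices in the same part) or it is $\{a_0,b_1\}$ (an adjacent pair, i.e. an edge of $C$).

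Next I would split $G$ along $C$. Let $G_{\mathrm{in}}=G[\,V(C)\cup\mathrm{int}\,]$ and $G_{\mathrm{out}}=G[\,V(C)\cup\mathrm{ext}\,]$, so that $G_{\mathrm{in}}\cap G_{\mathrm{out}}=C$ and $G_{\mathrm{in}}\cup G_{\mathrm{out}}=G$. Both are planar, bipartite, subcubic and oriented, and both are strictly smaller than $G$ since each omits the (non-empty) strict part of the other side. The point to check is that both lie in $\F$, i.e. are $2$-vertex-connected: relative to $C$, every connected component of $G\setminus V(C)$ lying inside attaches to $C$ in at least two vertices (otherwise its unique attachment is a cut-vertex of $G$), and these attachments are exactly the two inner-attached vertices; adding these interior components to the cycle $C$ therefore keeps the graph $2$-connected, and symmetrically for the exterior. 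By minimality of $G$, both $G_{\mathrm{in}}$ and $G_{\mathrm{out}}$ admit \CAI-partitions $(\A_1,\I_1)$ and $(\A_2,\I_2)$.

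Finally I would merge these two partitions into a \CAI-partition of $G$, and this reconciliation along $C$ is the main obstacle. On a $4$-cycle, in any \CAI-partition the independent part meets $C$ in an independent set, so each $\I_i\cap V(C)$ is a single vertex or an antipodal pair and $\A_i$ cannot contain all of $C$; moreover, when $\A_i\cap V(C)$ is an antipodal pair, those two vertices are joined by an $\A_i$-path through the corresponding side. The difficulty is that $(\A_1,\I_1)$ and $(\A_2,\I_2)$ are produced independently and may disagree on $V(C)$, while re-assigning a vertex of $C$ can simultaneously destroy independence of $\I$ and create an $\A$-cycle (promoting a $v\in\I_1\cap V(C)$ into $\A$ links its two neighbours, which may already be $\A$-connected through one side). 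I would therefore argue by a case analysis on the partition type of $V(C)$ in each piece, in the spirit of \Cref{lem:2C4}: use the $2$-cut to localise directed $\A$-cycles (any such cycle must traverse both cut-vertices), use the guaranteed $\A_i$-paths to restore connectivity of $\A$, and, when one side must be re-encoded, replace its contribution by a short bipartite gadget joining the two cut-vertices rather than by a naive identification. Such an identification would create a digon, since the two cut-vertices share a neighbour on $C$ (exactly the kind of obstruction avoided via \Cref{lem:3C4} in \Cref{lem:2C4}), and in the antipodal case would additionally break bipartiteness as the cut-vertices lie in the same part. The adjacent case is lighter, the two cut-vertices being already joined by an edge of $C$; the antipodal case, where the bipartite and acyclicity constraints interact most, is the crux and is where I expect the bulk of the work.
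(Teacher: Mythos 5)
Your structural setup is sound and matches the paper's: by \Cref{lem:2vC4} every vertex of $C$ has degree $3$, exactly two of the four leaving edges attach to the interior and two to the exterior, and your two cases (adjacent attachment pair vs.\ antipodal attachment pair) are precisely the paper's Case 1 and Case 2. Your verification that $G_{\mathrm{in}}$ and $G_{\mathrm{out}}$ lie in $\F$ is also correct. But the proof stops exactly where the content of the lemma begins: the merge of the two \CAI-partitions. You acknowledge that $(\A_1,\I_1)$ and $(\A_2,\I_2)$ may disagree on $V(C)$, and you defer the resolution to an unspecified case analysis and a ``short bipartite gadget'', explicitly labelling the antipodal case as ``the bulk of the work''. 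That deferred step is the whole difficulty: a directed cycle of $G$ can weave between interior and exterior through vertices of $C$, so acyclicity of $\A_1$ inside $G_{\mathrm{in}}$ and of $\A_2$ inside $G_{\mathrm{out}}$ gives no control over directed cycles of $\A_1\cup\A_2$ in $G$; to rule these out one needs to know which directed $\A$-paths between the attachment vertices exist in each side, information that two independently chosen partitions do not coordinate. Moreover, the one concrete claim you do make about the merge is false: in a \CAI-partition of $G_{\mathrm{in}}$ the set $\I_1\cap V(C)$ can be empty, i.e.\ $\A_1$ can contain all of $C$, whenever $C$ is not a directed cycle (acyclicity for oriented graphs only forbids directed cycles); the paper itself exploits this in its Case 1, where it places all four vertices of a non-directed $C$ into $\A$.

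The paper circumvents the reconciliation problem with a different decomposition: it deletes $C$ entirely and adds oriented arcs between the attachment vertices --- $\ora{b'_0a'_3}$ and $\ora{b'_2a'_1}$ in its Case 1, $\ora{a'_3b'_0}$ and $\ora{b'_2a'_1}$ in its Case 2, with orientations chosen according to the orientation of $C$ (and, in one subcase of Case 1, it instead uses two separate graphs $G[S_1]+\ora{b'_0a'_1}$ and $G[S_2]+\ora{a'_3b'_2}$). These added arcs encode directed reachability through the deleted cycle, so that any directed $\A$-cycle arising when the vertices of $C$ are reinserted pulls back to a directed $\A$-cycle in the smaller graph $H\in\F$, contradicting its \CAI-partition; only one partition has to be extended, never two reconciled. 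To salvage your approach, your ``gadget'' would have to play exactly this role and you would need to handle all orientation subcases of $C$ --- which amounts to reproving the paper's argument, so as written the proposal has a genuine gap rather than an alternative proof.
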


\begin{proof}
Suppose by contradiction that $G$ contains a separating $4$-cycle $C=a_0b_1a_2b_3$. Observe that $G-\{a_0,b_1,a_2,b_3\}$ has exactly two connected components since $G$ is subcubic and $2$-vertex-connected. Let $S_1$ and $S_2$ be the set of vertices of those two connected components. Let $b'_0,a'_1,b'_2,a'_3$ be the neighbors of $a_0,b_1,a_2,b_3$ respectively. See~\Cref{fig:lem:separating}. Since $G$ is $2$-vertex-connected, exactly two of $\{b'_0,a'_1,b'_2,a'_3\}$ are in the same component. Thus, w.l.o.g. we have the two cases below. By~\Cref{lem:2C4}, there are no edges between $b'_0$ and $a'_3$, between $b'_2$ and $a'_1$, between $b'_0$ and $a'_1$, and between $a'_3$ and $b'_2$. Therefore, the graphs that will be defined below are well-defined.

\textbf{Case 1:} $b'_0,a'_1\in S_1$ and $b'_2,a'_3\in S_2$. See~\Cref{fig:lem:separating-1}.\\
W.l.o.g. we assume $\ora{a_0b_1}$ is an arc in $G$.
\begin{itemize}
 \item Suppose that we have $\ora{a_2b_3}$ in $G$. Let $H=G-\{a_0,b_1,a_2,b_3\}+\ora{b'_0a'_3}+\ora{b'_2a'_1}$. Observe that $H\in\F$ since $C$ is separating in $G$. Let $(\A,\I)$ be a \CAI-partition of $H$. Since $\{\ora{b'_0a'_3},\ora{b'_2a'_1}\}$ is an edge-cut in $H$ and since $(\A,\I)$ is a \CAI-partition of $H$, there can be at most one vertex from $\{b'_0,a'_1,b'_2,a'_3\}$ in $\I$. Therefore, we distinguish two cases.
 \begin{itemize}
 \item Suppose w.l.o.g. that $b'_0\in\I$. In this case, $(\A\cup\{a_0,b_1,a_2\},\I\cup\{b_3\})$ is a \CAI-partition of $G$.
 \item Suppose that $\{b'_0,a'_1,b'_2,a'_3\}\subseteq\A$.
 Since $\A$ is connected, there must be an $\A$-path between $b'_0$ and $a'_1$ or between $a'_3$ and $b'_2$.
 Since neither $(\A\cup\{b_1,a_2,b_3\},\I\cup\{a_0\})$ nor $(\A\cup\{a_0,b_1,b_3\},\I\cup\{a_2\})$ are decompositions of $G$ and $C$ is a separating cycle of $G$, there must be a directed $\A$-path $\ora{P_2}$ from $a'_3$ to $b'_2$ in $S_2$ and a directed $\A$-path $\ora{P_1}$ from $a'_1$ to $b'_0$ in $S_1$. However, this is impossible because $\ora{P_1}\ora{b'_0a'_3}\ora{P_2}\ora{b'_2a'_1}$ is then a directed $\A$-cycle in $H$.
 \end{itemize}

 \item Suppose that we have $\ora{b_3a_2}$ in $G$. Let $H_1=G[S_1]+\ora{b'_0a'_1}$ and $H_2=G[S_2]+\ora{a'_3b'_2}$ be the two connected components of $G-\{a_0,b_1,a_2,b_3\}+\{\ora{b'_0a'_1}, \ora{a'_3b'_2}\}$. Observe that $H_1$ and $H_2$ are in $\F$. Let $(\A_i,\I_i)$ be a \CAI-partition of $H_i$, for $i\in\{1,2\}$. We claim that $(\A,\I)=(\A_1\cup\A_2\cup\{a_0,b_1,a_2,b_3\},\I_1\cup\I_2)$ is a \CAI-partition of $G$. Indeed, $C$ is not a directed cycle, $\A$ is connected, and any potential directed $\A$-cycle in $G$, would lead to a directed $\A_1$-cycle (resp. $\A_2$-cycle) in $H_1$ (resp. $H_2$) passing through the arc $\ora{b'_0a'_1}$ (resp. $\ora{a'_3b'_2}$).\\
\end{itemize}

\textbf{Case 2:} $b'_0,b'_2\in S_1$ and $a'_1,a'_3\in S_2$. See~\Cref{fig:lem:separating-2}.\\ 
Let $H=G-\{a_0,b_1,a_2,b_3\}+\ora{a'_3b'_0}+\ora{b'_2a'_1}$. Observe that $H\in\F$ since $C$ is separating in $G$. Let $(\A,\I)$ be a \CAI-partition of $H$. Since $\{\ora{a'_3b'_0},\ora{b'_2a'_1}\}$ is a cut in $H$ and $(\A,\I)$ is a \CAI-partition of $H$, there can be at most one vertex from $\{b'_0,a'_1,b'_2,a'_3\}$ in $\I$. Therefore, we distinguish two cases.
\begin{itemize}
 \item Suppose w.l.o.g. that $b'_0\in\I$. In this case, $(\A\cup\{a_0,b_1,a_2\},\I\cup\{b_3\})$ is a \CAI-partition of $G$.
 \item Suppose that $\{b'_0,a'_1,b'_2,a'_3\}\subseteq\A$. Since $\A$ is connected, suppose w.l.o.g. that there exists an $\A$-path between $b'_0$ and $b'_2$. Since $(\A\cup\{b_1,a_2,b_3\},\I\cup\{a_0\})$ and $(\A\cup\{a_0,b_1,b_3\},\I\cup\{a_2\})$ are not decompositions of $G$ and $C$ is a separating cycle of $G$, there must by a directed cycle in $(\A\cap S_2)\cup\{b_1,a_2,b_3\}$ and $(\A\cap S_2)\cup\{b_1,a_0,b_3\}$. Hence $b_1$ is either a source or a sink in the cycle $C$. Therefore, $(A\cup\{a_0,b_1,a_2\},\I\cup\{b_3\})$ is a \CAI-partition of $G$. \qedhere
\end{itemize}
\end{proof}

\begin{figure}[!htbp]
\centering
\begin{subfigure}[b]{\textwidth}
\centering
\begin{minipage}[b]{0.19\textwidth}
\centering
\begin{tikzpicture}[scale=0.67]
\begin{scope}[every node/.style={circle,draw,minimum size=0.65cm,inner sep = 2}]
    \node[red,label={below:\textcolor{red}{$\A$}}] (a0) at (1,1) {$a_0$};
    \node[red,label={below:\textcolor{red}{$\A$}}] (b1) at (3,1) {$b_1$};
    \node[red,label={above:\textcolor{red}{$\A$}}] (a2) at (3,2.5) {$a_2$};
    \node[red,label={above:\textcolor{red}{$\I$}}] (b3) at (1,2.5) {$b_3$};
    
    \node[label={below:\textcolor{blue}{$\I$}}] (b'0) at (0,0) {$b'_0$};
    \node[label={below:\textcolor{blue}{$\A$}}] (a'1) at (4,0) {$a'_1$};
    \node[label={above:\textcolor{blue}{$\A$}}] (b'2) at (4,3.5) {$b'_2$};
    \node[label={above:\textcolor{blue}{$\A$}}] (a'3) at (0,3.5) {$a'_3$};
\end{scope}

\begin{scope}[every edge/.style={draw,minimum width = 0.04cm}]
    \path[red] (a0) edge[->-] (b1);
    \path[red] (b1) edge (a2);
    \path[red] (a2) edge[->-] (b3);
    \path[red] (b3) edge (a0);
    
    \path[red] (a0) edge (b'0);
    \path[red] (b1) edge (a'1);
    \path[red] (b'2) edge (a2);
    \path[red] (b3) edge (a'3);

    \path[blue] (b'2) edge[->-] (a'1);
    \path[blue] (b'0) edge[->-] (a'3);

    % \path (a'3) edge[bend left,decorate] node[above,label={above:\textcolor{blue}{$\A$}}] {} (b'2);
\end{scope}
\end{tikzpicture}

~\\
~\\
~
\end{minipage}
\begin{minipage}[b]{0.19\textwidth}
\centering
\begin{tikzpicture}[scale=0.67]
\begin{scope}[every node/.style={circle,draw,minimum size=0.65cm,inner sep = 2}]
    \node[red,label={below:\textcolor{red}{$\I$}}] (a0) at (1,1) {$a_0$};
    \node[red,label={below:\textcolor{red}{$\A$}}] (b1) at (3,1) {$b_1$};
    \node[red,label={above:\textcolor{red}{$\A$}}] (a2) at (3,2.5) {$a_2$};
    \node[red,label={above:\textcolor{red}{$\A$}}] (b3) at (1,2.5) {$b_3$};
    
    \node[label={below:\textcolor{blue}{$\A$}}] (b'0) at (0,0) {$b'_0$};
    \node[label={below:\textcolor{blue}{$\A$}}] (a'1) at (4,0) {$a'_1$};
    \node[label={above:\textcolor{blue}{$\A$}}] (b'2) at (4,3.5) {$b'_2$};
    \node[label={above:\textcolor{blue}{$\A$}}] (a'3) at (0,3.5) {$a'_3$};
\end{scope}

\begin{scope}[every edge/.style={draw,minimum width = 0.04cm}]
    \path[red] (a0) edge[->-] (b1);
    \path[red] (b1) edge (a2);
    \path[red] (a2) edge[->-] (b3);
    \path[red] (b3) edge (a0);
    
    \path[red] (a0) edge (b'0);
    \path[red] (b1) edge (a'1);
    \path[red] (b'2) edge (a2);
    \path[red] (b3) edge (a'3);

    \path[blue] (b'2) edge[->-] (a'1);
    \path[blue] (b'0) edge[->-] (a'3);

    \path (a'1) edge[bend left,decorate] (b'0);
\end{scope}
\end{tikzpicture}

No directed $\A$-path from $a'_3$ to $b'_2$ in $G-C$.
\end{minipage}
\begin{minipage}[b]{0.19\textwidth}
\centering
\begin{tikzpicture}[scale=0.67]
\begin{scope}[every node/.style={circle,draw,minimum size=0.65cm,inner sep = 2}]
    \node[red,label={below:\textcolor{red}{$\A$}}] (a0) at (1,1) {$a_0$};
    \node[red,label={below:\textcolor{red}{$\A$}}] (b1) at (3,1) {$b_1$};
    \node[red,label={above:\textcolor{red}{$\I$}}] (a2) at (3,2.5) {$a_2$};
    \node[red,label={above:\textcolor{red}{$\A$}}] (b3) at (1,2.5) {$b_3$};
    
    \node[label={below:\textcolor{blue}{$\A$}}] (b'0) at (0,0) {$b'_0$};
    \node[label={below:\textcolor{blue}{$\A$}}] (a'1) at (4,0) {$a'_1$};
    \node[label={above:\textcolor{blue}{$\A$}}] (b'2) at (4,3.5) {$b'_2$};
    \node[label={above:\textcolor{blue}{$\A$}}] (a'3) at (0,3.5) {$a'_3$};
\end{scope}

\begin{scope}[every edge/.style={draw,minimum width = 0.04cm}]
    \path[red] (a0) edge[->-] (b1);
    \path[red] (b1) edge (a2);
    \path[red] (a2) edge[->-] (b3);
    \path[red] (b3) edge (a0);
    
    \path[red] (a0) edge (b'0);
    \path[red] (b1) edge (a'1);
    \path[red] (b'2) edge (a2);
    \path[red] (b3) edge (a'3);

    \path[blue] (b'2) edge[->-] (a'1);
    \path[blue] (b'0) edge[->-] (a'3);

    \path (a'3) edge[bend left,decorate] (b'2);
\end{scope}
\end{tikzpicture}

No directed $\A$-path from $a'_1$ to $b'_0$ in $G-C$.
\end{minipage}
\begin{minipage}[b]{0.19\textwidth}
\centering
\begin{tikzpicture}[scale=0.67]
\begin{scope}[every node/.style={circle,draw,minimum size=0.65cm,inner sep = 2}]
    \node[red] (a0) at (1,1) {$a_0$};
    \node[red] (b1) at (3,1) {$b_1$};
    \node[red] (a2) at (3,2.5) {$a_2$};
    \node[red] (b3) at (1,2.5) {$b_3$};
    
    \node[label={below:\textcolor{blue}{$\A$}}] (b'0) at (0,0) {$b'_0$};
    \node[label={below:\textcolor{blue}{$\A$}}] (a'1) at (4,0) {$a'_1$};
    \node[label={above:\textcolor{blue}{$\A$}}] (b'2) at (4,3.5) {$b'_2$};
    \node[label={above:\textcolor{blue}{$\A$}}] (a'3) at (0,3.5) {$a'_3$};
\end{scope}

\begin{scope}[every edge/.style={draw,minimum width = 0.04cm}]
    \path[red] (a0) edge[->-] (b1);
    \path[red] (b1) edge (a2);
    \path[red] (a2) edge[->-] (b3);
    \path[red] (b3) edge (a0);
    
    \path[red] (a0) edge (b'0);
    \path[red] (b1) edge (a'1);
    \path[red] (b'2) edge (a2);
    \path[red] (b3) edge (a'3);

    \path[blue] (b'2) edge[->-] (a'1);
    \path[blue] (b'0) edge[->-] (a'3);

    \path[->] (a'1) edge[bend left,decorate] (b'0);
    \path[->] (a'3) edge[bend left,decorate] (b'2);
\end{scope}
\end{tikzpicture}

Impossible.\\
~\\
~
\end{minipage}
\begin{minipage}[b]{0.19\textwidth}
\centering
\begin{tikzpicture}[scale=0.67]
\begin{scope}[every node/.style={circle,draw,minimum size=0.65cm,inner sep = 2}]
    \node[red,label={below:\textcolor{red}{$\A$}}] (a0) at (1,1) {$a_0$};
    \node[red,label={below:\textcolor{red}{$\A$}}] (b1) at (3,1) {$b_1$};
    \node[red,label={above:\textcolor{red}{$\A$}}] (a2) at (3,2.5) {$a_2$};
    \node[red,label={above:\textcolor{red}{$\A$}}] (b3) at (1,2.5) {$b_3$};
    
    \node (b'0) at (0,0) {$b'_0$};
    \node (a'1) at (4,0) {$a'_1$};
    \node (b'2) at (4,3.5) {$b'_2$};
    \node (a'3) at (0,3.5) {$a'_3$};
\end{scope}

\begin{scope}[every edge/.style={draw,minimum width = 0.04cm}]
    \path[red] (a0) edge[->-] (b1);
    \path[red] (b1) edge (a2);
    \path[red] (b3) edge[->-] (a2);
    \path[red] (b3) edge (a0);
    
    \path[red] (a0) edge (b'0);
    \path[red] (b1) edge (a'1);
    \path[red] (a2) edge (b'2);
    \path[red] (b3) edge (a'3);

    \path[blue] (a'3) edge[->-,bend left] (b'2);
    \path[blue] (b'0) edge[->-,bend right] (a'1);
\end{scope}
\end{tikzpicture}

~\\
~\\
~\\
~
\end{minipage}
\caption{\label{fig:lem:separating-1}Case 1.}    
\end{subfigure}

\begin{subfigure}[b]{0.99\textwidth}
\centering
\begin{minipage}[b]{0.24\textwidth}
\centering
\begin{tikzpicture}[scale=0.6]
\begin{scope}[every node/.style={circle,draw,minimum size=0.65cm,inner sep = 2}]
    \node[red,label={below:\textcolor{red}{$\A$}}] (a0) at (0,0) {$a_0$};
    \node[red,label={below:\textcolor{red}{$\A$}}] (b1) at (4.5,0) {$b_1$};
    \node[red,label={above:\textcolor{red}{$\A$}}] (a2) at (4.5,4.5) {$a_2$};
    \node[red,label={above:\textcolor{red}{$\I$}}] (b3) at (0,4.5) {$b_3$};
    
    \node[label={below:\textcolor{blue}{$\I$}}] (b'0) at (1.5,1.5) {$b'_0$};
    \node[label={left:\textcolor{blue}{$\A$}}] (a'1) at (3,-1.5) {$a'_1$};
    \node[label={above:\textcolor{blue}{$\A$}}] (b'2) at (3,3) {$b'_2$};
    \node[label={right:\textcolor{blue}{$\A$}}] (a'3) at (1.5,6) {$a'_3$};
\end{scope}

\begin{scope}[every edge/.style={draw,minimum width = 0.04cm}]
    \path[red] (a0) edge (b1);
    \path[red] (b1) edge (a2);
    \path[red] (b3) edge (a2);
    \path[red] (b3) edge (a0);
    
    \path[red] (a0) edge (b'0);
    \path[red] (b1) edge (a'1);
    \path[red] (a2) edge (b'2);
    \path[red] (b3) edge (a'3);

    \path[blue] (a'3) edge[->-] (b'0);
    \path[blue] (b'2) edge[->-] (a'1);
\end{scope}
\end{tikzpicture}

~\\
~
\end{minipage}
\begin{minipage}[b]{0.24\textwidth}
\centering
\begin{tikzpicture}[scale=0.6]
\begin{scope}[every node/.style={circle,draw,minimum size=0.65cm,inner sep = 2}]
    \node[red,label={below:\textcolor{red}{$\I$}}] (a0) at (0,0) {$a_0$};
    \node[red,label={below:\textcolor{red}{$\A$}}] (b1) at (4.5,0) {$b_1$};
    \node[red,label={above:\textcolor{red}{$\A$}}] (a2) at (4.5,4.5) {$a_2$};
    \node[red,label={above:\textcolor{red}{$\A$}}] (b3) at (0,4.5) {$b_3$};
    
    \node[label={below:\textcolor{blue}{$\A$}}] (b'0) at (1.5,1.5) {$b'_0$};
    \node[label={left:\textcolor{blue}{$\A$}}] (a'1) at (3,-1.5) {$a'_1$};
    \node[label={above:\textcolor{blue}{$\A$}}] (b'2) at (3,3) {$b'_2$};
    \node[label={right:\textcolor{blue}{$\A$}}] (a'3) at (1.5,6) {$a'_3$};
\end{scope}

\begin{scope}[every edge/.style={draw,minimum width = 0.04cm}]
    \path[red] (a0) edge (b1);
    \path[red] (b1) edge (a2);
    \path[red] (b3) edge (a2);
    \path[red] (b3) edge (a0);
    
    \path[red] (a0) edge (b'0);
    \path[red] (b1) edge (a'1);
    \path[red] (a2) edge (b'2);
    \path[red] (b3) edge (a'3);

    \path[blue] (a'3) edge[->-] (b'0);
    \path[blue] (b'2) edge[->-] (a'1);

    \path (b'0) edge[decorate] (b'2);
\end{scope}
\end{tikzpicture}

No directed cycle in $(\A\cap S_2)\cup\{b_1,a_2,b_3\}$.
\end{minipage}
\begin{minipage}[b]{0.24\textwidth}
\centering
\begin{tikzpicture}[scale=0.6]
\begin{scope}[every node/.style={circle,draw,minimum size=0.65cm,inner sep = 2}]
    \node[red,label={below:\textcolor{red}{$\A$}}] (a0) at (0,0) {$a_0$};
    \node[red,label={below:\textcolor{red}{$\A$}}] (b1) at (4.5,0) {$b_1$};
    \node[red,label={above:\textcolor{red}{$\I$}}] (a2) at (4.5,4.5) {$a_2$};
    \node[red,label={above:\textcolor{red}{$\A$}}] (b3) at (0,4.5) {$b_3$};
    
    \node[label={below:\textcolor{blue}{$\A$}}] (b'0) at (1.5,1.5) {$b'_0$};
    \node[label={left:\textcolor{blue}{$\A$}}] (a'1) at (3,-1.5) {$a'_1$};
    \node[label={above:\textcolor{blue}{$\A$}}] (b'2) at (3,3) {$b'_2$};
    \node[label={right:\textcolor{blue}{$\A$}}] (a'3) at (1.5,6) {$a'_3$};
\end{scope}

\begin{scope}[every edge/.style={draw,minimum width = 0.04cm}]
    \path[red] (a0) edge (b1);
    \path[red] (b1) edge (a2);
    \path[red] (b3) edge (a2);
    \path[red] (b3) edge (a0);
    
    \path[red] (a0) edge (b'0);
    \path[red] (b1) edge (a'1);
    \path[red] (a2) edge (b'2);
    \path[red] (b3) edge (a'3);

    \path[blue] (a'3) edge[->-] (b'0);
    \path[blue] (b'2) edge[->-] (a'1);

    \path (b'0) edge[decorate] (b'2);
\end{scope}
\end{tikzpicture}

No directed cycle in $(\A\cap S_2)\cup\{a_0,b_1,b_3\}$.
\end{minipage}
\begin{minipage}[b]{0.24\textwidth}
\centering
\begin{tikzpicture}[scale=0.6]
\begin{scope}[every node/.style={circle,draw,minimum size=0.65cm,inner sep = 2}]
    \node[red,label={below:\textcolor{red}{$\A$}}] (a0) at (0,0) {$a_0$};
    \node[red,label={below:\textcolor{red}{$\A$}}] (b1) at (4.5,0) {$b_1$};
    \node[red,label={above:\textcolor{red}{$\A$}}] (a2) at (4.5,4.5) {$a_2$};
    \node[red,label={above:\textcolor{red}{$\I$}}] (b3) at (0,4.5) {$b_3$};
    
    \node[label={below:\textcolor{blue}{$\A$}}] (b'0) at (1.5,1.5) {$b'_0$};
    \node[label={left:\textcolor{blue}{$\A$}}] (a'1) at (3,-1.5) {$a'_1$};
    \node[label={above:\textcolor{blue}{$\A$}}] (b'2) at (3,3) {$b'_2$};
    \node[label={right:\textcolor{blue}{$\A$}}] (a'3) at (1.5,6) {$a'_3$};
\end{scope}

\begin{scope}[every edge/.style={draw,minimum width = 0.04cm}]
    \path[red] (a0) edge[->-] (b1);
    \path[red] (a2) edge[->-] (b1);
    \path[red] (b3) edge[->-] (a2);
    \path[red] (b3) edge[->-] (a0);
    
    \path[red] (a0) edge (b'0);
    \path[red] (b1) edge[->-] (a'1);
    \path[red] (a2) edge (b'2);
    \path[red] (a'3) edge[->-] (b3);

    \path[blue] (a'3) edge[->-] (b'0);
    \path[blue] (b'2) edge[->-] (a'1);

    \path (b'0) edge[decorate] (b'2);
    \path (a'1) edge[decorate, out=-20, in=-90] (6,2.25);
    \path[->] (6,2.25) edge[decorate, out = 70, in=40] (a'3);
\end{scope}
\end{tikzpicture}

Example with $b_1$ being a sink in $C$.
\end{minipage}
\caption{\label{fig:lem:separating-2}Case 2.}
\end{subfigure}
\caption{\Cref{lem:separating}.}
\label{fig:lem:separating}
\end{figure}

Finally, we prove a stronger result than~\Cref{prop:reducible}\ref{itm:no-4-faces}.
\begin{lemma}\label{lem:C4}
There are no $4$-cycles in $G$.
\end{lemma}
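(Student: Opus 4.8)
The plan is to argue in the minimal counterexample $G$, assuming a $4$-cycle $C=a_0b_1a_2b_3$ exists and squeezing its local structure out of the preceding lemmas. By \Cref{lem:2vC4} each of $a_0,b_1,a_2,b_3$ has degree $3$; write $b'_0,a'_1,b'_2,a'_3$ for their respective unique neighbours off $C$. Using \Cref{lem:2C4} these four vertices are distinct and none of the four ``shortcut'' edges $b'_0a'_1,a'_1b'_2,b'_2a'_3,a'_3b'_0$ lies in $G$: for instance $b'_0a'_1\in E(G)$ would make $a_0b_1a'_1b'_0$ a $4$-cycle sharing the edge $a_0b_1$ with $C$, and $b'_0=b'_2$ would give a $4$-cycle sharing two edges with $C$. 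Finally, by \Cref{lem:separating} the cycle $C$ is non-separating, so one side of $C$ is vertex-free; since bipartiteness rules out the chords $a_0a_2$ and $b_1b_3$, that side is a face, $G-C$ is connected, and $b'_0,a'_1,b'_2,a'_3$ lie on a common face of $G-C$ in this cyclic order.

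The reduction I would use deletes two opposite edges of $C$. Put $H=G-\{b_1a_2,b_3a_0\}$; then the two surviving sides of $C$ become the internal paths $b'_0a_0b_1a'_1$ and $b'_2a_2b_3a'_3$, so $H$ is homeomorphic to $(G-C)+\{b'_0a'_1,b'_2a'_3\}$. Symmetrically $H'=G-\{a_0b_1,a_2b_3\}$ is homeomorphic to $(G-C)+\{a'_1b'_2,a'_3b'_0\}$. Both $H,H'$ are bipartite, planar, subcubic oriented subgraphs of $G$ with strictly smaller $|E|+|V|$; since the added edges are non-edges of $G$ (first paragraph) the two homeomorphic models are simple, so on them $2$-vertex-connectivity is equivalent to being bridgeless, a property of the homeomorphism type.

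The main obstacle is to show that at least one of $H,H'$ lies in $\F$, i.e.\ is $2$-connected. I would analyse the bridges of $G-C$: if $f$ is such a bridge, then because $G$ is $2$-edge-connected each of the two sides of $f$ must contain one of the terminals $b'_0,a'_1,b'_2,a'_3$, and because these lie on one common face in the cyclic order $b'_0,a'_1,b'_2,a'_3$, planarity forces the terminals to split into two contiguous arcs. A matching edge reconnects across $f$ exactly when it joins the two arcs, so $H$ can fail to be $2$-connected only if some bridge realises the split $\{b'_0,a'_1\}\mid\{b'_2,a'_3\}$, and $H'$ only if some bridge realises $\{a'_1,b'_2\}\mid\{a'_3,b'_0\}$. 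The key point is that these two ``opposite'' splits cannot be realised simultaneously: in the bridge-tree of $G-C$, a bridge giving the first split places the whole pair $\{b'_2,a'_3\}$ (or $\{b'_0,a'_1\}$) on one side of any bridge giving the second split, contradicting that the second split separates that very pair. Hence at least one of $H,H'$ is bridgeless, and so lies in $\F$.

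Finally, take a \CAI-partition $(\A,\I)$ of whichever of $H,H'$ lies in $\F$ and recover $G$ by re-inserting the two deleted edges. Following the scheme of this section, I would run a case analysis on the $\A/\I$-labels that $a_0,b_1,a_2,b_3$ inherit and on the orientations of the reinserted edges, using the two observations on attaching or detaching a vertex with a single $\A$-neighbour and tracking directed $\A$-paths between the terminals exactly as in \Cref{lem:2C4} and \Cref{lem:separating}. The only genuinely new phenomenon is that re-adding $b_1a_2$ and $b_3a_0$ closes the $4$-cycle $C$, creating two possible defects: a non-independent $\I$ (both endpoints of a reinserted edge in $\I$) and a directed $\A$-cycle running through $C$. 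Both are repaired by relabelling a bounded number of vertices of $C$; moving a single well-chosen vertex of $C$ into $\I$ destroys any directed $\A$-cycle through $C$ while preserving connectivity of $\A$, which yields the desired \CAI-partition of $G$ and the contradiction completing the proof.
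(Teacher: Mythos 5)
Your structural preparation is sound, and in one respect nicer than the paper's: the distinctness/non-adjacency of $b'_0,a'_1,b'_2,a'_3$ via \Cref{lem:2C4}, the use of \Cref{lem:separating} to get $G-C$ connected, and the bridge-tree argument that the splits $\{b'_0,a'_1\}\mid\{b'_2,a'_3\}$ and $\{a'_1,b'_2\}\mid\{a'_3,b'_0\}$ cannot both be realized (so one of $H,H'$ is bridgeless, hence $2$-connected) are all correct; the last is a clean alternative to the claim the paper proves by exhibiting a cut-vertex. Note, though, that the paper's construction is different in a way that turns out to matter: it deletes the four vertices of $C$ and adds two \emph{arcs} between the terminals, with orientations chosen according to the orientations of the arcs of $G$ around $C$, precisely so that every directed cycle of $G$ passing through $C$ projects to a directed cycle of $H$ through an added arc (\Cref{obs:H}). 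This transfer of obstructions is what kills the hard configurations. Your $H$ keeps $a_0,b_1,a_2,b_3$ as subdivision vertices and adds nothing, so no such mechanism is available.

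The last paragraph of your proposal — the actual extension of a \CAI-partition of $H$ to $G$ — is where the lemma's real content lies in this paper, and it is both not carried out and based on a false principle. Concretely, nothing prevents a \CAI-partition $(\A,\I)$ of your $H=G-\{b_1a_2,b_3a_0\}$ of the following form: all of $a_0,b_1,a_2,b_3,b'_0,a'_1,b'_2,a'_3$ lie in $\A$, the arcs of $H$ are $\ora{b'_0a_0}$, $\ora{a_0b_1}$, $\ora{a'_1b_1}$, $\ora{a_2b'_2}$, $\ora{a_2b_3}$, $\ora{b_3a'_3}$, there is a directed $\A$-path $P_1$ from $b'_2$ to $a'_1$ and a directed $\A$-path $P_2$ from $a'_3$ to $b'_0$ in $G-C$, and $\A$ is exactly the resulting undirected cycle $b'_0a_0b_1a'_1\,P_1\,b'_2a_2b_3a'_3\,P_2$. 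This is a legitimate \CAI-partition of $H$ ($b_1$ is a sink and $a_2$ a source, so $H[\A]$ has no directed cycle), but if the reinserted arcs are $\ora{b_1a_2}$ and $\ora{a_0b_3}$, then $G[\A]$ contains \emph{two vertex-disjoint} directed cycles: $b_1\to a_2\to b'_2\,P_1\,a'_1\to b_1$ and $a_0\to b_3\to a'_3\,P_2\,b'_0\to a_0$. No single vertex of $C$ lies on both, so your one-vertex repair fails; the only independent pairs of $C$-vertices meeting both cycles are $\{b_1,b_3\}$ and $\{a_0,a_2\}$, and moving either pair into $\I$ disconnects $\A$ (it cuts the undirected cycle into two arcs, and the reinserted edges are incident to the removed vertices, hence useless); one or zero vertices of $C$ in $\I$ leaves a directed cycle, and three or more violates independence. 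So \emph{no} relabelling confined to $C$ yields a \CAI-partition of $G$ in this configuration. In the paper's construction this configuration simply cannot arise, because $P_1$, $P_2$ together with the added arcs $\ora{b'_0a'_1}$, $\ora{a'_3b'_2}$ would already form a directed $\A$-cycle in $H$, contradicting acyclicity; your edge-deletion variant has no analogue of this, so the concluding case analysis cannot be "exactly as in" \Cref{lem:2C4} and \Cref{lem:separating}, and this constitutes a genuine gap.
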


\begin{proof}
Suppose by contradiction that $G$ contains a $4$-cycle $C=a_0b_1a_2b_3$, which by \Cref{lem:separating} must be a $4$-face. Let $b'_0,a'_1,b'_2,a'_3$ be the neighbors of $a_0,b_1,a_2,b_3$ respectively. By~\Cref{lem:2C4}, there are no edges between $b'_0$ and $a'_3$, between $b'_2$ and $a'_1$, between $b'_0$ and $a'_1$, and between $a'_3$ and $b'_2$. Therefore, the graphs that will be defined below are well-defined. 
We begin by showing a useful claim.

\begin{claim}\label{claim:$2$-vertex-connected}
 The underlying undirected graph $G-C+b'_0a'_3+a'_1b'_2$ or $G-C+b'_0a'_1+a'_2b'_3$ is $2$-vertex-connected.
 By symmetry, we can assume that $G-C+b'_0a'_3+a'_1b'_2$ is $2$-vertex-connected.
\end{claim}

\begin{proof}
By contradiction, $G$ would contain two edge-cuts of size 3, say $\{a_0b_3,b_1a_2,w_1w_2\}$ and $\{b_3a_2,a_0b_1,u_0u_1\}$, where $u_0,u_1,w_0,w_1$ are some vertices of $G$ (see \Cref{fig:C4-cuts}).
W.l.o.g. suppose that $\{a_0b_3,b_1a_2,w_1w_2\}$ separates $G$ into two components with vertex sets $S_1\supseteq\{a_0,b_1,w_1,u_0,u_1\}$, $S_2\supseteq\{a_2,b_3,w_2\}$ and that $\{b_1a_2,a_0b_3,u_0u_1\}$ separates $G$ into two components with vertex sets $T_1\supseteq\{a_0,b_3,u_0\}$, $T_2\supseteq\{b_1,a_2,w_1,w_2,u_1\}$. In this case, $b_3$ is a cut vertex in $G$ ($a'_3\in T_1 \cap S_2 \setminus \{b_3\}$), which contradicts the 2-connectivity of $G$. 
\end{proof}

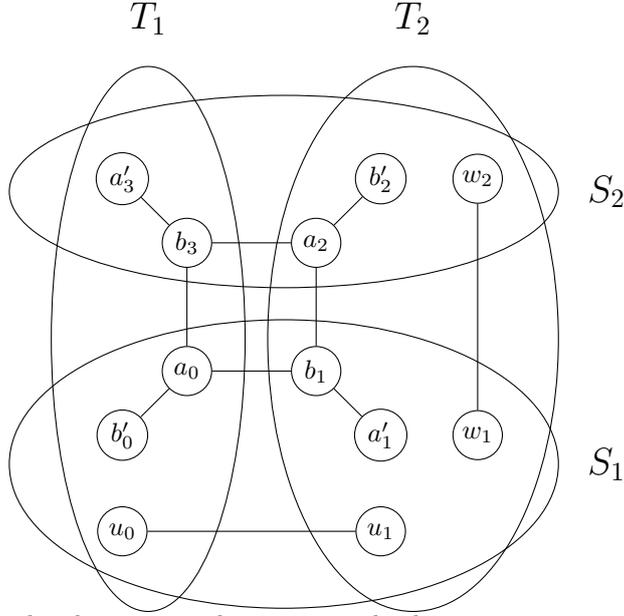
\begin{figure}[!htbp]
\centering
\begin{tikzpicture}[scale=0.85]
\begin{scope}[every node/.style={circle,draw,minimum size=0.65cm,inner sep = 2}]
    \node (a0) at (1,1) {$a_0$};
    \node (b1) at (3,1) {$b_1$};
    \node (a2) at (3,3) {$a_2$};
    \node (b3) at (1,3) {$b_3$};
    
    \node (b'0) at (0,0) {$b'_0$};
    \node (a'1) at (4,0) {$a'_1$};
    \node (b'2) at (4,4) {$b'_2$};
    \node (a'3) at (0,4) {$a'_3$};

    \node (w2) at (5.5,4) {$w_2$};
    \node (w1) at (5.5,0) {$w_1$};
    \node (u0) at (0,-1.5) {$u_0$};
    \node (u1) at (4,-1.5) {$u_1$};

    \node[draw=none] at (7.5,-0.45) {\Large $S_1$};
    \node[draw=none] at (7.5,3.8) {\Large $S_2$};
    \node[draw=none] at (0.4,6.5) {\Large $T_1$};
    \node[draw=none] at (4.5,6.5) {\Large $T_2$};
\end{scope}

\begin{scope}[every edge/.style={draw,minimum width = 0.04cm}]
    \path (a0) edge (b1);
    \path (b1) edge (a2);
    \path (a2) edge (b3);
    \path (b3) edge (a0);
    
    \path (a0) edge (b'0);
    \path (b1) edge (a'1);
    \path (b'2) edge (a2);
    \path (b3) edge (a'3);

    \path (u0) edge (u1);
    \path (w1) edge (w2);
    % \path (a'3) edge[bend left,decorate] node[above,label={above:\textcolor{blue}{$\A$}}] {} (b'2);
\end{scope}

\draw (2.5,-0.45) ellipse (4.25cm and 2.25cm);
\draw (2.5,3.8) ellipse (4.25cm and 1.5cm);
\draw (4.5,1.5) ellipse (2.25cm and 4.25cm);
\draw (0.4,1.5) ellipse (1.5cm and 4.25cm);
\end{tikzpicture}
\caption{A 4-cycle whose removal creates two bridges must contain a cut-vertex ($b_3$).}
\label{fig:C4-cuts}
\end{figure}

Now, we proceed to the proof of \Cref{lem:C4}.

\textbf{Case 1:} Suppose that $G$ contains the following arcs $\ora{a'_3b_3}$, $\ora{b_3a_2}$, $\ora{a_2b'_2}$, $\ora{b'_0a_0}$, $\ora{a_0b_1}$, $\ora{b_1a'_1}$ and that $G-C$ is $2$-vertex-connected. Let $H=G-C+\ora{a'_3b'_2}+\ora{b'_0a'_1}$. See~\Cref{fig:lem:C4-1}.

By assumption, we have $H\in\F$. Let \AI~be a \CAI-partition of $H$. Observe that $|\{b'_0,a'_1,b'_2,a'_3\}\cap\A| \geq 2$ since $\I$ is an independent set in $H$. Thus, we have the following two cases.
\begin{itemize}
 \item Suppose $|\{b'_0,a'_1,b'_2,a'_3\}\cap\A|\leq 3$. W.l.o.g. we can assume that $b'_0\in\I$ and therefore $a'_1\in\A$. We claim that $(\A',\I')=(\A\cup\{a_0,a_2,b_3\},\I\cup\{b_1\})$ is a \CAI-partition of $G$. Indeed, $\A'$ is connected and any possibly directed $\A'$-cycle in $G$ would contain $\ora{b_3a_2}$, but then $H$ would contain a directed $\A$-cycle containing $\ora{a'_3b'_2}$.
 \item Suppose $|\{b'_0,a'_1,b'_2,a'_3\}\cap\A|=4$. Since $\A$ is connected, by symmetry, in $G-C$ there exists an $\A$-path from $b'_0$ to $b'_2$ or $a'_3$. Let $(\A',\I')=(\A\cup\{b_1,a_2,b_3\},\I\cup\{a_0\})$. If $(\A',\I')$ is a \CAI-partition of $G$, then we are done. 
 Otherwise, $G$ necessarily contains a directed $\A'$-cycle which consists of the arcs $\ora{a'_3b_3}$, $\ora{b_3a_2}$, $\ora{a_2b_1}$, $\ora{b_1a'_1}$ together with a directed $\A$-path from $a'_1$ to $a'_3$. In this case $(\A\cup\{a_0,b_1,a_2\},\I\cup\{b_3\})$ is a \CAI-partition of $G$.
\end{itemize}

\begin{figure}[!htbp]
\centering
\begin{minipage}[b]{0.32\textwidth}
\centering
\begin{tikzpicture}[scale=0.75]
\begin{scope}[every node/.style={circle,draw,minimum size=0.65cm,inner sep = 2}]
    \node[red,label={below:\textcolor{red}{$\A$}}] (a0) at (1,1) {$a_0$};
    \node[red,label={below:\textcolor{red}{$\I$}}] (b1) at (3,1) {$b_1$};
    \node[red,label={above:\textcolor{red}{$\A$}}] (a2) at (3,2.5) {$a_2$};
    \node[red,label={above:\textcolor{red}{$\A$}}] (b3) at (1,2.5) {$b_3$};
    
    \node[label={below:\textcolor{blue}{$\I$}}] (b'0) at (0,0) {$b'_0$};
    \node[label={below:\textcolor{blue}{$\A$}}] (a'1) at (4,0) {$a'_1$};
    \node[label={above:\textcolor{blue}{$\A$}}] (b'2) at (4,3.5) {$b'_2$};
    \node[label={above:\textcolor{blue}{$\A$}}] (a'3) at (0,3.5) {$a'_3$};
\end{scope}

\begin{scope}[every edge/.style={draw,minimum width = 0.04cm}]
    \path[red] (a0) edge[->-] (b1);
    \path[red] (b1) edge (a2);
    \path[red] (b3) edge[->-] (a2);
    \path[red] (b3) edge (a0);
    
    \path[red] (b'0) edge[->-] (a0);
    \path[red] (b1) edge[->-] (a'1);
    \path[red] (a2) edge[->-] (b'2);
    \path[red] (a'3) edge[->-] (b3);

    \path[blue] (a'3) edge[->-, bend left] (b'2);
    \path[blue] (b'0) edge[->-, bend right] (a'1);

    % \path (a'3) edge[bend left,decorate] node[above,label={above:\textcolor{blue}{$\A$}}] {} (b'2);
\end{scope}
\end{tikzpicture}

~\\
~
\end{minipage}
\begin{minipage}[b]{0.32\textwidth}
\centering
\begin{tikzpicture}[scale=0.75]
\begin{scope}[every node/.style={circle,draw,minimum size=0.65cm,inner sep = 2}]
    \node[red,label={below:\textcolor{red}{$\I$}}] (a0) at (1,1) {$a_0$};
    \node[red,label={below:\textcolor{red}{$\A$}}] (b1) at (3,1) {$b_1$};
    \node[red,label={above:\textcolor{red}{$\A$}}] (a2) at (3,2.5) {$a_2$};
    \node[red,label={above:\textcolor{red}{$\A$}}] (b3) at (1,2.5) {$b_3$};
    
    \node[label={below:\textcolor{blue}{$\A$}}] (b'0) at (0,0) {$b'_0$};
    \node[label={below:\textcolor{blue}{$\A$}}] (a'1) at (4,0) {$a'_1$};
    \node[label={above:\textcolor{blue}{$\A$}}] (b'2) at (4,3.5) {$b'_2$};
    \node[label={above:\textcolor{blue}{$\A$}}] (a'3) at (0,3.5) {$a'_3$};
\end{scope}

\begin{scope}[every edge/.style={draw,minimum width = 0.04cm}]
    \path[red] (a0) edge[->-] (b1);
    \path[red] (b1) edge (a2);
    \path[red] (b3) edge[->-] (a2);
    \path[red] (b3) edge (a0);
    
    \path[red] (b'0) edge[->-] (a0);
    \path[red] (b1) edge[->-] (a'1);
    \path[red] (a2) edge[->-] (b'2);
    \path[red] (a'3) edge[->-] (b3);

    \path[blue] (a'3) edge[->-, bend left] (b'2);
    \path[blue] (b'0) edge[->-, bend right] (a'1);

    \path (b'0) edge[decorate] (a'3);
\end{scope}
\end{tikzpicture}

No directed $\A$-path from $a'_1$ to $a'_3$ in $G-C$.
\end{minipage}
\begin{minipage}[b]{0.32\textwidth}
\centering
\begin{tikzpicture}[scale=0.75]
\begin{scope}[every node/.style={circle,draw,minimum size=0.65cm,inner sep = 2}]
    \node[red,label={below:\textcolor{red}{$\A$}}] (a0) at (1,1) {$a_0$};
    \node[red,label={below:\textcolor{red}{$\A$}}] (b1) at (3,1) {$b_1$};
    \node[red,label={above:\textcolor{red}{$\A$}}] (a2) at (3,2.5) {$a_2$};
    \node[red,label={above:\textcolor{red}{$\I$}}] (b3) at (1,2.5) {$b_3$};
    
    \node[label={below:\textcolor{blue}{$\A$}}] (b'0) at (0,0) {$b'_0$};
    \node[label={below:\textcolor{blue}{$\A$}}] (a'1) at (4,0) {$a'_1$};
    \node[label={above:\textcolor{blue}{$\A$}}] (b'2) at (4,3.5) {$b'_2$};
    \node[label={above:\textcolor{blue}{$\A$}}] (a'3) at (0,3.5) {$a'_3$};
\end{scope}

\begin{scope}[every edge/.style={draw,minimum width = 0.04cm}]
    \path[red] (a0) edge[->-] (b1);
    \path[red] (a2) edge[->-] (b1);
    \path[red] (b3) edge[->-] (a2);
    \path[red] (b3) edge (a0);
    
    \path[red] (b'0) edge[->-] (a0);
    \path[red] (b1) edge[->-] (a'1);
    \path[red] (a2) edge[->-] (b'2);
    \path[red] (a'3) edge[->-] (b3);

    \path[blue] (a'3) edge[->-, bend left] (b'2);
    \path[blue] (b'0) edge[->-, bend right] (a'1);

    \path (b'0) edge[decorate] (a'3);
    \path (a'1) edge[decorate, bend right] (4.5,4);
    \path[->] (4.5,4) edge[decorate, bend right=50] (a'3);
\end{scope}
\end{tikzpicture}

~\\
~
\end{minipage}
\caption{Case 1 of~\Cref{lem:C4}.}
\label{fig:lem:C4-1}
\end{figure}

\textbf{Case 2:} Suppose that $G$ contains the following arcs $\ora{a'_3b_3}$, $\ora{b_3a_2}$, $\ora{a_2b'_2}$, $\ora{b'_0a_0}$, $\ora{a_0b_1}$, $\ora{b_1a'_1}$ and there exists a bridge in $G-C$. Together with \Cref{claim:$2$-vertex-connected}, we conclude that there exists an edge $e$ in $G$ such that $\{a_0b_3,b_1a_2,e\}$ is a 3-edge-cut of $G$. 
Let $H_1$ and $H_2$ be the two connected subgraphs of $G-C-e$ with $H_1$ containing $a'_0$ and $b'_1$ and $H_2$ containing $b'_2$ and $a'_3$. 
Let $H'_1$ be obtained by reversing every arc of $H_1$ and $H=H'_1+H_2+e+\ora{a'_3b'_0}+\ora{a'_1b'_2}$.
See~\Cref{fig:lem:C4-2}.\\
Observe that $H\in\F$ is smaller than $G$, so we have a \CAI-partition of $H$. Observe that $|\{b'_0,a'_1,b'_2,a'_3\}\cap\A|\geq 2$ since $\I$ is an independent set in $H$. Thus, we have the following two cases.
\begin{itemize}
 \item Suppose $|\{b'_0,a'_1,b'_2,a'_3\}\cap\A|\leq 3$. W.l.o.g. we can assume that $b'_0\in\I$ and therefore $a'_3\in\A$. Then $(\A\cup\{a_0,b_1,a_2\},\I\cup\{b_3\})$ is a \CAI-partition of $G$.
 \item Suppose $|\{b'_0,a'_1,b'_2,a'_3\}\cap\A|=4$. We claim that there cannot be simultaneously a directed $\A$-path from $b'_2$ to $a'_3$ in $H_2$ and a directed $\A$-path from $a'_1$ to $b'_0$ in $H_1$. Otherwise, there would be a directed $\A$-cycle in $H$ consisting of the following: a directed $\A$-path from $b'_2$ to $a'_3$, $\ora{a'_3b'_0}$, a directed $\A$-path from $b'_0$ to $a'_1$ (because $H'_1$ has all arcs reversed with respect to $H_1$). Therefore, we have the two following cases.
 \begin{itemize}
 \item There is a directed $\A$-path from $b'_2$ to $a'_3$ in $H_2$ and no directed $\A$-paths from $a'_1$ to $b'_0$ in $H_1$. If $(\A',\I')=(\A\cup\{a_0,b_1,b_3\},\I\cup\{a_2\})$ is a \CAI-partition of $G$, then we are done. Otherwise, there must be a directed $\A'$-cycle going through $a'_3$, $b_3$, $a_0$, $b_1$, $a'_1$, and the edge $e$ oriented from $H_1$ towards $H_2$. However, in this case, $(\A\cup\{a_0,b_1,a_2\},\I\cup\{b_3\})$ is a \CAI-partition of $G$.

 The same arguments give a \CAI-partition of $G$ when there is a directed $\A$-path from $a'_1$ to $b'_0$ in $H_1$ and no directed $\A$-paths from $b'_2$ to $a'_3$ in $H_2$.
 
 \item There are neither directed $\A$-paths from $b'_2$ to $a'_3$ in $H_2$, nor from $a'_1$ to $b'_0$ in $H_1$. If $(\A\cup\{a_0,b_1,a_2,b_3\},\I)$ is a \CAI-partition of $G$, then we are done. Otherwise, if $e$ is oriented from $H_1$ towards $H_2$, then there must be a directed $\A$-cycle going through $a'_3$, $b_3$, $b_1$, $a'_1$, and $e$. In this case, $(\A\cup\{a_0,b_1,a_2\},\I\cup\{b_3\})$ is a \CAI-partition of $G$. The case when $e$ is oriented from $H_2$ towards $H_1$ is symmetric.
 \end{itemize}
\end{itemize}

\begin{figure}[!htbp]
\centering
\begin{minipage}[b]{0.32\textwidth}
\centering
\begin{tikzpicture}[scale=0.75]
\begin{scope}[every node/.style={circle,draw,minimum size=0.65cm,inner sep = 2}]
    \node[red,label={below:\textcolor{red}{$\A$}}] (a0) at (1,1) {$a_0$};
    \node[red,label={below:\textcolor{red}{$\A$}}] (b1) at (3,1) {$b_1$};
    \node[red,label={above:\textcolor{red}{$\A$}}] (a2) at (3,2.5) {$a_2$};
    \node[red,label={above:\textcolor{red}{$\I$}}] (b3) at (1,2.5) {$b_3$};
    
    \node[label={below:\textcolor{blue}{$\I$}}] (b'0) at (0,0) {$b'_0$};
    \node[label={below:\textcolor{blue}{$\A$}}] (a'1) at (4,0) {$a'_1$};
    \node[label={above:\textcolor{blue}{$\A$}}] (b'2) at (4,3.5) {$b'_2$};
    \node[label={above:\textcolor{blue}{$\A$}}] (a'3) at (0,3.5) {$a'_3$};

    % \node[draw=none] (e1) at (5,3) {};
    % \node[draw=none] (e2) at (5,0.5) {}; 
\end{scope}

\begin{scope}[every edge/.style={draw,minimum width = 0.04cm}]
    \path[red] (a0) edge[->-] (b1);
    \path[red] (b1) edge (a2);
    \path[red] (b3) edge[->-] (a2);
    \path[red] (b3) edge (a0);
    
    \path[red] (b'0) edge[->-] (a0);
    \path[red] (b1) edge[->-] (a'1);
    \path[red] (a2) edge[->-] (b'2);
    \path[red] (a'3) edge[->-] (b3);

    \path[blue] (a'3) edge[->-] (b'0);
    \path[blue] (a'1) edge[->-] (b'2);

    % \path (e1) edge node[left] {$e$} (e2);
\end{scope}
\end{tikzpicture}

~\\
~
\end{minipage}
\begin{minipage}[b]{0.32\textwidth}
\centering
\begin{tikzpicture}[scale=0.75]
\begin{scope}[every node/.style={circle,draw,minimum size=0.65cm,inner sep = 2}]
    \node[red] (a0) at (1,1) {$a_0$};
    \node[red] (b1) at (3,1) {$b_1$};
    \node[red] (a2) at (3,2.5) {$a_2$};
    \node[red] (b3) at (1,2.5) {$b_3$};
    
    \node[label={below:\textcolor{blue}{$\A$}}] (b'0) at (0,0) {$b'_0$};
    \node[label={below:\textcolor{blue}{$\A$}}] (a'1) at (4,0) {$a'_1$};
    \node[label={above:\textcolor{blue}{$\A$}}] (b'2) at (4,3.5) {$b'_2$};
    \node[label={above:\textcolor{blue}{$\A$}}] (a'3) at (0,3.5) {$a'_3$};

    % \node[draw=none] (e1) at (5,3) {};
    % \node[draw=none] (e2) at (5,0.5) {}; 
\end{scope}

\begin{scope}[every edge/.style={draw,minimum width = 0.04cm}]
    \path[red] (a0) edge[->-] (b1);
    \path[red] (b1) edge (a2);
    \path[red] (b3) edge[->-] (a2);
    \path[red] (b3) edge (a0);
    
    \path[red] (b'0) edge[->-] (a0);
    \path[red] (b1) edge[->-] (a'1);
    \path[red] (a2) edge[->-] (b'2);
    \path[red] (a'3) edge[->-] (b3);

    \path[blue] (a'3) edge[->-] (b'0);
    \path[blue] (a'1) edge[->-] (b'2);

    % \path (e1) edge node[left] {$e$} (e2);

    \path[->] (b'2) edge[decorate,bend right] (a'3);
    \path[->] (a'1) edge[decorate,bend left] (b'0);
\end{scope}
\end{tikzpicture}

Impossible.\\
~
\end{minipage}
\begin{minipage}[b]{0.32\textwidth}
\centering
\begin{tikzpicture}[scale=0.75]
\begin{scope}[every node/.style={circle,draw,minimum size=0.65cm,inner sep = 2}]
    \node[red,label={below:\textcolor{red}{$\A$}}] (a0) at (1,1) {$a_0$};
    \node[red,label={below:\textcolor{red}{$\A$}}] (b1) at (3,1) {$b_1$};
    \node[red,label={above:\textcolor{red}{$\I$}}] (a2) at (3,2.5) {$a_2$};
    \node[red,label={above:\textcolor{red}{$\A$}}] (b3) at (1,2.5) {$b_3$};
    
    \node[label={below:\textcolor{blue}{$\A$}}] (b'0) at (0,0) {$b'_0$};
    \node[label={below:\textcolor{blue}{$\A$}}] (a'1) at (4,0) {$a'_1$};
    \node[label={above:\textcolor{blue}{$\A$}}] (b'2) at (4,3.5) {$b'_2$};
    \node[label={above:\textcolor{blue}{$\A$}}] (a'3) at (0,3.5) {$a'_3$};

    \node[draw=none] (e1) at (5,3) {};
    \node[draw=none] (e2) at (5,0.5) {}; 
\end{scope}

\begin{scope}[every edge/.style={draw,minimum width = 0.04cm}]
    \path[red] (a0) edge[->-] (b1);
    \path[red] (b1) edge (a2);
    \path[red] (b3) edge[->-] (a2);
    \path[red] (b3) edge (a0);
    
    \path[red] (b'0) edge[->-] (a0);
    \path[red] (b1) edge[->-] (a'1);
    \path[red] (a2) edge[->-] (b'2);
    \path[red] (a'3) edge[->-] (b3);

    \path[blue] (a'3) edge[->-] (b'0);
    \path[blue] (a'1) edge[->-] (b'2);

    \path (e1) edge[->-] node[left] {$e$} (e2);

    \path[->] (b'2) edge[decorate,bend right] (a'3);
    % \path[->] (a'1) edge[decorate,bend left] (b'0);
\end{scope}
\end{tikzpicture}

No directed $\A$-path from $a'_1$ to $b'_0$ in $H_1$.
\end{minipage}

\begin{minipage}[b]{0.32\textwidth}
\centering
\begin{tikzpicture}[scale=0.75]
\begin{scope}[every node/.style={circle,draw,minimum size=0.65cm,inner sep = 2}]
    \node[red,label={below:\textcolor{red}{$\A$}}] (a0) at (1,1) {$a_0$};
    \node[red,label={below:\textcolor{red}{$\A$}}] (b1) at (3,1) {$b_1$};
    \node[red,label={above:\textcolor{red}{$\A$}}] (a2) at (3,2.5) {$a_2$};
    \node[red,label={above:\textcolor{red}{$\I$}}] (b3) at (1,2.5) {$b_3$};
    
    \node[label={below:\textcolor{blue}{$\A$}}] (b'0) at (0,0) {$b'_0$};
    \node[label={below:\textcolor{blue}{$\A$}}] (a'1) at (4,0) {$a'_1$};
    \node[label={above:\textcolor{blue}{$\A$}}] (b'2) at (4,3.5) {$b'_2$};
    \node[label={above:\textcolor{blue}{$\A$}}] (a'3) at (0,3.5) {$a'_3$};

    \node[draw=none] (e1) at (5,3) {};
    \node[draw=none] (e2) at (5,0.5) {}; 
\end{scope}

\begin{scope}[every edge/.style={draw,minimum width = 0.04cm}]
    \path[red] (a0) edge[->-] (b1);
    \path[red] (b1) edge (a2);
    \path[red] (b3) edge[->-] (a2);
    \path[red] (b3) edge (a0);
    
    \path[red] (b'0) edge[->-] (a0);
    \path[red] (b1) edge[->-] (a'1);
    \path[red] (a2) edge[->-] (b'2);
    \path[red] (a'3) edge[->-] (b3);

    \path[blue] (a'3) edge[->-] (b'0);
    \path[blue] (a'1) edge[->-] (b'2);

    \path (e2) edge[->-] node[left] {$e$} (e1);

    \path[->] (b'2) edge[decorate,bend right] (a'3);
    % \path[->] (a'1) edge[decorate,bend left] (b'0);
\end{scope}
\end{tikzpicture}

No directed $\A$-path from $a'_1$ to $b'_0$ in $H_1$.\\
~
\end{minipage}
\begin{minipage}[b]{0.32\textwidth}
\centering
\begin{tikzpicture}[scale=0.75]
\begin{scope}[every node/.style={circle,draw,minimum size=0.65cm,inner sep = 2}]
    \node[red,label={below:\textcolor{red}{$\A$}}] (a0) at (1,1) {$a_0$};
    \node[red,label={below:\textcolor{red}{$\A$}}] (b1) at (3,1) {$b_1$};
    \node[red,label={above:\textcolor{red}{$\A$}}] (a2) at (3,2.5) {$a_2$};
    \node[red,label={above:\textcolor{red}{$\A$}}] (b3) at (1,2.5) {$b_3$};
    
    \node[label={below:\textcolor{blue}{$\A$}}] (b'0) at (0,0) {$b'_0$};
    \node[label={below:\textcolor{blue}{$\A$}}] (a'1) at (4,0) {$a'_1$};
    \node[label={above:\textcolor{blue}{$\A$}}] (b'2) at (4,3.5) {$b'_2$};
    \node[label={above:\textcolor{blue}{$\A$}}] (a'3) at (0,3.5) {$a'_3$};

    \node[draw=none] (e1) at (5,3) {};
    \node[draw=none] (e2) at (5,0.5) {}; 
\end{scope}

\begin{scope}[every edge/.style={draw,minimum width = 0.04cm}]
    \path[red] (a0) edge[->-] (b1);
    \path[red] (b1) edge (a2);
    \path[red] (b3) edge[->-] (a2);
    \path[red] (b3) edge (a0);
    
    \path[red] (b'0) edge[->-] (a0);
    \path[red] (b1) edge[->-] (a'1);
    \path[red] (a2) edge[->-] (b'2);
    \path[red] (a'3) edge[->-] (b3);

    \path[blue] (a'3) edge[->-] (b'0);
    \path[blue] (a'1) edge[->-] (b'2);

    \path (e2) edge[->-] node[left] {$e$} (e1);
\end{scope}
\end{tikzpicture}

No directed $\A$-path from $a'_1$ to $b'_0$ in $H_1$, from $b'_2$ to $a'_3$ in $H_2$, or from $a'_1$ to $a'_3$ going through $e$.
\end{minipage}
\begin{minipage}[b]{0.32\textwidth}
\centering
\begin{tikzpicture}[scale=0.75]
\begin{scope}[every node/.style={circle,draw,minimum size=0.65cm,inner sep = 2}]
    \node[red,label={below:\textcolor{red}{$\A$}}] (a0) at (1,1) {$a_0$};
    \node[red,label={below:\textcolor{red}{$\A$}}] (b1) at (3,1) {$b_1$};
    \node[red,label={above:\textcolor{red}{$\A$}}] (a2) at (3,2.5) {$a_2$};
    \node[red,label={above:\textcolor{red}{$\I$}}] (b3) at (1,2.5) {$b_3$};
    
    \node[label={below:\textcolor{blue}{$\A$}}] (b'0) at (0,0) {$b'_0$};
    \node[label={below:\textcolor{blue}{$\A$}}] (a'1) at (4,0) {$a'_1$};
    \node[label={above:\textcolor{blue}{$\A$}}] (b'2) at (4,3.5) {$b'_2$};
    \node[label={above:\textcolor{blue}{$\A$}}] (a'3) at (0,3.5) {$a'_3$};

    \node[draw=none] (e1) at (5,3) {};
    \node[draw=none] (e2) at (5,0.5) {}; 
\end{scope}

\begin{scope}[every edge/.style={draw,minimum width = 0.04cm}]
    \path[red] (a0) edge[->-] (b1);
    \path[red] (b1) edge (a2);
    \path[red] (b3) edge[->-] (a2);
    \path[red] (b3) edge (a0);
    
    \path[red] (b'0) edge[->-] (a0);
    \path[red] (b1) edge[->-] (a'1);
    \path[red] (a2) edge[->-] (b'2);
    \path[red] (a'3) edge[->-] (b3);

    \path[blue] (a'3) edge[->-] (b'0);
    \path[blue] (a'1) edge[->-] (b'2);

    \path (e2) edge[->-] node[left] {$e$} (e1);
    
    \path[->] (a'1) edge[decorate,bend right] (5,1);
    \path[->] (5,2.55) edge[decorate,bend right=90] (a'3);
\end{scope}
\end{tikzpicture}

No directed $\A$-path from $a'_1$ to $b'_0$ in $H_1$ or from $b'_2$ to $a'_3$ in $H_2$.\\
~
\end{minipage}
\caption{Case 2 of~\Cref{lem:C4}.}
\label{fig:lem:C4-2}
\end{figure}

\textbf{Case 3:} Suppose that we are not in Case 1, nor in Case 2. Suppose w.l.o.g. that $G$ contains $\ora{a_2b'_2}$. We define $H$ depending on the orientation of $a'_3b_3$ in $G$:
\begin{itemize}
 \item $\ora{a'_3b_3}$ : $H=G-\{a_0,b_1,a_2,b_3\}+\ora{a'_1b'_2}+\ora{a'_3b'_0}$,
 \item $\ora{b_3a'_3}$ : $H=G-\{a_0,b_1,a_2,b_3\}+\ora{a'_1b'_2}+\ora{b'_0a'_3}$.
\end{itemize} 
See~\Cref{fig:lem:C4-3}.
\begin{observation}\label{obs:H}
 Any directed cycle $C'$ in $G$ containing edges $b'_0a_0$, $a_0b_3$, $b_3a'_3$ (resp. $a'_1b_1$, $b_1a_2$, $a_2b'_2$) creates a directed cycle $C'-\{b'_0a_0,a_0b_3,b_3a'_3\}+b'_0a'_3$ (resp. $C'-\{a'_1b_1,b_1a_2,a_2b'_2\}+a'_1b'_2$) in $H$. 
\end{observation}

By \Cref{claim:$2$-vertex-connected}, $H$ is $2$-vertex-connected and is in $\F$. Let \AI~be a \CAI-partition of $H$. Observe that $|\{b'_0,a'_1,b'_2,a'_3\}\cap\A|\geq 2$ since $\I$ is an independent set in $H$. Thus, we have the following two cases.
\begin{itemize}
 \item Suppose $|\{b'_0,a'_1,b'_2,a'_3\}\cap\A|\leq 3$. W.l.o.g. we can assume that $b'_0\in\I$ and therefore $a'_3\in\A$. Then $(\A\cup\{a_0,b_1,a_2\},\I\cup\{b_3\})$ is a \CAI-partition of $G$.
 \item Suppose $|\{b'_0,a'_1,b'_2,a'_3\}\cap\A|= 4$. Now we distinguish the following cases.
 \begin{itemize}
 \item There exists an $\A$-path between $a'_1$ and $b'_0$ and an $\A$-path between $a'_3$ and $b'_2$ in $G-C$. If $\A$ is connected in $G-C$, then $(\A\cup\{a_0,a_2\},\I\cup\{b_1,b_3\})$ is a \CAI-partition of $G$. Therefore, there is no $\A$-path between $a'_3$ and $b'_0$ or $a'_1$, as well as between $b'_2$ and $b'_0$ or $a'_1$. Since $(\A',\I')=(\A\cup\{a_0,a_2,b_3\},\I\cup\{b_1\})$ is not a \CAI-partition of $G$, there must be a directed $\A'$-cycle containing $\ora{a'_3b_3}$, $\ora{b_3a_2}$, $\ora{a_2b'_2}$, and a directed $\A$-path from $b'_2$ to $a'_3$. Similarly, since $(\A'',\I'')=(\A\cup\{a_0,b_1,b_3\},\I\cup\{a_2\})$ is not a \CAI-partition of $G$, there must be an $\A''$-cycle containing $\ora{a'_1b_1}$, $\ora{b_1a_0}$, $\ora{a_0b'_0}$, and a directed $\A$-path from $b'_0$ to $a'_1$ (if the arcs were reversed then we would be in \textbf{Case 1} or \textbf{Case 2}). However, the directed $\A$-path from $b'_0$ to $a'_1$, $\ora{a'_1b'_2}$, the directed $\A$-path from $b'_2$ to $a'_3$, and $\ora{a'_3b'_0}$ form a directed $\A$-cycle in $H$, a contradiction.
 \item There exists either an $\A$-path between $a'_1$ and $b'_0$ or an $\A$-path between $a'_3$ and $b'_2$ in $G-C$. By symmetry, we assume that it is the latter. Since $(\A',\I')=(\A\cup\{a_0,b_1,a_2\},\I\cup\{b_3\})$ is not a \CAI-partition of $G$, there must be a directed $\A'$-cycle in $G$. This $\A'$-cycle cannot contain $a'_1$ and $b'_0$ since there is no $\A$-path between them in $G-C$. This cycle cannot contain $a'_1$, $b_1$, $a_2$, and $b'_2$ by \Cref{obs:H}. Therefore, this $\A'$-cycle contains an $\A$-path between $b'_2$ and $b'_0$. Using the same arguments, $G$ contains an $\A$-path between $a'_1$ and $a'_3$. Hence, we go back to the case where $\A$ is connected in $G-C$ and $(\A\cup\{a_0,a_2\},\I\cup\{b_1,b_3\})$ is a \CAI-partition of $G$.
 \item There is no $\A$-path between $a'_1$ and $b'_0$ and no $\A$-path between $a'_3$ and $b'_2$ in $G-C$. Since $\A$ must be connected in $H$, we can suppose w.l.o.g. that there exists an $\A$-path between $a'_1$ and $a'_3$. Since $(\A',\I')=(\A\cup\{a_0,b_1,a_2\},\I\cup\{b_3\})$ is not a \CAI-partition of $G$. There must be a directed $\A'$-cycle in $G$. This $\A'$-cycle cannot contain $a'_1$ and $b'_0$ since there is no $\A$-path between them in $G-C$. This cycle cannot contain $a'_1$, $b_1$, $a_2$, and $b'_2$ by \Cref{obs:H}. Therefore, this $\A'$-cycle contains $\ora{b'_0a_0}$, $\ora{a_0b_1}$, $\ora{b_1a_2}$, $\ora{a_2b'_2}$, and a directed $\A$-path from $b'_2$ to $b'_0$ (in particular, the orientations of $a_0b_1$, $b_1a_2$ are forced). Using the same arguments, since $(\A\cup\{a_0,a_2,b_3\},\I\cup\{b_1\})$ is not a \CAI-partition of $G$, we have that $G$ contains $\ora{a_0b_3}$ and $\ora{b_3a_2}$. Hence, $(\A\cup\{a_0,b_1,b_3\},\I\cup\{a_2\})$ is a \CAI-partition of $G$.\qedhere 
 \end{itemize}
\end{itemize}
\end{proof}

\begin{figure}[!htbp]
\centering
\begin{minipage}[b]{0.19\textwidth}
\centering
\begin{tikzpicture}[scale=0.67]
\begin{scope}[every node/.style={circle,draw,minimum size=0.65cm,inner sep = 2}]
    \node[red,label={below:\textcolor{red}{$\A$}}] (a0) at (1,1) {$a_0$};
    \node[red,label={below:\textcolor{red}{$\A$}}] (b1) at (3,1) {$b_1$};
    \node[red,label={above:\textcolor{red}{$\A$}}] (a2) at (3,2.5) {$a_2$};
    \node[red,label={above:\textcolor{red}{$\I$}}] (b3) at (1,2.5) {$b_3$};
    
    \node[label={below:\textcolor{blue}{$\I$}}] (b'0) at (0,0) {$b'_0$};
    \node (a'1) at (4,0) {$a'_1$};
    \node (b'2) at (4,3.5) {$b'_2$};
    \node[label={above:\textcolor{blue}{$\A$}}] (a'3) at (0,3.5) {$a'_3$};
\end{scope}

\begin{scope}[every edge/.style={draw,minimum width = 0.04cm}]
    \path[red] (a0) edge (b1);
    \path[red] (b1) edge (a2);
    \path[red] (b3) edge (a2);
    \path[red] (b3) edge (a0);
    
    \path[red] (b'0) edge (a0);
    \path[red] (b1) edge (a'1);
    \path[red] (a2) edge[->-] (b'2);
    \path[red] (a'3) edge[->-] (b3);

    \path[blue] (a'3) edge[->-] (b'0);
    \path[blue] (a'1) edge[->-] (b'2);

    % \path (a'3) edge[bend left,decorate] node[above,label={above:\textcolor{blue}{$\A$}}] {} (b'2);
\end{scope}
\end{tikzpicture}

~\\
~\\
~
\end{minipage}
\begin{minipage}[b]{0.19\textwidth}
\centering
\begin{tikzpicture}[scale=0.67]
\begin{scope}[every node/.style={circle,draw,minimum size=0.65cm,inner sep = 2}]
    \node[red,label={below:\textcolor{red}{$\A$}}] (a0) at (1,1) {$a_0$};
    \node[red,label={below:\textcolor{red}{$\I$}}] (b1) at (3,1) {$b_1$};
    \node[red,label={above:\textcolor{red}{$\A$}}] (a2) at (3,2.5) {$a_2$};
    \node[red,label={above:\textcolor{red}{$\I$}}] (b3) at (1,2.5) {$b_3$};
    
    \node[label={below:\textcolor{blue}{$\A$}}] (b'0) at (0,0) {$b'_0$};
    \node[label={below:\textcolor{blue}{$\A$}}] (a'1) at (4,0) {$a'_1$};
    \node[label={above:\textcolor{blue}{$\A$}}] (b'2) at (4,3.5) {$b'_2$};
    \node[label={above:\textcolor{blue}{$\A$}}] (a'3) at (0,3.5) {$a'_3$};
\end{scope}

\begin{scope}[every edge/.style={draw,minimum width = 0.04cm}]
    \path[red] (a0) edge (b1);
    \path[red] (b1) edge (a2);
    \path[red] (b3) edge (a2);
    \path[red] (b3) edge (a0);
    
    \path[red] (b'0) edge (a0);
    \path[red] (b1) edge (a'1);
    \path[red] (a2) edge[->-] (b'2);
    \path[red] (a'3) edge[->-] (b3);

    \path[blue] (a'3) edge[->-] (b'0);
    \path[blue] (a'1) edge[->-] (b'2);

    % \path (a'1) edge[decorate, bend left] (b'0);
    % \path (a'3) edge[decorate, bend left] (b'2);
\end{scope}
\end{tikzpicture}

$\A$ is connected in $G-C$.\\
~
\end{minipage}
\begin{minipage}[b]{0.19\textwidth}
\centering
\begin{tikzpicture}[scale=0.67]
\begin{scope}[every node/.style={circle,draw,minimum size=0.65cm,inner sep = 2}]
    \node[red,label={below:\textcolor{red}{$\A$}}] (a0) at (1,1) {$a_0$};
    \node[red,label={below:\textcolor{red}{$\I$}}] (b1) at (3,1) {$b_1$};
    \node[red,label={above:\textcolor{red}{$\A$}}] (a2) at (3,2.5) {$a_2$};
    \node[red,label={above:\textcolor{red}{$\A$}}] (b3) at (1,2.5) {$b_3$};
    
    \node[label={below:\textcolor{blue}{$\A$}}] (b'0) at (0,0) {$b'_0$};
    \node[label={below:\textcolor{blue}{$\A$}}] (a'1) at (4,0) {$a'_1$};
    \node[label={above:\textcolor{blue}{$\A$}}] (b'2) at (4,3.5) {$b'_2$};
    \node[label={above:\textcolor{blue}{$\A$}}] (a'3) at (0,3.5) {$a'_3$};
\end{scope}

\begin{scope}[every edge/.style={draw,minimum width = 0.04cm}]
    \path[red] (a0) edge (b1);
    \path[red] (b1) edge (a2);
    \path[red] (b3) edge (a2);
    \path[red] (b3) edge (a0);
    
    \path[red] (b'0) edge (a0);
    \path[red] (b1) edge (a'1);
    \path[red] (a2) edge[->-] (b'2);
    \path[red] (a'3) edge[->-] (b3);

    \path[blue] (a'3) edge[->-] (b'0);
    \path[blue] (a'1) edge[->-] (b'2);

    \path (a'1) edge[decorate, bend left] (b'0);
    \path (a'3) edge[decorate, bend left] (b'2);
\end{scope}
\end{tikzpicture}

No directed $\A$-path from $b'_2$ to $a'_3$ in $G-C$.
\end{minipage}
\begin{minipage}[b]{0.19\textwidth}
\centering
\begin{tikzpicture}[scale=0.67]
\begin{scope}[every node/.style={circle,draw,minimum size=0.65cm,inner sep = 2}]
    \node[red,label={below:\textcolor{red}{$\A$}}] (a0) at (1,1) {$a_0$};
    \node[red,label={below:\textcolor{red}{$\A$}}] (b1) at (3,1) {$b_1$};
    \node[red,label={above:\textcolor{red}{$\I$}}] (a2) at (3,2.5) {$a_2$};
    \node[red,label={above:\textcolor{red}{$\A$}}] (b3) at (1,2.5) {$b_3$};
    
    \node[label={below:\textcolor{blue}{$\A$}}] (b'0) at (0,0) {$b'_0$};
    \node[label={below:\textcolor{blue}{$\A$}}] (a'1) at (4,0) {$a'_1$};
    \node[label={above:\textcolor{blue}{$\A$}}] (b'2) at (4,3.5) {$b'_2$};
    \node[label={above:\textcolor{blue}{$\A$}}] (a'3) at (0,3.5) {$a'_3$};
\end{scope}

\begin{scope}[every edge/.style={draw,minimum width = 0.04cm}]
    \path[red] (a0) edge (b1);
    \path[red] (b1) edge (a2);
    \path[red] (b3) edge (a2);
    \path[red] (b3) edge (a0);
    
    \path[red] (b'0) edge (a0);
    \path[red] (b1) edge (a'1);
    \path[red] (a2) edge[->-] (b'2);
    \path[red] (a'3) edge[->-] (b3);

    \path[blue] (a'3) edge[->-] (b'0);
    \path[blue] (a'1) edge[->-] (b'2);

    \path (a'1) edge[decorate, bend left] (b'0);
    \path (a'3) edge[decorate, bend left] (b'2);
\end{scope}
\end{tikzpicture}

No directed $\A$-path from $b'_0$ to $a'_1$ in $G-C$.
\end{minipage}
\begin{minipage}[b]{0.19\textwidth}
\centering
\begin{tikzpicture}[scale=0.67]
\begin{scope}[every node/.style={circle,draw,minimum size=0.65cm,inner sep = 2}]
    \node[red] (a0) at (1,1) {$a_0$};
    \node[red] (b1) at (3,1) {$b_1$};
    \node[red] (a2) at (3,2.5) {$a_2$};
    \node[red] (b3) at (1,2.5) {$b_3$};
    
    \node[label={below:\textcolor{blue}{$\A$}}] (b'0) at (0,0) {$b'_0$};
    \node[label={below:\textcolor{blue}{$\A$}}] (a'1) at (4,0) {$a'_1$};
    \node[label={above:\textcolor{blue}{$\A$}}] (b'2) at (4,3.5) {$b'_2$};
    \node[label={above:\textcolor{blue}{$\A$}}] (a'3) at (0,3.5) {$a'_3$};
\end{scope}

\begin{scope}[every edge/.style={draw,minimum width = 0.04cm}]
    \path[red] (a0) edge (b1);
    \path[red] (b1) edge (a2);
    \path[red] (b3) edge (a2);
    \path[red] (b3) edge (a0);
    
    \path[red] (b'0) edge (a0);
    \path[red] (b1) edge (a'1);
    \path[red] (a2) edge[->-] (b'2);
    \path[red] (a'3) edge[->-] (b3);

    \path[blue] (a'3) edge[->-] (b'0);
    \path[blue] (a'1) edge[->-] (b'2);

    \path[->] (b'0) edge[decorate, bend right] (a'1);
    \path[->] (b'2) edge[decorate, bend right] (a'3);
\end{scope}
\end{tikzpicture}

Impossible.\\
~\\
~
\end{minipage}

\begin{minipage}[b]{0.32\textwidth}
\centering
\begin{tikzpicture}[scale=0.75]
\begin{scope}[every node/.style={circle,draw,minimum size=0.65cm,inner sep = 2}]
    \node[red,label={below:\textcolor{red}{$\A$}}] (a0) at (1,1) {$a_0$};
    \node[red,label={below:\textcolor{red}{$\A$}}] (b1) at (3,1) {$b_1$};
    \node[red,label={above:\textcolor{red}{$\A$}}] (a2) at (3,2.5) {$a_2$};
    \node[red,label={above:\textcolor{red}{$\I$}}] (b3) at (1,2.5) {$b_3$};
    
    \node[label={below:\textcolor{blue}{$\A$}}] (b'0) at (0,0) {$b'_0$};
    \node[label={below:\textcolor{blue}{$\A$}}] (a'1) at (4,0) {$a'_1$};
    \node[label={above:\textcolor{blue}{$\A$}}] (b'2) at (4,3.5) {$b'_2$};
    \node[label={above:\textcolor{blue}{$\A$}}] (a'3) at (0,3.5) {$a'_3$};
\end{scope}

\begin{scope}[every edge/.style={draw,minimum width = 0.04cm}]
    \path[red] (a0) edge (b1);
    \path[red] (b1) edge (a2);
    \path[red] (b3) edge (a2);
    \path[red] (b3) edge (a0);
    
    \path[red] (b'0) edge (a0);
    \path[red] (b1) edge (a'1);
    \path[red] (a2) edge[->-] (b'2);
    \path[red] (a'3) edge[->-] (b3);

    \path[blue] (a'3) edge[->-] (b'0);
    \path[blue] (a'1) edge[->-] (b'2);

    \path (a'3) edge[decorate, bend left] (b'2);
\end{scope}
\end{tikzpicture}

No $\A$-path between $b'_0$ and $a'_1$ and no directed $\A$-path from $b'_2$ to $b'_0$ in $G-C$.
\end{minipage}
\begin{minipage}[b]{0.32\textwidth}
\centering
\begin{tikzpicture}[scale=0.75]
\begin{scope}[every node/.style={circle,draw,minimum size=0.65cm,inner sep = 2}]
    \node[red,label={below:\textcolor{red}{$\A$}}] (a0) at (1,1) {$a_0$};
    \node[red,label={below:\textcolor{red}{$\A$}}] (b1) at (3,1) {$b_1$};
    \node[red,label={above:\textcolor{red}{$\A$}}] (a2) at (3,2.5) {$a_2$};
    \node[red,label={above:\textcolor{red}{$\I$}}] (b3) at (1,2.5) {$b_3$};
    
    \node[label={below:\textcolor{blue}{$\A$}}] (b'0) at (0,0) {$b'_0$};
    \node[label={below:\textcolor{blue}{$\A$}}] (a'1) at (4,0) {$a'_1$};
    \node[label={above:\textcolor{blue}{$\A$}}] (b'2) at (4,3.5) {$b'_2$};
    \node[label={above:\textcolor{blue}{$\A$}}] (a'3) at (0,3.5) {$a'_3$};
\end{scope}

\begin{scope}[every edge/.style={draw,minimum width = 0.04cm}]
    \path[red] (a0) edge (b1);
    \path[red] (b1) edge (a2);
    \path[red] (b3) edge (a2);
    \path[red] (b3) edge (a0);
    
    \path[red] (b'0) edge (a0);
    \path[red] (b1) edge (a'1);
    \path[red] (a2) edge[->-] (b'2);
    \path[red] (a'3) edge[->-] (b3);

    \path[blue] (a'3) edge[->-] (b'0);
    \path[blue] (a'1) edge[->-] (b'2);

    \path (a'1) edge[decorate, bend right] (4.5,4);
    \path (4.5,4) edge[decorate, bend right=50] (a'3);
\end{scope}
\end{tikzpicture}

No $\A$-path between $b'_0$ and $a'_1$, or between $b'_2$ and $a'_3$, and no directed $\A$-path from $b'_2$ to $b'_0$ in $G-C$.
\end{minipage}
\begin{minipage}[b]{0.32\textwidth}
\centering
\begin{tikzpicture}[scale=0.75]
\begin{scope}[every node/.style={circle,draw,minimum size=0.65cm,inner sep = 2}]
    \node[red,label={below:\textcolor{red}{$\A$}}] (a0) at (1,1) {$a_0$};
    \node[red,label={below:\textcolor{red}{$\A$}}] (b1) at (3,1) {$b_1$};
    \node[red,label={above:\textcolor{red}{$\I$}}] (a2) at (3,2.5) {$a_2$};
    \node[red,label={above:\textcolor{red}{$\A$}}] (b3) at (1,2.5) {$b_3$};
    
    \node[label={below:\textcolor{blue}{$\A$}}] (b'0) at (0,0) {$b'_0$};
    \node[label={below:\textcolor{blue}{$\A$}}] (a'1) at (4,0) {$a'_1$};
    \node[label={above:\textcolor{blue}{$\A$}}] (b'2) at (4,3.5) {$b'_2$};
    \node[label={above:\textcolor{blue}{$\A$}}] (a'3) at (0,3.5) {$a'_3$};
\end{scope}

\begin{scope}[every edge/.style={draw,minimum width = 0.04cm}]
    \path[red] (a0) edge[->-] (b1);
    \path[red] (b1) edge[->-] (a2);
    \path[red] (b3) edge[->-] (a2);
    \path[red] (a0) edge[->-] (b3);
    
    \path[red] (b'0) edge[->-] (a0);
    \path[red] (b1) edge (a'1);
    \path[red] (a2) edge[->-] (b'2);
    \path[red] (a'3) edge[->-] (b3);

    \path[blue] (a'3) edge[->-] (b'0);
    \path[blue] (a'1) edge[->-] (b'2);

    \path (a'1) edge[decorate, bend right] (4.5,4);
    \path (4.5,4) edge[decorate, bend right=50] (a'3);
    \path (b'2) edge[decorate, bend right=50] (-0.5,4);
    \path (-0.5,4) edge[decorate, bend right] (b'0);
\end{scope}
\end{tikzpicture}

No $\A$-path between $b'_0$ and $a'_1$, or between $b'_2$ and $a'_3$ in $G-C$.\\
~
\end{minipage}
\caption{Case 3 of~\Cref{lem:C4} with $\ora{a'_3b_3}$.}
\label{fig:lem:C4-3}
\end{figure}

\begin{lemma}\label{lem:3-cuts}
    Let $u$ and $v$ be $2$-vertices of $G$, then $G-\{u,v\}$ is $2$-connected.
    % The following hold:
    % \begin{enumerate}
    %     \item $G-\{u,v\}$ is connected, and\label{item:3-cuts-1}
    %     \item if $G-u$ and $G-v$ are $2$-connected, then $G-\{u,v\}$ is $2$-connected.\label{item:3-cuts-2}
    % \end{enumerate}
\end{lemma}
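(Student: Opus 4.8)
The plan is to argue by contradiction using the $2$-connectivity and the minimality of $G$, in the split-and-merge style of \Cref{lem:2C4} and \Cref{lem:separating}. Write $N(u)=\{u_1,u_2\}$ and $N(v)=\{v_1,v_2\}$. By \Cref{lem:232} the vertices $u$ and $v$ are non-adjacent and share no neighbour, so $u_1,u_2,v_1,v_2$ are four distinct vertices, each of degree $3$ by \Cref{lem:22}. Suppose $G-\{u,v\}$ is not $2$-connected. Then there is a set $S$ of at most one vertex such that $\bigl(G-\{u,v\}\bigr)-S$ is disconnected; equivalently, $\{u,v\}$, or $\{u,v,w\}$ with $S=\{w\}$, is a cut-set of $G$.

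The first step is to pin down the shape of this cut from the fact that $u$ and $v$ have degree $2$. As $G$ is $2$-connected, every component of $G$ minus the cut must be adjacent to at least two vertices of the cut; one checks that we may take $w$ (when present) adjacent to neither $u$ nor $v$, and that, unless $\{u,v\}$ already separates, $w$ is adjacent to every component. A short degree count, using that $u$ and $v$ each have only two neighbours, then shows that there are exactly two sides $C_1,C_2$, that up to relabelling $u_1,v_1\in C_1$ and $u_2,v_2\in C_2$ (so that both $u$ and $v$ genuinely join the two sides; the case where only one of them does is analogous and easier), and that in the size-$3$ case $w$ is adjacent to both $C_1$ and $C_2$. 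The resulting small edge-cut $\{uu_2,vv_2\}$, together with the edges from $w$, is what the name \emph{$3$-cut} refers to.

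The second step is to split $G$ into two graphs $H_1,H_2\in\F$, each strictly smaller than $G$. On the $C_1$-side I keep $G[C_1\cup\{u,v\}]$ and simulate the $u$--$v$ connection running through $C_2$ by a single arc $\ora{uv}$ (oriented as in \Cref{lem:2C4}) when $u$ and $v$ lie in opposite colour classes, and by a directed path of length $2$ through one new vertex otherwise, so as to keep the graph bipartite; the construction for $H_2$ on the $C_2$-side is symmetric, and $w$ is placed on one side with its edges to the other side rerouted through the gadget. Because each discarded side contains at least the two distinct vertices $u_i,v_i$ while only a bounded gadget is added, we have $|V(H_i)|+|E(H_i)|<|V(G)|+|E(G)|$; and, exactly as in the $2$-connectivity checks of \Cref{lem:C4}, any would-be cut-vertex of $H_i$ would already separate $G$, so each $H_i$ is $2$-vertex-connected and lies in $\F$.

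Finally I would take \CAI-partitions of $H_1$ and $H_2$ by minimality and merge them into a \CAI-partition of $G$, contradicting the choice of $G$. The gadget arcs are oriented so that any directed $\A$-cycle of $G$ crossing the interface would descend to a directed $\A$-cycle already present in $H_1$ or $H_2$, which is precisely the mechanism used in \Cref{lem:2C4} and \Cref{lem:separating}. I expect the merge to be the main obstacle. Since $C_1$ and $C_2$ communicate in $G$ only through $u$ and $v$, the connected set $\A$ must retain at least one of $u,v$ to bridge the two sides, and this has to be reconciled with the two possibly conflicting choices that the \CAI-partitions of $H_1$ and $H_2$ make at $u$ and $v$, as well as with the bipartite-parity issue that decides whether each gadget is a single arc (which forbids putting both $u$ and $v$ in $\I$) or a length-$2$ path (which does not). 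As in the earlier lemmas, this is settled by a finite case analysis on the positions of $u$, $v$ (and $w$) and on the existence of directed $\A$-paths between the attachment vertices on each side.
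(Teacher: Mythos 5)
Your strategy is genuinely different from the paper's, and the difference is precisely where the gap lies. The paper never splits $G$ into two pieces and never has to merge two partitions: it deletes $u$ and $v$ outright, adds a single shortcut arc between the two neighbours of $u$ (oriented consistently with the through-direction at $u$) and likewise for $v$, repairs bipartiteness by swapping the colour classes of one of the two sides, and, in the cut-vertex case, subdivides the corresponding bridge $xy$ of $G-\{u,v\}$ by one new vertex $z$ (a bridge exists because cut vertices and bridges coincide in connected subcubic graphs). This produces \emph{one} smaller graph $H'\in\F$; a single \CAI-partition of $H'$ is then lifted back to $G$ by re-inserting $u$ and $v$ into $\A$: the shortcut edge forces at least one neighbour of $u$ into $\A$, so connectivity survives, and any directed cycle of $G$ through $u$ projects onto the shortcut arc, so acyclicity survives. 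In your proposal, minimality instead hands you two unrelated partitions $(\A_1,\I_1)$ of $H_1$ and $(\A_2,\I_2)$ of $H_2$, and everything hinges on combining them.

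That merge is a genuine gap, not a routine verification left to the reader, for at least three reasons. First, the two partitions can make incompatible choices at the interface, e.g.\ $u\in\A_1$, $v\in\I_1$ while $u\in\I_2$, $v\in\A_2$; since $u$ and $v$ are the only shared vertices and the only ones you can modify locally (their other neighbours lie deep inside the sides), there is no evident repair, and you cannot re-invoke minimality to demand ``compatible'' partitions. Second, your acyclicity mechanism only works for the single-arc gadget: when $u$ and $v$ lie in the same colour class you are forced to use the length-two gadget through a new vertex $z$, and nothing prevents the partition of $H_i$ from placing $z\in\I_i$, in which case a crossing directed cycle of $G$ does \emph{not} descend to a directed $\A_i$-cycle in either $H_i$; worse, with that gadget both partitions may put $u,v\in\I_i$, and then the merged $\A$ is not even connected. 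Third, the three-vertex-cut case is underspecified: ``rerouting'' the edges of $w$ to the other side ``through the gadget'' has no concrete realization that simultaneously preserves subcubicity, bipartiteness, planarity and $2$-connectivity, whereas the paper's subdivision of the single bridge edge does this for free. Your structural preprocessing is also shakier than you suggest (the claim that the cut vertex $w$ ``may be taken adjacent to neither $u$ nor $v$'' requires proof), but that is secondary: the unperformed merge, which in \Cref{lem:2C4} and \Cref{lem:separating} is exactly the part the paper spends all its effort on even for very specific cuts, is the actual content of the lemma, and here it is missing for an arbitrary cut.
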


\begin{proof}
     Let $H=G-\{u,v\}$.
     
     First, we show that $H$ is connected. %\Cref{item:3-cuts-1}
     Suppose by contradiction that $H$ is disconnected. 
     We will build $H'\in\F$ from $H$ such that $|V(H')|+|E(H')|<|V(G)|+|E(G)|$ and extend a \CAI-partition of $H'$ to $G$, thus obtaining a contradiction. 
     Let $t$ and $w$ be neighbors of $u$ in $G$.  
     Suppose that $u$ is incident to arcs $\ora{tu}$ and $\ora{uw}$. 
     In such case, we add $\ora{tw}$ to $H$, otherwise, we add $\ora{wt}$ to $H$.
     We do the same between neighbors of $v$ and obtain $H'$.
     Since $G\in \F$, $H'$ remains 2-connected, subcubic, oriented, and planar.
     Moreover, since $G$ is 2-connected, there are exactly two connected components $H_1$ and $H_2$ in $H$ and $\{u,v\}$ forms a cut-set of $G$.
     Let $(A,B)$ be the bipartition of $G$, let $(A_1,B_1)=(A\cap V(H_1),B\cap V(H_1))$ and $(A_2,B_2)=(A\cap V(H_2),B\cap V(H_2))$ be the bipartitions of $H_1$ and $H_2$ respectively.
     Observe that $(A_1\cup B_2,B_1\cup A_2)$ is a bipartition of $H'$.
     Therefore, $H'\in \F$.
     In addition, $|V(H')|+|E(H')|=|V(G)|+|E(G)|-4$.
     By minimality of $G$, there exists a \CAI-partition $(\A,\I)$ of $H'$.
     We claim that $(\A',\I')=(\A\cup\{u,v\},\I)$ is a \CAI-partition of $G$.
     Indeed, if it is not a \CAI-partition of $G$, then it must contain a directed $\A'$-cycle going through $u$ or $v$.
     However, by construction of $H'$, if such a directed cycle exists then it must exist in $\A$, which contradicts the fact that $\A$ is an acyclic set.

     Now, we show that $H$ is $2$-connected. %\Cref{item:3-cuts-2}. 
     Suppose by contradiction that $H$ is not 2-connected. 
     Since $H$ is connected, it must contain a bridge $\ora{xy}$.
     Similarly to the previous case, we will build $H'\in\F$ from $H-\ora{xy}$ such that $|V(H')|+|E(H')|<|V(G)|+|E(G)|$ and extend a \CAI-partition of $H'$ to $G$.
     We add arcs between neighbors of $u$ and $v$ in the same fashion as when we proved that $H$ is connected. %in the proof of %\Cref{item:3-cuts-1}.
     Moreover, we add a vertex $z$ with arcs $\ora{xz}$ and $\ora{zy}$ to obtain $H'$.
     Moreover, %since $G-u$ and $G-v$ are 2-connected,
     there are exactly two connected components $H_1$ and $H_2$ in $H-\ora{xy}$ and $\{u,v,x\}$ forms a cut-set of $G$.
     Let $(A,B)$ be the bipartition of $G$, let $(A_1,B_1)=(A\cap V(H_1),B\cap V(H_1))$ and $(A_2,B_2)=(A\cap V(H_2),B\cap V(H_2))$ be the bipartitions of $H_1$ and $H_2$ respectively.
     Suppose w.l.o.g. that $x\in A_1$.
     Observe that $(A_1\cup B_2,B_1\cup A_2\cup\{z\})$ is a bipartition of $H'$.
     Since $G\in \F$, all other properties of $G$ also remains in $H'$ so $H'\in\F$.
     In addition, $|V(H')|+|E(H')|=|V(G)|+|E(G)|-2$.
     By minimality of $G$, there exists a \CAI-partition $(\A,\I)$ of $H'$.\\
     Suppose that $z\in\A$. 
     We claim that $(\A',\I')=((\A\setminus\{z\})\cup\{u,v\},\I)$ is a \CAI-partition of $G$.
     Similarly to the proof of $H$ being connected, there is no directed $\A'$-cycle. %{item:3-cuts-1}
     Moreover, losing $z$ does not disconnect $G[\A']$ since $x$ and $y$ would have been in $\A$, thus in $\A'$, and they are connected by $\ora{xy}$ in $G$.\\ 
     Suppose that $z\in\I$.
     As a consequence, $x,y\in\A$.
     Since $(\A',\I')=((\A\setminus\{z\})\cup\{u,v\},\I)$ cannot be a \CAI-partition of $G$, there must be a directed $\A'$-cycle going through $\ora{xy}$.
     Since $\{u,v,x\}$ forms a cut-set of $G$, such a cycle must go through $u$ and/or $v$.
     If such a cycle goes through $u$, then we put $u$ in $\I'$ instead.
     We do the same for $v$.
     We claim that the resulting partition $(\A'',\I'')$ is a \CAI-partition of $G$.
     Indeed, there are no $\A''$-directed cycle by construction of $\A''$.
     Moreover, $G[\A'']$ must be connected because whenever we put $u$ in $\I''$, the neighbors of $u$ are in $\A''$ and they are connected by the path remaining from a directed cycle going through $\ora{xy}$ and $u$ in $\A'$. 
     The same holds for $v$.
     This concludes the proof.% of \Cref{item:3-cuts-2}.
\end{proof}

\begin{lemma}\label{lem:23b}
% If there exists a path $a_0b_1a_2b_3a_4b_5$ lying on some $k$-face of $G$ such that vertices $b_1$ and $a_4$ are of degree $2$, then $k \ge 8$ and $G-\{b_1,a_4\}$ has a bridge.
$G$ cannot have two $2$-vertices at facial distance $3$ or less. 
\end{lemma}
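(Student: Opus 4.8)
The plan is to reduce to the single nontrivial configuration and then delete both $2$-vertices. Since facial distance is at least the distance in $G$, \Cref{lem:232} shows that two $2$-vertices have facial distance at least $3$, so I only need to exclude facial distance exactly $3$. Thus suppose $u,v$ are $2$-vertices at facial distance $3$ on a common face $f$, realised by a path $u\,s\,t\,v$ in $G[f]$; both edges at a $2$-vertex lie on each of its incident faces, so $s$ and the second neighbour $s'$ of $u$ (resp. $t$ and the second neighbour $t'$ of $v$) are exactly the two neighbours of $u$ (resp. $v$). By \Cref{lem:22} the vertices $s,t$ are $3$-vertices; write $s_3$ (resp. $t_3$) for the third neighbour of $s$ (resp. $t$). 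Since $u,v$ lie in opposite parts of the bipartition, the neighbours of $u$ and of $v$ lie in opposite parts, so $\{s,s',t,t'\}$ are four distinct vertices, and the crucial structural fact extracted from facial distance $3$ is the adjacency $s\sim t$. \Cref{lem:C4} provides the non-adjacencies used in the extension.

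First I would set $H=G\setminus\{u,v\}$. By \Cref{lem:3-cuts} the graph $H$ is $2$-vertex-connected, and it inherits being planar, bipartite, subcubic and oriented, so $H\in\F$; as $|V(H)|+|E(H)|<|V(G)|+|E(G)|$, minimality yields a \CAI-partition $(\A,\I)$ of $H$, which I extend to $G$ by placing $u$ and $v$ back. The organising observation is that \emph{at most one of $u,v$ is bad}, where I call $u$ bad if both $s,s'\in\I$: indeed $s\in\I$ forces $t\in\A$ (because $s\sim t$ and $\I$ is independent), so $v$ then has a neighbour in $\A$ and cannot be bad.

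If neither $u$ nor $v$ is bad, each is placed cleanly: a $2$-vertex with both neighbours in $\A$ is put into $\I$ (it has no further neighbours, so $\I$ stays independent), and a $2$-vertex with exactly one neighbour in $\A$ is put into $\A$ as a leaf, creating neither a directed cycle nor a disconnection. As $u\not\sim v$, carrying this out for both simultaneously is consistent (adding two leaves to the connected acyclic $\A$ keeps it connected and directed-acyclic). The remaining case is that exactly one vertex, say $u$, is bad. Then I place $v$ first by the clean rule above, and must force $u$ into $\A$: I put $u\in\A$ and move $s$ from $\I$ to $\A$. This keeps $\I$ independent and keeps $\A$ connected, since the $H$-neighbours $t,s_3$ of $s$ both lie in $\A$ (as $s\in\I$), while $u$ is attached to $\A$ as a leaf through $s$.

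The hard part will be the directed-cycle check in this bad case. Moving $s$ into $\A$ can fail only by creating a directed $\A$-cycle through $s$, which requires $s$ to be neither a source nor a sink on its two arcs to $t,s_3$ together with a directed $\A$-path joining $t$ and $s_3$; if instead this fails, I would symmetrically move $s'$ (whose two $H$-neighbours also lie in $\A$) into $\A$, which fails only under the analogous transitivity condition. The genuine obstacle is the subcase where \emph{both} moves are blocked, forcing the existence of two specific directed $\A$-paths in $H$. I expect to resolve it by the orientation-based strategy already used in \Cref{lem:C4} and \Cref{lem:3-cuts}: using the arc directions along $u\,s\,t\,v$ and the planar layout of $s',t',s_3,t_3$ around $f$, one splices the two forced directed $\A$-paths (through the deleted region or through $s\sim t$) into a directed $\A$-cycle already present in $H$, contradicting that $\A$ is acyclic there; alternatively, when this local analysis is delicate, one re-breaks the offending directed cycle by recolouring a $2$-vertex on it into $\I$, exactly as the connectivity of $\A$ is preserved in the last case of \Cref{lem:3-cuts}.
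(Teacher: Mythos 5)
Your reduction to the facial-distance-$3$ configuration, the choice $H=G-\{u,v\}$ (justified by \Cref{lem:3-cuts}), and the placement of $u,v$ when each has a neighbour in $\A$ are all correct, and up to that point you are following the same skeleton as the paper. The genuine gap is exactly the case you yourself flag as ``the hard part'': $u$ bad (i.e.\ $s,s'\in\I$) and \emph{both} repairs blocked, i.e.\ moving $s$ into $\A$ and moving $s'$ into $\A$ each create a directed $\A$-cycle. Neither of your proposed escapes works. Splicing the two forced directed $\A$-paths (one joining $t$ and $s_3$, one joining the two $H$-neighbours of $s'$) into a directed cycle of $\A$ is impossible: those paths can only be concatenated through $s$ or $s'$, which lie in $\I$, so they coexist inside the acyclic set $\A$ and give no contradiction. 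The recolouring trick of \Cref{lem:3-cuts} also does not transfer: there the recoloured vertices are the re-inserted $2$-vertices themselves, and the directed cycle through such a vertex is what reconnects its two neighbours after recolouring; here $u$ can never be put into $\I$ (both its neighbours are in $\I$), and the blocking cycles live entirely inside $V(H)$, where no vertex is guaranteed to be a $2$-vertex of $G$, nor to be movable into $\I$ without violating independence or disconnecting $\A$.

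The idea you are missing is that the paper does \emph{not} take $H=G-\{u,v\}$: it deletes $b_1,a_4$ (your $u,v$) \emph{and adds the arc} $\ora{a_0b_5}$, i.e.\ an arc between your $s'$ and $t'$, whenever these are not already adjacent. This buys two things your plan lacks. First, $s'$ and $t'$ are adjacent in $H$, so they are never both in $\I$; in particular the fully stuck situation $s,s',t'\in\I$ --- which your $H$ permits, and in which even the natural repair (put $s,u,v$ into $\A$ and move $t$ into $\I$, making $s$ pendant and killing any cycle through $s$) fails because $v$ is then left with both neighbours in $\I$ --- simply cannot occur. Second, when $s'$ and $t'$ are both in $\A$, the added arc carries connectivity information: the components of $G[\A]$ without that arc attach to $a_0$ and $b_5$ in a controlled way, and this is precisely what the paper's final case analysis uses to decide which of $b_1,a_4$ go to $\A$ and when to swap $a_2$ or $b_3$ across the partition without creating directed cycles. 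Without the extra arc your local moves cascade (repairing $s$ pushes $t$ into $\I$, which orphans $v$ unless $t'$ is moved into $\A$, a move that can itself be blocked), and nothing in your argument terminates this cascade; so as written the proof is incomplete at its decisive step.
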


\begin{proof}
Suppose by contradiction that it is not true, and so by~\Cref{lem:22} and~\Cref{lem:232} there exists a path $a_0b_1a_2b_3a_4b_5$ lying on some $k$-face of $G$ such that vertices $b_1$ and $a_4$ are of degree $2$ and vertices
%Therefore, By \Cref{lem:22},
$a_0,a_2,b_3,b_5$ have degree $3$. Moreover, by \Cref{lem:3-cuts}, $G-\{b_1,a_4\}$ is $2$-connected
% We distinguish the following cases:
% \begin{enumerate} 
%   \item $a_0b_1a_2b_3a_4b_5$ is a $6$-face and $G-\{b_1,a_4\}$ has a bridge, say $e_1e_2$. 
%   Note that $e_1, e_2$ are distinct from $a_0, b_1, a_2, b_3, a_4, b_5$, since otherwise $G$ would have already a bridge. 
%   Let $H=G-\{b_1,a_4,e_1e_2\}\cup\{a_0a_2,b_3b_5\}$, if $a_0b_1a_2$ (resp. $b_3a_4b_5$) is a directed path, then the arc between $a_0a_2$ (resp. $b_3b_5$) is oriented in the same direction. Moreover, in $H$ we replace (w.l.o.g.) $\ora{e_1e_2}$ by $\ora{e_1e_3}$ and $\ora{e_3e_2}$ for a new vertex $e_3$.
%   Observe that $H$ is bipartite and is in $\F$ and has order one less than $G$.
%    Take a \CAI-partition $(\A,\I)$ of $H$. 
%    Then $(\A\cup\{b_1,a_4\}\setminus\{e_3\},\I\setminus\{e_3\})$ is a \CAI-partition of $G$, except plausibly if $e_3 \in \I$.
%    In the latter case, a directed cycle in $\A$ going through $e_1e_2$ also needs to go through $a_0b_1a_2$ or $b_3a_4b_5$.
%    Now one can add $b_1$ or/ and $a_4$ to $\I$, to ensure that $\A$ is acyclic, while keeping $\A$ connected.
and thus $G-\{b_1,a_4\}$ is in $\F$. Let $H=G-\{b_1,a_4\}$ to which we add the arc $\ora{a_0b_5}$ if these two vertices are not adjacent in $G$.
 Take a \CAI-partition $(\A,\I)$ of $H$.
 
 If $\{a_0,a_2\}\subset\I$, then all the neighbors of $a_0$ and $a_2$ are in $\A$. Now since $\A$ is connected in $H$, we get that $(\A\cup\{b_1,a_2,a_4\}\setminus\{b_3\},\I\cup\{b_3\}\setminus\{a_2\})$ is a \CAI-partition of $G$.
 If $a_0\in\I$ and $a_2\in\A$, then depending whether adding $a_4$ to $\A$ creates a cycle or not, either $(\A\cup\{b_1,a_4\},\I)$ or $(\A\cup\{b_1\},\I\cup\{a_4\})$ is a \CAI-partition of $G$. Therefore, we conclude that $a_0\in\A$.
 
 Suppose $b_5\in\I$. Observe that among $a_2$ and $b_3$, at least one must be in $\A$. 
 Moreover, since $\A$ is connected in $H$ and $b_5\in\I$, there is an $\A$-path in $H$ (and in $G$) from $a_0$ to every vertex in $\A$, in particular to either $a_2$ or $b_3$. Thus we build a \CAI-partition $(\A',\I')$ of $G$ as follows:
 \begin{itemize}
 \item If $b_3\in\I$, then $\A'=\A\setminus\{a_2\}\cup\{b_1,b_3,a_4\}$ and $\I'=\I\setminus\{b_3\}\cup\{a_2\}$. Note that since the last neighbor of $b_3$ is in $\A$, $\A'$ remains connected.
 \item If $b_3\in\A$, then $\A'=\A\cup\{a_4\}$. Now, if $a_2\in\I$ add $b_1$ to $\A'$ and otherwise add $b_1$ to $\I'$.
 \end{itemize}
 We conclude that $b_5\in\A$. Again, observe that among $a_2$ and $b_3$, at least one must be in $\A$. And since $\{a_0,b_5\}\subset\A$, w.l.o.g, we can assume that $a_2\in\A$. We build a \CAI-partition $(\A',\I')$ of $G$ as follows:
 \begin{itemize}
 
 \item If there is an $\A$-path between $a_0$ and $b_5$ in $G-\{b_1,a_4\}$, then :
 \begin{itemize}
 \item If $b_3\in\A$ then $\A'=\A$ and $\I'=\I\cup\{b_1,a_4\}$.
 \item If $b_3\in\I$, then $\A'=\A\cup\{a_4\}$ and $\I'=\I\cup\{b_1\}$
 \end{itemize}

 \item If all the $\A$-paths from $a_0$ to $b_5$ in $H$ contain $\ora{a_0b_5}$, then since $\A$ must be connected in $H$, either there is an $\A$-path in $G$ from $a_0$ to $a_2$ or from $b_5$ to $a_2$, but not both. Therefore:
 \begin{itemize}
 \item Suppose $b_3\in\A$. Then there is an $\A$-path in $G$ from $a_0$ to $b_3$ or from $b_5$ to $b_3$, but not both. For the former we fix $\A'=\A\cup\{a_4\}$ and $\I'=\I\cup\{b_1\}$, while for the latter fix $\A'=\A\cup\{b_1\}$ and $\I'=\I\cup\{a_4\}$
 \item Suppose $b_3\in\I$.
 \begin{itemize}
 \item If there is an $\A$-path in $G$ from $a_2$ to $b_5$, then there is no $\A$-path in $G$ from $a_0$ to $a_2$. Thus we can fix $\A'=\A\cup\{b_1,a_4\}$ and $\I'=\I$.
 \item So there is no $\A$-path in $G$ from $a_2$ to $b_5$, and therefore there is an $\A$-path in $G$ from $a_0$ to $a_2$ (since $\A$ is connected in $H$). Let $a'_3$ be the third neighbor of $b_3$ other than $a_2$ and $a_4$ and note that $a'_3\in\A$. If there is an $\A$-path in $G$ from $a_2$ to $a'_3$ then there is one from $a_0$ to $a'_3$. Hence we can fix $\A'=\A\setminus\{a_2\}\cup\{b_1,b_3,a_4\}$ and $\I'=\I\setminus\{b_3\}\cup\{a_2\}$. If there is no $\A$-path in $G$ from $a_2$ to $a'_3$ then there is one from $a'_3$ to $b_5$, because $\A$ must be connected in $H$. Hence we can fix $\A'=\A\cup\{b_3\}$ and $\I'=\I\setminus\{b_3\}\cup\{b_1,a_4\}$.
 \end{itemize} \qedhere
 \end{itemize} 
 \end{itemize} 
%\end{enumerate}
\end{proof}

% \begin{lemma}\label{cor:no2bad_atfacedist3}
%     $G$ cannot have two bad $2$-vertices at facial distance $3$. 
% \end{lemma}

% \begin{proof} 
%     Let $a_0b_1a_2b_3a_4b_5a_6$ be a path lying on some $k$-face $C_k$ of $G$ such that vertices $b_1$ and $a_4$ are $2$ vertices incident to $C_6$s. 
%     Note that $k \ge 8$ and $G \setminus \{b_1,a_4\}$ has a bridge by \Cref{lem:23b}.
%     The union of the $C_k$ and the two $C_6$s form a subgraph where $a_4$ and $b_1$ are internal and so since $G \setminus \{b_1,a_4\}$ has a bridge, there should be an internal component (containing $a_2,b_3$) and an external component (containing $b_5,a_6, \ldots , a_0$) once that bridge is removed in  $G \setminus \{b_1,a_4\}$.
%     Hence the two $C_6$s cannot be disjoint and since $G$ is subcubic, they cannot have a single vertex in common. 
%     The internal cycle of the union of the three cycles is hence of order at most $5$, contradicting~\Cref{lem:C4}, since $G$ is bipartite.
% \end{proof}

\begin{lemma}\label{lem:C6with_deg2vx}
    Let $b_1a_2b_3a_4b_5a'_1$ be a $6$-face in $G$, all of whose vertices have degree $3$ except from $a'_1.$ Then a \CAI-partition $(\A,\I)$ of $H=G-\{a'_1\}$, assuming $H\in\F$, satisfies $\{a_2,b_3,a_4\}\subset\A$ and $\{b_1,b_5\} \subset \I$. 
\end{lemma}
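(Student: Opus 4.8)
The plan is to argue by contradiction using the minimality of $G$: recall $G$ is a minimum counterexample, so it has no \CAI-partition, and any \CAI-partition I manage to build on $G$ will yield the desired contradiction. Throughout I write $a_0,a_6,a'_3,b'_2,b'_4$ for the third (off-face) neighbours of $b_1,b_5,b_3,a_2,a_4$ respectively; since $a'_1$ is a $2$-vertex, its only neighbours are $b_1$ and $b_5$. Given the \CAI-partition $(\A,\I)$ of $H=G-\{a'_1\}$, I would first show $b_1,b_5\in\I$. Suppose not. If both $b_1,b_5\in\A$, then putting $a'_1$ into $\I$ keeps $\I$ independent (both neighbours of $a'_1$ lie in $\A$) and leaves $\A$ untouched, so $(\A,\I\cup\{a'_1\})$ is a \CAI-partition of $G$. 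If exactly one of them, say $b_1$, lies in $\A$, then $a'_1$ has a unique neighbour in $\A\cup\{a'_1\}$, so by the second preliminary observation $\A\cup\{a'_1\}$ is connected and acyclic and $(\A\cup\{a'_1\},\I)$ is a \CAI-partition of $G$. Both conclusions contradict minimality, so $b_1,b_5\in\I$.

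Since $\I$ is independent and $b_1,b_5\in\I$, all neighbours of $b_1$ and $b_5$ in $H$ lie in $\A$; in particular $a_0,a_2\in\A$ and $a_4,a_6\in\A$. It remains to prove $b_3\in\A$, which I would again attack by contradiction: assume $b_3\in\I$. Then $a'_3\in\A$, and the two face-neighbours $b_1,b_3$ of $a_2$ both lie in $\I$, so the only possible neighbour of $a_2$ in $\A$ is $b'_2$; thus $a_2$ is a leaf of $\A$ (and $b'_2\in\A$), and symmetrically $a_4$ is a leaf with $b'_4\in\A$. Because $G$ has no $4$-cycles (\Cref{lem:C4}), we have $a_0\neq a_4$ and $a_6\neq a_2$ (such a coincidence would close a $4$-cycle through the face), and $a_0,a_6,a'_3$ are all distinct from $a_2,a_4$; hence $a_0,a_6,a'_3\in\A':=\A\setminus\{a_2,a_4\}$, which is connected and acyclic since it is obtained by deleting the two non-adjacent leaves $a_2,a_4$.

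The key move is then a global flip along the whole face. I would set
\[
\A_G=\A'\cup\{b_1,b_3,b_5\},\qquad \I_G=(\I\setminus\{b_1,b_3,b_5\})\cup\{a_2,a_4,a'_1\}.
\]
In $\A_G$ each of $b_1,b_3,b_5$ has exactly one neighbour (namely $a_0$, $a'_3$, $a_6$), since its two other neighbours ($a'_1,a_2$ for $b_1$; $a_2,a_4$ for $b_3$; $a_4,a'_1$ for $b_5$) all lie in $\I_G$ — this is precisely where \Cref{lem:C4} is needed, to rule out that $b_1$ or $b_5$ secretly has a second neighbour in $\A'$. So $b_1,b_3,b_5$ are pendant vertices attached to the connected acyclic $\A'$, whence $\A_G$ is connected and acyclic as an \emph{undirected} graph and therefore has no directed cycle; and $\I_G$ is independent because the only vertices added to $\I$, namely $a_2,a_4,a'_1$, have all their neighbours in $\A_G$. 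Thus $(\A_G,\I_G)$ is a \CAI-partition of $G$, contradicting minimality, and so $b_3\in\A$. I expect the main obstacle to be exactly this last step: local repairs are tempting but fail, since moving $a_2$ or $a_4$ into $\I$ clashes with $b_1,b_5\in\I$, while partial flips that move $b_3$ together with only one of $a_2,a_4$ reintroduce a directed $\A$-cycle through $b_3$. The point of the proposal is that flipping the entire face at once turns all three $b$-vertices into pendants, making acyclicity automatic and sidestepping any case analysis on the orientations.
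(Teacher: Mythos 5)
Your proof is correct and follows essentially the same route as the paper's: the same contradiction argument forces $\{b_1,b_5\}\subset\I$ (hence $a_0,a_2,a_4,a_6\in\A$), and the same whole-face flip $\bigl((\A\setminus\{a_2,a_4\})\cup\{b_1,b_3,b_5\},\ (\I\setminus\{b_1,b_3,b_5\})\cup\{a_2,a_4,a'_1\}\bigr)$ handles the case $b_3\in\I$. The only difference is that you spell out details the paper leaves implicit, namely the pendant-vertex justification for acyclicity and the use of \Cref{lem:C4} to guarantee $a_0\neq a_4$ and $a_6\neq a_2$.
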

    
\begin{proof}
    See~\Cref{fig:2C6}. We take $H=G-\{a'_1\}$ and since $H\in\F$, we can consider a \CAI-partition $(\A,\I)$ of $H$. Observe that if $\{b_1,b_5\}\subset\A$, then $(\A,\I\cup\{a'_1\})$ is a \CAI-partition of $G$. Also, if $|\{b_1,b_5\}\cap\A|=1$, then $(\A\cup\{a'_1\},\I)$ is a \CAI-partition of $G$. Hence $\{b_1,b_5\}\subset\I$ and therefore $\{a_0,a_2,a_4,a_6\}\subset\A$.
We claim that $b_3\in\A$. Indeed, if $b_3\in\I$ then necessarily $\{b'_2,b'_4\}\subset\A$. Therefore $((\A\setminus\{a_2,a_4\})\cup\{b_1,b_3,b_5\},(\I\setminus\{b_1,b_3,b_5\})\cup\{a_2,a_4,a'_1\})$ is a \CAI-partition of $G$. 
\end{proof}

\begin{figure}[H]
 \centering
 \begin{tikzpicture}
 \tikzset{VertexStyle/.append style={minimum size=0.6cm, inner sep=0.01cm, draw}}

\Vertex[style={minimum size=1.0cm,shape=circle},LabelOut=false,L=\hbox{$b_5$},x=2.5cm,y=4cm]{v0}
\Vertex[style={minimum size=1cm,shape=circle},LabelOut=false,L=\hbox{$a_6$},x=0.5,y=4.6454cm]{v1}
\Vertex[style={minimum size=1.0cm,shape=circle},LabelOut=false,L=\hbox{$a_0$},x=0.5,y=0cm]{v2}
\Vertex[style={minimum size=1.0cm,shape=circle},LabelOut=false,L=\hbox{$b_1$},x=2.5cm,y=0.75cm]{v3}
\Vertex[style={minimum size=1.0cm,shape=circle},LabelOut=false,L=\hbox{$a_2$},x=5cm,y=0.75cm]{v4}
\Vertex[style={minimum size=1.0cm,shape=circle},LabelOut=false,L=\hbox{$a_4$},x=5cm,y=4cm]{v5}
\Vertex[style={minimum size=1.0cm,shape=circle},LabelOut=false,L=\hbox{$a'_1$},x=2.5cm,y=2.5cm]{v6}
%\Vertex[style={minimum size=1.0cm,shape=circle},LabelOut=false,L=\hbox{$b_7$},x=0,y=2.5cm]{v7}
%\Vertex[style={minimum size=1.0cm,shape=circle},LabelOut=false,L=\hbox{$a'_7$},x=-2,y=2.5cm]{v77}
\Vertex[style={minimum size=1.0cm,shape=circle},LabelOut=false,L=\hbox{$b_3$},x=5cm,y=2.5cm]{v8}
\Vertex[style={minimum size=1.0cm,shape=circle},LabelOut=false,L=\hbox{$a'_3$},x=7cm,y=2.5cm]{v88}
%\Vertex[style={minimum size=1.0cm,shape=circle},LabelOut=false,L=\hbox{$a'_6$},x=-2cm,y=5.0cm]{v9}
%\Vertex[style={minimum size=1.0cm,shape=circle},LabelOut=false,L=\hbox{$b'_0$},x=-2cm,y=0.0248cm]{v10}
\Vertex[style={minimum size=1.0cm,shape=circle},LabelOut=false,L=\hbox{$b'_2$},x=7cm,y=0.0cm]{v11}
\Vertex[style={minimum size=1.0cm,shape=circle},LabelOut=false,L=\hbox{$b'_4$},x=7cm,y=4.6454cm]{v12}
\Edge[lw=0.03cm,](v0)(v1)
\Edge[lw=0.03cm,](v0)(v5)
\Edge[lw=0.03cm,](v0)(v6)
%\Edge[lw=0.03cm,](v1)(v7)
%\Edge[lw=0.03cm,](v1)(v9)
\Edge[lw=0.03cm,](v2)(v3)
%\Edge[lw=0.03cm,](v2)(v7)
%\Edge[lw=0.03cm,](v2)(v10)
\Edge[lw=0.03cm,](v3)(v4)
\Edge[lw=0.03cm,](v3)(v6)
\Edge[lw=0.03cm,](v4)(v8)
\Edge[lw=0.03cm,](v4)(v11)
\Edge[lw=0.03cm,](v5)(v8)
\Edge[lw=0.03cm,](v5)(v12)
%\Edge[lw=0.03cm,](v7)(v77)
\Edge[lw=0.03cm,](v8)(v88)

\end{tikzpicture}
 \caption{A 2-vertex incident to a 6-face.}
 \label{fig:2C6}
\end{figure}
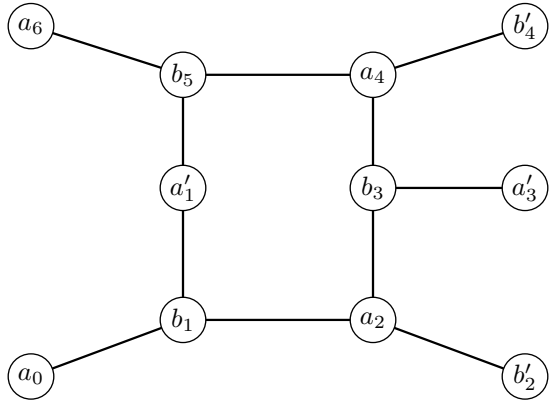

\begin{lemma}\label{lem:2C6}
A $2$-vertex cannot be incident to two $6$-faces in $G$.
\end{lemma}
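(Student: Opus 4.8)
The plan is to run the standard minimal-counterexample reduction by deleting the offending $2$-vertex. Suppose $a'_1$ is a $2$-vertex incident to two $6$-faces, with neighbours $b_1,b_5$; since $a'_1$ has degree $2$, its two incident faces share exactly the path $b_1a'_1b_5$, so I may write the first face as $b_1a_2b_3a_4b_5a'_1$ and the second as $b_1a_0b_7a_6b_5a'_1$. By~\Cref{lem:22} the vertices $b_1,b_5$ have degree $3$, and by~\Cref{lem:23b} each of $a_0,a_2,a_4,a_6,b_3,b_7$ lies within facial distance $3$ of $a'_1$ and hence also has degree $3$; thus both faces meet the hypotheses of~\Cref{lem:C6with_deg2vx}. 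I would first check that the eight vertices $b_1,a_2,b_3,a_4,b_5,a_6,b_7,a_0$ are pairwise distinct: any coincidence among them either raises a degree above $3$ (contradicting subcubicity, e.g.\ $b_7=b_3$) or creates a $4$-cycle through $a'_1$ (e.g.\ $a_0=a_4$ gives $a'_1b_1a_4b_5$), contradicting~\Cref{lem:C4}.

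Next set $H=G\setminus\{a'_1\}$. Deleting $a'_1$ merges the two $6$-faces into the single face bounded by the $8$-cycle $b_1a_2b_3a_4b_5a_6b_7a_0$; this is the only face that changes, and by the previous step it is a simple cycle, so \emph{all} faces of $H$ are simple cycles and $H$ is $2$-vertex-connected. Hence $H\in\F$ with $|E(H)|+|V(H)|<|E(G)|+|V(G)|$, so by minimality $H$ has a \CAI-partition $(\A,\I)$. Applying~\Cref{lem:C6with_deg2vx} to the first face yields $\{a_2,b_3,a_4\}\subset\A$ and $\{b_1,b_5\}\subset\I$, and applying it to the second face (with $a_0,b_7,a_6$ playing the roles of $a_2,b_3,a_4$) yields $\{a_0,b_7,a_6\}\subset\A$ and again $\{b_1,b_5\}\subset\I$. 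Thus the partition is completely rigid on the configuration: $b_1,b_5\in\I$ and all six of $a_0,a_2,a_4,a_6,b_3,b_7$ lie in $\A$.

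The contradiction must come from nonetheless producing a \CAI-partition of $G$. Since both neighbours of $a'_1$ lie in $\I$, we can neither put $a'_1$ in $\I$ (it would have two $\I$-neighbours) nor add it to $\A$ as an isolated vertex, so we are forced to pull $b_1$ or $b_5$ into $\A$. Consider $\A_1=\A\cup\{b_1,a'_1\}$ with $\I_1=\I\setminus\{b_1\}$: here $\I_1$ stays independent, $a'_1$ is a leaf (its only $\A_1$-neighbour is $b_1$), and $b_1$ joins its $\A$-neighbours $a_2,a_0$, so $\A_1$ is connected; the only way it fails is if the unique undirected cycle created, namely $b_1$ together with the tree-path $P_1$ between $a_2$ and $a_0$ in the tree induced by $\A$, is a \emph{directed} cycle $Z_1$. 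Symmetrically, $\A_5=\A\cup\{b_5,a'_1\}$ fails only if the cycle $Z_5$ through $b_5$ and the $a_4$--$a_6$ tree-path $P_5$ is directed. If either attachment avoids a directed cycle we are done, so the crux is the case where both $Z_1$ and $Z_5$ are directed.

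To resolve this remaining case I would exploit the tree structure of $\A$. Because $a_2b_3a_4$ and $a_0b_7a_6$ are vertex-disjoint paths in the tree, there is a unique ``bridge'' $Q$ joining them, and both $P_1$ and $P_5$ run through $Q$ (in the same direction); hence deleting a single vertex $w$ of $Q$ from $\A$ destroys $Z_1$ outright, while $b_1$ — adjacent to $a_2$ and $a_0$, which fall on opposite sides of the cut at $w$ — restores connectivity, so $(\A\cup\{b_1,a'_1\})\setminus\{w\}$ is again a tree. \textbf{The main obstacle} is the independence bookkeeping: moving $w$ into $\I$ requires its third neighbour to lie in $\A$, which need not hold for every $w$ on $Q$, and when $Q$ is a single edge there is no interior vertex to delete. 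I expect to settle these through the symmetric attachment via $b_5$ and a short orientation case analysis on the arcs incident to $b_1,b_5,a'_1$ (in the spirit of~\Cref{lem:C4} and~\Cref{lem:separating}), possibly together with a local swap moving the offending third neighbour — exactly the type of argument already used in the earlier reductions.
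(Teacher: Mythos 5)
Your setup agrees with the paper's and is sound: the two $6$-faces share exactly the path $b_1a'_1b_5$, all vertices other than $a'_1$ have degree $3$ by \Cref{lem:22} and \Cref{lem:23b}, $H=G-\{a'_1\}$ lies in $\F$, and applying \Cref{lem:C6with_deg2vx} to both faces makes the partition rigid ($b_1,b_5\in\I$, the six other face vertices in $\A$), forcing you to pull $b_1$ or $b_5$ into $\A$; you are in fact more explicit than the paper about why $H\in\F$ and about vertex distinctness. The genuine gap is in your endgame, and it is a conceptual one: you treat $G[\A]$ as a \emph{tree}. In the oriented setting a \CAI-partition only requires $\A$ to induce a connected subdigraph with no \emph{directed} cycles; its underlying undirected graph may contain many cycles. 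Hence there is no ``tree induced by $\A$'', no \emph{unique} cycle $Z_1$ created by adding $b_1$, no unique tree-paths $P_1,P_5$, and no bridge $Q$ joining the two face-paths. All directed cycles of $\A\cup\{b_1\}$ do pass through $b_1,a_2,a_0$, but otherwise they can be pairwise internally disjoint, so deleting a single vertex $w$ need not make $\A\cup\{b_1\}\setminus\{w\}$ acyclic, and the claim that $a_2$ and $a_0$ fall on opposite sides of $w$ (so that $b_1$ restores connectivity and the result ``is again a tree'') has no justification. On top of this, even within your tree picture you leave two cases openly unresolved (the third neighbour of $w$ possibly lying in $\I$, and $Q$ having no interior vertex); these are not loose ends but precisely where the difficulty sits.

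For comparison, the paper's proof needs no tree structure and resolves exactly the obstacles you name. Writing $b'_2,a'_3,b'_4$ for the third neighbours of $a_2,b_3,a_4$, it observes that since $b_5\in\I$, every directed cycle of $\A\cup\{b_1\}$ must leave the path $a_2b_3a_4$ through one of the edges $a_2b'_2$, $b_3a'_3$, $a_4b'_4$; it then takes the \emph{first} such edge $e$ (in this order) lying on a directed cycle and removes its endpoint $x=e\cap\{a_2,b_3,a_4\}$ from $\A$. The ``first'' choice guarantees $\A\cup\{b_1\}\setminus\{x\}$ is acyclic in one stroke; the fact that $e$ lies on a directed cycle guarantees the third neighbour of $x$ is in $\A$, so $x$ can be moved to $\I$ — exactly your ``independence bookkeeping'' problem; and a short case distinction (whether $x=a_4$, and whether $\A\cup\{b_1\}\setminus\{x\}$ stays connected) decides whether to complete the partition by adding $a'_1$ or by adding $b_5$ to $\A$. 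To repair your write-up, this directed-cycle selection argument would have to replace the tree/bridge mechanism; the rest of your proof can stand as is.
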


\begin{proof}
    Let $b_1a_2b_3a_4b_5a'_1$ and $b_1a'_2b'_3a'_4b_5a'_1$ be the two $6$-faces in $G$, with $\deg(a'_1)=2$ and the other vertices having degree $3$ by \Cref{lem:23b}.
    Then, by \Cref{lem:C6with_deg2vx}, a \CAI-partition $(\A,\I)$ of $H=G-\{a'_1\}$ satisfies $\{a_2,b_3,a_4,a'_2,b'_3,a'_4\}\subset\A$ and $\{b_1,b_5\} \subset \I.$ We define $b'_2$, $a'_3$, and $b'_4$ as in \Cref{fig:2C6}.
    If we put $b_1$ in $\A$, $\A$ is not acyclic anymore.
    Consider the first edge $e$ among $a_2b'_2, b_3a'_3, a_4b'_4$ (in this order) for which a cycle in $\A \cup \{b_1\}$ exists using that edge.
    Let $x= e \cap \{a_2,b_3,a_4\}.$
    By the choice of $e$ and thus $x$, $\A \cup \{b_1\} \setminus x$ will be acyclic.
    If $x=a_4$ or $\A \cup \{b_1\} \setminus x$ is not connected (and thus $b'_4,a_4$ are not in the same connected component of $\A \cup \{b_1\} \setminus x$ as $b_1$), adding $b_5$ to $\A$ does not create a cycle.
    That is, $(\A \cup \{b_1, b_5\} \setminus x, \I \cup \{x,a'_1\} \setminus \{b_1, b_5\})$ is a $\CAI$-partition of $G$. 
    If $\A \cup \{b_1\} \setminus x$ is connected and $x\neq a_4$, then $(\A \cup \{b_1, a'_1\} \setminus x, \I \cup x \setminus b_1)$ is a $\CAI$-partition of $G.$ 
\end{proof}

\begin{lemma}\label{lem:C8}
If an $8$-face contains two $2$-vertices, then none of them is bad.
\end{lemma}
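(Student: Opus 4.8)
The plan is to argue by contradiction inside the minimal counterexample $G$, assuming that some $8$-face $f=x_0x_1x_2x_3x_4x_5x_6x_7$ carries two $2$-vertices, one of which, say $x_0$, is bad. First I would fix the configuration. By \Cref{lem:23b} any two $2$-vertices lie at facial distance at least $4$, and on an $8$-face this forces the two $2$-vertices to be antipodal; so I may take them to be $x_0$ and $x_4$, with all of $x_1,x_2,x_3,x_5,x_6,x_7$ of degree $3$. Since $x_0$ is bad it also lies on a $6$-face $f'=x_0x_1y_1y_2y_3x_7$; \Cref{lem:23b} applied on $f'$ forces $y_1,y_2,y_3$ to have degree $3$ as well, and \Cref{lem:C4} guarantees that $f$ and $f'$ share only the path $x_7x_0x_1$.

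Next I set $H=G-x_0$. I would check $H\in\F$: by \Cref{lem:3-cuts} the graph $G-\{x_0,x_4\}$ is $2$-connected, and re-adding $x_4$ as an ear between $x_3$ and $x_5$ preserves $2$-connectivity, so $H$ is $2$-vertex-connected (and obviously still bipartite, planar, subcubic, oriented). By minimality $H$ has a \CAI-partition $(\A,\I)$, and since $f'$ meets the hypotheses of \Cref{lem:C6with_deg2vx} with $x_0$ as its $2$-vertex, every such partition satisfies $\{y_1,y_2,y_3\}\subset\A$ and $\{x_1,x_7\}\subset\I$; consequently the degree-$3$ neighbours $x_2$ (of $x_1$) and $x_6$ (of $x_7$) also lie in $\A$.

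The heart of the argument is the extension back to $G$, and here the oriented setting is what I would exploit. Because both neighbours $x_1,x_7$ of $x_0$ lie in $\I$, one cannot place $x_0$ in $\I$; and if some \CAI-partition of $H$ had $x_1$ or $x_7$ in $\A$, one could attach $x_0$ as a leaf (or drop it into $\I$ when both are in $\A$), contradicting minimality — this is exactly why \Cref{lem:C6with_deg2vx} forces $x_1,x_7\in\I$. Note also that, $G$ being bipartite, $x_1$ and $x_7$ share a colour class, so no edge may be added between them in $H$, which is what makes the extension delicate. To build a partition of $G$ I must therefore move $x_1$ (or symmetrically $x_7$) into $\A$ alongside $x_0$. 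Moving $x_1$ into $\A$ keeps $\A$ connected and $\I$ independent, so it can fail only by creating a \emph{directed} $\A$-cycle through $x_1$, and such a cycle must consist of a directed $\A$-path between the two $\A$-neighbours $x_2$ and $y_1$ of $x_1$ together with $x_1$. I would then neutralise this cycle by returning one vertex to $\I$, chosen according to the orientation of the edges at $x_1$ and the first ``exit'' of the cycle, in the spirit of the cycle-breaking in \Cref{lem:2C6}.

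The indispensable ingredient — and the reason the hypothesis demands \emph{two} $2$-vertices on $f$, a single bad $2$-vertex on an $8$-face being permitted by the discharging — is the second $2$-vertex $x_4$ together with the path $x_2x_3x_4x_5x_6$ of $f$ joining the two $\A$-vertices $x_2$ and $x_6$. This path provides an alternative route that keeps $\A$ connected when the cycle-breaking vertex (such as $x_2$, $x_6$, or some $y_i$) is deleted from $\A$, while the degree-$2$ vertex $x_4$ is the natural vertex to shift into $\I$ to kill a directed cycle that runs along the $8$-face, since both its neighbours lie on the path and removing it cannot disconnect $\A$. I expect the main obstacle to be precisely this bookkeeping: a case analysis over the orientations of the edges incident to $x_0,x_1,x_7$ and over the placement of $x_3,x_4,x_5$, verifying in each case that some vertex can be returned to $\I$ so that the resulting $\A$ is simultaneously connected and free of directed cycles while $\I$ remains independent — yielding a \CAI-partition of $G$ and the desired contradiction.
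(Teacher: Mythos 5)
Your setup coincides with the paper's own proof: delete the bad $2$-vertex $x_0$, check $H=G-x_0\in\F$ (your justification via \Cref{lem:3-cuts} plus re-adding $x_4$ as an ear is in fact more explicit than the paper's bare assertion), and apply \Cref{lem:C6with_deg2vx} to conclude that every \CAI-partition of $H$ has $\{y_1,y_2,y_3,x_2,x_6\}\subset\A$ and $\{x_1,x_7\}\subset\I$. Up to that point the proposal is sound.

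The gap is that everything after this point --- the extension back to $G$, which is the entire substance of the lemma --- is never carried out, and the heuristic you propose for it fails exactly in the hardest case. The paper proceeds by a case distinction your sketch does not anticipate. First, if $\A$ contains a path between $x_2$ and $x_6$ avoiding $\{y_1,y_2,y_3\}$, one argues as in \Cref{lem:2C6}; this is the \emph{only} case in which your picture of the $8$-face path $x_2x_3x_4x_5x_6$ as an ``alternative route'' is valid. Otherwise, independence of $\I$ and connectivity of $\A$ force \emph{exactly one} of $x_3,x_4,x_5$ into $\I$. When it is $x_3$ (or $x_5$), the paper's swap (put $x_0,x_1,x_3$ into $\A$, put $x_2$ into $\I$, and if a cycle appears put $x_4$ into $\I$ as well) is broadly compatible with your plan. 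But when it is $x_4$ --- the second $2$-vertex itself --- both of your tools vanish simultaneously: the $8$-face path is interrupted at $x_4$, and ``shift $x_4$ into $\I$'' is vacuous because $x_4$ is already there (the paper, in this case, instead moves $x_4$ from $\I$ into $\A$ when needed). This case requires a genuinely new idea absent from your sketch: one first shows that every $\A$-path from $x_2$ to $x_6$ must contain at least \emph{two} of $\{y_1,y_2,y_3\}$ (no path can pass through exactly one of them, by degree reasons and $x_1,x_7\in\I$), and then a planarity/interleaving argument on the boundary of the union of the two faces yields an $\A$-path from $x_6$ to $y_3$ avoiding $y_2$, or from $x_2$ to $y_1$ avoiding $y_2$; the final reassignment is built from that path. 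Without this trichotomy and the planarity argument, the ``bookkeeping'' you defer is not routine and the proof is incomplete. (A minor further inaccuracy: the cycle-breaking in \Cref{lem:2C6} is governed by which exit edges of the face a cycle uses, not by the orientations of the arcs at $x_1$; orientations play essentially no role in this part of the paper's argument.)
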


\begin{proof}
    Assume not. 
    Let $a_0b_1a_2b_3a_4b_5a_6b_7$ be an $8$-face containing two $2$-vertices, without loss of generality $a_2$ and $a_6$ (using~\Cref{lem:23b}) and assume that $a_2$ is bad, i.e. is also incident to a $6$-face $b_1a'_1b_2a'_3b_3a_2$. See \Cref{fig:C8C6}, for an illustration.

By~\Cref{lem:C6with_deg2vx},
a \CAI-partition $(\A, \I)$ of $H=G \setminus \{a_2\}$ (which belongs to $\F$) satisfies $\{a_0, a'_1,b_2,a'_3, a_4\} \subset \A$
and $\{b_1, b_3\} \subset \I.$
If there is an $\A$-path between $a_0$ and $a_4$, containing no vertex from $\{a'_1, b_2, a'_3\}$, we are done analogously as in the proof of~\Cref{lem:2C6}. 

By the previous and the definition of $(\A, \I)$, there is exactly one of $\{b_5, a_6, b_7\}$ belonging to $\I.$

If $b_5 \in \I$ (the case $b_7 \in \I$ is analogous), we can consider $(\A',\I')=(\A \cup \{a_2,b_3,b_5\} \setminus \{a_4\}, \I \setminus \{b_3,b_5\} \cup \{a_4\}). $
Here $\A'$ is connected and $\I'$ is an independent.
If $\A'$ contains a cycle, we can put $a_6$ in $\I'$,
i.e. either $(\A',\I')$ or $(\A' \setminus a_6,\I' \cup a_6)$ is a \CAI-partition of $G.$

Finally, we can assume that $a_6 \in \I$ and $b_5, b_7 \in \A,$ and recall that every $\A$-path from $a_0$ to $a_4$ uses at least two vertices out of $\{a'_1, b_2, a'_3\}$.
By planarity, this implies that there is an $\A$-path from $a_4$ to $a'_3$ avoiding $b_2$, or from $a_0$ to $a'_1$ avoiding $b_2$ (possibly both).
By symmetry, we can assume the first.
Now choose $(\A',\I')=(\A \cup \{b_3,a_2\} \setminus \{a'_3\}, \I \setminus \{b_3\} \cup \{a'_3\}). $
Now $(\A',\I')$ or $(\A' \cup a_6,\I' \setminus a_6)$ is a \CAI-partition of $G.$
\end{proof}

    \begin{figure}[H]
 \centering
\begin{tikzpicture}
 \tikzset{VertexStyle/.append style={minimum size=0.6cm, inner sep=0.01cm, draw}}

\Vertex[style={minimum size=1.0cm,shape=circle},LabelOut=false,L=\hbox{$a_0$},x=0.,y=0cm]{a0}
\Vertex[style={minimum size=1.0cm,shape=circle},LabelOut=false,L=\hbox{$b_1$},x=2cm,y=0cm]{b1}
\Vertex[style={minimum size=1.0cm,shape=circle},LabelOut=false,L=\hbox{$a_2$},x=2cm,y=2cm]{a2}
\Vertex[style={minimum size=1.0cm,shape=circle},LabelOut=false,L=\hbox{$b_3$},x=2.cm,y=4cm]{b3}

\Vertex[style={minimum size=1.0cm,shape=circle},LabelOut=false,L=\hbox{$a_4$},x=0cm,y=4cm]{a4}

\Vertex[style={minimum size=1.0cm,shape=circle},LabelOut=false,L=\hbox{$b_5$},x=-2.cm,y=4cm]{b5}
\Vertex[style={minimum size=1.0cm,shape=circle},LabelOut=false,L=\hbox{$a_6$},x=-2cm,y=2cm]{a6}

\Vertex[style={minimum size=1.0cm,shape=circle},LabelOut=false,L=\hbox{$b_7$},x=-2.,y=0cm]{b7}

\Vertex[style={minimum size=1.0cm,shape=circle},LabelOut=false,L=\hbox{$b'_4$},x=0cm,y=5cm]{b4}
\Vertex[style={minimum size=1.0cm,shape=circle},LabelOut=false,L=\hbox{$b'_0$},x=0.,y=-1cm]{b0}

\Vertex[style={minimum size=1.0cm,shape=circle},LabelOut=false,L=\hbox{$a'_5$},x=-3cm,y=5cm]{a5}

\Vertex[style={minimum size=1.0cm,shape=circle},LabelOut=false,L=\hbox{$a'_7$},x=-3.,y=-1cm]{a7}

\Vertex[style={minimum size=1.0cm,shape=circle},LabelOut=false,L=\hbox{$a'_1$},x=4cm,y=0cm]{a1}
\Vertex[style={minimum size=1.0cm,shape=circle},LabelOut=false,L=\hbox{$b_2$},x=4cm,y=2cm]{b2}
\Vertex[style={minimum size=1.0cm,shape=circle},LabelOut=false,L=\hbox{$a'_3$},x=4.cm,y=4cm]{a3}

\draw (a0)--(b1)--(a2)--(b3)--(a4)--(b5)--(a6)--(b7)--(a0)--(b1)--(a1)--(b2)--(a3)--(b3);
\draw (b7)--(a7);
\draw (a0)--(b0);
\draw (a4)--(b4);
\draw (a5)--(b5);

\end{tikzpicture}
 \caption{Configuration of~\Cref{lem:C8}}
 \label{fig:C8C6}
\end{figure}

\section{Proof of \texorpdfstring{\Cref{thm:tw2}}{Theorem 9}}
It is well-known that series-parallel graphs contain no subdivisions of a $K_4$, see e.g.~\cite{Bod98}.

Given an undirected graph $G=(V,E)$, a partition of its edges into a sequence of \emph{ears} $\mathrm{ED}=(E_0, \ldots, E_{\ell})$ is an \emph{open ear decomposition} (starting in $E_0$) if:
\begin{itemize} 
 \item[0.] $E_0$ is a cycle,
 \item[1.] $E_i$ is a path with endpoints $x_i,y_i$, for $1\leq i\leq \ell$,
 \item[2.] the internal vertices of $E_j$ do not appear in $E_i$ with $i<j$, but the endpoints $x_j,y_j$ appear in some $E_k$ and $E_m$, for $0\leq k,m<j\leq \ell$.
\end{itemize}
Further, $\mathrm{ED}$ is \emph{nested} if 
\begin{itemize}
 \item[3.] the endpoints $x_j,y_j$ of $E_j$ are interior vertices of exactly one ear $E_i$, for $0\leq i<j\leq \ell$. We call the $(x_j,y_j)$-subpath of $E_i$ the \emph{nest interval of $E_j$ on $E_i$},
 \item[4.] if $E_j$ and $E_k$ both have their endpoints on $E_i$, then their nest intervals on $E_i$ are contained in each other or are internally disjoint.
\end{itemize}
Additionally, $\mathrm{ED}$ is \emph{short} if
\begin{itemize}
 \item[5.] each $E_j$ is induced and the nest interval of $E_j$ on $E_i$ is not longer than {the path $E_j$}.
\end{itemize}

A classic result of Whitney~\cite{Whi32} shows that a $2$-vertex-connected graph on at least $3$ vertices has an open ear-decomposition. This has been adapted by Eppstein~\cite{Epp92} who showed that a $2$-vertex-connected graph is series-parallel if and only if it admits a nested open ear decomposition. We will show the following nice little lemma:
\begin{lemma}\label{lem:ED}
 If $G$ is a $2$-vertex-connected series-parallel then it has a short nested open ear decomposition.
\end{lemma}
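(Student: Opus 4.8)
The plan is to start from \emph{any} nested open ear decomposition $\mathrm{ED}=(E_0,\dots,E_\ell)$ of $G$, which exists by Eppstein's characterisation~\cite{Epp92}, and to repeatedly perform a single local ``rotation'' move until it becomes short. For an ear $E_j$ with $j\geq 1$, let $N_j$ denote its nest interval on its host ear (unique by property~(3)), and write $|P|$ for the number of edges of a path $P$. I would track the potential $\Phi(\mathrm{ED})=\sum_{j\geq 1}|N_j|$, a nonnegative integer. Since the total length $\sum_j|E_j|=|E(G)|$ and the number of ears are the same for every open ear decomposition, $\Phi$ is the natural quantity to push down.

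The key move is the following. Suppose some ear $E_j$ violates shortness, i.e.\ $|N_j|>|E_j|$, where $N_j$ is the $x_j$--$y_j$ subpath of its host $E_i$. Since the interior vertices of $E_j$ are new and those of $N_j$ belong to $E_i$, the paths $E_j$ and $N_j$ meet only in $x_j,y_j$, so $E_j\cup N_j$ is a cycle. I would then \emph{swap} them: replace the subpath $N_j$ inside $E_i$ by $E_j$, obtaining a new host ear $E_i'$ (still a path if $i\geq 1$, still a cycle if $i=0$, as $E_j$ meets $E_i-N_j$ only at $x_j,y_j$), and promote the old $N_j$ to a new ear whose endpoints $x_j,y_j$ are now interior to $E_i'$ and whose nest interval on $E_i'$ is exactly $E_j$. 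Because $|E_j|<|N_j|$, this term of $\Phi$ strictly drops, while every other nest interval is either unchanged (a child strictly inside $N_j$ simply becomes a child of the promoted ear, and children of $E_i$ disjoint from $N_j$ or nested inside $E_j$ are untouched) or can only shrink (any child of $E_i$ whose interval contains $N_j$). Hence $\Phi$ strictly decreases, and being a nonnegative integer it terminates after finitely many swaps in a decomposition with $|N_j|\leq|E_j|$ for all $j$, giving the length requirement of condition~(5).

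It remains to see that such a terminal decomposition has induced ears. Once $|N_j|\leq|E_j|$ holds everywhere, I claim the only possible chord of an ear $E_j$ is the edge $x_jy_j$ between its two endpoints; as the example of $K_4$ minus an edge already shows, this edge cannot in general be avoided, and permitting it is exactly what being an induced ear means here. First, no ear is a single edge: a single-edge ear would have nest interval of length $\geq 2$ (its endpoints are nonadjacent on their host, else the edge would be duplicated in the simple graph $G$), contradicting shortness. Now let $uw$ be any chord of $E_j$, lying in an ear $E_m$. Using property~(3) together with the fact that interior vertices of an ear occur in no other ear, a short case analysis shows that neither $u$ nor $w$ can be interior to $E_j$, and that an interior vertex cannot be paired with an endpoint; the only surviving possibility is $\{u,w\}=\{x_j,y_j\}$. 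Thus every ear is induced in the required sense.

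The step I expect to be most delicate is the bookkeeping for the swap: after replacing $N_j$ by $E_j$ one must re-examine the nesting relation and output a valid order (any linear extension of the updated parent relation works, since the swap keeps the family of intervals laminar), and the root must be treated separately because $E_0$ is a cycle whose ``nest intervals'' are arcs — there the swap merely exchanges one arc of the cycle for $E_j$, keeping $E_0$ a cycle. Verifying that laminarity is preserved in each child case above, and that the promoted ear precedes all of its new children, is the routine but load-bearing part of the argument.
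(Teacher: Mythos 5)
Your strategy is genuinely different from the paper's (the paper never needs Eppstein's theorem~\cite{Epp92}: it builds the ears greedily as shortest paths and derives nestedness from $K_4$-minor-freeness), but as written your swap move has a real gap: it does not preserve nestedness. By property 4, the nest interval $I_F$ of another ear $F$ hosted on $E_i$ may be \emph{contained in $N_j$ while sharing one endpoint with it}; that is, $F$ may have endpoints $x_j$ and a vertex $w$ interior to $N_j$. This configuration is perfectly legal in a nested decomposition, yet it falls outside all three of your cases (``strictly inside $N_j$'', ``disjoint from $N_j$ or nested inside $E_j$'', ``contains $N_j$''). After your swap, $w$ is an interior vertex of the promoted ear $N_j$, while $x_j$ is interior to $E_i'$ and only an \emph{endpoint} of the promoted ear; hence no single ear contains both endpoints of $F$ as interior vertices, property 3 fails, and what you produce is not a nested open ear decomposition at all --- at which point the laminar family, the parent relation, and the potential $\Phi$ are no longer even defined, so the termination argument collapses. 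Concretely: let $E_0=v_0v_1\cdots v_9v_0$, let $E_1=v_0av_5$ with nest interval $v_0v_1\cdots v_5$ (so $E_1$ violates shortness), and let $E_2$ be a path from $v_0$ to $v_2$ with nest interval $v_0v_1v_2$. Swapping $E_1$ with its interval leaves $E_2$ dangling between the new $E_0$ and the promoted ear. Nor is the repair mere bookkeeping: the natural alternative move (exchanging the host subpath $I_F$ together with $F$, say) changes $\Phi$ by $2\bigl(|F|-|I_F|\bigr)$, which can be positive, so the potential argument does not survive unchanged.

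For comparison, the paper's proof avoids this difficulty entirely: it takes a shortest cycle as $E_0$, then repeatedly attaches a shortest path between two already-covered vertices among paths using only uncovered edges; any violation of property 3 or 4 then yields a $K_4$-minor (Figure~\ref{fig:tw2cases}), impossible in a series-parallel graph, and property 5 follows from the minimality of the chosen cycle and paths. The part of your argument that is sound, and in fact a good catch, is the reading of ``induced'': as your $K_4$-minus-an-edge example shows, an ear cannot always avoid a chord joining its two endpoints, so property 5 must be read as forbidding all other chords only (equivalently, once the length condition rules out single-edge ears, every remaining chord joins the two endpoints of its ear); this relaxed reading is also all that the proof of the subsequent lemma uses. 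But that observation concerns the statement's interpretation and does not repair the gap in your swap step, which is where the actual work of the lemma lies.
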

\begin{proof}
Since $G$ is $2$-vertex-connected it has a cycle, take a shortest one and use it as $E_0$. Given a partial short nested open ear decomposition $\mathrm{ED}'=(E_0, \ldots, E_i)$ covering a subgraph $H\subset G$, pick any two vertices $x,y$ of $H$ such that they are connected with a path only using edges from $G\setminus E(H)$ and take a shortest such path $E_{i+1}$. To see that such $x,y$ exist is as usual: If there is a vertex $z\in G\setminus V(H)$ and since $G$ is $2$-vertex connected there must be two paths from $z$ to $H$ that only intersect in $z$. Their two endpoints are $x,y$. Otherwise any edge $E_{i+1}=\{x,y\}$ of $G\setminus E(H)$ will do. This yields an open ear decomposition.

Suppose that $E_k$ is the first ear that does not satisfying 3. or 4. Hence every prior ear has a unique predecessor. If $E_k$ violates 3., then it has endpoints as interior vertices $x_k\in E_j$ and $y_k\in E_i$ for $i\neq j$. Note that every vertex is an interior point of some ear, so the endpoints of $E_k$ must be interior of at least one ear. Let $E_{i\wedge j}$ be the first common predecessor ear of $E_i$ and $E_j$, in both cases $E_{i\wedge j}\in \{E_i, E_j\}$ and $E_{i\wedge j}\notin \{E_i, E_j\}$ it is easy to construct a $K_4$-minor, see the left two cases in Figure~\ref{fig:tw2cases}.

If $E_k$ violates 4., then there are $E_i, E_j$ such that the nest intervals of $E_k$ and $E_j$ on $E_i$ properly overlap. Also in this case it is easy to find a $K_4$-minor, see the right case in Figure~\ref{fig:tw2cases}. 

\begin{figure}
 \centering
 \includegraphics[width=.7\textwidth]{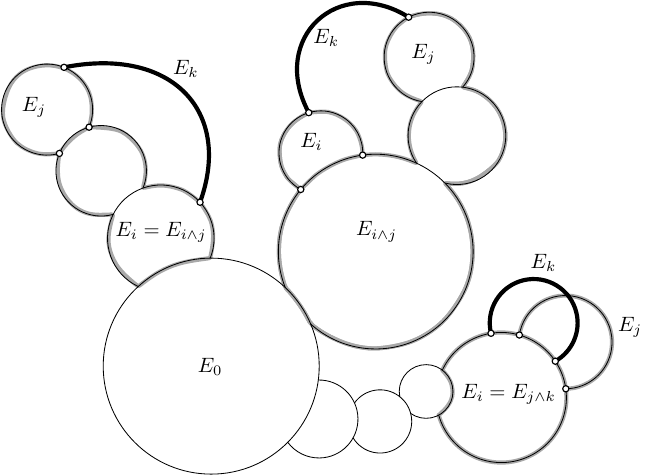}
 \caption{The three ways an ear $E_k$ can violate properties 3. or 4. and the resulting $K_4$-minors in grey.}
 \label{fig:tw2cases}
\end{figure}

Let us now prove 5.
First, note that by the choice of $E_k$ as shortest path (or cycle), it clearly is induced. Suppose now that the nest interval $I$ of $E_k$ on its unique predecessor ear $E_j$ is longer than $E_k$. But then at the time of constructing $E_j$ the shorter path $(E_j\setminus I)\cup E_i$ would have been available, contradicting the minimality in the choice of $E_j$.
\end{proof}

Khuller~\cite{Khu89} proposed the definition of \emph{tree ear decomposition}, which are those open ear decompositions additionally satisfying 3. We will call a tree ear decomposition \emph{short} if it furthermore satisfies 5. Clearly, short open nested ear decompositions are short tree ear decompositions. Hence, 
together with \Cref{lem:ED} the following yields~\Cref{thm:tw2}.
\begin{lemma}
 If $G$ is simple and has a short tree ear decomposition, then it has a \CAI-partition.
\end{lemma}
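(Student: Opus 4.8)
The plan is to process the ears in the order $E_0,E_1,\dots,E_\ell$ of the given short tree ear decomposition and to build the \CAI-partition $(\A,\I)$ incrementally, maintaining at every step a \CAI-partition of the subgraph $H_t=E_0\cup\cdots\cup E_t$ covered so far, together with one extra bookkeeping invariant: \emph{each ear contributes at most one of its own vertices to $\I$}, where the ``own'' vertices of $E_0$ are all of $V(E_0)$ and the own vertices of $E_s$ ($s\ge 1$) are its interior vertices. The point of this invariant is the following observation, which I would record first. In a tree ear decomposition the two endpoints $x_j,y_j$ of an ear $E_j$ are interior vertices of a single earlier ear $E_i$ (the parent, $i<j$), so if $E_i$ has at most one own vertex in $\I$, then $x_j$ and $y_j$, being distinct interior vertices of $E_i$, cannot both lie in $\I$. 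Thus \textbf{at most one endpoint of every ear lies in $\I$}. Since the construction below only ever assigns labels to brand-new vertices and never changes old labels, the invariant is preserved for free.

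For the base case I would put a single vertex of the cycle $E_0$ into $\I$ and the rest into $\A$, which is a path, so $E_0$ contributes exactly one own vertex to $\I$. For the inductive step, when adding $E_j=x\,v_1\cdots v_{m-1}\,y$, I use that \emph{short} forces $m\ge 2$: an ear of length $1$ would have a nest interval of length $1$, i.e.\ $x,y$ adjacent on the parent, creating a parallel edge and contradicting simplicity. Hence every ear has at least one interior vertex, and at the moment $E_j$ is processed its interior vertices are new, so their only neighbours are their two path-neighbours. By the observation above, $x,y$ are not both in $\I$, and I split into two cases. If exactly one endpoint, say $x$, is in $\A$, I put all of $v_1,\dots,v_{m-1}$ into $\A$; this attaches a pendant path at $x$ (its far end abuts $y\in\I$), so $\A$ stays a tree and $\I$ is unchanged, and $E_j$ contributes $0$ own vertices to $\I$. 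If both endpoints are in $\A$, I put $v_1$ into $\I$ and $v_2,\dots,v_{m-1}$ into $\A$; then $v_1$ only touches vertices of $\A$, so $\I$ stays independent, while $v_2,\dots,v_{m-1}$ (if any) form a pendant path hanging off $y$, which breaks the unique fundamental cycle of $E_j$ and keeps $\A$ connected and acyclic, and $E_j$ contributes exactly $1$ own vertex to $\I$. In both cases the invariant is maintained, and after $E_\ell$ the resulting $(\A,\I)$ is the desired \CAI-partition of $G$.

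The step I expect to carry the real content is the case where both endpoints are already in $\A$: one must break the new cycle without disconnecting $\A$ or creating two adjacent vertices in $\I$, and — crucially — one must guarantee that this never cascades into an ear both of whose endpoints lie in $\I$. That latter configuration is exactly the unextendable one, since then $v_1$ and $v_{m-1}$ are both forced into $\A$ while the whole interior path hangs off two $\I$-vertices and is disconnected from the rest of $\A$. The at-most-one-vertex-per-ear invariant is precisely the device that forecloses it, so the heart of the write-up is simply verifying that this invariant survives each case. The short hypothesis enters only through the guarantee that every ear has length at least two (the nested/short bound on nest intervals is not otherwise needed here, beyond ruling out length-$1$ ears in a simple graph).
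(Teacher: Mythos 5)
Your proof is correct and follows essentially the same route as the paper's: process ears in order, maintain that $\I$ has at most one vertex on each ear (so property 3 forces at most one endpoint of each new ear into $\I$), use shortness plus simplicity to guarantee every ear has an interior vertex, and then either put all interior vertices into $\A$ or put one interior vertex adjacent to an endpoint into $\I$. The only difference is bookkeeping: you verify the invariant inductively on the partial subgraphs $H_t$ (which works because the ears partition $E(G)$), whereas the paper makes the same point at the end by observing that any unaccounted edge between $\A$-vertices would have to be a single-edge ear, which short decompositions of simple graphs forbid.
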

\begin{proof}
To prove the theorem go along a short tree ear decomposition $\mathrm{ED}$ of $G$ and construct a \CAI-partition with the property that $\mathcal{I}$ has at most one vertex on each ear. This is easy for $E_0$ by putting an arbitrary vertex of it into $\mathcal{I}$. Note that by 5. every $E_i$ has some interior vertex, because otherwise its nest interval must also have been an edge, contradicting simplicity. When $E_i$ is added, then by property 3. at most one of its endpoints is in $\mathcal{I}$. If it is exactly one, then just add the vertices of $E_i$ to $\mathcal{A}$. Otherwise choose an internal vertex of $E_i$ neighboring an endpoint of $E_i$ and add it to $\mathcal{I}$. Clearly, in both cases we maintain that $\mathcal{I}$ is independent and has at most one vertex on every ear. Moreover, in both cases we add one induced subpath of $E_i$ which is induced by 5. to $\mathcal{A}$. If there was an edge induced from a vertex of $E_i$ to some previous vertex in $\mathcal{A}$, then this must be a later ear, contradicting that the ears in a short tree ear decomposition are not edges. 
\end{proof}

We do not know if there are any interesting graphs apart from the series-parallel ones, that admit short tree ear decompositions. One source is to take a graph with a tree ear decomposition, e.g., any Hamiltonian graph, and subdivide edges sufficiently often so property 5. is satisfied.

\section{Tightness of \texorpdfstring{\Cref{thm:subcubic,thm:tw2}}{Theorems 8 and 9}}\label{s}
We discuss the tightness of the results obtained above. 

\begin{lemma}\label{lem:no-AI-graphs}
 Each of the graphs of \Cref{fig:tight-graphs} has no \CAI-partition.
\end{lemma}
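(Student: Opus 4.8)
The plan is to treat the graphs of \Cref{fig:tight-graphs} one at a time and, for each, run a short finite case analysis over the choice of the independent set $\I$, showing that $\A=V(G)\setminus\I$ can never be simultaneously connected and acyclic. Two standing remarks drive every case: since $\I$ is independent, every vertex of $\I$ has all of its neighbours in $\A$; and since $\A$ must be connected and acyclic, in the undirected setting $G[\A]$ is a tree, while in the oriented setting $G[\A]$ is merely a connected sub-digraph with no \emph{directed} cycle. I will cut the number of cases down using the automorphisms of each graph.

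The main engine for the undirected (and for the regular) examples is an edge count. If $(\A,\I)$ is an undirected CAI-partition and $a=|\A|$, then $G[\A]$ is a tree and contributes exactly $a-1$ edges, $G[\I]$ contributes none, and all remaining edges run between $\A$ and $\I$; counting those remaining edges from the $\I$-side gives
\begin{equation*}
|E(G)|-(a-1)=\sum_{v\in\I}d(v).
\end{equation*}
For a cubic graph on $n$ vertices this reads $\tfrac{3n}{2}-a+1=3(n-a)$, i.e. $a=\tfrac{3n-2}{4}$, so that \emph{no} undirected CAI-partition exists once $4\mid n$. This single observation disposes of any cubic witness of order divisible by $4$, and a completely analogous count settles the non-regular witnesses by forbidding the required value of $a$.

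For the witnesses carrying a genuine orientation (those certifying tightness of the \emph{oriented} \Cref{thm:subcubic}) the count above is only a filter, because $\A$ is now allowed to contain undirected cycles that are not directed. Here I would first use the identity to pin down the admissible sizes of $\A$, and then verify case by case---reading the arc directions straight from the figure and using symmetry---that every candidate $G[\A]$ of such a size contains a directed cycle or is disconnected. The orientations in the figure are chosen precisely so that each short cycle of $G$ is \emph{cyclically} oriented; consequently any undirected cycle that an attempted $\A$ might try to absorb is in fact a directed cycle, which forces an acyclic $\A$ to be a genuine tree and lets the parity count close the case.

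I expect this last point to be the crux. The undirected nonexistence of a CAI-partition does not by itself imply the oriented nonexistence, so the real work in the oriented cases is checking that the chosen orientation leaves no escape: for every $\A$ surviving the size filter, each cycle one could include in $G[\A]$ must already be directed. I would organise this by enumerating the (few) cycles of $G$, confirming each is cyclically oriented, and thereby reducing oriented acyclicity back to $\A$ being a tree---at which point the divisibility obstruction applies verbatim and completes the proof for every graph in \Cref{fig:tight-graphs}.
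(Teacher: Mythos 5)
Your counting identity is sound and it does dispose of the undirected cubic case: for the hypercube (Figure (b)), $n=8$ is divisible by $4$, so $a=(3n-2)/4$ is not an integer and no \CAI-partition exists. That is a legitimate (and arguably cleaner) alternative to the paper's argument for that one graph, and it is essentially the Payan--Sakarovitch count cited in the related work. The gap is in the oriented cases, which are the heart of the lemma. Your crux claim --- ``the orientations in the figure are chosen precisely so that each short cycle of $G$ is cyclically oriented,'' hence any cycle inside $\A$ is directed, hence $\A$ must be a tree --- is false for both oriented witnesses. In Figure (a) the edges incident to vertices $6,7,6',7'$ and in Figure (e) the six edges joining the four triangles are drawn bold and are explicitly allowed to carry \emph{arbitrary} orientations; consequently both graphs contain short cycles (e.g.\ the $4$-cycle $0,1,4,6$ in (a), and the hexagonal faces in (e)) that need not be directed. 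So oriented acyclicity does not reduce to tree-ness, and your parity filter cannot close these cases. Worse, for graph (a) the count is vacuous even as a filter: the graph is not regular (degrees $2$, $3$, $4$ occur), so the identity only confines $|\A|$ to a wide range rather than forbidding a value.

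What is actually needed for (a) and (e) --- and what the paper supplies --- are structural arguments tied to the specific directed cycles. For (a): the directed $4$-cycles $0,3,2,1$; $1,4,3,2$; $1,4,5,2$ force that in each half at least one of $\{1,2\}$ (resp.\ $\{1',2'\}$) lies in $\I$, and since the two halves are joined only by the edges $11'$ and $22'$, the set $\A$ is then disconnected. For (e): each of the four vertex-disjoint directed triangles must contain exactly one vertex of $\I$ (at least one because a directed triangle cannot lie in $\A$, at most one because $\I$ is independent), which pins $|\I|=4$ and forces $\A$ to be disconnected. Incidentally, a corrected counting argument does finish (e) from that point --- the $12$ edges leaving $\I$ leave only $18-12=6$ edges among the $8$ vertices of $\A$, too few for connectivity --- but this uses the inequality $|E|\geq(a-1)+\sum_{v\in\I}d(v)$ for a connected (not necessarily acyclic) $\A$, not your tree identity, and it must be preceded by the triangle analysis you do not have. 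As written, your proposal defers exactly the case analyses that constitute the proof, and the principle it offers in their place does not hold for the graphs in question.
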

\begin{proof}
We provide the proof for each figure separately.
\begin{itemize}
 \item[(a)] To show that the graph of \Cref{subfig:no-AI-oriented-planar-bipartite-2-conn} has no \CAI-partition we show some properties of the left (resp. right) part of the figure induced by vertices $\{0,\ldots,7\}$ (resp. $\{0',\ldots,7'\}$). More precisely, we show that for any \CAI-partition of the left part, vertices $\{1,2\}\not\subset\A$. Indeed, suppose that there is a \CAI-partition such that $\{1,2\}\subset\A$. Since $0,3,2,1$ is a directed cycle, either $0\in\I$ or $3\in\I$. If $0\in\I$, then $3\in\A$ and since $1,4,3,2$ is a directed cycle, we conclude that $4\in\I$. But then vertex 6 has both neighbors in $\I$ and at the same time $6\in\A$ which contradicts the connectivity of $\A$. If $3\in\I$, then $4\in\A$ and since $1,4,5,2$ is a directed cycle, we conclude that $5\in\I$. But then vertex 7 has both neighbors in $\I$ and at the same time $7\in\A$ which contradicts the connectivity of $\A$.

 Therefore, for any \CAI-partition of the left (resp. right) part, either vertex 1 or 2 (resp. $1'$ or $2'$) must be in $\I$. Hence we obtain a contradiction because $\A$ is not connected.
 
 \item[(b)] 
 Let $(\A, \I)$ be a \CAI-partition of the hypercube.
 If $|\I|\le 2$, then $\A$ cannot be acyclic. 
 But if $|\I|\geq 3$, then since $\I$ is independent, there would be an isolated vertex in $\A$ contradicting the connectivity of $\A$. 
 Thus no \CAI-partition of the hypercube exists.
 \item[(c),(d)] The proofs for \Cref{subfig:no-AI-digons,subfig:no-AI-1-connected} are straightforward.
 \item[(e)] Observe that for every directed triangle of \Cref{subfig:truncated_tetrahedron}, exactly one vertex must be in $\I$. Since the graph is symmetric, it is easy to observe that for any choice of these four vertices in $\I$, the other vertices form a disconnected graph. \qedhere
 %The case-analysis can e.g. be done based on $0,1,2$ or $3$ vertices belonging to the outerface.
\end{itemize}
\end{proof}

\Cref{thm:tw2} is best possible in the sense that removing any of the restrictions on the graph class provides a counter-example. For instance, the graph of \Cref{subfig:hypercube} is $2$-vertex-connected but of treewidth 3, hence just above $2$-vertex-connected series-parallel, which coincides with $2$-vertex-connected and treewidth $2$. The graph of \Cref{subfig:no-AI-1-connected} has treewidth 2 but is not $2$-vertex-connected.

As of \Cref{thm:subcubic}, we provide a counterexample whenever one of the following restrictions is removed : maximum degree 3 (\Cref{subfig:no-AI-oriented-planar-bipartite-2-conn}), oriented (\Cref{subfig:hypercube,subfig:no-AI-digons}), $2$-vertex-connected (\Cref{subfig:no-AI-1-connected}), bipartite (\Cref{subfig:truncated_tetrahedron}).

\begin{figure}[h]
\centering

\begin{subfigure}[b]{0.67\textwidth}
 \centering
 \begin{tikzpicture}[scale=0.85]
 \tikzset{->-/.style={
 decoration={
 markings,
 mark=at position #1 with {\arrow{>}}
 },
 postaction={decorate}
 },
 ->-/.default=0.55 % set default value for arrow position
 }
 
 \tikzset{VertexStyle/.append style={minimum size=0.57cm, inner sep=0.01cm}}
 
 \tikzset{EdgeStyle/.style = {->-,>=stealth'}}

 \Vertex[style={minimum size=1.0cm,shape=circle},LabelOut=false,L=\hbox{$0$},x=0cm,y=5cm]{v0}
 \Vertex[style={minimum size=1.0cm,shape=circle},LabelOut=false,L=\hbox{$1$},x=5cm,y=5cm]{v1}
 \Vertex[style={minimum size=1.0cm,shape=circle},LabelOut=false,L=\hbox{$2$},x=5cm,y=0cm]{v2}
 \Vertex[style={minimum size=1.0cm,shape=circle},LabelOut=false,L=\hbox{$3$},x=0cm,y=0cm]{v3}
 \Vertex[style={minimum size=1.0cm,shape=circle},LabelOut=false,L=\hbox{$4$},x=2.5cm,y=2.513cm]{v4}
 \Vertex[style={minimum size=1.0cm,shape=circle},LabelOut=false,L=\hbox{$5$},x=3.75cm,y=1.25cm]{v5}
 \Vertex[style={minimum size=1.0cm,shape=circle},LabelOut=false,L=\hbox{$6$},x=1.2cm,y=3.8cm]{v6}
 \Vertex[style={minimum size=1.0cm,shape=circle},LabelOut=false,L=\hbox{$7$},x=2.5cm,y=0.8cm]{v7}

 \foreach \from/\to in {0/3, 1/0, 1/4, 2/1, 3/2, 4/3, 4/5, 5/2}
 \Edge(v\from)(v\to);
 
 \draw[-,line width=0.8mm] (v0)--(v6)--(v4) (v3)--(v7)--(v5);
 
 \Vertex[style={minimum size=1.0cm,shape=circle},LabelOut=false,L=\hbox{$0'$},x=13cm,y=5cm]{v00}
 \Vertex[style={minimum size=1.0cm,shape=circle},LabelOut=false,L=\hbox{$1'$},x=8cm,y=5cm]{v11}
 \Vertex[style={minimum size=1.0cm,shape=circle},LabelOut=false,L=\hbox{$2'$},x=8cm,y=0cm]{v22}
 \Vertex[style={minimum size=1.0cm,shape=circle},LabelOut=false,L=\hbox{$3'$},x=13cm,y=0cm]{v33}
 \Vertex[style={minimum size=1.0cm,shape=circle},LabelOut=false,L=\hbox{$4'$},x=10.5cm,y=2.513cm]{v44}
 \Vertex[style={minimum size=1.0cm,shape=circle},LabelOut=false,L=\hbox{$5'$},x=9.25cm,y=1.25cm]{v55}
 \Vertex[style={minimum size=1.0cm,shape=circle},LabelOut=false,L=\hbox{$6'$},x=11.7cm,y=3.8cm]{v66}
 \Vertex[style={minimum size=1.0cm,shape=circle},LabelOut=false,L=\hbox{$7'$},x=10.5cm,y=0.8cm]{v77}

 \foreach \from/\to in {00/33, 11/00, 11/44, 22/11, 33/22, 44/33, 44/55, 55/22}
 \Edge(v\to)(v\from);
 \draw[-,line width=0.8mm] (v00)--(v66)--(v44) (v33)--(v77)--(v55);

 \Edge(v22)(v2);
 \Edge(v1)(v11);
 
 \end{tikzpicture}
\caption{Oriented planar bipartite $2$-vertex-connected (with maximum degree 4).\\The bold edges can be oriented arbitrarily.\label{subfig:no-AI-oriented-planar-bipartite-2-conn}}
\end{subfigure}
\hfill
\begin{subfigure}[b]{0.3\textwidth}
 \centering
 \begin{tikzpicture}[join=bevel,inner sep=0.5mm, scale=1.5]
 \tikzstyle{graphnode}=[draw,shape=circle,draw=black,minimum size=0.1cm,inner sep=0.07cm]
 \node[graphnode](000) at (0,0) {};
 \node[graphnode](001) at (0,1) {};
 \node[graphnode](010) at (1,0) {};
 \node[graphnode](011) at (1,1) {};

 \node[graphnode](100) at (-1,-1) {};
 \node[graphnode](101) at (-1,2) {};
 \node[graphnode](110) at (2,-1) {};
 \node[graphnode](111) at (2,2) {};

 \draw[-] (000)--(001)--(011)--(010)--(000) (100)--(101)--(111)--(110)--(100);
 \foreach \I in {00,01,10,11}
 \draw[-] (1\I)--(0\I);
 \end{tikzpicture}
 
\caption{Planar bipartite $2$-vertex-connected\\subcubic (undirected).\label{subfig:hypercube}}
\end{subfigure}

\begin{subfigure}[b]{0.2\textwidth}
\centering
\begin{tikzpicture}

\tikzset{->-/.style={
 decoration={
 markings,
 mark=at position #1 with {\arrow{>}}
 },
 postaction={decorate}
 },
 ->-/.default=0.55 % set default value for arrow position
}

\tikzset{VertexStyle/.append style={minimum size=0.1cm, inner sep=0.07cm}}

\tikzset{EdgeStyle/.style = {->-,>=stealth'}}

\Vertex[NoLabel,style={minimum size=0.4cm,shape=circle},x=1cm,y=4cm]{v2}
\Vertex[NoLabel,style={minimum size=0.4cm,shape=circle},x=1cm,y=2cm]{v3}
\Vertex[NoLabel,style={minimum size=0.4cm,shape=circle},x=3cm,y=2cm]{v4}
\Vertex[NoLabel,style={minimum size=0.4cm,shape=circle},x=3cm,y=4cm]{v5}
\Edge(v3)(v2)
\Edge(v5)(v4)

\tikzset{EdgeStyle/.append style = {bend left}}

\Edge(v2)(v5)
\Edge(v4)(v3)

\tikzset{EdgeStyle/.append style = {bend left}}
\Edge(v5)(v2)
\Edge(v3)(v4)
\end{tikzpicture}

 \caption{Planar bipartite \\$2$-vertex-connected subcubic \\(non-oriented).\label{subfig:no-AI-digons}}
\end{subfigure}
\qquad
\begin{subfigure}[b]{0.25\textwidth}
\centering
\begin{tikzpicture}[scale=0.8]

\tikzset{->-/.style={
 decoration={
 markings,
 mark=at position #1 with {\arrow{>}}
 },
 postaction={decorate}
 },
 ->-/.default=0.55 % set default value for arrow position
}

\tikzset{VertexStyle/.append style={minimum size=0.1cm, inner sep=0.07cm}}

\tikzset{EdgeStyle/.style = {->-,>=stealth'}}

\Vertex[NoLabel,style={minimum size=0.4cm,shape=circle},x=0.0cm,y=5cm]{v0}
\Vertex[NoLabel,style={minimum size=0.4cm,shape=circle},x=4.0cm,y=5cm]{v1}
\Vertex[NoLabel,style={minimum size=0.4cm,shape=circle},x=1cm,y=4cm]{v2}
\Vertex[NoLabel,style={minimum size=0.4cm,shape=circle},x=1cm,y=2cm]{v3}
\Vertex[NoLabel,style={minimum size=0.4cm,shape=circle},x=3cm,y=2cm]{v4}
\Vertex[NoLabel,style={minimum size=0.4cm,shape=circle},x=3cm,y=4cm]{v5}
\Vertex[NoLabel,style={minimum size=0.4cm,shape=circle},x=0cm,y=1cm]{v8}
\Vertex[NoLabel,style={minimum size=0.4cm,shape=circle},x=4cm,y=1cm]{v9}
\Edge(v3)(v2)
\Edge(v2)(v5)
\Edge(v2)(v0)
\Edge(v4)(v3)
\Edge(v5)(v4)
\Edge(v5)(v1)
\Edge(v4)(v9)
\Edge(v3)(v8)
\end{tikzpicture}

 \caption{Oriented planar bipartite subcubic (not $2$-vertex-connected).\label{subfig:no-AI-1-connected}}
\end{subfigure}
\qquad
\begin{subfigure}[b]{0.45\textwidth}
\centering
\begin{tikzpicture}[scale=0.6]

\tikzset{->-/.style={
 decoration={
 markings,
 mark=at position #1 with {\arrow{>}}
 },
 postaction={decorate}
 },
 ->-/.default=0.55 % set default value for arrow position
}

\tikzset{VertexStyle/.append style={minimum size=0.1cm, inner sep=0.1cm}}
\tikzset{EdgeStyle/.style = {->-,>=stealth'}}

\Vertex[NoLabel,style={minimum size=1.0cm,shape=circle},x=0cm,y=0cm]{A}
\Vertex[NoLabel,style={minimum size=1.0cm,shape=circle},x=5cm,y=0cm]{B}
\Vertex[NoLabel,style={minimum size=1.0cm,shape=circle},x=7.5cm,y=4.33cm]{C}
\Vertex[NoLabel,style={minimum size=1.0cm,shape=circle},x=5cm,y=8.66cm]{D}
\Vertex[NoLabel,style={minimum size=1.0cm,shape=circle},x=0cm,y=8.66cm]{E}
\Vertex[NoLabel,style={minimum size=1.0cm,shape=circle},x=-2.5cm,y=4.33cm]{F}
\Vertex[NoLabel,style={minimum size=1.0cm,shape=circle},x=2.5cm,y=2.98cm]{H}
\Vertex[NoLabel,style={minimum size=1.0cm,shape=circle},x=3.67cm,y=5cm]{I}
\Vertex[NoLabel,style={minimum size=1.0cm,shape=circle},x=1.33cm,y=5cm]{J}
\Vertex[NoLabel,style={minimum size=1.0cm,shape=circle},x=2.5cm,y=1.83cm]{K}
\Vertex[NoLabel,style={minimum size=1.0cm,shape=circle},x=0.33cm,y=5.58cm]{L}
\Vertex[NoLabel,style={minimum size=1.0cm,shape=circle},x=4.67cm,y=5.58cm]{M}
\Edge(A)(K)
\Edge(K)(B)
%\Edge(v2)(v1)
\Edge(B)(A)
\Edge(M)(D)
%\Edge(v4)(v3)
\Edge(D)(C)
\Edge(C)(M)
%\Edge(v5)(v0)
\Edge(J)(I)
\Edge(I)(H)
\Edge(H)(J)
%\Edge(v9)(v6)
\Edge(E)(L)
%\Edge(v10)(v7)
\Edge(L)(F)
%\Edge(v11)(v8)
\Edge(F)(E)

\draw[-,line width=0.8mm] (E)--(D) (L)--(J) (I)--(M) (H)--(K) (C)--(B) (F)--(A);
\end{tikzpicture}

\caption{Oriented planar $2$-vertex-connected subcubic (not bipartite). The bold edges can be oriented arbitrarily.\label{subfig:truncated_tetrahedron}}
\end{subfigure}
\caption{Graphs with no \CAI-partition.\label{fig:tight-graphs}}
\end{figure}
\section{Proofs of \texorpdfstring{\Cref{thm:finalboss}}{Theorem 10} and \texorpdfstring{\Cref{cor:finalboss}}{Corollary 11}} \label{sec:neg}
Note that with the specific properties of a partition resulting from \Cref{obs:main} the following show that this strategy will not resolve~\Cref{conj:EulerianNL} or \Cref{conj:Barnette}, i.e., it yields \Cref{thm:finalboss}.

\begin{theorem} \label{thm:oriented-triang}
There exists a Eulerian oriented planar triangulation $G$ with tripartition $I_1,I_2,I_3$, such that every partition of $G$ into two acyclic sets $A_1,A_2$ has $I_i\not\subseteq A_j$ for all $i\in\{1,2,3\}$ and $j\in\{1,2\}$.
\end{theorem}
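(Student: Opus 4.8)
The plan is to reduce the statement to a purely \emph{undirected-flavoured} obstruction about the bipartite graphs $G-I_i$, and then to build $G$ by orienting a suitable APSW-type triangulation. The central point is that one condition on the orientation makes the reduction go through. Call a triangular face \emph{directed} if it is a directed $3$-cycle, and say that $G$ is \emph{face-saturated} if every edge lies on at least one directed face. I would first prove the following reduction lemma: if $G$ is a Eulerian oriented planar triangulation with tripartition $I_1,I_2,I_3$ that is face-saturated, and if $(A_1,A_2)$ is any partition of $V(G)$ into two acyclic sets with $I_i\subseteq A_j$, then $G-I_i$ admits an \emph{AI-partition}, i.e. a partition into an acyclic set and an independent set (connectivity \emph{not} required).

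The proof of the lemma is short. Since $I_i\subseteq A_j$, the other part satisfies $A_{3-j}\subseteq V(G)-I_i=V(G-I_i)$, so $A_{3-j}$ is an acyclic subset of $G-I_i$. It remains to see that $S:=A_j\cap V(G-I_i)$ is independent in $G-I_i$. Suppose $uv$ were an edge of $G-I_i$ with $u,v\in S$; as $G$ is a triangulation with tripartition $I_1,I_2,I_3$, the vertices $u,v$ lie in the two classes other than $I_i$, so the third vertex $w$ of either triangle on $uv$ lies in $I_i\subseteq A_j$. By face-saturation at least one of these two triangles, say $uvw$, is directed, and then $\{u,v,w\}\subseteq A_j$ is a directed $3$-cycle inside the acyclic set $A_j$, a contradiction. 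Hence $S$ is independent and $(A_{3-j},S)$ is the desired AI-partition of $G-I_i$. By symmetry the same holds for $A_2$, so \textbf{it suffices to exhibit a face-saturated Eulerian oriented planar triangulation $G$ such that $G-I_i$ has no AI-partition for every $i\in\{1,2,3\}$}; \Cref{thm:oriented-triang} then follows, as does \Cref{thm:finalboss}, since the absence of an AI-partition in particular forbids a \CAI-partition.

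With the reduction in hand, the remaining and genuinely hard task is the construction. Here I would start from the undirected family underlying~\cite{APSW16}: a Eulerian planar triangulation whose three colour classes are so entangled that every subgraph hitting all faces is forced to use many vertices of each colour (this entanglement is exactly what powers \Cref{cor:finalboss}). The plan is to equip such a triangulation (or an enlarged variant) with a Eulerian orientation in which enough faces are directed so that (i) every edge lies on a directed face, giving face-saturation, and (ii) inside each link graph $G-I_i$ the surviving directed cycles form an obstruction that no independent set can destroy: for every independent $\mathcal{I}\subseteq V(G-I_i)$, the digraph $(G-I_i)-\mathcal{I}$ still contains a directed cycle. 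To guarantee no AI-partition rather than merely no \CAI-partition, I would exploit directed facial cycles of $G-I_i$: an acyclic $\mathcal{A}$ must miss a vertex of every directed face, so its complement would have to be an independent transversal of all faces, and the APSW-style entanglement (amplified by a parameter $k$) should make such an independent transversal impossible. Tuning the same parameter $k$ would then also yield the quantitative statement of \Cref{cor:finalboss}.

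The main obstacle is achieving everything \emph{simultaneously and from a single object}. One Eulerian orientation of one simple planar triangulation, with its unique tripartition, must make all three bipartite graphs $G-I_1,G-I_2,G-I_3$ free of AI-partitions at once, while keeping every edge on a directed triangle and keeping in-degree equal to out-degree at every vertex. These requirements pull against one another: orienting all faces coherently (e.g. $I_1\to I_2\to I_3\to I_1$) is Eulerian and face-saturated but makes each $G-I_i$ a DAG, which is fatal; so the orientation must be inhomogeneous, yet still balanced and still leaving a robust, independent-set-resistant family of directed cycles in each of the three link graphs. I expect this balancing act to be the crux, most likely resolved by an explicit, locally checkable gadget together with a recursive amplification in $k$, with the verification of the ``no independent transversal / no AI-partition'' property in all three colour classes being the step that requires the most care (and plausibly a computer check on the base gadget).
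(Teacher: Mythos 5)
Your reduction lemma is correct: face-saturation plus the tripartition structure does convert any two-acyclic-set partition of $G$ with $I_i\subseteq A_j$ into an AI-partition of $G-I_i$ (the third vertex of each face on an edge of $A_j- I_i$ lies in $I_i\subseteq A_j$, and a directed such face kills acyclicity of $A_j$). However, the proposal does not prove \Cref{thm:oriented-triang}, because the theorem \emph{is} the construction, and you explicitly defer it: you describe the tension between being Eulerian, face-saturated, and having directed-cycle structure survive in all three graphs $G-I_i$, and then state that you ``expect'' this to be resolvable by a gadget plus amplification, ``plausibly'' with a computer check. No gadget is given, no orientation is exhibited, and no verification is carried out, so the argument stops exactly where the real work begins.

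There is also a structural concern with the route itself: your reduction replaces the target by a strictly stronger property --- that \emph{no} $G-I_i$ admits even an AI-partition (connectivity dropped) --- and it is not at all clear that a face-saturated Eulerian oriented planar triangulation with this property exists; nothing in~\cite{APSW16} supplies it, since the APSW obstruction is about \emph{connected} (permeating) subgraphs, and connectivity is precisely what your relaxation throws away. The paper's proof avoids this overshoot entirely: it never needs the blanket non-existence of AI-partitions. Instead it glues two explicit gadgets (\Cref{fig:gadget1,fig:gadget2}) onto a Eulerian orientation of the octahedron (\Cref{fig:eulerian-counterexample}) and proves a \emph{conditional} obstruction (\Cref{lem:gadgets}, items 9 and 10): in any partition into two acyclic sets, if two designated outer vertices lie in $\A_1$, then the five or six designated inner vertices cannot all lie in $\A_2$. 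Since the inner vertices of appropriate gadgets all belong to one tripartition class, assuming $I_1\subseteq\A_1$ forces, through a short case analysis on directed triangles and $4$-cycles of the octahedron, a contradiction. This targeted, cross-gadget argument is weaker than ``no AI-partition'' but is exactly what the theorem needs, and it is checkable by hand. To salvage your approach you would have to either exhibit a triangulation meeting your stronger requirement (an open problem in its own right) or weaken your reduction to a conditional obstruction of the paper's type --- at which point you have essentially reconstructed the paper's proof.
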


In order to build the graph of \Cref{thm:oriented-triang}, we first provide two useful gadgets.

\begin{lemma}\label{lem:gadgets}
 Let $G_1(0,1,2,3)$ and $G_2(1,2,13)$ be the oriented triangulations of \Cref{fig:gadget1,fig:gadget2}. We have the following properties :
 \begin{enumerate}
 \item $\forall i\in\{4,\ldots,12\}, d^+_{G_1}(i)=d^-_{G_1}(i), d^+_{G_2}(i)=d^-_{G_2}(i)$.
 \item $d^+_{G_1}(0)=3$, $d^-_{G_1}(0)=2$.
 \item $d^+_{G_1}(1)=3$, $d^-_{G_1}(1)=2$.
 \item $d^+_{G_1}(2)=2$, $d^-_{G_1}(2)=3$.
 \item $d^+_{G_1}(3)=3$, $d^-_{G_1}(3)=4$.
 \item $d^+_{G_2}(1)=4$, $d^-_{G_2}(1)=2$.
 \item $d^+_{G_2}(2)=3$, $d^-_{G_2}(2)=3$.
 \item $d^+_{G_2}(13)=1$, $d^-_{G_2}(13)=3$.
 \item For every partition of $G_1(0,1,2,3)$ into two acyclic sets $\A_1$ and $\A_2$, if $\{1,2\}\subset\A_1$, then $\{8,9,10,11,12\}\not\subset\A_2$.
 \item For every partition of $G_2(1,2,13)$ into two acyclic sets $\A_1$ and $\A_2$, if $\{1,2\}\subset\A_1$, then $\{8,9,10,11,12,13\}\not\subset\A_2$.
 \end{enumerate}
\end{lemma}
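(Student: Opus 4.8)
The plan is to separate the arithmetic parts (1)--(8) from the extremal parts (9)--(10). Parts (1)--(8) are a direct reading of \Cref{fig:gadget1,fig:gadget2}: for each named vertex I would count its out-arcs and in-arcs and match them against the stated values. What is worth recording is the pattern behind the numbers: every internal vertex $4,\ldots,12$ (and $13$ in $G_2$) is balanced, $d^+=d^-$, while the terminals carry prescribed excesses. These excesses are exactly what will later let copies of the gadgets be glued along their terminals into a globally Eulerian oriented triangulation in the proof of \Cref{thm:oriented-triang}, so I would present them as a compact table rather than spell out each count.

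The substance is in (9), which I would prove by contradiction. Assume a partition of $G_1$ into two acyclic sets $\A_1,\A_2$ with $\{1,2\}\subset\A_1$ and, for contradiction, $\{8,9,10,11,12\}\subset\A_2$. The only tool is that neither part may contain a directed cycle, so no directed triangle of the triangulation is monochromatic: whenever two consecutive vertices of a directed triangle lie in one part, its third vertex is forced into the other. Both hypotheses feed this rule, and I would propagate the forced assignments through the fixed orientation of the gadget. The target is the inner block on $\{0,1,2,3,4,5,6,7\}$, whose directed $4$-cycles $0\,3\,2\,1$, $1\,4\,3\,2$, $1\,4\,5\,2$ are exactly the cycles already exploited in \Cref{lem:no-AI-graphs}(a); the gadget is designed so that these forced assignments drive all four vertices of one such cycle into a single part, yielding a monochromatic directed cycle and the desired contradiction. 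The difference from the tightness argument is that here there is no connectivity hypothesis, so each branch must terminate by exhibiting a genuine monochromatic directed cycle rather than an isolated vertex of $\A$.

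For (10) I would reuse this cascade almost verbatim on the shared internal vertices $4,\ldots,12$: the gadget $G_2$ differs from $G_1$ only by the modification near the former terminals $0,3$ that introduces the apex $13$ and reorients the incident arcs as recorded in (6)--(8). Under $\{1,2\}\subset\A_1$ and $\{8,9,10,11,12,13\}\subset\A_2$ the forcing of $4,5,6,7$ runs as before, while the extra constraint $13\in\A_2$ only pins down the orientation of the triangles at the apex, closing the same kind of monochromatic directed cycle. The real difficulty in both parts is organizational, not conceptual: I must traverse the directed triangles of each gadget in an order in which every newly forced vertex already shares a directed triangle with two placed vertices, so that the forcing never stalls before the contradiction appears. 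I would therefore build the write-up around an explicit short list of triangles visited in that order, checking at each one which vertex is forced and into which part.
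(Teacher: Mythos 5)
Items (1)--(8) of your plan are fine and match the paper, which likewise dismisses them as direct checks on the figures. For items (9) and (10) your overall strategy --- assume both inclusions, propagate forced memberships using the fact that neither part may contain a directed cycle, and end with a monochromatic directed cycle --- is indeed the paper's strategy. However, the concrete propagation scheme you commit to has a genuine gap: your forcing rule is stated for directed \emph{triangles} only, and under the initial placement $\{1,2\}\subset\A_1$, $\{8,9,10,11,12\}\subset\A_2$ no directed triangle of $G_1$ forces anything. The directed triangles of $G_1$ are $(1,0,11)$, $(0,6,12)$, $(1,10,2)$, $(2,9,3)$, $(3,8,4)$, $(4,5,10)$, $(4,6,11)$, $(5,9,7)$; in each of them either at most one vertex of each part is placed, or (as in $(1,10,2)$) all three vertices are already placed consistently. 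So the cascade stalls at step zero, and no ordering of triangles of the kind you postulate (``every newly forced vertex already shares a directed triangle with two placed vertices'') can exist --- the very first forced vertex cannot be produced this way.

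The two ingredients your proposal is missing, both present in the paper's proof, are: (i) a \emph{case distinction on vertex $0$}, which is never forced and must be branched on; and (ii) forcing along directed \emph{$4$-cycles}, not triangles. In the case $0\in\A_2$ the paper uses the directed $4$-cycle $(0,11,4,12)$, whose three vertices $0,11,12\in\A_2$ force $4\in\A_1$; then $(1,4,5,2)$ forces $5\in\A_2$; then $(3,8,5,9)$ forces $3\in\A_1$; and then $(1,4,3,2)\subset\A_1$ is the contradiction. The case $0\in\A_1$ starts instead from $(0,3,2,1)$ and ends with $(1,4,5,2)$ monochromatic. Note that even after branching on $0$, triangle forcing alone still stalls: if $0\in\A_2$ it yields only $6\in\A_1$ (from $(0,6,12)$) and then stops, and if $0\in\A_1$ it yields nothing new. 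So the $4$-cycles are indispensable to the propagation itself, not merely to the final contradiction. The same two ingredients are needed for item (10), where the paper branches on $0$ and uses the directed triangle $(0,3,13)$ together with the $4$-cycles $(1,4,3,2)$ and $(0,12,4,11)$. As planned --- a list of triangles with no branching --- your write-up would fail at its first step.
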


\begin{proof}
 The first eight items can be easily checked on \Cref{fig:gadget1,fig:gadget2}.

 To prove item 9, we proceed by contradiction. Consider a vertex-partition of $G_1(0,1,2,3)$ into two acyclic sets $\A_1$ and $\A_2$ such that $\{1,2\}\subset\A_1$ and $\{8,9,10,11,12\}\subset\A_2$. We have the two following cases:
 \begin{itemize}
 \item Suppose $0\in\A_2$. Since $0,11,4,12$ induce a directed cycle, we know that $4\in\A_1$. But then since $1,4,5,2$ induce a directed cycle, we know that $5\in\A_2$. Therefore, since $3,8,5,9$ induce a directed cycle, we know that $3\in\A_1$. This is a contradiction because $\A_1$ contains the directed cycle $1,4,3,2$.
 \item Suppose $0\in\A_1$. Since $0,3,2,1$ induce a directed cycle, we know that $3\in\A_2$. Similarly to the previous paragraph we conclude that $\{4,5\}\subset\A_1$. This is a contradiction because $\A_1$ contains the directed cycle $1,4,5,2$.
 \end{itemize}
 The proof of item 10 follows the same arguments.

 Consider a vertex-partition of $G_2(1,2,13)$ into two acyclic sets $\A_1$ and $\A_2$ such that $\{1,2\}\subset\A_1$ and $\{8,9,10,11,12,13\}\subset\A_2$. We have the two following cases:
  \begin{itemize}
 \item Suppose $0\in\A_2$. Since $0,3,13$ induce a directed cycle, we know that $3\in\A_1$. 
 Considering $\{1,2,3,4\}$ implies that $4 \in \A_2$, leading to $0,12,4,11$ inducing a directed cycle and thus $\A_2$ not being acyclic, contradiction.
 \item Suppose $0\in\A_1$. Since $0,3,2,1$ induce a directed cycle, we know that $3\in\A_2$. Similarly to the previous paragraph we conclude that $\{4,5\}\subset\A_1$. This is a contradiction because $\A_1$ contains the directed cycle $1,4,5,2$. \qedhere
 \end{itemize}
\end{proof}
\begin{proof}[Proof of \Cref{thm:oriented-triang}]
 We build $G$ by gluing the gadgets of \Cref{fig:gadget1,fig:gadget2} on a Eulerian orientation of the octahedron. See~\Cref{fig:eulerian-counterexample}. More precisely we have the following gadgets in $G$: 
 \begin{itemize}
 \item $G_1(v_6,v_3,v_4,v_7)$, $G_1(v_{10},v_4,v_5,v_{11})$, $G_1(v_{14},v_5,v_3,v_{15})$, $G_1(v_8,v_0,v_1,v_9)$, $G_1(v_{12},v_1,v_2,v_{13})$, $G_1(v_{16},v_2,v_0,v_{17})$,
 \item $G_2(v_3,v_0,v_6)$, $G_2(v_0,v_4,v_8)$, $G_2(v_4,v_1,v_{10})$, $G_2(v_1,v_5,v_{12})$, $G_2(v_5,v_2,v_{14})$, $G_2(v_2,v_3,v_{16})$.
 \end{itemize}

 Observe that $G$ is a triangulation. We show that $G$ is Eulerian, that is $d^+(v)=d^-(v)$ for every vertex $v$. By item 1 of \Cref{lem:gadgets}, we have $d^+(v)=d^-(v)$ for every internal vertex $v$ (which is not on the outerface of the gadgets). We show that $d^+(v_i)=d^-(v_i)$ for every $i\in\{0,\ldots,17\}$:

 \begin{itemize}
 \item For $i\in\{6,8,10,12,14,16\}$, by items 2 and 8 of \Cref{lem:gadgets}, we have $d^+(v_i)=d^+_{G_1}(0)+d^+_{G_2}(13)=3+1=4=2+3-1=d^-_{G_1}(0)+d^-_{G_2}(13)-1$.
 \item For $i\in\{7,9,11,13,15,17\}$, by item 5 of \Cref{lem:gadgets}, we have $d^+(v_i)=d^+_{G_1}(3)+1=3+1=4=d^-_{G_1}(3)$.
 \item For $i\in\{0,1,2,3,4,5\}$, by items 3, 4, 6, and 7 of \Cref{lem:gadgets}, we have $d^+(v_i)=d^+_{G_1}(1)+d^+_{G_2}(1)-1+d^+_{G_2}(2)+d^+_{G_1}(2)=3+4-1+3+2=11=2+2+1+3+3=d^-_{G_1}(1)+d^-_{G_2}(1)+1+d^-_{G_2}(2)+d^-_{G_1}(2)$.
 \end{itemize}

 Let $I_1,I_2,I_3$ be the tripartition of $G$. It remains to prove that for every partition of $G$ into two acyclic sets, none of these sets contains $I_j$ for every $j\in\{1,2,3\}$. W.l.o.g. let $\{v_0,v_5,v_9,v_{10},v_{15},v_{16}\}\subset I_1$, let $\{v_1,v_3,v_7,v_8,v_{13},v_{14}\}\subset I_2$, let $\{v_2,v_4,v_6,v_{11},v_{12},v_{17}\}\subset I_3$. Let $\A_1,\A_2$ be a vertex-partition of $G$ into two acyclic sets. By contradiction and by symmetry, we can assume that $I_1\subset\A_1$. Observe that the five internal vertices of $G_1(v_6,v_3,v_4,v_7)$ corresponding to $\{8,9,10,11,12\}$ in \Cref{fig:gadget1} must all be in $I_1$. Hence by item 9 of \Cref{lem:gadgets} applied to $G_1(v_6,v_3,v_4,v_7)$, we conclude that $\{v_3,v_4\}\not\subset\A_2$ and thus $\{v_3,v_4\}\cap\A_1\neq\emptyset$. Thus, we distinguish the two cases:
 \begin{itemize}
 \item Suppose $v_3\in\A_1$. Since $v_5\in\A_1$ and $v_3,v_5,v_4$ induce a directed triangle, we have $v_4\in\A_2$. Since $v_0,v_3,v_5,v_1$ induce a directed cycle, we know that vertex $v_1\in\A_2$. This is a contradiction with item 10 of \Cref{lem:gadgets} applied to $G_2(v_4,v_1,v_{10})$. Indeed, since the five vertices internal vertices of $G_2(v_4,v_1,v_{10})$ corresponding to $\{8,9,10,11,12,13\}$ in \Cref{fig:gadget2} all belong to $I_1$, by hypothesis we know that they all belong to $\A_1$. On the other hand, since $\{v_1,v_4\}\subset\A_2$, by item 10 of \Cref{lem:gadgets} we know that at least one of these five vertices must be in $\A_2$.
 \item Suppose $v_4\in\A_1$. The proof is very similar to the previous case, due to the symmetry of $G$. Since $v_5\in\A_1$ and $v_3,v_5,v_4$ induce a directed triangle, we have $v_3\in\A_2$. Since $v_0,v_2,v_5,v_4$ induce a directed cycle, vertex $v_2\in\A_2$. This is a contradiction with item 10 of \Cref{lem:gadgets} applied to $G_2(v_2,v_3,v_{16})$, because $v_{16}\in I_1$ but when $\{v_2,v_3\}\subset\A_2$ we know that $I_1\not\subset\A_1$. \qedhere
 \end{itemize}
\end{proof}

\begin{figure}[H]
\begin{subfigure}[b]{0.4\textwidth}
 \centering
 
 \begin{tikzpicture}[scale=1.1]
 \tikzset{->-/.style={
 decoration={
 markings,
 mark=at position #1 with {\arrow{>}}
 },
 postaction={decorate}
 },
 ->-/.default=0.55 % set default value for arrow position
 }
 
 \tikzset{VertexStyle/.append style={minimum size=0.5cm, inner sep=0.01cm, draw}}
 
 \tikzset{EdgeStyle/.style = {->-,>=stealth'}}
 
 \Vertex[LabelOut=false,L=\hbox{$0$},x=0cm,y=5cm]{v0}
 \Vertex[LabelOut=false,L=\hbox{$3$},x=0cm,y=0cm]{v3}
 \Vertex[LabelOut=false,L=\hbox{$4$},x=2.5cm,y=2.513cm]{v4}
 \Vertex[LabelOut=false,L=\hbox{$5$},x=3.675cm,y=1.25cm]{v5}
 \Vertex[LabelOut=false,L=\hbox{$6$},x=1.2cm,y=3.8cm]{v6}
 \Vertex[LabelOut=false,L=\hbox{$7$},x=2.5cm,y=0.8cm]{v7}

 \tikzset{VertexStyle/.append style={minimum size=0.5cm, inner sep=0.01cm, color=blue!35, text=black}}
 \Vertex[LabelOut=false,L=\hbox{$1$},x=5cm,y=5cm]{v1}
 \Vertex[LabelOut=false,L=\hbox{$2$},x=5cm,y=0cm]{v2}
 
 \tikzset{VertexStyle/.append style={minimum size=0.5cm, inner sep=0.01cm, color=red!25, text=black}}
 %\SetVertexNormal[FillColor = red!25]
 
 \Vertex[LabelOut=false,L=\hbox{$12$},x=1.2cm,y=2.513cm]{v12}
 \Vertex[LabelOut=false,L=\hbox{$11$},x=2.5cm,y=3.8cm]{v11}
 \Vertex[LabelOut=false,L=\hbox{$10$},x=3.8cm,y=2.513cm]{v10}
 \Vertex[LabelOut=false,L=\hbox{$8$},x=2.5cm,y=1.7cm]{v8}
 \Vertex[LabelOut=false,L=\hbox{$9$},x=3.6cm,y=0.5cm]{v9}

 \foreach \from/\to in {0/3, 0/6, 1/0, 1/4, 2/1, 3/2, 3/7, 4/3, 4/5, 4/6, 5/2, 7/5}
 \Edge(v\from)(v\to);
 \foreach \from/\to in {0/11, 12/0, 11/4, 11/1, 4/12, 12/3, 8/4, 3/8, 7/8, 8/5, 5/9, 9/3, 2/9, 9/7, 10/4, 1/10, 5/10, 10/2, 6/12, 6/11}
 \Edge(v\from)(v\to);

 % \Vertex[LabelOut=false,L=\hbox{$13$},x=-2.5cm,y=2.5cm]{v13}
 % \Edge(v13)(v0)
 % \Edge(v3)(v13)
 % \tikzset{EdgeStyle/.append style = {bend left=60}}
 % \Edge(v2)(v13)
 % \Edge(v13)(v1)

 %\AddVertexColor{red}{v8,v9,v10,v11,v12,v13}
 
 \end{tikzpicture}
\caption{$G_1(0,1,2,3)$ - For every partition into two acyclic sets $\A_1$ and $\A_2$, if $\{1,2
\}\subset\A_1$, then $\{8, 9, 10, 11, 12\}\not\subset\A_2$.}
\label{fig:gadget1}
\end{subfigure}
\hfill
\begin{subfigure}[b]{0.52\textwidth}
 \centering
 \begin{tikzpicture}[scale=1.1]
 \tikzset{->-/.style={
 decoration={
 markings,
 mark=at position #1 with {\arrow{>}}
 },
 bezier bounding box=true,
 postaction={decorate}
 },
 ->-/.default=0.55 % set default value for arrow position
 }
 
 \tikzset{VertexStyle/.append style={minimum size=0.5cm, inner sep=0.01cm, draw}}
 
 \tikzset{EdgeStyle/.style = {->-,>=stealth'}}
 
 \Vertex[LabelOut=false,L=\hbox{$0$},x=0cm,y=5cm]{v0}
 \Vertex[LabelOut=false,L=\hbox{$3$},x=0cm,y=0cm]{v3}
 \Vertex[LabelOut=false,L=\hbox{$4$},x=2.5cm,y=2.513cm]{v4}
 \Vertex[LabelOut=false,L=\hbox{$5$},x=3.675cm,y=1.25cm]{v5}
 \Vertex[LabelOut=false,L=\hbox{$6$},x=1.2cm,y=3.8cm]{v6}
 \Vertex[LabelOut=false,L=\hbox{$7$},x=2.5cm,y=0.8cm]{v7}

 \tikzset{VertexStyle/.append style={minimum size=0.5cm, inner sep=0.01cm, color=blue!35, text=black}}
 \Vertex[LabelOut=false,L=\hbox{$1$},x=5cm,y=5cm]{v1}
 \Vertex[LabelOut=false,L=\hbox{$2$},x=5cm,y=0cm]{v2}
 
 \tikzset{VertexStyle/.append style={minimum size=0.5cm, inner sep=0.01cm, color=red!25, text=black}}
 
 \Vertex[LabelOut=false,L=\hbox{$12$},x=1.2cm,y=2.513cm]{v12}
 \Vertex[LabelOut=false,L=\hbox{$11$},x=2.5cm,y=3.8cm]{v11}
 \Vertex[LabelOut=false,L=\hbox{$10$},x=3.8cm,y=2.513cm]{v10}
 \Vertex[LabelOut=false,L=\hbox{$8$},x=2.5cm,y=1.7cm]{v8}
 \Vertex[LabelOut=false,L=\hbox{$9$},x=3.6cm,y=0.5cm]{v9}

 \foreach \from/\to in {0/3, 0/6, 1/0, 1/4, 2/1, 3/2, 7/3, 4/3, 4/5, 6/4, 5/2, 5/7}
 \Edge(v\from)(v\to);
 
 \foreach \from/\to in {11/0, 0/12, 4/11, 11/1, 12/4, 3/12, 4/8, 8/3, 8/7, 5/8, 9/5, 3/9, 9/2, 7/9, 10/4, 1/10, 10/5, 2/10, 12/6, 6/11}
 \Edge(v\from)(v\to);

 \Vertex[style={minimum size=1.0cm,shape=circle},LabelOut=false,L=\hbox{$13$},x=-1.7cm,y=2.5cm]{v13}
 \Edge(v13)(v0)
 \Edge(v3)(v13)
 \tikzset{EdgeStyle/.append style = {bend left=70}}
 \Edge(v2)(v13)
 \tikzset{EdgeStyle/.append style = {bend right=70}}
 \Edge(v1)(v13)
 
 \end{tikzpicture}

\caption{$G_2(1,2,13)$ - For every partition into two acyclic sets $\A_1$ and $\A_2$, if $\{1,2
\}\subset\A_1$, then $\{8, 9, 10, 11, 12, 13\}\not\subset\A_2$. }
\label{fig:gadget2}
\end{subfigure}

\begin{subfigure}{\textwidth}
 \centering
 \includegraphics[scale=1.2]{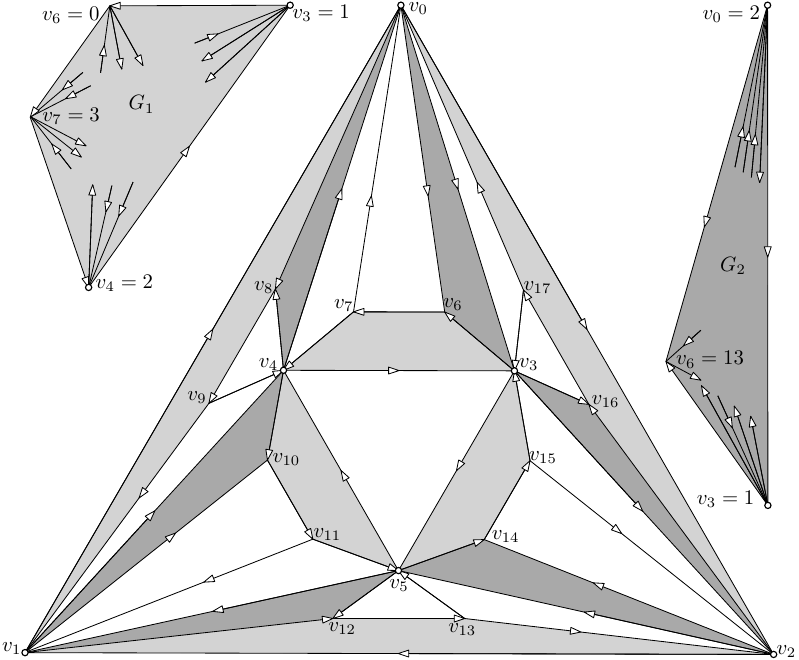}
 \caption{A Eulerian oriented triangulation where the light (resp. dark) gray face is isomorphic to $G_1$ (resp. $G_2$.)}
 \label{fig:eulerian-counterexample}
\end{subfigure}
\caption{The construction of the counterexample in \Cref{thm:oriented-triang}.}
\end{figure}

\begin{proof}[Proof of~\Cref{cor:finalboss}]
    Take $2k-1$ copies $G_1, \ldots G_{2k-1}$ copies of the graph $G$ from~\Cref{thm:oriented-triang} and identify the inner triangle $v_5, v_4, v_3$ of $G_i$ with the outer triangle $v_0, v_1, v_2$ of $G_{i+1}$ for $1\leq i\leq 2k-2$. The resulting graph $H$ is a Eulerian oriented planar triangulation. Let $I_1, I_2, I_3$ be its tripartition and suppose that $\mathcal{A}$ is a connected acyclic permeating subgraph such that $|\mathcal{A}\cap I_1|<k$. 
    By the pigeonhole principle there is an $1\leq i\leq 2k-1$ such that $G_i\cap I_1\cap \mathcal{A}=\emptyset$. Since $\mathcal{A}$ is connected then for any two vertices $u,v\in \mathcal{A}\cap G_i$ there is a $(u,v)$-path $P$. If $P$ leaves $G_i$, then  $P$ traverses one of the gluing triangles towards $G_{i\pm 1}$ on two adjacent vertices of the triangles and can be shortened so it remains in $G_i$. Hence, $\mathcal{A}\cap G_i$ is connected. But by~\Cref{obs:permeat} $\mathcal{A}$ and $\mathcal{I}=G_i\setminus I_1\setminus \mathcal{A}$ are a \CAI-partition of $G_i\setminus I_1$, which by~\Cref{thm:oriented-triang} implies that $\mathcal{A}\cap G_i$ is disconnected. Contradiction. 
\end{proof}

As a final remark of this section, we note that the underlying undirected graph of the construction obtained in \Cref{fig:eulerian-counterexample} is not a counterexample to \Cref{conj:Barnette} (and thus is not a counterexample to Conjectures~\ref{conj:EulerianNL} and \ref{conj:NL}). To see this, let $G_1$, $G_2$ be the underlying undirected graphs of $G_1(0,1,2,3)$, $G_2(1,2,13)$ respectively. An easy case analysis shows that every partition into two forests $A_1$ and $A_2$ of vertices $\{0,1,2,3\}$ of $G_1$, can be extended to a partition into two forests $A'_1\supset A_1$ and $A'_2\supset A_2$ of $G_1$, such that in the subgraph induced by $A_1$ in $G_1 - \{(0,1),(1,2),(2,3),(3,0)\}$, vertices of $A_1$ (resp. $A_2$) are not connected. A similar property can be shown for vertices $\{1,2,13\}$ of $G_2$. With this in hand, it is enough to give a valid partition into two forests of the undirected subgraph of \Cref{fig:eulerian-counterexample} induced by vertices $\{v_0,\ldots,v_{17}\}$ and  extend this partition to each of the light and dark faces.

\section{Conclusion}\label{conclusions}
Concerning \Cref{thm:subcubic}, each of the graphs of \Cref{fig:tight-graphs} has one less restriction and no \CAI-partition as shown in \Cref{lem:no-AI-graphs}. There is only one missing case that we leave as an open question:

\begin{question}\label{quest}
 Does every oriented bipartite or triangle-free $2$-vertex-connected subcubic graph admit a \CAI-partition?
\end{question}
\noindent Furthermore, we believe that~\Cref{thm:tw2} can be generalized in the following way:
\begin{conjecture}
The vertices of a graph $G$ of treewidth at most $k$, and connectivity at least $k$ can be partitioned into an induced graph $\mathcal{T}$ of treewidth at most $k-1$ and connectivity at least $k-1$ and an independent set $\mathcal{I}$.
\end{conjecture}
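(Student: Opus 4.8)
The plan is to treat $k$ as fixed and to argue by a structural induction that mirrors the $k=2$ case, which is exactly \Cref{thm:tw2}: there $\mathcal{T}$ is a tree (treewidth $1$ and connectivity $1$) and $\mathcal{I}$ is independent. The base cases $k=1$ (partitioning a tree into two independent sets, which is just $2$-colouring it) and $k=2$ are already known, so the real content is the inductive step for $k\ge 3$.

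The first ingredient I would look for is a decomposition of $k$-connected treewidth-$k$ graphs generalizing the short nested open ear decomposition of \Cref{lem:ED}. Concretely, I would fix a tree decomposition of width $k$, root it, and read off a sequence of \emph{pieces} $P_0,P_1,\ldots,P_\ell$, where $P_0$ is the subgraph on the root bag and each later $P_i$ attaches to the already-built part along a single bag of at most $k$ vertices, this bag playing the role of the two endpoints of an ear. I would then build $\mathcal{T}$ and $\mathcal{I}$ greedily along this sequence, maintaining the invariants that (a) $\mathcal{I}$ is independent and (b) the restriction of $G-\mathcal{I}$ to the processed part has treewidth at most $k-1$ and is ``$(k-1)$-connected so far''. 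When a new piece $P_i$ is added, invariant (b) forces $\mathcal{I}$ to meet every width-$k$ bag of the decomposition inside $P_i$: deleting a transversal of the bags from a fixed tree decomposition lowers every bag to size at most $k$ and hence certifies treewidth at most $k-1$ for $G-\mathcal{I}$. The freedom in \emph{which} vertex of each bag to take is what I would use both to keep $\mathcal{I}$ independent and to avoid creating a separator of size $<k-1$, exactly as a single interior vertex of each induced ear was chosen in the proof of \Cref{thm:tw2}.

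Producing $\mathcal{I}$ as an independent transversal of the bag hypergraph of the tree decomposition is an independent-transversal problem in which the bounded-treewidth structure keeps the relevant degrees under control, so Haxell-type independent-transversal results should supply candidate sets $\mathcal{I}$. The main obstacle, and the reason the statement is only conjectured, is the coupling between the two requirements: a transversal that lowers the treewidth need not preserve connectivity, while protecting all $(k-1)$-cuts constrains which transversal one may take. For $k=2$ the nested ear structure reconciled these automatically, since deleting one interior vertex of an induced ear can neither create a cycle in $\mathcal{T}$ nor disconnect it, the two endpoints staying joined through the rest of the ear. I expect the hard part to be establishing the analogous local replacement statement for $k\ge 3$: that from each newly added piece one can delete a well-chosen vertex so that its neighbourhood remains $(k-1)$-connected in the remainder, so that invariant (b) survives piece by piece. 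Proving such a local lemma, or instead finding a genuinely $k$-dimensional ear decomposition in which it holds by construction, is where I would concentrate the effort.
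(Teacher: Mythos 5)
This statement is a conjecture in the paper's conclusion: there is no proof of it to compare yours against. The paper offers only partial evidence, namely the case $k=1$ (trees are bipartite), the case $k=2$ (which is \Cref{thm:tw2}), and the case of $k$-trees, which it settles with a one-line greedy construction along an elimination ordering: when a new simplicial vertex $u$ is added, put $u$ in $\mathcal{I}$ if it has no neighbour already in $\mathcal{I}$, and put $u$ in $\mathcal{T}$ otherwise. Your proposal correctly recognizes the statement as open, correctly identifies $k\le 2$ as the known cases, and correctly locates the central difficulty (the tension between lowering treewidth via a transversal of the maximum bags and protecting $(k-1)$-connectivity). To that extent it is a reasonable research plan rather than a proof, and you say as much; judged as a proof of the statement, the gap is the one you name yourself, and it is genuine.

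Two concrete remarks on the sketch. First, your inductive scheme --- process pieces along a rooted tree decomposition and, from each full bag, place one well-chosen vertex into $\mathcal{I}$ while keeping $\mathcal{I}$ independent --- is essentially the paper's $k$-tree argument in disguise; it succeeds for $k$-trees precisely because every bag there is a $(k+1)$-clique, so ``$u$ has a neighbour in $\mathcal{I}$'' and ``the bag containing $u$ is already hit by $\mathcal{I}$'' coincide, and independence of the transversal comes for free. For general treewidth-$k$ graphs the bags are not cliques, and that equivalence is exactly what breaks; you would need a replacement for it before the induction can even be set up. Second, Haxell-type independent transversal theorems do not apply in the form you invoke them: they require a partition of the vertex set into \emph{disjoint} classes with maximum degree bounded relative to class size, whereas the bags of a tree decomposition overlap heavily (a single vertex can lie in arbitrarily many maximum bags), so a nontrivial reduction would be needed before any such theorem produces a candidate $\mathcal{I}$ --- and even then it would say nothing about the connectivity of $\mathcal{T}$, which, as you observe, is the real obstruction and the reason the statement remains a conjecture.
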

Considering treewidth $0$ graphs as independent sets, the case $k=1$ just says that trees are bipartite. \Cref{thm:tw2} corresponds to $k=2$ since $2$-vertex-connected simple series-parallel graphs are the $2$-vertex-connected graphs of treewidth $2$. Further, the conjecture holds for $k$-trees: just construct $G,\mathcal{I},\mathcal{T}$ along an elimination-ordering. Start with $K_{k+1},\{v\},K_{k+1}\setminus v$, for any $v\in K_{k+1}$. If a new vertex $u$ gets added and is adjacent to no element of $\mathcal{I}$, then add $u$ to $\mathcal{I}$ and add $u$ to $\mathcal{T}$ otherwise.

\subsubsection*{Acknowledgments:} 
We thank Franti\v{s}ek Kardo\v{s} for the initial discussion on the problems of this paper and for pointing out the result of Payan and Sakarovitch~\cite{PS75}. We further thank Marthe Bonamy for contributing to the proof of~\Cref{thm:subcubic}, and an anonymous referee for noticing that part of the proof could be shortened.

S.C. was supported by a FWO grant with grant number 1225224N and was supported during a research visit in 2021 by a Van Gogh grant, reference VGP.19/00015.
K.K was supported by the Spanish State Research Agency
through grants RYC-2017-22701, PID2022-137283NB-C22 and the Severo Ochoa and María de Maeztu Program for Centers and Units of Excellence in R\&D (CEX2020-001084-M) and the grant of The Natural Science Foundation of Hebei Province (project No. A2023205045). P.V. was partially supported by Agence Nationale de la Recherche (France) under research grant ANR DIGRAPHS ANR-19-CE48-0013-01. Moreover K.K. and P.V. were partially supported by Agence Nationale de la Recherche (France) under the JCJC program (ANR-21-CE48-0012).


\begin{thebibliography}{10}

\bibitem{APSW16}
{\sc H.~Alt, M.~S. Payne, J.~M. Schmidt, and D.~R. Wood}, {\em Thoughts on
  {Barnette}'s conjecture}, Australas. J. Comb., 64 (2016), pp.~354--365.

\bibitem{BFFS22}
{\sc B.~Bagheri~Gh., T.~Feder, H.~Fleischner, and C.~Subi}, {\em On finding
  {Hamiltonian} cycles in {Barnette} graphs}, Fundam. Inform., 188 (2022),
  pp.~1--14.

\bibitem{B68}
{\sc D.~Barnette}, {\em Conjecture 5}, in Recent Progress in Combinatorics:
  Proceedings of the Third Waterloo Conference on Combinatorics, W.~T. Tutte,
  ed., Academic Press, New York, 1968.

\bibitem{Bod98}
{\sc H.~L. Bodlaender}, {\em A partial k-arboretum of graphs with bounded
  treewidth}, Theoretical Computer Science, 209 (1998), pp.~1--45.

\bibitem{BCD24}
{\sc {\'E}.~Bonnet, D.~Chakraborty, and J.~Duron}, {\em Cutting {Barnette}
  graphs perfectly is hard}, Theor. Comput. Sci., 1010 (2024), p.~15.
\newblock Id/No 114701.

\bibitem{BGM22}
{\sc G.~Brinkmann, J.~Goedgebeur, and B.~D. Mckay}, {\em The minimality of the
  {G}eorges-{K}elmans graph}, Mathematics of Computation, 91 (2022),
  pp.~1483--1500.

\bibitem{CO02}
{\sc G.~L. Chia and S.-H. Ong}, {\em On {Barnette}'s conjecture and {CBP}
  graphs with given numbers of {Hamilton} cycles}, in Proceedings of the third
  Asian mathematical conference 2000, University of the Philippines, Diliman,
  Philippines, October 23--27, 2000, Singapore: World Scientific, 2002,
  pp.~94--111.

\bibitem{CBOS24}
{\sc M.~L.~L. da~Cruz, R.~S.~F. Bravo, R.~A. Oliveira, and U.~S. Souza}, {\em
  Near-bipartiteness, connected near-bipartiteness, independent feedback vertex
  set and acyclic vertex cover on graphs having small dominating sets}, in
  Combinatorial Optimization and Applications, W.~Wu and J.~Guo, eds., Cham,
  2024, Springer Nature Switzerland, pp.~82--93.

\bibitem{Epp92}
{\sc D.~Eppstein}, {\em Parallel recognition of series-parallel graphs},
  Information and Computation, 98 (1992), pp.~41--55.

\bibitem{Flo10}
{\sc J.~Florek}, {\em On {Barnette}'s conjecture}, Discrete Math., 310 (2010),
  pp.~1531--1535.

\bibitem{Flo16}
{\sc J.~Florek}, {\em On {Barnette}'s conjecture and the {{\(H^{+-}\)}}
  property}, J. Comb. Optim., 31 (2016), pp.~943--960.

\bibitem{Flo20b}
{\sc J.~Florek}, {\em Graphs with multi-$4$-cycles and the {Barnette}'s
  conjecture}.
\newblock Preprint, {arXiv}:2002.05288 [math.{CO}] (2020), 2020.

\bibitem{Flo20a}
{\sc J.~Florek}, {\em Remarks on {Barnette}'s conjecture}, J. Comb. Optim., 39
  (2020), pp.~149--155.

\bibitem{F24}
{\sc J.~Florek}, {\em A sufficient condition for cubic 3-connected plane
  bipartite graphs to be hamiltonian}, 2024.

\bibitem{Har13}
{\sc J.~Harant}, {\em A note on {Barnette}'s conjecture}, Discuss. Math., Graph
  Theory, 33 (2013), pp.~133--137.

\bibitem{Hoc17}
{\sc W.~Hochst{\"a}ttler}, {\em A flow theory for the dichromatic number},
  European Journal of Combinatorics, 66 (2017), pp.~160--167.

\bibitem{HMM85}
{\sc D.~Holton, B.~Manvel, and B.~McKay}, {\em Hamiltonian cycles in cubic
  3-connected bipartite planar graphs}, Journal of Combinatorial Theory, Series
  B, 38 (1985), pp.~279--297.

\bibitem{Hor82}
{\sc J.~D. Horton}, {\em On two-factors of bipartite regular graphs}, Discrete
  Mathematics, 41 (1982), pp.~35--41.

\bibitem{K20}
{\sc F.~Kardo\v{s}}, {\em A computer-assisted proof of the {B}arnette--{G}oodey
  {C}onjecture: Not only fullerene graphs are hamiltonian}, SIAM Journal on
  Discrete Mathematics, 34 (2020), pp.~62--100.

\bibitem{KT09}
{\sc K.-i. Kawarabayashi and C.~Thomassen}, {\em Decomposing a planar graph of
  girth 5 into an independent set and a forest}, J. Comb. Theory, Ser. B, 99
  (2009), pp.~674--684.

\bibitem{Khu89}
{\sc S.~Khuller}, {\em Ear decompositions}, abstract, SIGACT News 20, 128,
  1989.

\bibitem{knauer2024partitioning}
{\sc K.~Knauer, C.~Rambaud, and T.~Ueckerdt}, {\em Partitioning a planar graph
  into two triangle-forests}, arXiv:2401.15394,  (2024).

\bibitem{KV19}
{\sc K.~Knauer and P.~Valicov}, {\em Cuts in matchings of 3-connected cubic
  graphs}, European Journal of Combinatorics, 76 (2019), pp.~27--36.

\bibitem{LM17}
{\sc Z.~Li and B.~Mohar}, {\em Planar digraphs of digirth four are
  2-colorable}, SIAM Journal on Discrete Mathematics, 31 (2017),
  pp.~2201--2205.

\bibitem{Lu11}
{\sc X.~Lu}, {\em A note on {Barnette}'s conjecture}, Discrete Math., 311
  (2011), pp.~2711--2715.

\bibitem{MT01}
{\sc B.~Mohar and C.~Thomassen}, {\em Graphs on surfaces}, Baltimore, MD: Johns
  Hopkins University Press, 2001.

\bibitem{NSS23}
{\sc R.~Nedela, M.~Seifrtová, and M.~Škoviera}, {\em Decycling cubic graphs},
  Discrete Mathematics, 347 (2024), p.~114039.

\bibitem{NS22}
{\sc R.~Nedela and M.~Škoviera}, {\em Cyclic connectivity, edge-elimination,
  and the twisted {I}saacs graphs}, Journal of Combinatorial Theory, Series B,
  155 (2022), pp.~17--44.

\bibitem{NL85}
{\sc V.~Neumann-Lara}, {\em Vertex colourings in digraphs. some problems},
  technical report, University of Waterloo, July 8 1985.

\bibitem{PS75}
{\sc C.~Payan and M.~Sakarovitch}, {\em Ensembles cycliquement stables et
  graphes cubiques}, Cah. centr. et. rech. operat. (Colloq. theor. graphes,
  Paris, 1974), 17 (1975), pp.~319--343.

\bibitem{RW08}
{\sc A.~Raspaud and W.~Wang}, {\em On the vertex-arboricity of planar graphs},
  Eur. J. Comb., 29 (2008), pp.~1064--1075.

\bibitem{steiner2018}
{\sc R.~Steiner}, {\em Neumann-{L}ara-flows and the two-colour-conjecture},
  master's thesis, FernUniversit\"at in Hagen, Fakult\"at f\"ur Mathematik und
  Informatik, 2018.

\bibitem{Tait1884}
{\sc P.~G. Tait}, {\em Listing's topologie}, Philosophical Magazine, 17 (1884),
  pp.~30--46.
\newblock Reprinted in Scientific Papers, Vol. II, pp. 85--98.

\bibitem{Tho95}
{\sc C.~Thomassen}, {\em Decomposing a planar graph into an independent set and
  a 3-degenerate graph}, J. Comb. Theory, Ser. B, 83 (2001), pp.~262--271.

\bibitem{TW11}
{\sc M.-T. Tsai and D.~B. West}, {\em A new proof of 3‑colorability of
  {E}ulerian triangulations}, Ars Mathematica Contemporanea, 4 (2011),
  pp.~73--77.

\bibitem{Tutte46}
{\sc W.~T. Tutte}, {\em On {H}amiltonian circuits}, Journal of the London
  Mathematical Society, s1-21 (1946), pp.~98--101.

\bibitem{Tut71}
{\sc W.~T. Tutte}, {\em On the 2-factors of bicubic graphs}, Discrete
  Mathematics, 1 (1971), pp.~203--208.

\bibitem{Whi32}
{\sc H.~Whitney}, {\em Non-separable and planar graphs}, Transactions of the
  American Mathematical Society, 34 (1932), pp.~339--362.

\end{thebibliography}
\end{document}